\documentclass{amsart}

\usepackage{geometry}
\usepackage{enumerate}
\usepackage{enumitem}
\usepackage{amsfonts}
\usepackage{amssymb}
\usepackage{xy}
\usepackage{sseq}
\input xy
\xyoption{all}
\usepackage{amsmath}
\usepackage{url}
\usepackage{verbatim}
\usepackage{hyperref}

\setcounter{tocdepth}{1}

\usepackage{xy}
\input xy
\xyoption{all}

\newcommand{\R}{\mathbf{R}}
\newcommand{\psp}{\spec^{\circ}}
\newcommand{\grspec}{\mathrm{GrSpec}}
\newcommand{\coker}{\mathrm{coker\, }}
\newcommand{\loc}{\mathrm{Loc}}
\newcommand{\spec}{\mathrm{Spec}}
\newcommand{\idem}{\mathrm{Idem}}

\newcommand{\einf}{\mathrm{CAlg}}

\newcommand{\cl}{\colon}
\newcommand{\clg}{\mathrm{CAlg}}
\newcommand{\sym}{\mathrm{Sym}}
\usepackage{amsthm}
\usepackage{tikz}
\usepackage{cleveref/cleveref}

\renewcommand{\rightrightarrows}{\begin{smallmatrix} \to \\
\to \end{smallmatrix} }
\newcommand{\triplearrows}{\begin{smallmatrix} \to \\ \to \\ 
\to \end{smallmatrix} }
 
\newcommand{\pics}{\mathfrak{pic}}

\newcommand{\TMF}{\mathrm{TMF}}
\renewcommand{\ell}{\mathrm{Ell}}

\newcommand{\e}[1]{\mathbf{E}_{#1}}
\renewcommand{\hom}{\mathrm{Hom}}
\newcommand{\supp}{\mathrm{Supp}}
\newcommand{\qcoh}{\mathrm{QCoh}}
\newcommand{\md}{\mathrm{Mod}}

\newtheorem{theorem}{Theorem}[section]

\newtheorem{lemma}[theorem]{Lemma}
\newtheorem{proposition}[theorem]{Proposition}
\newtheorem{corollary}[theorem]{Corollary}

\theoremstyle{definition}

\newtheorem{remark}[theorem]{Remark}
\newtheorem{example}[theorem]{Example}
\newtheorem{definition}[theorem]{Definition}
\newtheorem{cons}[theorem]{Construction}

\begin{document}

\title{Residue fields for a class of rational
$\e{\infty}$-rings and applications}
\date{\today}
\author{Akhil Mathew}
\address{Harvard University, Cambridge, MA 02138}
\email{amathew@math.harvard.edu} 
\urladdr{http://math.harvard.edu/~amathew}

\begin{abstract}
Let $A$ be an $\e{\infty}$-ring over the rational numbers. If $A$
satisfies a noetherian condition on its homotopy groups $\pi_*(A)$, we
construct a collection of $\e{\infty}$-$A$-algebras that  realize on homotopy
the residue fields of $\pi_*(A)$. We prove an analog of the nilpotence theorem
for these residue fields. As a result, we are able to give a complete
algebraic description of the Galois theory of $A$ and of the thick
subcategories of perfect $A$-modules. We also obtain partial information on
the Picard group of $A$. 
\end{abstract}

\maketitle

\tableofcontents
\section{Introduction}
\subsection{Motivation}
The goal of this paper is to describe certain invariants of structured ring
spectra in characteristic zero. We start by first reviewing the motivation from
stable homotopy theory. 

The chromatic picture of stable homotopy theory identifies a class of
``residue fields'' which play an important role in global phenomena. 
Consider the following ring spectra: 
\begin{enumerate}
\item $H \mathbb{Q}$: rational homology.  
\item For each prime $p$, mod $p$ homology $H \mathbb{F}_p$. 
\item For each prime $p$ and height $n$, the $n$th Morava $K$-theory $K(n)$. 
\end{enumerate}

These all define multiplicative homology theories on the category of spectra
satisfying \emph{perfect} K\"unneth isomorphisms: they behave like fields.
Moreover, as a consequence of the deep nilpotence technology of \cite{DHS,
HS}, they are powerful enough to describe much of the structure of the stable
homotopy category. 
For example, one has the following result: 
\begin{theorem}[Hopkins-Smith \cite{HS}] \label{fieldsdetectnilp} Let $R$ be a ring spectrum and let
$\alpha \in \pi_*(R)$. Then $\alpha$ is nilpotent if and only if the Hurewicz
image of $\alpha $ in $\pi_*(F \otimes R)$ is nilpotent, as $F$ ranges over all
the ring spectra above. \label{nilpotence}
\end{theorem}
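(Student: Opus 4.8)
The result is the nilpotence theorem of Devinatz--Hopkins--Smith, and I would prove it in two stages: first reduce to complex bordism, then reduce complex bordism to the displayed list of fields. Precisely, Stage~1 is the assertion that $\alpha$ is nilpotent if and only if its Hurewicz image $h(\alpha)\in MU_*(R) = \pi_*(MU\otimes R)$ is nilpotent, and Stage~2 is the assertion that an element of $MU_*(R)$ is nilpotent if and only if it has nilpotent image in $\pi_*(F\otimes R)$ for each $F$ on the list $H\mathbb Q$, $H\mathbb F_p$, $K(n)$. Stage~1 is the deep, geometric part; Stage~2 is formal, given the structure theory of $MU_*MU$-comodules.

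I would do Stage~2 by first inverting $\alpha$. Smashing commutes with the telescope, so $F\otimes R[\alpha^{-1}]\simeq (F\otimes R)[h_F(\alpha)^{-1}]$, which vanishes precisely when $h_F(\alpha)$ is nilpotent; thus Stage~2 reduces to: for \emph{any} spectrum $X$ (take $X = R[\alpha^{-1}]$), if $MU_* X\neq 0$ then $F_* X\neq 0$ for some $F$ on the list. Here $MU_* X$ is an $MU_*MU$-comodule, and the subcomodule generated by a single element is finitely generated over $MU_*$ --- one reads this off from the finitely many tensor factors appearing in its coaction. Since $MU_*$ is coherent, this subcomodule is finitely presented, so Landweber's filtration theorem applies and produces a subquotient of $MU_* X$ isomorphic to a shift of $MU_*/\mathfrak p$ for an invariant prime $\mathfrak p$. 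The invariant primes of $MU_*$ are exactly $(0)$ and, for each prime $p$ and each $1\le n\le\infty$, the ideal $I_{p,n} = (p,v_1,\dots,v_{n-1})$; hence $\supp(MU_* X)$ contains one of these. Matching $(0)$ with $H\mathbb Q$, $I_{p,\infty}$ with $H\mathbb F_p$, and $I_{p,n}$ ($n$ finite) with Morava $K$-theory $K(n)$, and invoking the standard comparisons among the homology theories $MU$, $BP$, $K(n)$, $H\mathbb F_p$, $H\mathbb Q$ (equality of Bousfield classes: e.g.\ $I_{p,n}\in\supp(BP_* X)$ forces $v_n^{-1}(BP_* X/I_{p,n}BP_* X)\neq 0$, and likewise for the two endpoints), one concludes that the matching $F$ has $F_* X\neq 0$.

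For Stage~1, I would follow Devinatz--Hopkins--Smith: pass to the $\alpha$-telescope $R[\alpha^{-1}]$ and prove that $MU\otimes R[\alpha^{-1}]\simeq 0$ forces $R[\alpha^{-1}]\simeq 0$, by descending the tower of Thom spectra
\[
\mathbb S = X(1)\longrightarrow X(2)\longrightarrow\dots\longrightarrow X(n)\longrightarrow\dots\longrightarrow \mathrm{colim}_n X(n) = MU,
\]
where $X(n)$ is the Thom spectrum of the bundle over $\Omega SU(n)$ pulled back from the universal bundle along $\Omega SU(n)\to\Omega SU\simeq BU$. These are homotopy-associative ring spectra with $H_*X(n)$ polynomial on $n-1$ generators, interpolating between the sphere and $MU$. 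The crux is the descent from stage $n+1$ to stage $n$: one shows that a suitable vanishing statement over $X(n+1)\otimes R[\alpha^{-1}]$ forces the corresponding statement over $X(n)\otimes R[\alpha^{-1}]$, the inductive step being governed by the James--Bott filtration of $\Omega SU(n+1)$ (a ``mini Snaith splitting''), which supplies a canonical class $b_n\in\pi_{2n}(X(n+1))$ and relates $X(n)\otimes R[\alpha^{-1}]$ to its $b_n$-telescope and to $X(n+1)\otimes R[\alpha^{-1}]$. Together with the bookkeeping needed to recover each finite stage $X(n)$ from $MU = \mathrm{colim}_n X(n)$ (a Bousfield-localization argument), this yields $R[\alpha^{-1}] = X(1)\otimes R[\alpha^{-1}]\simeq 0$.

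The main obstacle is Stage~1, and within it the inductive descent along the $X(n)$-tower: identifying the vanishing hypothesis that genuinely propagates from stage $n+1$ to stage $n$, and proving the requisite splitting of $\Omega SU(n+1)$ that makes the $b_n$-telescope computable. Stage~2 is comparatively routine once the invariant primes of $MU_*$ are classified, though some care is needed with the non-finitely-generated ideal $I_{p,\infty}$ --- where $H\mathbb F_p$-homology is recovered from $BP$-homology only up to a $\lim^1$-term --- and with the spectral sequences computing $K(n)$-homology from $BP$-homology.
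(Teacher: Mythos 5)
You should know at the outset that the paper does not prove \Cref{fieldsdetectnilp} at all: it is quoted from Hopkins--Smith \cite{HS} (resting on Devinatz--Hopkins--Smith \cite{DHS}) purely as motivation, and the paper's own nilpotence statement (\Cref{ournilpthm}) is proved later by entirely different, much simpler rational techniques (residue fields built by cell attachments, descent along $C^*(S^1;\mathbb{Q})$, etc.). So there is no proof in the paper to measure your outline against; what you have written is a sketch of the published proof. As such, your Stage~1 correctly records the architecture of \cite{DHS} (the $X(n)$-tower, the classes $b_n$, the James/Bott filtration of $\Omega SU(n+1)$), though it is only a thumbnail of the hard part --- the vanishing-line arguments in the $X(n)$-based Adams spectral sequences, which carry the whole proof, are not even mentioned.

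The genuine gap is in Stage~2, which you describe as ``formal.'' First, your reduction to the statement ``for \emph{any} spectrum $X$, $MU_*X\neq 0$ implies $F_*X\neq 0$ for some $F$ on the list'' discards exactly the structure the Hopkins--Smith deduction uses: for arbitrary $X$ this is the Bousfield-class identity $\langle MU\rangle=\langle H\mathbb{Q}\rangle\vee\bigvee_{p,n}\langle K(n)\rangle\vee\bigvee_p\langle H\mathbb{F}_p\rangle$, which does not follow from the algebra you cite and is not known in this generality; the actual argument applies the dichotomy to the unit $1\in MU_*(\alpha^{-1}R)$, i.e.\ to a comodule \emph{algebra}, and uses the multiplicative structure essentially. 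Second, the algebraic steps are misstated: a finitely generated subcomodule of an arbitrary comodule need not be finitely presented (coherence of $MU_*$ only guarantees this for finitely generated submodules of finitely presented ones), Landweber's filtration theorem applies to finitely presented comodules and produces only the finitely generated invariant primes $I_{p,n}$ with $n<\infty$ (so the $H\mathbb{F}_p$ case must be treated separately, not by ``matching $I_{p,\infty}$''), and the inference ``$I_{p,n}\in\mathrm{Supp}(BP_*X)$ forces $v_n^{-1}(BP_*X/I_{p,n}BP_*X)\neq 0$'' is unjustified, since quotienting and localizing do not commute in the direction you need. The correct move is to localize first --- exactness gives $v_n^{-1}(BP_*X)=(v_n^{-1}BP)_*X\neq 0$ --- and then invoke $\langle v_n^{-1}BP\rangle=\langle E(n)\rangle=\langle K(0)\vee\dots\vee K(n)\rangle$; but even with that repair the case where no finite invariant prime is visible still requires the ring-spectrum argument of \cite{HS}. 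In short, Stage~2 must be the Hopkins--Smith argument for ring spectra, not a support computation valid for arbitrary spectra.
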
 

This fundamental result was used in \cite{HS} to classify the \emph{thick subcategories} of the
category of finite $p$-local spectra for a fixed prime $p$: all thick
subcategories are defined by vanishing conditions for the various residue
fields. 
One can attempt to ask such questions not only for spectra but for general
symmetric monoidal, stable $\infty$-categories, as 
Hovey, Palmieri, and Strickland have considered in \cite{axiomatic}; whenever one has an analog of
\Cref{nilpotence}, it is usually possible to prove results along these lines. 

For instance, let $A$ be an $\e{\infty}$-ring. Then one can try to study such questions in
the $\infty$-category $\md(A)$ of $A$-modules. If $\pi_*(A)$ is concentrated in
even degrees and is \emph{regular noetherian,} then it is possible to
construct residue fields, prove an analog of \Cref{nilpotence}, and obtain a purely algebraic description of the thick
subcategories of perfect $A$-modules. This has been observed independently by a
number of authors. For $\e{\infty}$-rings (such as the $\e{\infty}$-ring
$\TMF$ of periodic topological modular forms) which are ``built up''
appropriately from such nice $\e{\infty}$-rings, it is sometimes possible to
construct residue fields as well. We used this to classify thick subcategories
for perfect modules over $\e{\infty}$-rings such as $\TMF$ in \cite{thick_am}. 

\subsection{Statement of results}
In this paper, we will study such questions over the \emph{rational numbers}. 
Let $A$ be a rational $\e{\infty}$-ring  such that the even
homotopy groups $\pi_{\mathrm{even}}(A)$ form a
noetherian ring and such that the odd homotopy groups $\pi_{\mathrm{odd}}(A)$
form a finitely generated $\pi_{\mathrm{even}}(A)$-module. We will call such
rational $\e{\infty}$-rings \emph{noetherian.}

For the statement of our first result, we work with $\e{\infty}$-rings containing a unit in degree
two. In this case, we will produce, for every prime ideal $\mathfrak{p} \subset \pi_0(A)$, a ``residue field'' of $A$, which will
be an $\e{\infty}$-$A$-algebra whose homotopy groups form a graded field. 
\begin{theorem}[Existence of residue fields] 
\label{existencerf} Let $A$ be a rational, noetherian 
$\e{\infty}$-ring containing a unit in degree two. 
Given a prime ideal $\mathfrak{p} \subset \pi_0(A)$, there exists
an $\e{\infty}$-$A$-algebra $\kappa(\mathfrak{p})$ such that $\kappa(\mathfrak{p})$ is
even periodic
and the map 
$\pi_0(A) \to \pi_0 (\kappa(\mathfrak{p}))$ induces the reduction $\pi_0 A \to
\pi_0(A)_{\mathfrak{p}}/\mathfrak{p}\pi_0 (A )_{\mathfrak{p}}$. 
$\kappa(\mathfrak{p})$ is unique up to homotopy as an object of the $\infty$-category $\clg_{A/}$
of $\e{\infty}$-rings under $A$.
\end{theorem}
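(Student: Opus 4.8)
The plan is to reduce to the case that $\pi_0(A)$ is local, build the residue field one homotopy group at a time inside the connective cover of $A$, and finally invert the unit in degree two; uniqueness will then follow from an obstruction-theoretic lifting argument up the resulting tower. \emph{Reductions.} Inverting the multiplicative subset $\pi_0(A)\setminus\mathfrak p$ yields an $\e{\infty}$-$A$-algebra $A_{\mathfrak p}$ with $\pi_*(A_{\mathfrak p})=\pi_*(A)_{\mathfrak p}$; being a localization it is the initial $A$-algebra on which this set acts invertibly, hence canonical, and an $\e{\infty}$-$A_{\mathfrak p}$-algebra of the required form is in particular such an $A$-algebra. So I may assume $R:=\pi_0(A)$ is noetherian local with maximal ideal $\mathfrak m$ and residue field $k=R/\mathfrak m$. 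We still have a unit $\beta\in\pi_2(A)$, so $\pi_*(A)$ is $2$-periodic with $\pi_{2n}(A)\cong R$, and $\pi_1(A)$ is a finitely generated $R$-module. Writing $A_0:=\tau_{\ge0}A$, a connective $\e{\infty}$-ring with $\pi_0(A_0)=R$, one has $A_0[\beta^{-1}]\simeq A$ since $\mathrm{colim}\,(\pi_m A_0\xrightarrow{\beta}\pi_{m+2}A_0\to\cdots)=\pi_m(A)$. It is thus enough to produce a connective $\e{\infty}$-$A_0$-algebra $\kappa_0$ with $\pi_*(\kappa_0)\cong k[\bar\beta]$, where $|\bar\beta|=2$ and $\bar\beta$ is the image of $\beta$; then $\kappa(\mathfrak p):=\kappa_0[\bar\beta^{-1}]=\kappa_0\otimes_{A_0}A_0[\beta^{-1}]$ is an even periodic $\e{\infty}$-$A$-algebra with $\pi_*\cong k[\bar\beta^{\pm1}]$ inducing the prescribed map on $\pi_0$.

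\emph{Construction of $\kappa_0$.} I would build $\kappa_0$ as an $\omega$-indexed colimit $\kappa_0=\mathrm{colim}_m C_m$ of connective $\e{\infty}$-$A_0$-algebras, with $C_0=A_0$ and $C_{m+1}:=C_m\otimes_{C_m\{t_\bullet\}}C_m$ the pushout of the two maps out of the free $\e{\infty}$-$C_m$-algebra on classes $t_i$ in degree $m$ sending $t_i$ to a chosen finite family $z_i\in\pi_m(C_m)$, respectively to $0$. Over $\mathbb Q$ the free $\e{\infty}$-algebra on a class of degree $m$ has homotopy $\mathbb Q[t]$ if $m$ is even and $\Lambda(t)$ if $m$ is odd, since $\Sigma_n$ acts on $\pi_{mn}$ of the $n$-fold tensor power through the sign character when $m$ is odd and trivially when $m$ is even, and rational group homology vanishes; so the Tor spectral sequence for the pushout shows $\pi_{<m}(C_{m+1})=\pi_{<m}(C_m)$, $\pi_0(C_{m+1})=\pi_0(C_m)$ for $m\ge1$, and, provided the $z_i$ are linearly independent over $\pi_0(C_m)$ when $m\ge1$, $\pi_m(C_{m+1})=\pi_m(C_m)/\sum_i\pi_0(C_m)z_i$ with all remaining new homotopy concentrated in degrees $>m$. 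For $m=0$ I take the $z_i$ to be generators of $\mathfrak m$; then $\pi_0(C_1)=k$ and $\pi_*(C_1)=H_*(K(z_\bullet;\pi_*(A_0)))$ has finite type over $k$. For $m\ge1$ I take the $z_i$ to be a $k$-basis of a complement of $k\bar\beta^{m/2}$ (understood as $0$ for $m$ odd) in the finite-dimensional space $\pi_m(C_m)$; finite type over $k$ is preserved because each total degree receives contributions from only finitely many Tor-degrees, and $\bar\beta$ survives as a nonzero class outside the chosen complement. Then $\pi_{\le m}(C_{m+1})=k[\bar\beta]_{\le m}$, the transition maps are isomorphisms below the current stage, and $\pi_*(\kappa_0)\cong k[\bar\beta]$, which gives existence.

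\emph{Uniqueness.} Given a second $\e{\infty}$-$A$-algebra $\kappa'$ as in the statement, I would lift the structure map $A=C_0\to\kappa'$ up the tower: extending $C_m\to\kappa'$ over $C_{m+1}=C_m\otimes_{C_m\{t_\bullet\}}C_m$ is possible, with a nonempty space of choices, precisely when the images of the $z_i$ vanish in $\pi_m(\kappa')$. When $m=0$ the $z_i\in\mathfrak m$ die since $\pi_0(\kappa')=k$; when $m$ is odd $\pi_m(\kappa')=0$; and when $m\ge2$ is even the chosen $z_i$ lie in the nilradical of $\pi_*(C_m)$, which maps to the nilradical $0$ of the graded field $\pi_*(\kappa')\cong k[\bar\beta^{\pm1}]$. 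Passing to the colimit produces a map $f\colon\kappa(\mathfrak p)\to\kappa'$ in $\clg_{A/}$; it is an equivalence because $\pi_0(f)=\mathrm{id}_k$ (both sides extend the surjection $R\to k$) and $\pi_2(f)$ carries $\bar\beta$ to $\bar\beta$ ($f$ is an $A$-algebra map), so $\pi_*(f)$ is an isomorphism. The one step requiring genuine care, and the main obstacle, is the claim that at every stage the homotopy of $C_m$ outside $k[\bar\beta]$ consists of nilpotents; I would establish it by tracking the extra multigrading recorded by the cells attached so far, under which the unwanted homotopy lies in positive total weight bounded in terms of $m$, together with the characteristic-zero fact that $\pi_1(A)\cdot\pi_1(A)\subseteq\mathfrak m\cdot\pi_2(A)$ --- a sufficiently high power of any element of $\pi_1(A)\cdot\pi_1(A)$ vanishes, since odd self-squares vanish and even-degree classes are central, so no such element can be the unit $\beta$ --- which forces the odd part of $A$ to map into the nilradical after reduction modulo $\mathfrak m$.
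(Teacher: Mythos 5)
Your overall architecture (localize at $\mathfrak p$, build a tower by killing the unwanted homotopy degree by degree, invert the degree-two unit at the end, and get uniqueness by lifting $A\to\kappa'$ up the tower) is a reasonable alternative to the paper's route, and your Tor bookkeeping for why each attachment only changes homotopy in degrees $\geq m$ is fine. But the step you yourself flag --- that at every stage the homotopy of $C_m$ outside $k[\bar\beta]$ consists of nilpotents --- is a genuine gap, and it is load-bearing for \emph{both} halves of your argument. For existence you need that killing the chosen complement never truncates the powers of $\bar\beta$ (otherwise $\kappa_0[\bar\beta^{-1}]=0$); for uniqueness you need the killed classes to map to zero in $\pi_m(\kappa')\cong k\bar\beta^{m/2}$, and nilpotence is exactly what you invoke for that. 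The sketched justification does not hold up. First, the auxiliary ``weight'' multigrading is destroyed at stage $0$: the attaching map sends weight-one generators $t_i$ to the weight-zero elements $x_i\in\mathfrak m$, so at best a filtration survives, and positivity of weight in an associated graded does not give nilpotence --- especially since killing \emph{odd} classes rationally produces divided-power (hence, in characteristic zero, polynomial) classes in even degrees whose weight grows linearly with topological degree (the extreme case is the paper's observation that $A//y\simeq A[t]$ when $y=0$ in odd degree). So ``the unwanted even homotopy is nilpotent'' is not a soft counting statement; it is precisely the kind of claim that the paper spends its main technical effort on, via the finite-universal-homeomorphism theorem \Cref{radicial2} (proved with completions, idempotent lifting, and the Cohen-type section \Cref{secresfld}) for even-degree quotients, and via the $C^*(S^1;k)$-module classification \Cref{keylem} (feeding into \Cref{countablerational}) for odd-degree quotients, which shows that at the field stage killing odd classes creates \emph{no} new even homotopy at all. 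Note also that your sketch never uses noetherianness at this step, whereas \Cref{bigcompact} shows statements of this flavor fail badly without it ($\pi_0(A)/(x,y)=\mathbb{Q}$ but $A//(x,y)=0$), so any correct argument must engage the finiteness hypotheses in an essential way.

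The uniqueness half has a second, related soft spot even if the nilpotence claim were repaired: the image in $\kappa'$ of the classes you kill at stage $m$ depends on the nullhomotopies chosen at earlier stages (the new even classes are essentially suspensions of the odd classes you coned off, and different nullhomotopies shift their images by multiples of $\bar\beta^{m/2}$). So you either need the junk to be genuinely nilpotent, or you need an obstruction-theoretic argument that the earlier choices can be corrected so that all later obstructions vanish; as written, a badly chosen extension at stage $m-1$ can make the stage-$m$ lift impossible, and your tower was fixed before $\kappa'$ was given. The paper avoids this by controlling entire mapping spaces rather than lifting along a cell tower: \Cref{laurentseriesdetbyhomotopy} and \Cref{nilpcountable}--\Cref{mapstogradedfields} show that maps into graded fields $L[t_2^{\pm1}]$ are detected on homotopy rings, with surjectivity of restriction maps along the tower giving existence and the rigidity of $k[t_2^{\pm1}]$ giving uniqueness. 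In short: your reductions and the degreewise attachment scheme are fine, but the crux claim is unproved (and, as stated, doubtful because of the divided-power phenomenon), and without it neither the non-vanishing of $\kappa(\mathfrak p)$ nor the lifting argument for uniqueness goes through; filling it would require machinery comparable to \Cref{radicial2} and \Cref{keylem}.
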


We will prove an analog of \Cref{nilpotence} in $\md(A)$ for these residue
fields.

\begin{theorem}[Nilpotence]
\label{ournilpthm}
Suppose $A$ is as above, and let $B$ be an $A$-ring spectrum; that is, an
algebra object in the homotopy category $\mathrm{Ho}( \md(A))$. Let $x \in
\pi_*(B)$. Then $x$ is nilpotent if and only if for every prime
ideal $\mathfrak{p}
 \subset \pi_0 (A)$, the image of $x$ in $\pi_*(B \otimes_A
 \kappa(\mathfrak{p}))$ is nilpotent. \end{theorem}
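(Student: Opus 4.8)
The plan is to reduce the statement to a statement about perfect $A$-modules and then to a localization/completion argument on $\spec \pi_0(A)$. First I would reduce to the case that $B$ is a \emph{perfect} $A$-module: given $x \in \pi_n(B)$, the element $x$ is nilpotent if and only if the induced self-map $\Sigma^{kn} B \to B$ (for suitable $k$) is ``eventually zero'' in an appropriate sense, but more efficiently, I would observe that $x$ nilpotent is detected by whether the localization $B[x^{-1}]$ vanishes, and $B[x^{-1}]$ is a filtered colimit of copies of $B$; since $- \otimes_A \kappa(\mathfrak{p})$ commutes with filtered colimits, $x$ maps to something nilpotent in $\pi_*(B \otimes_A \kappa(\mathfrak{p}))$ for all $\mathfrak{p}$ iff $B[x^{-1}] \otimes_A \kappa(\mathfrak{p}) \simeq 0$ for all $\mathfrak{p}$. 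So the theorem is equivalent to the assertion: if $M$ is an $A$-module (here $M = B[x^{-1}]$, a filtered colimit of perfects, hence the statement for perfect $M$ suffices by a colimit argument) with $M \otimes_A \kappa(\mathfrak{p}) \simeq 0$ for every prime $\mathfrak{p} \subset \pi_0(A)$, then $M \simeq 0$.

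Next I would prove this ``detection of vanishing'' statement. Since $A$ is rational and noetherian in the sense of the paper, $\pi_*(A)$ is a $\mathbb{Q}$-algebra, and (using the degree-two unit to identify the graded structure with the $\mathbb{Z}/2$-graded one) the key point is algebraic: a finitely generated module over a noetherian $\mathbb{Q}$-algebra that becomes zero after tensoring with every residue field is zero. The subtlety is that $M \otimes_A \kappa(\mathfrak{p})$ is a derived tensor product, so its vanishing is stronger than the vanishing of the classical fiber; I would use the construction of $\kappa(\mathfrak{p})$ from Theorem~\ref{existencerf}, which factors as localization at $\mathfrak{p}$ followed by a quotient by a regular sequence generating the maximal ideal of $\pi_0(A)_{\mathfrak{p}}$ (this is where rationality is essential: over $\mathbb{Q}$ the local ring $\pi_0(A)_{\mathfrak{p}}/(\text{nilpotents})$ has residue field of characteristic zero, and the construction of $\kappa(\mathfrak{p})$ builds it by killing generators one at a time via cofiber sequences, possibly after a further base change to split the field extension). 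Concretely: if $M$ is perfect and $M \otimes_A \kappa(\mathfrak{p}) \simeq 0$, then by the cofiber-sequence description of $\kappa(\mathfrak{p})$ and Nakayama's lemma applied to the (finitely generated, by perfectness and the noetherian hypothesis) homotopy groups of $M \otimes_A A_{\mathfrak{p}}$, one deduces $M \otimes_A A_{\mathfrak{p}} \simeq 0$. Ranging over all $\mathfrak{p}$ and using that an $A$-module is zero iff all its localizations at primes of $\pi_0(A)$ are zero (again because $\pi_*(M)$ is a module over $\pi_*(A)$ and vanishing of a module over a commutative ring is local), we conclude $M \simeq 0$.

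To handle the passage from perfect $M$ to the general $M = B[x^{-1}]$, I would use the standard argument that $B[x^{-1}]$ is a sequential colimit of shifts of $B$ along multiplication by $x$, and each term is perfect when $B$ is perfect; for general $B$ (merely a ring object in $\mathrm{Ho}(\md(A))$, not assumed perfect), I would instead argue directly that $x$ nilpotent in $\pi_*(B)$ is equivalent to the unit map $A \to B[x^{-1}]$ factoring so that $B[x^{-1}]$ is built from $A$ in a way that interacts well with the residue fields — more precisely, $B[x^{-1}] \simeq 0$ iff $x$ is nilpotent, and $B[x^{-1}] \otimes_A \kappa(\mathfrak{p}) \simeq (B \otimes_A \kappa(\mathfrak{p}))[x^{-1}]$, which vanishes iff $x$ is nilpotent in $\pi_*(B \otimes_A \kappa(\mathfrak{p}))$ because $\pi_*(\kappa(\mathfrak{p}))$ is a graded field and hence $\pi_* (B\otimes_A\kappa(\mathfrak{p}))$ is flat over it, making localization detect nilpotence on homotopy exactly. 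The remaining point — that $B[x^{-1}] \otimes_A \kappa(\mathfrak{p}) \simeq 0$ for all $\mathfrak{p}$ forces $B[x^{-1}] \simeq 0$ — is the ``detection of vanishing'' statement, which I would like to prove for all $A$-modules, not just perfect ones.

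The main obstacle is precisely this last step in the non-perfect (non-compact) setting: without a finite generation hypothesis on $\pi_*(B[x^{-1}])$ as an $\pi_*(A)$-module, Nakayama-type arguments fail, and an arbitrary $A$-module can have all classical fibers vanish without being zero (e.g. a non-trivial module supported on a generic point in a non-affine-looking way). I expect to resolve this by exploiting that the residue fields $\kappa(\mathfrak{p})$ are built by \emph{finite} iterated cofiber sequences (using the noetherian hypothesis on $A$, so the maximal ideal at each prime is finitely generated), which lets me run a descending induction on the dimension of $\spec \pi_0(A)$: I would filter an arbitrary module by its support, use that vanishing of $M \otimes_A \kappa(\mathfrak{p})$ at the generic points controls the generic fiber, and then localize away the top-dimensional support and induct. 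This noetherian-induction argument, combined with the explicit finite construction of $\kappa(\mathfrak{p})$, is the technical heart of the proof.
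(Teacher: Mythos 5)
Your reduction of the theorem to a vanishing statement — ``if $M \otimes_A \kappa(\mathfrak{p}) \simeq 0$ for all $\mathfrak{p}$ then $M \simeq 0$'' applied to the telescope $M = B[x^{-1}]$ — is a clean reformulation, but it converts the theorem into precisely the Bousfield-decomposition question that the paper explicitly leaves open for non-perfect modules (see the remark following \Cref{tensorproductnonzero}: the analogue fails over the sphere, and the paper states it does not know what happens in $\md(A)$). Since $B$ is an arbitrary $A$-ring spectrum and $B[x^{-1}]$ is a filtered colimit, you are never in the perfect case, so the whole weight of the argument falls on your final paragraph. The plan there rests on a false structural premise: that $\kappa(\mathfrak{p})$ is obtained from $A_{\mathfrak{p}}$ by a \emph{finite} iterated cofiber construction (killing a finite sequence generating the maximal ideal). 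That is only true under regularity hypotheses. In the paper's setting $\pi_0(A)_{\mathfrak{p}}$ need not be regular; killing generators of $\mathfrak{p}$ by $1$-cells creates new elements in degrees $0$ and $-1$ which must be killed in turn, and $\kappa(\mathfrak{p})$ is produced only after a countable limiting procedure (\Cref{countablerational}, \Cref{nilpcountable}, \Cref{rsfldforart}); it is not perfect as an $A_{\mathfrak{p}}$-module. So there is no finite filtration to run a Nakayama or support-induction argument along, and your sketched resolution does not close the gap. (A secondary circularity: your treatment of the perfect case via ``cofiber-sequence description plus Nakayama'' is essentially \Cref{supportfield}, which the paper proves \emph{using} nilpotence detection in the artinian case, not the other way around.)

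The paper avoids the vanishing question entirely by staying at the level of nilpotence detection as a property of families of $A$-algebras: it is preserved under filtered colimits and is transitive (\Cref{easynilp}); the pair $\{A/x, A[x^{-1}]\}$ detects nilpotence by the elementary computation $\alpha^{2N} = \beta^N x^N \alpha^N = 0$ (\Cref{quotloc}); maps admitting descent detect nilpotence, which covers killing nilpotent degree-zero classes (\Cref{quotdesc}) and odd-degree classes (\Cref{quotdegminusone}), hence the infinite cell-attachment process building $\kappa(\mathfrak{p})$ in the artinian case (\Cref{rsfldforart}); and the general case follows by induction on the Krull dimension of $\pi_0(A)_{\mathfrak{p}}$ using $\{A//x, A[x^{-1}]\}$. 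If you want to salvage your approach, you would either need to prove the Bousfield-type vanishing statement for the specific non-perfect modules $B[x^{-1}]$ (which seems no easier than the theorem itself) or reorganize the argument along the lines above, where nilpotence — not vanishing — is the invariant being propagated through localizations, quotients, and colimits.
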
 

The proof of \Cref{ournilpthm} uses entirely different (and much simpler)
techniques than \Cref{fieldsdetectnilp}. However, the conclusion is similar,
and we thus find $\md(A)$ as an interesting new example of an ``axiomatic stable
homotopy theory'' (\cite{axiomatic}) where many familiar techniques can be
applied. 

In particular, from \Cref{ournilpthm}, we will 
deduce a classification of thick subcategories of the
$\infty$-category $\md^\omega(A)$ of perfect $A$-modules, for $A$ rational
noetherian (not necessarily containing a unit in degree two). 
Let $\pi_{\mathrm{even}}(A) = \bigoplus_{i \in 2 \mathbb{Z}} \pi_i(A)$; this is
a graded ring, so $\spec \pi_{\mathrm{even}}(A)$ inherits a
$\mathbb{G}_m$-action. 

\begin{theorem}[Thick subcategory theorem]
\label{thicksubcat}
Let $A$ be a rational, noetherian $\e{\infty}$-ring. 
The thick subcategories of $\md^\omega(A)$ are in natural correspondence with
the subsets of the collection of homogeneous prime ideals of $
\pi_{\mathrm{even}}(A)$ which are closed under specialization  or, equivalently,
specialization-closed subsets of the topological space associated to the stack $( \spec \pi_{\mathrm{even}}(A))/\mathbb{G}_m$. 
\end{theorem}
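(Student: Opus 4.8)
The plan is to run the standard ``support--nilpotence'' machine for classifying thick subcategories, with \Cref{ournilpthm} as the essential input: over a residue field $\kappa(\mathfrak p)$ the homotopy is a graded field, so $\md^\omega(\kappa(\mathfrak p))$ has exactly two thick subcategories, and the classification collapses onto the geometry of $\pi_{\mathrm{even}}(A)$. First I would reduce to the setting of \Cref{existencerf} and \Cref{ournilpthm}. Since $A$ need not contain a unit in degree two, I want residue fields indexed by the homogeneous primes of $R:=\pi_{\mathrm{even}}(A)$: one option is to pass to $\widetilde A:=A\otimes_{H\mathbb Q}KU_{\mathbb Q}$, which is rational, noetherian, has a unit in degree two, and has $\pi_0(\widetilde A)=R$, while tracking the residual $\mathbb G_m$-action coming from the grading; alternatively one repeats the constructions directly, using that every odd-degree homotopy class of a rational $\e{\infty}$-ring squares to zero, so $\pi_{\mathrm{odd}}(A)$ generates a nilpotent ideal of $\pi_*(A)$ and is invisible to the underlying space. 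Either way, I will use: for each homogeneous prime $\mathfrak p\subset R$ an $A$-algebra $\kappa(\mathfrak p)$ whose graded-field homotopy recovers the graded residue field of $R$ at $\mathfrak p$, together with the nilpotence statement that an element of $\pi_*$ of an $A$-ring spectrum is nilpotent iff it is so after $-\otimes_A\kappa(\mathfrak p)$ for all $\mathfrak p$. Write $X$ for the space of homogeneous primes of $R$; this is the underlying space of $(\spec R)/\mathbb G_m$, and it is noetherian.

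Next I set up support. For $M\in\md^\omega(A)$ put $\supp(M)=\{\mathfrak p\in X: M\otimes_A\kappa(\mathfrak p)\neq 0\}$. Graded Nakayama applied to the finite graded $R$-module $\pi_*(M)$ over the homogeneous localizations of $R$ shows $\supp(M)=V(\mathrm{Ann}_R\pi_*(M))\cap X$, so each $\supp(M)$ is closed, in particular specialization-closed; the long exact sequences give $\supp(M'\oplus M'')=\supp(M')\cup\supp(M'')$, $\supp(M[1])=\supp(M)$, subadditivity on cofibre sequences, and monotonicity under retracts, so for any specialization-closed $S\subseteq X$ the full subcategory $\mathcal C_S=\{M:\supp(M)\subseteq S\}$ is thick. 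The candidate bijection sends a thick $\mathcal C$ to $\supp(\mathcal C):=\bigcup_{M\in\mathcal C}\supp(M)$ and sends $S$ to $\mathcal C_S$. Surjectivity is the easy ``Koszul'' half: for a homogeneous ideal $I=(a_1,\dots,a_r)$ (finitely generated by the noetherian hypothesis), realize $a_i\in\pi_{|a_i|}(A)$ as a map $\Sigma^{|a_i|}A\to A$, let $A/a_i$ be its cofibre, and set $K_I=A/a_1\otimes_A\cdots\otimes_A A/a_r$; since over the graded field $\kappa(\mathfrak p)$ one has $\kappa(\mathfrak p)/a=0$ for $a$ invertible and $\kappa(\mathfrak p)/a\simeq\kappa(\mathfrak p)\oplus\Sigma^{|a|+1}\kappa(\mathfrak p)$ for $a\mapsto 0$, one gets $\supp(K_I)=V(I)\cap X$. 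Every specialization-closed $S$ is a union of such $V(I_\alpha)\cap X$, hence $S=\bigcup_\alpha\supp(K_{I_\alpha})$ is realized by the thick subcategory generated by the $K_{I_\alpha}$, so the two maps will be mutually inverse once I know $\mathcal C=\mathcal C_{\supp(\mathcal C)}$ for every thick $\mathcal C$.

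The remaining, essential point is: if $M,N\in\md^\omega(A)$ with $\supp(M)\subseteq\supp(N)$, then $M\in\mathrm{thick}(N)$. This is where \Cref{ournilpthm} enters. By Neeman--Thomason, $\mathrm{thick}(N)=\mathrm{Loc}(N)\cap\md^\omega(A)$, where $\mathrm{Loc}(N)$ is the localizing subcategory of $\md(A)$ generated by $N$; since $N$ is perfect, $\mathrm{Loc}(N)$ is a $\otimes$-ideal (every perfect module lies in $\mathrm{thick}(A)$), and the localization away from it is smashing, given by $-\otimes_AA''$ for an idempotent $\e{\infty}$-$A$-algebra $A''$ sitting in a cofibre sequence $A'\to A\to A''$ with $A'\in\mathrm{Loc}(N)$ and $\ker(-\otimes_AA'')=\mathrm{Loc}(N)$. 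Thus $M\in\mathrm{thick}(N)\iff M\otimes_AA''=0\iff$ the $A$-ring $B:=A''\otimes_A\mathrm{End}_A(M)\cong\mathrm{End}_{A''}(M\otimes_AA'')$ is zero. Applying \Cref{ournilpthm} to $1\in\pi_0(B)$, we have $B=0$ iff $B\otimes_A\kappa(\mathfrak p)=0$ for all $\mathfrak p$. Now $B\otimes_A\kappa(\mathfrak p)\simeq(A''\otimes_A\kappa(\mathfrak p))\otimes_{\kappa(\mathfrak p)}\mathrm{End}_{\kappa(\mathfrak p)}(M\otimes_A\kappa(\mathfrak p))$, and base change of the finite localization along $A\to\kappa(\mathfrak p)$ identifies $A''\otimes_A\kappa(\mathfrak p)$ with the finite localization of $\kappa(\mathfrak p)$ away from $N\otimes_A\kappa(\mathfrak p)$, which over a graded field is $\kappa(\mathfrak p)$ when $N\otimes_A\kappa(\mathfrak p)=0$ (i.e.\ $\mathfrak p\notin\supp(N)$) and $0$ when $N\otimes_A\kappa(\mathfrak p)\neq0$ (a nonzero module over a graded field generates everything). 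Hence $B\otimes_A\kappa(\mathfrak p)=0$ for all $\mathfrak p$ exactly when $\supp(M)\subseteq\supp(N)$, and in that case $M\otimes_AA''=0$, i.e.\ $M\in\mathrm{thick}(N)$. Finally, given a thick $\mathcal C$ and $M$ with $\supp(M)\subseteq\supp(\mathcal C)$: since $\supp(M)$ is closed in the noetherian space $X$ it is covered by finitely many $\supp(N_j)$ with $N_j\in\mathcal C$, so $\supp(M)\subseteq\supp\big(\bigoplus_jN_j\big)$ and $M\in\mathrm{thick}\big(\bigoplus_jN_j\big)\subseteq\mathcal C$; thus $\mathcal C=\mathcal C_{\supp(\mathcal C)}$, completing the classification.

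The part I expect to require the most care is twofold. First, propagating the residue-field and nilpotence package of \Cref{existencerf}--\Cref{ournilpthm} to a general noetherian $A$ while correctly indexing it by the homogeneous primes of $\pi_{\mathrm{even}}(A)$: this is exactly where the $\mathbb G_m$-quotient in the statement materializes, and it needs the bookkeeping relating $\spec\pi_*(A)$, its odd (nilpotent) part, and $\spec R$. Second, the compatibility in the last step of the smashing localization $A''$ with base change to $\kappa(\mathfrak p)$, together with the identification of finite localizations of a graded field. Once those are in place, the rest is the formal Hopkins--Smith--Neeman pattern, with \Cref{ournilpthm} doing the one piece of genuinely topological work.
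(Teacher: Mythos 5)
Your overall plan is sound and reaches the theorem, but it is worth recording how it differs from the paper's route. The paper first proves the $2$-periodic case (\Cref{2periodicthick}) and the general case (\Cref{thickwithoutperiodic}) by feeding the residue fields into the axiomatic Hopkins--Smith--Hovey--Palmieri--Strickland statement (\Cref{axiomaticarg}), whose internal mechanism is smash-nilpotence of maps detected by the fields; you instead prove the key implication $\supp(M)\subseteq\supp(N)\Rightarrow M\in\mathrm{thick}(N)$ directly, via Neeman--Thomason, the finite (smashing) localization $A''$ away from $N$, and an application of \Cref{ournilpthm} to the unit of $A''\otimes_A\mathrm{End}_A(M)$. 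That argument is correct: $\mathrm{Loc}(N)$ is a tensor ideal because $N$ is perfect, finite localization is smashing and compatible with base change along $A\to\kappa(\mathfrak p)$ (the point you rightly flag), and over a graded field the localization is $0$ or the identity according to whether $N\otimes_A\kappa(\mathfrak p)$ is nonzero. What it buys you is independence from the HPS axiomatic machinery at the cost of the localization formalism; the paper's citation of \Cref{axiomaticarg} is shorter but less self-contained. Your realization of closed supports by the Koszul-type objects $A/a_1\otimes_A\cdots\otimes_A A/a_r$, computed against the residue fields, is essentially the paper's step of forming $A/(x_1,\dots,x_n)$ (the paper instead invokes \Cref{radicial2} there), and the final reduction to finitely many $N_j$ via noetherianness is the same.

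Two steps in your sketch are asserted more quickly than they can be justified, and both are exactly where the paper does real work. First, the identification $\supp(M)=\{\mathfrak p: M\otimes_A\kappa(\mathfrak p)\neq 0\}=V(\mathrm{Ann}_R\,\pi_*(M))\cap X$ is not a consequence of graded Nakayama alone: $\kappa(\mathfrak p)$ is not flat over $A$, so $\kappa(\mathfrak p)_*(M)$ is not $\pi_*(M)\otimes_R k(\mathfrak p)$. The paper's \Cref{supportfield} proves the nontrivial direction by localizing, completing, reducing modulo generators of the maximal ideal, and then using that in the artinian case $A\to\kappa(\mathfrak p)$ detects nilpotence and hence cannot annihilate a nonzero perfect module; you need this (or an equivalent) argument. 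Second, the ``propagation'' you defer --- getting a nilpotence-detecting family indexed by the \emph{homogeneous} primes of $\pi_{\mathrm{even}}(A)$ after passing to $A[t_2^{\pm 1}]$ --- is not automatic, since the homogeneous primes do not exhaust $\spec\pi_0(A[t_2^{\pm 1}])$. The paper closes this gap in \Cref{homog_nilp} using \Cref{tensornonzeroring} (for an inhomogeneous $\mathfrak q$ with homogeneous part $\mathfrak p$ one has $\kappa(\mathfrak q)\otimes_A\kappa(\mathfrak p)\neq 0$, which rests on \Cref{nonzeroalg}) together with part (3) of \Cref{easynilp}. With those two ingredients supplied, your argument goes through.
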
 

In particular, we determine the \emph{spectrum} in the sense of Balmer
\cite{Bal05} of $\md^\omega(A)$ as the topological space associated to the
stack $( \spec \pi_{\mathrm{even}}(A))/\mathbb{G}_m$. 
It shows that the spectrum is determined in terms of $\pi_*(A)$,
i.e., the map of \cite{Balmer2} is an isomorphism. 

We will then apply these ideas to the computation of Galois groups,
which we introduced
in \cite{galgp} as an extension of Rognes's work \cite{rognes}. 
The use of residue fields in Galois theory goes back to Baker-Richter's work in
\cite{BR2}, which studied the Galois groups of Morava $E$-theories at odd
primes. 
We will show that the Galois theory of a noetherian rational $\e{\infty}$-ring 
is ``almost'' entirely algebraic. (The ``almost'' comes from, e.g., the possibility of adjoining roots of periodicity generators in degrees $2n, n>
1$.) We prove: 

\begin{theorem} 
\label{ourgaloistheorem}
If $A$ is a noetherian rational $\e{\infty}$-ring, then the Galois group of $A$
is the \'etale fundamental group of  the stack $\left(\spec
\pi_{\mathrm{even}}(A)\right)/\mathbb{G}_m$. \end{theorem}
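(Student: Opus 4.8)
The plan is to exhibit an equivalence, compatible with fibre functors, between the Galois category of finite covers of $A$ --- whose fundamental pro-group is, by definition, the Galois group of $A$ (see \cite{galgp}) --- and the category of finite \'etale covers of the stack $\mathcal{M} := (\spec \pi_{\mathrm{even}}(A))/\mathbb{G}_m$, whose fundamental group is $\pi_1^{\text{\'et}}(\mathcal{M})$. Here ``finite cover'' means a weakly \'etale, dualizable $\e{\infty}$-$A$-algebra, in the sense of \cite{galgp}. I would proceed in three steps.

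\emph{Step 1: reduction to a unit in degree two.} Set $A' := A \otimes_{H\mathbb{Q}} H\mathbb{Q}[\beta^{\pm 1}]$ with $|\beta| = 2$; this is again a rational, noetherian $\e{\infty}$-ring, now even periodic. Viewing $A'$ as a graded $\e{\infty}$-ring with $\beta$ of weight one, passage to the weight-zero part identifies graded $A'$-modules --- equivalently, quasi-coherent sheaves on $\spec(A')/\mathbb{G}_m$ --- with $A$-modules, since $\beta$ is invertible; thus $\md(A) \simeq \qcoh(\spec(A')/\mathbb{G}_m)$, and finite covers of $A$ correspond to $\mathbb{G}_m$-equivariant finite covers of $A'$. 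As $\pi_0(A') \cong \pi_{\mathrm{even}}(A)$ as graded rings, it therefore suffices to prove that $B \mapsto \pi_0(B)$ induces an equivalence from finite covers of $A'$ onto finite \'etale $\pi_0(A')$-algebras, $\mathbb{G}_m$-equivariantly. We also note that, by \Cref{ournilpthm} applied to $A'$, the residue fields $\kappa(\mathfrak{p})$ of \Cref{existencerf} detect nilpotence in $\md(A')$, so that a product of residue fields is a descendable $A'$-algebra; this is what ensures --- via \cite{galgp} --- that the Galois group is profinite and is genuinely computed by the category of finite covers.

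\emph{Step 2: rigidity for $A'$.} If $A' \to B$ is a finite cover, then $\pi_*(B)$ is a finitely generated projective $\pi_*(A')$-module with vanishing graded relative cotangent complex, i.e.\ a graded finite \'etale $\pi_*(A')$-algebra. Now $\pi_*(A') = \pi_0(A')[\beta^{\pm 1}]$ with $|\beta| = 2$, and here rationality enters: every homogeneous odd element of a graded finite \'etale $\pi_*(A')$-algebra $\Lambda$ squares to zero, hence is nilpotent, so $\Lambda_{\mathrm{red}}$ is concentrated in even degrees; since $\pi_*(A')$ is noetherian the finite \'etale site is unchanged by the nilpotent thickening $(\pi_*(A'))_{\mathrm{red}} \hookrightarrow \pi_*(A')$, and comparison with the reduced side forces $\Lambda \cong \Lambda_0[\beta^{\pm 1}] = \Lambda_0 \otimes_{\pi_0(A')} \pi_*(A')$ for a unique finite \'etale $\pi_0(A')$-algebra $\Lambda_0$. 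In particular $B$ is an \'etale $A'$-algebra in the sense of Lurie, with $\pi_0(B)$ finite \'etale over $\pi_0(A')$. By Lurie's topological invariance of the \'etale site (Lurie, \emph{Higher Algebra}, \S 7.5), the functor $B \mapsto \pi_0(B)$ is therefore an equivalence from finite covers of $A'$ onto finite \'etale $\pi_0(A')$-algebras; being manifestly functorial, it is $\mathbb{G}_m$-equivariant.

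\emph{Step 3: assembly.} Combining Steps 1 and 2 gives, compatibly with the residue-field fibre functors, an equivalence between finite covers of $A$ and $\mathbb{G}_m$-equivariant finite \'etale covers of $\spec\pi_{\mathrm{even}}(A)$, i.e.\ finite \'etale covers of $\mathcal{M}$; taking fundamental pro-groups yields that the Galois group of $A$ is $\pi_1^{\text{\'et}}(\mathcal{M})$. (Alternatively, for general noetherian $A$ one can bypass $A'$ at the level of the algebraic input by observing that the homogeneous ideal $I \subset \pi_*(A)$ generated by $\pi_{\mathrm{odd}}(A)$ is nilpotent --- it is finitely generated since $\pi_*(A)$ is noetherian, and any element of $\pi_{\mathrm{odd}}(A)\cdot\pi_{\mathrm{odd}}(A)$ lies in the augmentation ideal of the exterior $\mathbb{Q}$-subalgebra generated by finitely many odd elements, hence is nilpotent --- so that $(\pi_*(A))_{\mathrm{red}} = (\pi_{\mathrm{even}}(A))_{\mathrm{red}}$ and all the finite \'etale sites in play coincide with that of $\mathcal{M}$.) The main obstacle is the rigidity of Step 2 together with the $\mathbb{G}_m$-equivariant descent of Step 1: controlling the finite covers of a non-connective $\e{\infty}$-ring purely through its homotopy ring. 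This is precisely why the passage from $A$ to the even periodic $A'$ is made, so that Lurie's theorems and the residue-field technology of \Cref{existencerf} and \Cref{ournilpthm} become available in clean form.
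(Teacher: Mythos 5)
There is a genuine gap, and it sits exactly where the theorem's real content lies. In Step 2 you assert that if $A' \to B$ is a finite cover then $\pi_*(B)$ is a finitely generated projective $\pi_*(A')$-module (``with vanishing graded relative cotangent complex''), i.e.\ that finite covers of $A'$ are automatically algebraic. But a finite cover in the sense of \cite{galgp} is defined by a local-triviality/descent condition, not by flatness of homotopy groups, and for non-connective $\e{\infty}$-rings there is no cotangent-complex or deformation-theoretic argument that yields this for free. Indeed this assertion is false without the noetherian hypothesis: \Cref{strangegalois} produces a faithful $\mathbb{Z}/n$-Galois extension $A \to B$ of rational $\e{\infty}$-rings in which $\pi_0(A) \to \pi_0(B)$ is not \'etale and the Galois group of $A$ is strictly larger than $\pi_1^{\mathrm{et}}(\spec \pi_0 A)$. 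So the statement you take as the starting point of Step 2 is precisely \Cref{galep}, and proving it is the bulk of the work: the paper localizes and completes, uses the tower $A//(x_1^m,\dots,x_n^m)$ with inverse limit $A$ and the resulting embedding $\md^\omega(A) \subset \varprojlim_m \md^\omega(A_m)$ to reduce to artinian $\pi_0$, and in the artinian case (\Cref{evenpdicartiniangal}) runs the residue-field construction while tracking Galois groups via the invariance results \Cref{nilpinv} (killing nilpotent degree-zero classes does not change the Galois group) and \Cref{1surj} (coning off degree $-1$ classes induces surjections), ultimately bounding the Galois group by that of $k[t_2^{\pm 1}]$. None of this machinery, nor any substitute for it, appears in your proposal; the residue fields are mentioned only to assert profiniteness of the Galois group, not to control finite covers.

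Two smaller points. First, $\pi_*(A') \neq \pi_0(A')[\beta^{\pm 1}]$ in general: $\pi_*(A') \simeq \pi_*(A)[t^{\pm 1}]$ still contains $\pi_{\mathrm{odd}}(A)$ in odd degrees, so the graded-algebra analysis in Step 2 needs the reduction-mod-odd-nilpotents argument from the start (this part is repairable, and parallels the paper's Baker--Richter discussion and \Cref{grep}). Second, your claim that a product of the residue fields is a descendable $A'$-algebra is unjustified: the paper proves only that they detect nilpotence, and explicitly leaves open whether a Bousfield-type decomposition holds; fortunately the Galois formalism of \cite{galgp} does not require it. Your Step 1 (encoding the general case as $\mathbb{G}_m$-equivariant data over $A[t_2^{\pm 1}]$) is in the same spirit as the paper's descent along $\mathbb{Q} \to \mathbb{Q}[t_2^{\pm 1}]$ and is fine in outline, but without a proof of Step 2 the argument does not establish the theorem.
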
 

We note that the Galois group depends on the choice of a basepoint is not
truly ``canonical''; however, there is a canonical equivalence of the
associated Galois theories, or equivalently of the Galois \emph{groupoids}.

Finally, we will study the Picard groups of noetherian rational
$\e{\infty}$-rings. Here our results are much less conclusive, but we prove: 

\begin{theorem} 
If $A$ is a noetherian rational $\e{\infty}$-ring, then the cokernel of
the natural map $\mathrm{Pic}( \pi_*(A)) \to \mathrm{Pic}(A)$ (see
Construction~\ref{algpiccons}) is a torsion-free abelian group. 
\end{theorem}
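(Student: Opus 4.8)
The plan is to show that any torsion element of the cokernel is trivial by exhibiting the ``non-algebraic'' part of $\mathrm{Pic}(A)$ as a subquotient of cohomology groups which, because $A$ is rational, are $\mathbb{Q}$-vector spaces and hence torsion-free. Write $\rho\colon \mathrm{Pic}(\pi_*(A)) \to \mathrm{Pic}(A)$ for the map of Construction~\ref{algpiccons} and $C$ for its cokernel; the assertion that $C$ is torsion-free is equivalent to the statement that if $L$ is an invertible $A$-module with $L^{\otimes_A n}$ in the image of $\rho$ for some $n \geq 1$, then $L$ itself lies in the image of $\rho$. So I would fix such an $L$ and try to prove it is algebraic.

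The first and main input is a descent spectral sequence for the Picard spectrum. The residue fields $\kappa(\mathfrak{p})$ of \thref{existencerf} form a conservative family of field-like objects on $\md(A)$ — this is essentially the content of \thref{ournilpthm} — and, after reorganizing $\spec \pi_0(A)$ into a finite stratification by generic points and their specializations, one can promote conservativity into a descent (``affineness'') statement identifying $\md(A)$, up to the subtleties of a non-formal structure sheaf, with quasi-coherent sheaves on the stack $X := (\spec \pi_{\mathrm{even}}(A))/\mathbb{G}_m$ equipped with a sheaf $\otop$ of $\e{\infty}$-rings refining $\pi_*(A)$. Applying the Picard functor $\mathfrak{pic}$ and taking global sections yields a spectral sequence for $\pi_*\mathfrak{pic}(A)$ whose $E_2$-page consists of cohomology groups $H^s(X; \pi_t\mathfrak{pic}(\otop))$, with $\pi_0\mathfrak{pic}(\otop)$ the (locally constant) $\mathbb{Z}/2$ of shifts, $\pi_1\mathfrak{pic}(\otop) = \mathbb{G}_m$ the sheaf of units, and $\pi_t\mathfrak{pic}(\otop) = \pi_{t-1}\otop$ a quasi-coherent $\mathcal{O}_X$-module for $t \geq 2$. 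Since $A$ is noetherian rational, $X$ is a noetherian stack over $\mathbb{Q}$ of bounded cohomological dimension (as $\mathbb{G}_m$ is linearly reductive over $\mathbb{Q}$, only the cohomological dimension of $\spec \pi_{\mathrm{even}}(A)$ enters), so the spectral sequence converges and endows $\mathrm{Pic}(A)$ with a finite, exhaustive filtration.

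Next I would match the image of $\rho$ with the low-filtration part. Every invertible $A$-module is Zariski-locally on $\spec \pi_{\mathrm{even}}(A)$ equivalent to a shift of $A$: over a graded-local ring spectrum, base change to the residue field constructed in \thref{existencerf} lands in $\mathrm{Pic} = \mathbb{Z}/2$, and a Nakayama/minimal-model argument — using conservativity of this base change on perfect modules — lifts a shift of the residue field back to a shift of the unit. Hence $L$ is ``locally standard'', so its shift-parity class in cohomological degree $0$ and its $\mathbb{G}_m$-torsor class in cohomological degree $1$ are realized by algebraic invertible modules (namely $\Sigma A$ on the clopen pieces of $\spec \pi_0(A)$, and pullbacks of line bundles on $X$); conversely, a locally standard $L$ for which both classes are trivial has $\pi_*(L) \cong \pi_*(A)$ — the relevant Tor spectral sequence degenerates since locally free graded modules are flat — so a generator of $\pi_0(L)$ produces an equivalence $A \simeq L$. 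Thus the image of $\rho$ is exactly the part of $\mathrm{Pic}(A)$ detected in cohomological degrees $\leq 1$, and $C$ is an iterated extension of subquotients of the higher graded pieces $E_\infty^{s,s} \subseteq H^s(X; \pi_{s-1}\otop)$ for $s \geq 2$. Each of these is the cohomology of a quasi-coherent sheaf on a stack over $\mathbb{Q}$, hence a $\mathbb{Q}$-vector space, and an iterated extension of $\mathbb{Q}$-vector spaces is a $\mathbb{Q}$-vector space; in particular $C$ is torsion-free.

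The main obstacle is the passage from the nilpotence theorem to an honest descent spectral sequence converging to $\pi_*\mathfrak{pic}(A)$: one must extract enough descendability of $A$ along the residue fields — delicate because there are infinitely many primes, so one works with a finite stratification rather than the full product $\prod_{\mathfrak{p}} \kappa(\mathfrak{p})$ — and one must carry along the non-formality of $A$, since the structure sheaf $\otop$ is not the formal one built from $\pi_*(A)$. A secondary obstacle is the comparison step: verifying that ``locally standard'' is detected precisely by the degree-$0$ parity and the degree-$1$ torsor and that both are hit by $\rho$ (here lies the mild subtlety alluded to in the introduction, namely that over a point one sees $\mathbb{Z}/2$ rather than $\mathbb{Z}$ in cohomological degree $0$). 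Granting these two points, the rationality argument that delivers torsion-freeness is formal.
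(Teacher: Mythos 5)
There are two genuine gaps, and each by itself is fatal to the argument as written. First, the descent input you need does not exist (and is not provided by \Cref{ournilpthm}): the residue fields $\kappa(\mathfrak{p})$ detect nilpotence, but they are not perfect $A$-modules and there is no descendability of $A$ along them; the paper even remarks explicitly that it does not know whether a Bousfield-type decomposition holds, i.e.\ whether a module killed by all $\kappa(\mathfrak{p})$ must vanish. So the ``affineness'' identification of $\md(A)$ with sheaves over $(\spec \pi_{\mathrm{even}}(A))/\mathbb{G}_m$ refined by a structure sheaf, and the resulting convergent spectral sequence $H^s(X;\pi_t\mathfrak{pic}(\otop))\Rightarrow \pi_{t-s}\mathfrak{pic}(A)$, is precisely the unproved step on which everything else rests. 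The only descent statements actually available in this setting are along the explicit maps $A \to A//x$ (for $x$ nilpotent, \Cref{quotdesc}) and $A \to A//y$ (for $y$ of odd degree, \Cref{quotdegminusone}), and this is what the paper's proof uses: reduce by faithfully flat base change to the $2$-periodic, complete local case, run the $\mathfrak{pic}$-descent spectral sequence only along these small descendable maps (\Cref{quotdegminusonepic}, \Cref{modnilpotentinjpic}), pass to the colimit giving the residue field (\Cref{countablepic}, \Cref{torsionart2periodicpic}), and finally control $\mathrm{Pic}(A)$ through the tower $A//(x_1^m,\dots,x_n^m)$ via perfectness of invertible modules and a Mittag-Leffler argument.

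Second, your comparison step is false, not merely delicate: it is not true that an invertible module over a noetherian rational $\e{\infty}$-ring with (graded-)local homotopy is a shift of the unit. The paper's own \Cref{badpic} is a counterexample: for $A = \sym^*\mathbb{Q}[-1]\otimes \mathbb{Q}[\epsilon]/\epsilon^2$ (noetherian, with $\pi_0$ local artinian), the invertible modules $M_r$, $r\neq 0$, satisfy $M_r\otimes_A\kappa(\mathfrak{m})\simeq \kappa(\mathfrak{m})$ up to shift, yet $\pi_*(M_r)$ is not flat over $\pi_*(A)$, so $M_r$ is not a shift of $A$ (and the same persists after inverting a degree-two class). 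Conservativity of $\kappa(\mathfrak{m})$-base change on perfect modules gives nonvanishing, but there is no Nakayama-type lifting of a generator, because $A\to\kappa(\mathfrak{m})$ is an infinite cell attachment, not a nilpotent surjection. Consequently your identification ``image of $\mathrm{Pic}(\pi_*(A))$ $=$ filtration $\leq 1$'' cannot hold in the form stated: the non-algebraic (torsion-free) classes already appear over a local, even artinian, base, and in a Zariski-local-to-global picture the higher quasi-coherent cohomology over the affine $\spec\pi_0(A)$ would vanish, so your filtration would wrongly force $\mathrm{Pic}$ to be algebraic up to $\mathbb{Z}/2$. The correct mechanism in the paper is different: one does not show the cokernel is built from $H^{\geq 2}$ of quasi-coherent sheaves, but rather that all \emph{torsion} in $\mathrm{Pic}(A)$ is detected, via towers of artinian quotients and colimits toward $k[t_2^{\pm 1}]$, by the $\mathbb{Z}/2$ of shifts, the point being that the filtration-one (units) contributions in the relevant descent spectral sequences are $\mathbb{Q}$-vector spaces because the unipotent part of the units of a local artinian $\mathbb{Q}$-algebra is uniquely divisible.
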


Usually, results such as \Cref{thicksubcat} and \Cref{ourgaloistheorem} are proved using
strong homological assumptions on $\pi_*(A)$, e.g., that it is a regular ring. 
We will be able to get away with much weaker hypotheses on $\pi_*(A)$ (i.e., 
nothing close to regularity) because, over characteristic zero,
$\e{\infty}$-rings are  simpler. They have a more
algebraic feel which gives one
a wider range of techniques, and they have been studied in detail starting
with Quillen's work on rational homotopy theory \cite{quillenrat}. 
In particular, there are two basic coincidences that will be used in this paper. 
\begin{enumerate}
\item The free $\e{\infty}$-ring on a generator in degree zero is equivalent
to the suspension spectrum $\Sigma^\infty_+ \mathbb{Z}_{\geq 0}$. 
In particular, as a result, it is possible to quotient an $\e{\infty}$-ring by an element in
degree zero to get a new $\e{\infty}$-ring.
\item The free $\e{\infty}$-ring on a generator in degree $-1$ is equivalent to 
cochains on $S^1$. 
This has important descent-theoretic consequences and enables one to compare
modules over this $\e{\infty}$-ring with local systems on the circle $S^1$.
\end{enumerate}

Both these conditions are  specific to the rational numbers. 
They fail away from characteristic zero, because of
the existence of {power operations} \cite{homoiter, Hinfty}.

The above theorems rely crucially on the noetherianness hypotheses. We will
discuss various counterexamples in \S 8. These counterexamples are related to
classical purity questions 
for Picard groups and \'etale fundamental groups for local rings.
As a consequence, we produce new examples of non-algebraic Galois extensions of ring
spectra in the sense of Rognes \cite{rognes}, which seem to be of interest in itself. 

\begin{theorem} 
Given $n > 1$, 
there exists a rational $\e{\infty}$-ring $A$ with $\pi_0(A) = 
\mathbb{C}[x^n, y^{n-1}x , y^{n-2}x^2, \dots, y^n]$, $\pi_i(A) = 0$ for $i >
0$, and such that the Galois group of $A$ is $\mathbb{Z}/n$. 
\end{theorem} 

As the \'etale fundamental group of 
$\spec \mathbb{C}[x^n, y^{n-1}x , y^{n-2}x^2, \dots, y^n]$ is trivial, this Galois group is not
algebraic. To our knowledge, this is the first non-algebraic example of a
Galois extension of \emph{rational} ring spectra to be observed. 
We will describe explicitly the $\mathbb{Z}/n$-Galois extension in \S 8.

\subsection*{Organization} This paper is organized as follows. In \S 2, we
analyze the operation of attaching a 1-cell in a rational $\e{\infty}$-ring in
detail. In \S 3, we do the same for the operation of attaching a 0-cell, via a
comparison between modules over the cochain $\e{\infty}$-ring $C^*(S^1 ;
\mathbb{Q})$ and local systems on $S^1$. 
By reducing to the case where one has a unit in degree $2$, we will be able to
obtain all our constructions by only attaching cells in these degrees. 
The main technical results (existence
of the residue fields and the nilpotence theorem) are proved in \S 4. \S 5, \S
6, and \S 7 contain 
the applications to thick subcategories, Galois groups, and Picard groups,
respectively. Finally, \S 8 discusses various non-noetherian counterexamples. 

\subsection*{Notation} In this paper, we will adopt the following notational
conventions. 
We will frequently identify abelian groups with their Eilenberg-MacLane
spectra without additional notation. 
The letters $R, S, T, ...$ will refer to ordinary (discrete)
rings. The subscript ${}_*$ will refer to a grading.
We will let
$\clg$ denote the $\infty$-category of $\e{\infty}$-rings. 
The letters $A, B, C$ will refer to $\e{\infty}$-rings.
Given an
$\e{\infty}$-ring $A$, we let $\md(A)$ denote the $\infty$-category of
$A$-modules and $\md^\omega(A) \subset \md(A)$ the full subcategory spanned by
the perfect $A$-modules.
If $X$ is a space and $A$ an $\e{\infty}$-ring, we let $C^*(X; A)$ denote the
cochain $\e{\infty}$-algebra on $X$ with values in $A$, often also denoted $A^X$ or
$\mathrm{Fun}(X_+, A)$.
Finally, if $A \in \clg$ is a \emph{rational} $\e{\infty}$-ring, we will write $A[t_2^{\pm 1}]$ for the free
$\e{\infty}$-$A$-algebra on an invertible degree two generator. 

Although we use the language of $\infty$-categories and higher algebra
\cite{higheralg} in this paper, 
we note that 
everything can be carried out in the world of model categories. 
The theory of $\mathbf{E}_\infty$-ring spectra and modules over them was
originally developed \cite{EKMM} 
in the world of model categories. In characteristic zero, the issue
simplifies further as one can work with commutative differential graded algebras.
\subsection*{Acknowledgments} I am grateful to Bhargav Bhatt and Jacob Lurie for helpful
discussions related to the subject of this paper, and the referee for many
detailed comments. The author was supported by the NSF Graduate Fellowship
under grant DGE-114415.

\section{Degree zero elements of rational $\e{\infty}$-rings}

In this section, we 
describe the first set of the basic characteristic zero techniques needed for this paper. 
In particular, we discuss  the ``coincidence''  of $\e{\infty}$-rings that 
$\Sigma^\infty_+ \mathbb{Z}_{\geq 0}$ is free on a degree zero class
and thus analyze the operation of attaching cells in degree one. The main
result of the section (\Cref{radicial2}) controls the behavior on homotopy
rings of attaching cells in degree one. We also prove a version
(\Cref{secresfld}) of the
classical result that a complete local ring with residue field of characteristic
zero contains a copy  of its residue field. 

Some of the more refined results
require the noetherianness hypothesis that will be crucial for most of the main
results of this paper. 

\begin{definition}
We say that a rational $\e{\infty}$-ring $A$ is \emph{noetherian} if:
\begin{enumerate}
\item  
The commutative ring
$\pi_{\mathrm{even}}(A)$ is
noetherian.
\item  The $\pi_{\mathrm{even}}(A)$-module $\pi_{\mathrm{odd}}(A)$ is finitely
generated. 
\end{enumerate}
\end{definition}

The ``noetherian'' hypothesis ensures that certain categorical constructions
one may perform on $A$ affect the homotopy groups of $A$ in a reasonable
manner and, as such, will be indispensable to this paper. 

\newtheorem{warning}[theorem]{Warning}
\begin{warning}
The noetherian condition is not a purely categorical one. For instance, a finitely
presented $\e{\infty}$-algebra over a noetherian $\e{\infty}$-ring (even
$\mathbb{Q}$) need not be noetherian, i.e., the analog of Hilbert's basis
theorem fails. See \Cref{bigcompact} for an example. 
\end{warning}

\subsection{Cofibers of degree zero elements}
Let $A$ be an $\e{\infty}$-ring and let $x \in \pi_k(A)$, defining a map of
$A$-modules $\Sigma^k A \stackrel{x}{\to} A$. 

\begin{definition}
We will write $A/x$ for the cofiber of
this map $x\colon \Sigma^kA \to A$. \end{definition}

One wants to think of $A/x$ as a homotopy-theoretic ``quotient'' of
$A$ by the ``ideal'' generated by $x$ and, as in algebra, turn this into an
$\e{\infty}$-ring under $A$. 
There is, in general, no reason to expect this to be possible (or canonical in
any way). 
\begin{example}
The sphere $S^0$ is the most basic example of an $\e{\infty}$-ring, but 
the Moore spectrum $S^0/2$ is not 
even a ring spectrum up to homotopy. 
\end{example}
The obstructions to multiplicative structures have been discussed, for example, in
\cite{oka, productsMU}. Some further obstructions to \emph{structured} multiplications,
via the theory of power operations, are discussed in \cite{MNN}. 

Suppose first that $k = 0$.
To understand the failure of such quotients to be ring spectra, 
recall how the quotient is constructed in classical commutative algebra. Let
$R$ be a (classical) commutative ring, and fix $x \in R$. The (classical)
quotient  $R/(x)$ is the pushout of the diagram of commutative rings
\[ \xymatrix{
\mathbb{Z}[t] \ar[d]^{x \mapsto t} \ar[r]^{t \mapsto 0} &  \mathbb{Z} \ar[d] \\
R \ar[r] &  R/(x)
}.\]
Here $\mathbb{Z}[t]$ is the free commutative ring on a generator $t$, and
forming the pushout $R/(x)$ as above amounts to setting $x = 0$. 

In homotopy theory, one can make a similar construction, which has been
discussed  in, for example, \cite{szymik}. 

\begin{definition} \label{modx0}
There is a free
$\e{\infty}$-ring on a single generator, denoted $S^0\left\{t\right\}$, whose
underlying spectrum is given by 
\[ S^0 \left\{t\right\} \simeq \bigoplus_{n \geq 0} \Sigma^\infty_+ B \Sigma_n,\]
as $\left\{\Sigma_n\right\}_{n \geq 0}$ ranges over the symmetric groups. 
Given an $\e{\infty}$-ring $A$ and an element $x \in \pi_0 A$, we
obtain a map (defined up to homotopy) of $\e{\infty}$-rings $S^0\left\{t\right\} \to A$ sending the
tautological class $t \in \pi_0 S^0\left\{t\right\}$ to $x$, by the universal property: the space of maps of $\e{\infty}$-rings
$S^0\left\{t\right\} \to A$ is precisely $\Omega^\infty A$. 
In particular, we can form a pushout square
in $\clg$,
\[ \xymatrix{
S^0\left\{t\right\} \ar[d]^{t \mapsto x}  \ar[r]^{t \mapsto 0} &  S^0 \ar[d] \\
A \ar[r] &  A'
},\]
where $A'$ is called the \emph{free $A$-algebra with $x =0$.} 
Following Szymik \cite{szymik}, we will write $A' = A//x$. 
 
Given
an $\e{\infty}$-$A$-algebra $A''$, the space\footnote{We write $\clg_{A/}$ for
the $\infty$-category of $\e{\infty}$-$A$-algebras and $\hom_{A/}(\cdot,
\cdot)$ for mapping spaces here.} $\hom_{A/}( A', A'')$ is the space
of nullhomotopies of $x$ in $A''$ (which is empty unless $x $ maps to zero in
$\pi_0 A''$, and in this case is $\Omega^{\infty + 1} A''$). 
\end{definition}

For future reference, it will be convenient to have the following more general definition.
\begin{definition} \label{modxgeneral}
For $X$ a spectrum, we write $\sym^*(X)$ for the free $\e{\infty}$-ring on
$X$, so that $\sym^*(X) \simeq \bigoplus_{n \geq 0} (X^{\otimes n})_{h
\Sigma_n}$. If $A$ is an $\e{\infty}$-ring and $x \in \pi_k A$ is an element,
we denote by $A//x$ the pushout
\[ \xymatrix{
\sym^*(S^k) \ar[d]^0 \ar[r]^x &  A \ar[d] \\
S^0 \ar[r] &  A//x
},\]
where the map $\sym^*(S^k ) \to A$ is determined by the map $x\colon S^k \to A$,
and where $\sym^*(S^k) \to S^0$ is determined by $0\colon S^k \to S^0$.
We will call $A//x$ the \emph{free $\e{\infty}$-$A$-algebra with $x = 0$.} 
Given a sequence of elements $x_1, \dots, x_n \in \pi_*(A)$, we will write
$A//(x_1, \dots, x_n)$ for the iterated quotient $\left(\dots(A//x_1)//x_2)\dots
// x_{n-1} )\dots) \right) //x_n$.
\end{definition}

We return to the case $k = 0$.
In
general, if $x \in \pi_0 A$ is fixed, then 
\Cref{modx0}
gives 
\[ A//x  \simeq A \otimes_{S^0\left\{t\right\}} S^0, \]
since pushouts of $\e{\infty}$-rings are relative tensor products. 
This is usually very different, as an $A$-module, from $A/x$. 
For example, the free $\e{\infty}$-ring with $p^n = 0$ is not 
$S^0/p^n$. From the ``chromatic'' point of view, it is actually invisible: its
$E_r$-localization  vanishes for each $r$, by the main result of \cite{MNN}. 

\begin{remark}
In fact, the $S^0\left\{t\right\}$-module $S^0$ is quite complicated, and is
not, for example, perfect. 
For instance, if we worked over $\mathbb{F}_2$ rather than $S^0$, then
$\mathbb{F}_2\left\{t\right\}$ has homotopy groups given by a polynomial ring
on the tautological class $t$ and certain admissible monomials  in the
Dyer-Lashof algebra applied to $t$ (\cite{homoiter}), so that $\mathbb{F}_2$ is an infinite
quotient of $\mathbb{F}_2\left\{t\right\}$ by a regular sequence of
polynomial generators. 
\end{remark}

However, there is another $\e{\infty}$-ring which is better behaved in this
regard, and which \emph{will} enable us to place $\e{\infty}$-structures on
quotients in certain cases. Recall that the $\e{\infty}$-ring $S^0\left\{t\right\}$ is obtained from the free
$\e{\infty}$-space on a single generator by applying $\Sigma^\infty_+$. This
$\e{\infty}$-space is  the free symmetric monoidal
category on one object: the groupoid of finite sets and isomorphisms between
them, or topologically $\bigsqcup_{n \geq 0} B \Sigma_n$. 

\begin{definition}
We can apply $\Sigma^\infty_+$ instead to  the
symmetric monoidal groupoid $\mathbb{Z}_{\geq 0}$, which 
has objects given by the natural numbers (under addition) and no nontrivial isomorphisms. 
The resulting $\e{\infty}$-ring $\Sigma^\infty_+ \mathbb{Z}_{\geq 0}$, the
``monoid algebra''  of the natural numbers (as studied, for example, in \cite{ABGHR}), 
will be written $S^0[t]$ since its homotopy groups actually are given by $(\pi_*
S^0)[t]$. More generally, we will write $A[t]$ for the $\e{\infty}$-ring $A \otimes \Sigma^\infty_+
\mathbb{Z}_{\geq 0}$, if $A$ is any $\e{\infty}$-ring. 
\end{definition}

\begin{cons}
\label{modxeinfinity}
Now let $A$ be an $\e{\infty}$-ring, and let $S^0\left\{t\right\} \to A$ be a
map classifying an element $x \in \pi_0(A)$. 
Suppose that we have a factorization in the $\infty$-category $\einf$
\[ \xymatrix{
S^0\left\{t\right\} \ar[d]  \ar[r]^x &  A \\
S^0[t] \ar@{-->}[ru]
}.\]
In this case, we can form the relative tensor product  $A \otimes_{S^0[t]} S^0
$ (using the map $S^0[t] \to S^0$ which sends $t \mapsto 0$), as an
$\e{\infty}$-ring. The cofiber
sequence of $S^0[t]$-modules
\[ S^0[t] \stackrel{t}{\to} S^0[t] \to S^0,  \]
shows that this relative tensor product, as 
an $A$-module spectrum, is actually $A/x$. 
\end{cons}

In other words, by the universal
property of the monoid algebra,
if there exists a factorization in the diagram
 of $\e{\infty}$-spaces
 \[ \xymatrix{
\bigsqcup_{n \geq 0} B \Sigma_n  \ar[d] \ar[r]^x & \Omega^\infty A \\
\mathbb{Z}_{\geq 0} \ar@{-->}[ru]
},\]
where $\Omega^\infty A$ is given the \emph{multiplicative} $\e{\infty}$-structure, then 
we can place a \emph{natural} $\e{\infty}$-structure on $A/x$.
\begin{remark}
Let $X$ be an $\e{\infty}$-space and let $x \in \pi_0 X$, classified by a map
of $\e{\infty}$-spaces $\bigsqcup_{n \geq 0} B \Sigma_n \to X$.  
If this map admits a factorization over $\mathbb{Z}_{\geq 0}$, then 
$x$ has been called by Lurie a ``strictly commutative'' element of $X$. 
Construction~\ref{modxeinfinity} shows that, while arbitrary cofibers $A/x$ need not admit
$\e{\infty}$-structures, one can find an $\e{\infty}$-structure if $x$ is
strictly commutative. 
\end{remark}

Unfortunately, in general, describing maps out of $S^0[t]$ is difficult, since
$\mathbb{Z}_{\geq 0}$ does not admit a simple presentation as an
$\e{\infty}$-space.  In \emph{characteristic zero}, i.e., over
$\mathbb{Q}$, the natural map
\[ \mathbb{Q}\left\{t\right\}  \to \mathbb{Q}[t], \]
becomes an equivalence of $\e{\infty}$-rings, because the
maps $(B \Sigma_n)_+ \to S^0$ are rational equivalences. 
In particular, given any rational $\e{\infty}$-ring $A$, and an element $x \in
\pi_0(A)$, we can obtain a map
\[ \mathbb{Q}[t] \to A, \quad t \mapsto x,  \]
and we can form the relative tensor product $A/x \simeq A
\otimes_{\mathbb{Q}[t]} \mathbb{Q}$ as an $\e{\infty}$-ring. 
This process can equivalently be described as attaching a 1-cell to kill the element $x \in
\pi_0 A$, i.e., as forming $A//x$. 
We may summarize these observations in the following proposition.

\begin{proposition} 
\label{modxmodx}
Let $A$ be a rational $\e{\infty}$-ring and let $x \in \pi_0(A)$. Then  $A//x
\in \clg_{A/}$
 has as underlying $A$-module $A/x$.
In particular, we may make $A/x$ into an $\e{\infty}$-$A$-algebra. 
\end{proposition}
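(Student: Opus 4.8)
The plan is to identify $A//x$ with a relative tensor product over the polynomial $\e{\infty}$-ring $\mathbb{Q}[t]$ and then read off its underlying $A$-module from an explicit free resolution of $\mathbb{Q}$ over $\mathbb{Q}[t]$. By \Cref{modx0}, $A//x$ is the pushout in $\clg$ of $S^0 \leftarrow S^0\{t\} \to A$, and since pushouts of $\e{\infty}$-rings are relative tensor products this is $A \otimes_{S^0\{t\}} S^0$. Because $A$ is rational, this tensor product only sees the rationalization of $S^0\{t\}$, so the first step is to pin down $\mathbb{Q}\{t\} := S^0\{t\} \otimes \mathbb{Q}$.

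For that step I would invoke the splitting $S^0\{t\} \simeq \bigoplus_{n \geq 0} \Sigma^\infty_+ B\Sigma_n$ together with the fact that for any finite group $G$ the map $\Sigma^\infty_+ BG \to S^0$ is a rational equivalence (a transfer argument, or simply $H_*(BG;\mathbb{Q}) = \mathbb{Q}$ concentrated in degree $0$). Hence $\mathbb{Q}\{t\} \simeq \bigoplus_{n \geq 0} \bigl(\Sigma^\infty_+ B\Sigma_n \otimes \mathbb{Q}\bigr) \simeq \bigoplus_{n \geq 0} H\mathbb{Q}$, with the $n$-th summand carrying the $n$-th power of the tautological degree-zero class; this is exactly the monoid algebra $\mathbb{Q}[t] = \Sigma^\infty_+ \mathbb{Z}_{\geq 0} \otimes \mathbb{Q}$, whose homotopy is the ordinary polynomial ring $\mathbb{Q}[t]$ on a class in degree $0$. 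This is precisely the identification already asserted in the discussion preceding the proposition, so I would simply cite it.

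With that in hand, the element $x \in \pi_0(A)$ determines a map of $\e{\infty}$-rings $\mathbb{Q}[t] \to A$, $t \mapsto x$, and $A//x \simeq A \otimes_{\mathbb{Q}\{t\}} \mathbb{Q} \simeq A \otimes_{\mathbb{Q}[t]} \mathbb{Q}$ in $\clg_{A/}$, where $\mathbb{Q}$ is regarded as a $\mathbb{Q}[t]$-algebra via $t \mapsto 0$. Now I would use the cofiber sequence of $\mathbb{Q}[t]$-modules
\[ \mathbb{Q}[t] \xrightarrow{\ t\ } \mathbb{Q}[t] \longrightarrow \mathbb{Q}, \]
which is a length-one free resolution of $\mathbb{Q}$ since $t$ is a non-zero-divisor. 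Applying $A \otimes_{\mathbb{Q}[t]} (-)$ — which on underlying $A$-modules agrees with the relative tensor product of $\e{\infty}$-rings, because the forgetful functor $\clg_{A/} \to \md(A)$ preserves the relevant geometric realization (bar construction) — yields the cofiber sequence of $A$-modules $A \xrightarrow{x} A \to A \otimes_{\mathbb{Q}[t]} \mathbb{Q}$, exhibiting the underlying $A$-module of $A//x$ as $A/x$. The assertion that $A/x$ thereby acquires an $\e{\infty}$-$A$-algebra structure is then immediate.

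The main — indeed essentially the only — obstacle is the first step: the input that $\Sigma^\infty_+ B\Sigma_n$ is rationally trivial and, more sharply, the book-keeping that upgrades this to an equivalence $\mathbb{Q}\{t\} \simeq \mathbb{Q}[t]$ of $\e{\infty}$-rings rather than merely on homotopy groups. Everything after that (rewriting the $\clg$-pushout as a relative tensor product, and the compatibility of that operation with the forgetful functor to $A$-modules) is formal.
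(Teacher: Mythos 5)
Your argument is correct and is essentially the paper's own: the paper likewise identifies $\mathbb{Q}\{t\} \simeq \mathbb{Q}[t]$ using the natural map (an equivalence since each $(B\Sigma_n)_+ \to S^0$ is a rational equivalence), rewrites $A//x$ as $A \otimes_{\mathbb{Q}[t]} \mathbb{Q}$, and reads off the underlying $A$-module from the cofiber sequence $\mathbb{Q}[t] \xrightarrow{t} \mathbb{Q}[t] \to \mathbb{Q}$, exactly as in Construction~\ref{modxeinfinity}.
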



\label{sec:quotient0}

It is similarly possible to quotient by \emph{even
degree} elements of a rational $\e{\infty}$-ring, as we show below.  In fact, we did not strictly need the
discussion of ``strict  commutativity'' for the present paper, but included it
for its intrinsic interest (as it  becomes more relevant away from
characteristic zero). 

\begin{proposition} 
\label{modxmodxgr}
Let $A$ be a rational $\e{\infty}$-ring and let $x \in \pi_n(A)$. Suppose $n$
is an even integer. Then $A//x \in \clg_{A/}$ has underlying $A$-module $A/x$.
\end{proposition}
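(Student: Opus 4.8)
The plan is to reduce the even-degree case to the degree-zero case already handled in \Cref{modxmodx}, using the hypothesis (implicit in the ambient discussion) that we may freely adjoin an invertible degree-two element. Concretely, given $x \in \pi_n(A)$ with $n$ even, write $n = 2m$ and consider the free $\e{\infty}$-$A$-algebra $A[t_2^{\pm 1}]$ on an invertible degree-two generator, as in the Notation section. Over this ring, $x$ becomes the degree-zero element $t_2^{-m} x \in \pi_0(A[t_2^{\pm 1}])$, and $\Cref{modxmodx}$ tells us that $A[t_2^{\pm 1}]//(t_2^{-m}x)$ is an $\e{\infty}$-ring under $A[t_2^{\pm 1}]$ whose underlying module is the cofiber $A[t_2^{\pm 1}]/(t_2^{-m}x)$. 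First I would record that this cofiber is $(A/x)[t_2^{\pm 1}]$ as an $A$-module, since $t_2^{-m}$ is a unit and multiplication by $t_2^{-m}x$ agrees with the map induced by $x$ up to the periodicity equivalence $\Sigma^{2m} A[t_2^{\pm 1}] \simeq A[t_2^{\pm 1}]$.

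Second, I would construct $A//x$ itself not via the sphere but directly over $\mathbb{Q}$, mimicking \Cref{modxmodxgr}'s statement: since $A$ is rational, the free $\e{\infty}$-$A$-algebra on a degree-$n$ class $\sym^*_A(\Sigma^n A)$ receives the map classifying $x$, and we set $A//x := A \otimes_{\sym^*_A(\Sigma^n A)} A$ where the second map kills the generator. The content to be proven is that this relative tensor product has underlying $A$-module $A/x$. The strategy is a base-change/descent argument: the map $A \to A[t_2^{\pm 1}]$ is faithful enough (it is a free module on the classes $t_2^k$, hence conservative and compatible with colimits), so it suffices to check the claim after applying $- \otimes_A A[t_2^{\pm 1}]$. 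After this base change the free algebra $\sym^*_A(\Sigma^n A) \otimes_A A[t_2^{\pm 1}] \simeq \sym^*_{A[t_2^{\pm 1}]}(\Sigma^n A[t_2^{\pm 1}])$, and multiplication by the unit $t_2^m$ identifies $\Sigma^n A[t_2^{\pm 1}] \simeq A[t_2^{\pm 1}]$, so this free algebra is just the free $\e{\infty}$-$A[t_2^{\pm 1}]$-algebra on a degree-zero generator, namely (the base change of) $S^0\{t\}$ — or, rationally, $\mathbb{Q}[t]$. Hence the base-changed relative tensor product is exactly $A[t_2^{\pm 1}]//(t_2^{-m}x)$, which by \Cref{modxmodx} has underlying module $A[t_2^{\pm 1}]/(t_2^{-m}x) \simeq (A/x)[t_2^{\pm 1}] \simeq (A/x)\otimes_A A[t_2^{\pm 1}]$. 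By conservativity and the projection formula, $A//x$ has underlying $A$-module $A/x$, as desired.

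The main obstacle I anticipate is making the identification $\sym^*_A(\Sigma^n A) \otimes_A A[t_2^{\pm 1}] \simeq \sym^*_{A[t_2^{\pm 1}]}(\Sigma^{2m} A[t_2^{\pm 1}])$ and then $\simeq \sym^*_{A[t_2^{\pm 1}]}(\Sigma^0 A[t_2^{\pm 1}])$ genuinely \emph{as $\e{\infty}$-rings} compatibly with the two maps to $A[t_2^{\pm 1}]$ (the one classifying $x$ and the one sending the generator to zero), rather than merely as spectra — one must check that the periodicity equivalence intertwines the augmentations correctly, so that the square defining $A//x$ base-changes to the square defining $A[t_2^{\pm 1}]//(t_2^{-m}x)$. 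Once that commutativity of the relevant diagrams in $\clg_{A/}$ is in hand, the rest is formal: relative tensor products commute with base change, and faithfully flat (here, free and conservative) descent along $A \to A[t_2^{\pm 1}]$ recovers the underlying module of $A//x$. One should also remark that, unlike \Cref{modxmodx}, this argument does not produce an unconditional $\e{\infty}$-structure without the degree-two unit present somewhere; but the statement only asserts the module identification, which the above achieves, and the $\e{\infty}$-structure on $A//x$ is by construction the relative-tensor-product one. A cleaner alternative, avoiding $t_2$ entirely, would be to observe directly that over $\mathbb{Q}$ the free $\e{\infty}$-ring $\sym^*(S^n)$ for $n$ even has homotopy a polynomial ring $\mathbb{Q}[t]$ on a single class in degree $n$ (the higher symmetric powers $(S^n{}^{\otimes k})_{h\Sigma_k}$ being rationally just $S^{kn}$ since $\Sigma_k$ acts trivially on the even-degree class), so that $\sym^*(S^n) \simeq \mathbb{Q}[t_n]$ and the cofiber sequence $\mathbb{Q}[t_n] \xrightarrow{t_n} \mathbb{Q}[t_n] \to \mathbb{Q}$ base-changed along $t_n \mapsto x$ exhibits $A//x$ with underlying module $A/x$ — I would present this as the primary argument, since it is self-contained, and mention the $t_2$-reduction only as motivation.
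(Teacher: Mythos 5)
Your proposal is correct, and the argument you single out as primary at the end --- that over $\mathbb{Q}$ the free $\e{\infty}$-ring $\sym^*(\mathbb{Q}[n])$ for $n$ even has polynomial homotopy on a single degree-$n$ class (the $\Sigma_k$-actions being rationally trivial), so that the cofiber sequence $\Sigma^n \sym^*(\mathbb{Q}[n]) \to \sym^*(\mathbb{Q}[n]) \to \mathbb{Q}$, base-changed along the map classifying $x$, exhibits the underlying $A$-module of $A//x \simeq A \otimes_{\sym^*(\mathbb{Q}[n])} \mathbb{Q}$ as $A/x$ --- is exactly the paper's proof of \Cref{modxmodxgr}. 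The preliminary reduction through $A[t_2^{\pm 1}]$, base change, and conservativity is a workable but more roundabout route that the paper does not need, and you rightly relegate it to motivation.
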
 
\begin{proof} 
Recall that $\pi_* \sym^*(\mathbb{Q}[n])$ is a polynomial ring on a class in
degree $n$. 
Thus, the result follows from $A//x \simeq A \otimes_{\sym^*(\mathbb{Q}[n])}
\mathbb{Q}$ and the cofiber sequence $\Sigma^n \sym^*\mathbb{Q}[n] \to
\sym^*\mathbb{Q}[n] \to \mathbb{Q}$ of $\sym^* \mathbb{Q}[n]$-modules. 
\end{proof} 

\subsection{The Cohen structure theorem}

Let $(R, \mathfrak{m})$ be a complete local noetherian ring with residue field $k$ of
characteristic zero.  In this case, a basic piece of the Cohen structure
theorem (see for instance \cite[Ch. 8]{eisenbud}) implies that $R$ contains a copy of its residue field: 

\begin{theorem}[Cohen] \label{cohen}
Hypotheses as above, the projection $R \to R/\mathfrak{m}  =  k$ admits a
section. 
\end{theorem}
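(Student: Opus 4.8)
The plan is to build the section by a Zorn's-lemma / successive-approximation argument, filtering $R$ by powers of $\mathfrak{m}$. First I would reduce to the case where $R$ is moreover \emph{equicharacteristic}, i.e.\ contains $\mathbb{Q}$: since $R$ is local noetherian with residue field $k$ of characteristic zero, $\mathbb{Z}$ injects into $R$ and no prime of $\mathbb{Z}$ can lie in $\mathfrak{m}$ (as $\mathfrak{m} \cap \mathbb{Z}$ would be the maximal ideal of $R \cap \mathbb{Q}$, forcing $\mathrm{char}(k) > 0$), so every nonzero integer is a unit in $R$ and $\mathbb{Q} \hookrightarrow R$. Now one wants a ring map $k \to R$ splitting $R \to k$; equivalently, a compatible system of ring maps $s_n \colon k \to R/\mathfrak{m}^{n+1}$ lifting the identity on $R/\mathfrak{m} = k$, which then assemble (using $R = \varprojlim R/\mathfrak{m}^{n+1}$, by completeness and the noetherian Artin--Rees/Krull intersection theorem) to the desired section $k \to R$.

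The heart of the argument is the inductive step: given a ring section $s_n \colon k \to R/\mathfrak{m}^{n+1}$, lift it to $s_{n+1} \colon k \to R/\mathfrak{m}^{n+2}$. This is a deformation/obstruction problem controlled by the square-zero extension
\[ 0 \to \mathfrak{m}^{n+1}/\mathfrak{m}^{n+2} \to R/\mathfrak{m}^{n+2} \to R/\mathfrak{m}^{n+1} \to 0. \]
The obstruction to lifting a ring map $k \to R/\mathfrak{m}^{n+1}$ along a square-zero extension by a $k$-module $M = \mathfrak{m}^{n+1}/\mathfrak{m}^{n+2}$ lives in the André--Quillen cohomology group $H^2(k/\mathbb{Q}; M)$ (or equivalently in $\mathrm{Ext}^1_k(\mathbb{L}_{k/\mathbb{Q}}, M)$), and the torsor of lifts, when nonempty, is a principal homogeneous space under $H^1(k/\mathbb{Q}; M) = \mathrm{Ext}^0_k(\Omega_{k/\mathbb{Q}}, M)$ — wait, more precisely under $\mathrm{Der}_{\mathbb{Q}}(k, M)$ up to the relevant identifications. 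The key point is that $k$ is a field of characteristic zero, hence a separable (indeed formally smooth, even formally étale after passing to a transcendence basis) $\mathbb{Q}$-algebra, so $\mathbb{L}_{k/\mathbb{Q}} \simeq \Omega_{k/\mathbb{Q}}[0]$ is a flat $k$-module concentrated in degree $0$ and $H^i(k/\mathbb{Q}; M) = 0$ for all $i \geq 1$. Thus every obstruction vanishes and the lift exists at each stage.

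I would organize this concretely to avoid invoking the full machinery: write $k$ as a filtered union of subfields $k_0 \subseteq k$ with $k_0/\mathbb{Q}$ finitely generated, handle the case $k_0 = \mathbb{Q}(x_1,\dots,x_r)$ a purely transcendental extension by lifting the $x_i$ arbitrarily to units of $R$ (possible since $R \to k$ is surjective and $\mathbb{Q}[x_1,\dots,x_r]$ is free), and then handle a finite \emph{separable} extension $k_0 \subseteq k_1$ by Hensel's lemma: $R$ is complete local hence Henselian, so a separable polynomial over $k_0$ with a simple root in $k_1 \subseteq k$ lifts its root to $R$ after we have lifted $k_0$. Running this up the filtration and passing to the (possibly transfinite) colimit, compatibility being automatic because each step extends the previous map, produces the section. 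The main obstacle is purely bookkeeping: making the transfinite induction over all intermediate subfields $k_0$ coherent — i.e.\ phrasing it as a Zorn's-lemma argument on the poset of pairs $(k_0, s\colon k_0 \to R)$ lifting the structure map, and checking the hypothesis of Zorn (chains have upper bounds, which is clear) and that a maximal such $k_0$ must be all of $k$ (if not, adjoin one more transcendental or one more separable-algebraic element by the two cases above, contradicting maximality). Everything else — completeness giving the inverse limit, Henselianness of complete local rings, flatness of $\mathbb{L}_{k/\mathbb{Q}}$ — is standard.
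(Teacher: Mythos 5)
Your argument is correct, but note that the paper does not actually prove this statement: it quotes it as Cohen's theorem and refers to \cite[Theorem 60, \S 28.J]{matsumura} for a proof of a more general result, remarking only that it is related to the fact that characteristic-zero field extensions are filtered colimits of smooth algebras. Your concrete Zorn--Hensel argument is essentially that standard proof, and it is also the exact algebraic shadow of the paper's proof of the $\e{\infty}$-analogue (\Cref{secresfld}): lift a transcendence basis of $k/\mathbb{Q}$ arbitrarily, pass to the purely transcendental subfield, then handle the (automatically separable) algebraic part using Henselianness of the complete local ring, respectively the theory of \'etale extensions in the spectral setting. Two small points to make explicit: first, in the transcendental step the map $\mathbb{Q}[x_1,\dots,x_r]\to R$ extends to the fraction field because a nonzero polynomial evaluated at the chosen lifts has nonzero image in $k$ (by algebraic independence of the residues), hence lies outside $\mathfrak{m}$ and is a unit in the local ring $R$ --- ``$\mathbb{Q}[x_1,\dots,x_r]$ is free'' alone does not give this; second, reduction to $\mathbb{Q}\subset R$ is cleaner than you state it: every nonzero integer has nonzero image in $k$ (characteristic zero), so it lies outside $\mathfrak{m}$ and is a unit. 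Your first paragraph, via formal smoothness of $k$ over $\mathbb{Q}$ (vanishing of the relevant Andr\'e--Quillen obstruction groups since $\mathbb{L}_{k/\mathbb{Q}}\simeq\Omega_{k/\mathbb{Q}}$ is concentrated in degree zero) and successive lifting along the square-zero extensions $R/\mathfrak{m}^{n+2}\to R/\mathfrak{m}^{n+1}$ followed by passage to the limit $R\cong\varprojlim R/\mathfrak{m}^{n}$, is a valid alternative route, though it invokes more machinery than the problem needs; the hedging about whether the obstruction group is ``$H^2$'' or $\mathrm{Ext}^1_k(\mathbb{L}_{k/\mathbb{Q}},M)$ is only a matter of indexing conventions and does not affect the vanishing argument.
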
 

We refer to \cite[Theorem 60, \S 28.J]{matsumura} for a proof of a
more general result than \Cref{cohen}. 
It is closely related to the fact that, in characteristic zero, all field
extensions can be obtained as an inductive limit of smooth morphisms; this argument implies an analogous result in the
world of $\e{\infty}$-rings, and it is the purpose of this subsection to
describe that. 
In particular, we prove:

\begin{proposition} 
\label{secresfld}
Let $A$ be a noetherian, rational $\e{\infty}$-ring such that $\pi_0 A$ is a 
complete local ring with residue field $k$. Then there exists a morphism of
$\e{\infty}$-rings $k \to A$ such that on $\pi_0$, the composite map
\( k \to \pi_0 A \to k  \)
is the identity.
\end{proposition}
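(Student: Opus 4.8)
The plan is to mimic the classical Cohen argument---lift the residue field by successively adjoining elements and taking a suitable completion---but carried out in the $\infty$-categorical setting of $\e{\infty}$-rings over $\mathbb{Q}$. First I would reduce to constructing, for each finitely generated subfield $k_0 \subseteq k$, a map $k_0 \to A$ splitting $k_0 \hookrightarrow \pi_0 A \twoheadrightarrow k$, and then passing to the filtered colimit over all such $k_0$ to obtain $k \to A$; since $\clg$ has filtered colimits and $\pi_0$ commutes with them, this reduction is harmless. For a finitely generated field extension $k_0/\mathbb{Q}$, classical field theory (the fact that in characteristic zero every finitely generated field extension is separably generated, hence a finite separable---indeed \'etale---extension of a purely transcendental one) lets me write $k_0$ as a filtered colimit of smooth (in fact, polynomial-then-\'etale) $\mathbb{Q}$-algebras. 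So it suffices to solve the lifting problem one smooth stage at a time.

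The two basic cases are: (i) adjoining a transcendental, i.e., producing a map $\mathbb{Q}[x_1,\dots,x_n] \to A$ lifting a chosen map to $k$; and (ii) adjoining an \'etale element, i.e., given a map $R \to A$ (with $R$ a finitely presented $\mathbb{Q}$-algebra) and an \'etale $R$-algebra $R' = R[y]/(f(y))$ with $f'(y)$ a unit, extending along $R \to R'$. Case (i) is immediate: a polynomial ring is free as an $\e{\infty}$-$\mathbb{Q}$-algebra on degree-zero generators, so the lift is just a choice of $n$ elements of $\pi_0 A$ reducing to the chosen images in $k$, which exist since $\pi_0 A \to k$ is surjective. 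Case (ii) is where the completeness and noetherianness hypotheses enter. Here I would invoke \'etale rigidity / topological invariance of the \'etale site for $\e{\infty}$-rings (Lurie, \emph{Higher Algebra} / DAG): because $\pi_0 A$ is complete local with residue field $k$ (hence henselian), the \'etale $R$-algebra $R'$, base-changed to $\pi_0 A$ along $R \to \pi_0 A$, admits a section over $\pi_0 A$ refining the given section over $k$; and a section of an \'etale map on $\pi_0$ lifts uniquely to a section of the corresponding \'etale $\e{\infty}$-$A$-algebra. Concretely, $A \otimes_R R'$ is an \'etale $\e{\infty}$-$A$-algebra, so by the equivalence between its \'etale extensions and those of $\pi_0 A$, the idempotent/section picked out on $\pi_0$ produces the desired factorization $R' \to A$.

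Assembling the pieces: writing $k_0 = \operatorname{colim}_i B_i$ with $B_0$ a polynomial ring and each $B_i \to B_{i+1}$ \'etale, I build the lift $B_i \to A$ inductively---step $0$ by freeness, each successive step by \'etale lifting against the henselian ring $\pi_0 A$---and take the colimit to get $k_0 \to A$; then colimit over $k_0$ to get $k \to A$, with the $\pi_0$-composite equal to the identity by construction at each stage. The main obstacle I anticipate is \emph{not} the algebra but making the \'etale-lifting input rigorous in the spectral setting: I must be careful that "\'etale $\e{\infty}$-$A$-algebra" is used in the sense where $\pi_0$ is classically \'etale over $\pi_0 A$ and $\pi_*$ is obtained by base change, and that the relevant equivalence of \'etale sites (which needs $\pi_0 A$, not $A$, to be the henselian object) is cited correctly---and, separately, that the noetherian hypothesis on $A$ is what guarantees $A \otimes_R R'$ stays within the class of rings to which this machinery applies (so that its homotopy is $\pi_*(A) \otimes_{\pi_0 A} (\pi_0 A \otimes_R R')$ with no higher Tor). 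Everything else---the colimit arguments, the surjectivity of $\pi_0 A \to k$, separable generation---is routine.
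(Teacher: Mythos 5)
Your per-stage arguments are sound and, in spirit, quite close to the paper's proof: the paper also realizes $k$ inside $A$ by lifting a transcendence basis through a free $\e{\infty}$-ring on degree-zero generators (then localizing, using that $\pi_0 A$ is local and the residues are algebraically independent), and then extends over the algebraic part using the \'etale machinery of \S 7.5 of \emph{Higher Algebra}. The only substantive difference is that the paper quotes Cohen's structure theorem to produce the section $k \to \pi_0 A$ outright, whereas you in effect reprove the characteristic-zero case of it via henselian lifting; that is fine. (One side remark: your worry that noetherianness is needed to guarantee $\pi_*(A \otimes_R R') \simeq \pi_*(A) \otimes_R R'$ is misplaced, since \'etale maps are flat and there are never higher Tor terms; the noetherian and completeness hypotheses enter only to make $\pi_0 A$ a complete local noetherian, hence henselian, ring, which is what Cohen's theorem or your lifting argument needs.)

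The genuine gap is in your assembly step. Producing, for each finitely generated subfield $k_0 \subseteq k$, \emph{some} map $k_0 \to A$ lifting $k_0 \hookrightarrow k$ does not produce a map $k = \mathrm{colim}\, k_0 \to A$: for that you need a coherently compatible family indexed by the filtered poset of such subfields, and your construction makes independent choices for each $k_0$ --- notably the choice of lifts in $\pi_0 A$ of transcendental generators. For $k_0 \subseteq k_1$ the restriction to $k_0$ of your chosen map $k_1 \to A$ need not agree with your chosen map $k_0 \to A$ even on $\pi_0$ (two lifts of a transcendental element differ by an element of the maximal ideal), let alone coherently; and the relevant mapping spaces are not discrete in general (already $\hom_{\clg}(\mathbb{Q}(t),A)$ sits inside $\Omega^\infty A$), so one cannot wave this away. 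The sentence ``since $\clg$ has filtered colimits and $\pi_0$ commutes with them, this reduction is harmless'' does not address this. The repair is exactly the paper's organization: fix a transcendence basis $\{t_\alpha\}$ of $k/\mathbb{Q}$ once and for all, lift all the $t_\alpha$ simultaneously by a single map from the free $\e{\infty}$-$\mathbb{Q}$-algebra on that set of degree-zero generators, observe that it factors through $\mathbb{Q}(\{t_\alpha\})$ because $\pi_0 A$ is local, and then extend over the whole algebraic extension $k/\mathbb{Q}(\{t_\alpha\})$ in one step, using that $k$ is a filtered colimit of finite \'etale algebras, that maps out of (ind-)\'etale extensions form discrete spaces controlled by $\pi_0$, and that the $\pi_0$-level lifting is solved by completeness/henselianness; with the choices made globally in this way, and with \'etale lifts being unique, your argument goes through.
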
 

\begin{proof} 
By \Cref{cohen}, there is a section $\phi\colon k \to \pi_0 A$ of the reduction map.
We want to realize this topologically. 
To start with, we obtain a  map $\mathbb{Q} \to A$ as $A$ is rational.
Let $\left\{t_\alpha\right\}_{\alpha \in \Gamma}$ be a transcendence basis of
$k/\mathbb{Q}$ (cf. \cite[Ch.~VIII, sec. 1]{Lang} for a textbook reference), so
that we have field extensions
\[ \mathbb{Q} \subset \mathbb{Q}(\left\{t_\alpha\right\}) \subset k,  \]
where the first extension is purely transcendental and the second extension is
algebraic. For each $\alpha \in \Gamma$, choose $u_\alpha \in \pi_0 A$ be
defined by 
$u_\alpha = \phi(t_\alpha)$, so that $u_\alpha$ 
projects to $t_\alpha $ in the residue field. We obtain a map of
$\e{\infty}$-rings 
\[ \bigotimes_{\Gamma} \mathbb{Q}[t_\alpha] \to A, \quad t_\alpha \mapsto
u_\alpha,  \]
where the left-hand-side is a free $\e{\infty}$-ring on $|\Gamma|$ variables
(i.e., a discrete polynomial ring on the $\left\{t_\alpha\right\}$). It
necessarily factors over the  localization $\mathbb{Q}(\left\{t_\alpha\right\})$, so we
obtain a map
$\mathbb{Q}(\left\{t_\alpha\right\}) \to A$.
This realizes on $\pi_0$ the restriction
$\phi|_{\mathbb{Q}(\left\{t_\alpha\right\})}$.

Finally, we want to find an extension over
$k$ such that the diagram
\[ \xymatrix{
\mathbb{Q}(\left\{t_\alpha\right\}) \ar[r]\ar[d]   &  k \ar@{-->}[ld] \\
A
},\]
such that the composite $k \to \pi_0 A \to k$ is the identity. Since $k$ is a
colimit of finite \emph{\'etale}
$\mathbb{Q}(\left\{t_\alpha\right\})$-algebras (i.e., finite separable
extensions; recall that we are in characteristic zero), it is equivalent to doing this at the level of $\pi_0$ (\cite[\S
7.5]{higheralg}), and 
the map $\phi\colon k \to \pi_0(A)$ enables us to do that. 
\end{proof} 

\begin{remark}
The argument shows that the set of \emph{homotopy classes} of
maps of $\e{\infty}$-rings $k \to A$ is in bijection with the set of
ring-homomorphisms $k \to \pi_0(A)$. 
\end{remark}

\subsection{Properties of the quotient $\e{\infty}$-ring}

We will need a few more
preliminaries on the construction of Definition~\ref{modx0} and its behavior on homotopy. If $A$ is a rational $\e{\infty}$-ring and $x \in \pi_0
A$, then the homotopy groups of $A//x \simeq A/x$ are determined additively by the
short exact sequence 
\begin{equation} \label{ses} 0 \to \pi_j(A)/x \pi_j(A) \to \pi_j(A/x) \to (\mathrm{ker\ } x)|_{\pi_{j-1}
(A)}
\to 0,\end{equation}
but this fails to determine the precise multiplicative structure. In this
section, we will show (\Cref{radicial2}) that the multiplicative structure is not so different
from that of the subring $\pi_*(A)/x \pi_*(A)$ under noetherian hypotheses. 
We will not need the full strength of these results in the sequel.

We begin by reviewing the theory of finite universal homeomorphisms \cite[Tag 04DC]{stacks-project}.
Recall that a map of rings $R \to R'$ is called a \emph{universal
homeomorphism} if, for every $R$-algebra $R''$, the map $R'' \to R'' \otimes_R
R'$ induces a homeomorphism upon applying $\mathrm{Spec}$.

\begin{proposition} 
A morphism $R \to R'$, such that $R'$ is a finitely generated $R$-module, is a
universal homeomorphism if and only if, for every morphism $R \to k$ where $k$
is a field, the base-change $R' \otimes_R k$ is a local artinian $k$-algebra
(in particular, nonzero). 
\end{proposition}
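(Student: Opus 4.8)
The plan is to prove the two implications directly. (One could instead quote the characterization of universal homeomorphisms of affine schemes as the integral, surjective, universally injective morphisms, but the direct argument is just as short.) I would reduce everything to three standard inputs: (i) if $R'$ is a finitely generated $R$-module then $R\to R'$ is integral, so $\spec R'\to\spec R$, and likewise any base change of it, is a closed map; (ii) a continuous closed bijection of topological spaces is a homeomorphism; (iii) the fibre of $\spec B\to\spec A$ over a prime $\mathfrak{p}$ is, as a topological space, $\spec(B\otimes_A\kappa(\mathfrak{p}))$, and the fibres of a base change $\spec(B\otimes_A C)\to\spec C$ are again of this form. I would also use at the outset that a finite-dimensional algebra over a field is artinian, and that an artinian ring is local precisely when its spectrum is a single point.

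For the ``only if'' direction I would feed the $R$-algebra $R''=k$, for a field $k$ equipped with a map from $R$, into the definition of universal homeomorphism: this forces $\spec(R'\otimes_R k)\to\spec k$ to be a homeomorphism, so $R'\otimes_R k$ is nonzero and has a unique prime ideal. Since $R'$ is a finitely generated $R$-module, $R'\otimes_R k$ is a finite-dimensional $k$-algebra, hence artinian, and an artinian ring with a unique prime is local artinian, as required.

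For the ``if'' direction I would take an arbitrary $R$-algebra $R''$ and analyse $\spec(R''\otimes_R R')\to\spec R''$ fibrewise. The fibre over $\mathfrak{q}\in\spec R''$ is $\spec\bigl(R'\otimes_R\kappa(\mathfrak{q})\bigr)$, where $R\to\kappa(\mathfrak{q})$ is the composite; by hypothesis this is the spectrum of a nonzero local artinian ring, hence a single point, so the map is a bijection. It is also closed, because $R''\otimes_R R'$ is a finitely generated $R''$-module. A continuous closed bijection is a homeomorphism, and since $R''$ was arbitrary this shows $R\to R'$ is a universal homeomorphism.

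I do not expect a serious obstacle here; the one point that genuinely needs care is not to conflate ``$R'\otimes_R k$ is local'' with ``the fibre of $\spec R'\to\spec R$ over $\mathfrak{p}$ is a single point''. For finite algebras over a field these coincide, but the hypothesis really must be imposed for \emph{all} field-valued points of $R$, not merely its residue fields: for instance $\mathbb{Q}\to\mathbb{Q}(i)$ has every residue-field fibre equal to a field yet is not a universal homeomorphism, and correspondingly $\mathbb{Q}(i)\otimes_{\mathbb{Q}}\mathbb{C}$ is not local. This is exactly what the word ``universal'' controls, and it is accommodated automatically above because the fibres of a base change $\spec(R''\otimes_R R')\to\spec R''$ run over precisely such geometric fibres of $\spec R'\to\spec R$.
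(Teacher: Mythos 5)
Your proof is correct. The ``only if'' direction is essentially the paper's: base change along $R \to k$, note the spectrum is a point (the paper says: connected), and use that a finite-dimensional $k$-algebra with one-point (or connected) spectrum is local artinian. For the ``if'' direction you take a genuinely more elementary route than the paper. The paper first argues that the residue field of $R' \otimes_R k$ is purely inseparable over $k$ (otherwise base-changing to $\overline{k}$ produces nontrivial idempotents), invokes the characterization of radicial morphisms to conclude $\spec R' \to \spec R$ is universally injective, and only then combines closedness of the finite map with nonemptiness of fibers to get a homeomorphism after any base change. You bypass the radicial/purely-inseparable step entirely: since the hypothesis is imposed at \emph{all} field-valued points of $R$, for any $R$-algebra $R''$ and any $\mathfrak{q} \in \spec R''$ the fiber of $\spec(R'' \otimes_R R') \to \spec R''$ over $\mathfrak{q}$ is $\spec\bigl(R' \otimes_R \kappa(\mathfrak{q})\bigr)$, a single point, so the base-changed map is a continuous closed bijection and hence a homeomorphism. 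Your version is self-contained and avoids the external citation; what the paper's detour buys is the explicit identification of the map as radicial, linking the statement to the standard description of universal homeomorphisms as integral, surjective, universally injective morphisms. Your closing caution --- that the hypothesis must hold for all field-valued points and not just residue fields, as the example $\mathbb{Q} \to \mathbb{Q}(i)$ shows --- is exactly the point at which the paper's purely-inseparable argument does its work, and your fiberwise formulation absorbs it automatically.
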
 
\begin{proof} 
Suppose that for every map from $R$ to a field $k$, the base-change $R'
\otimes_R k$ is a local artinian $k$-algebra. It follows that the residue
field of $R' \otimes_R k$ is necessarily a purely inseparable extension of
$k$, since otherwise we can replace $k$ by $\overline{k}$ and $R' \otimes_R
\overline{k}$ would have nontrivial idempotents.
It then follows that the map $\mathrm{Spec} R' \to \mathrm{Spec} R$ is a radicial morphism 
 \cite[Tag 01S2]{stacks-project}.
Given any $R$-algebra $R''$, the map $R'' \to R' \otimes_R R''$ is finite, so
induces a closed map on $\mathrm{Spec}$ which is also injective since $R \to
R'$ is radicial. The map is surjective since all the fibers are nonempty by
assumption, and thus a homeomorphism. 

Conversely, suppose $R \to R'$ is a finite universal homeomorphism. Then all
the base-changes $k \to R' \otimes_R k$, for an $R$-field $k$, are universal
homeomorphisms themselves. 
In particular, $\mathrm{Spec} (R' \otimes_R k)$ is connected. But $R' \otimes_R k$ is a finite-dimensional
$k$-algebra, so if its spectrum is connected, then $R' \otimes_R k$ must be
local artinian. 
\end{proof} 

\begin{corollary}
A finite map $R \to R'$ of $\mathbb{Q}$-algebras is a universal homeomorphism if
 and only if for every residue field $R \to k$,
the tensor product $R'
\otimes_R k$ is local with residue field $k$. 
\end{corollary}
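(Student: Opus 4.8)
The plan is to deduce this corollary directly from the preceding proposition, which already characterizes finite universal homeomorphisms $R \to R'$ (in arbitrary characteristic) by the condition that $R' \otimes_R k$ is local artinian for every $R$-field $k$. In characteristic zero the remaining work is to observe that ``local artinian $k$-algebra'' simplifies to ``local with residue field $k$,'' and this is exactly where the hypothesis $R, R'$ being $\mathbb{Q}$-algebras is used.

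First I would invoke the proposition to reduce to showing: for a finite map $R \to R'$ of $\mathbb{Q}$-algebras and an $R$-field $k$ (necessarily of characteristic zero), the $k$-algebra $R' \otimes_R k$ is local artinian if and only if it is local with residue field $k$. One direction is trivial: if $R' \otimes_R k$ is local with residue field $k$, then since $R'$ is a finite $R$-module, $R' \otimes_R k$ is a finite-dimensional $k$-algebra, hence artinian. For the converse, suppose $R' \otimes_R k$ is local artinian with maximal ideal $\mathfrak{n}$ and residue field $\ell = (R' \otimes_R k)/\mathfrak{n}$. As noted in the proof of the proposition, $\ell$ is a purely inseparable extension of $k$; but $k$ has characteristic zero, so the only purely inseparable extension of $k$ is $k$ itself, and therefore $\ell = k$. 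This gives the equivalence, and combining with the proposition yields the corollary.

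There is essentially no serious obstacle here — the corollary is a specialization of the proposition, and the only genuine input is the elementary fact that fields of characteristic zero are perfect (no nontrivial purely inseparable extensions). The one point that merits a sentence of care is making sure the ``local artinian'' hypothesis is genuinely needed at all in the statement: we should note that the finiteness of $R \to R'$ is what guarantees $R' \otimes_R k$ is already finite-dimensional over $k$, so ``artinian'' is automatic and ``local with residue field $k$'' is the only substantive condition, exactly matching the formulation of the corollary. No further calculation is required.
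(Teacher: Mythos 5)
Your route is the intended one (reduce to the preceding proposition and use that characteristic-zero fields have no nontrivial inseparable extensions), but the reduction as you state it is false, and the key step borrows a fact whose proof needs more than your stated hypothesis. It is not true that, for a \emph{fixed} $R$-field $k$, ``$R' \otimes_R k$ is local artinian'' is equivalent to ``$R' \otimes_R k$ is local with residue field $k$'': take $R = \mathbb{Q}$, $R' = \mathbb{Q}(i)$, $k = \mathbb{Q}$; then $R' \otimes_R k = \mathbb{Q}(i)$ is local artinian, yet its residue field strictly contains $k$ (and indeed $\mathbb{Q} \to \mathbb{Q}(i)$ is not a universal homeomorphism, since $\mathbb{Q}(i)\otimes_{\mathbb{Q}}\mathbb{Q}(i)$ is a product of two fields). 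Correspondingly, your step ``as noted in the proof of the proposition, $\ell$ is purely inseparable over $k$'' does not follow from locality of $R' \otimes_R k$ alone: in the proposition's proof, pure inseparability is extracted from the hypothesis at the algebraically closed field $\overline{k}$, namely connectedness of $R' \otimes_R \overline{k}$, because a nontrivial separable extension $\ell/k$ would produce nontrivial idempotents in $(R'\otimes_R k)\otimes_k \overline{k} \simeq R'\otimes_R \overline{k}$ (idempotents lift along the nilpotent ideal $\mathfrak{n}\otimes_k\overline{k}$).

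The repair is to argue the forward implication directly from the universal-homeomorphism hypothesis: by the proposition, $R'\otimes_R K$ is local artinian for \emph{every} field map $R \to K$, in particular for $K=\overline{k}$; since every finite extension of $k$ is separable in characteristic zero, the idempotent argument just recalled forces the residue field of $R'\otimes_R k$ to equal $k$. Your ``trivial'' direction is fine, though strictly speaking the corollary's hypothesis quantifies only over residue fields of $R$ while the proposition quantifies over all field maps $R\to K$; this is harmless because any such $K$ factors through a residue field $k$, and if $R'\otimes_R k$ is $k$ plus nilpotents then $R'\otimes_R K \simeq (R'\otimes_R k)\otimes_k K$ is $K$ plus nilpotents, hence local artinian. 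With these adjustments your argument is exactly the paper's.
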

\begin{proof} 
This follows from the fact that all finite extensions in characteristic zero
are separable, 
so that if $A$ is a finite-dimensional local $k$-algebra with residue field 
strictly containing $k$, then $A \otimes_k \overline{k}$ necessarily has
nontrivial idempotents. 
\end{proof} 

We will now begin working towards the proof of \Cref{radicial2}. We will first
need a preliminary lemma on idempotents in these quotients. 

\begin{definition} 
If $A$ is an $\e{\infty}$-ring, we let $\mathrm{Idem}(A)$ denote the set of
idempotents in $\pi_0 A$. The construction $A \mapsto \mathrm{Idem}(A)$ sends
homotopy limits of $\e{\infty}$-rings to inverse limits of sets, since
the set $\mathrm{Idem}(A)$ is homotopy equivalent to the space of maps of
$\e{\infty}$-rings $S^0
\times S^0 \to A$ in view of the theory of \'etale algebras
\cite[\S 7.5]{higheralg}.  If $R$ is a discrete ring, we will also
write $\mathrm{Idem}(R)$ for the set of idempotents in $R$.
\end{definition}

\begin{lemma} \label{purelyinsepart}
Let $A$ be a rational noetherian $\e{\infty}$-ring with $\pi_0(A)$ local and 
let $x_1, \dots, x_r \in \pi_0 (A)$. Then the map $\pi_0 (A)/(x_1, \dots, x_r)
\to \pi_0(A//(x_1, \dots, x_r))$ of discrete rings induces an
isomorphism on $\mathrm{Idem}$.\end{lemma}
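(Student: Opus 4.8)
The plan is to reduce the statement to the case $r=1$, since $A//(x_1,\dots,x_r)$ is built by iterated quotients and each intermediate $\e{\infty}$-ring $A//(x_1,\dots,x_j)$ is again rational and noetherian with local $\pi_0$ (the quotient of a local ring being local, and the noetherian/finite-generation conditions being preserved by the short exact sequence \eqref{ses} applied degreewise). So it suffices to show: for $A$ rational noetherian with $\pi_0 A$ local and $x \in \pi_0 A$, the map $\pi_0(A)/(x) \to \pi_0(A//x)$ induces a bijection on idempotents. By \Cref{modxmodx}, the underlying $A$-module of $A//x$ is $A/x$, so $\pi_0(A//x)$ fits into the short exact sequence $0 \to \pi_0(A)/x\pi_0(A) \to \pi_0(A/x) \to (\ker x)|_{\pi_{-1}(A)} \to 0$ from \eqref{ses}. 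Since $\pi_0(A)$ is local, $\pi_0(A)/(x)$ has no nontrivial idempotents, so I must show $\pi_0(A//x)$ has none either (and the zero ring case, i.e. $x$ a unit, is trivial).

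First I would observe that $\pi_0(A)/(x)$ is local with the \emph{same} maximal ideal image, hence its only idempotents are $0$ and $1$; the content is therefore that no new idempotents appear in the kernel-extension term. The key input is that $\pi_0(A/x) \to \pi_0(A)/x\pi_0(A)$ is a \emph{nilpotent extension} — more precisely, I'd argue that the ideal $I := \ker(\pi_0(A/x) \twoheadrightarrow \pi_0(A)/x\pi_0(A))$, which as a module is a quotient of $(\ker x)|_{\pi_{-1}A}$, is a nilpotent ideal. This is where noetherianness enters: $\pi_0(A)$ is noetherian, so $\ker x \subset \pi_{-1}(A) = \pi_{\mathrm{odd}}(A)$ is a finitely generated $\pi_0(A)$-module; one needs that the multiplication on $\pi_*(A/x)$ makes products of enough such elements land in the image of the even part. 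The cleanest route: the multiplication $\pi_{-1}(A) \otimes \pi_{-1}(A) \to \pi_{-2}(A) = \pi_{\mathrm{even}}(A)$, combined with the module structure, shows $I^2 \subset x\cdot(\text{something})$ or at least that $I$ is contained in the nilradical; then since idempotents lift uniquely along nilpotent (indeed along any) surjections — and here they lift along a surjection with nilpotent kernel, so bijectively — we conclude $\idem(\pi_0(A//x)) = \idem(\pi_0(A)/(x)) = \{0,1\}$.

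The main obstacle I anticipate is pinning down precisely why $I$ is a nil (or nilpotent) ideal. The additive description from \eqref{ses} gives no multiplicative information, so I would reason more structurally: present $A//x = A \otimes_{\mathbb{Q}[t]} \mathbb{Q}$ and filter, or instead argue at the level of $\spec$. Specifically, the map $\spec \pi_0(A//x) \to \spec \pi_0(A)/(x) = \spec\pi_0(A)/(x)$ is a closed immersion on underlying topological spaces followed by... no — better: I would show the composite $\pi_0(A) \to \pi_0(A//x)$ and $\pi_0(A) \to \pi_0(A)/(x) \to \pi_0(A//x)$ agree, and that $\spec \pi_0(A//x) \to \spec \pi_0(A)$ is a homeomorphism onto the closed subset $V(x)$ — this is a universal-homeomorphism-type statement, and the proposition and corollary on finite universal homeomorphisms just proved are presumably exactly the tool (one checks the fiber over each residue field of $\pi_0(A)$ is local artinian). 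Since $\idem$ is a homeomorphism invariant of $\spec$, and $\idem(\pi_0(A)/(x)) = \{0,1\}$ because $V(x) \subset \spec\pi_0(A)$ is connected (closed subset of a local, hence irreducible-spectrum, scheme — actually local suffices, its $\spec$ is connected), we get $\idem(\pi_0(A//x))$ is trivial too, completing the induction.

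One caveat to handle carefully: $A//x$ need not be noetherian in general per the Warning, but each single-step quotient $A//x_j$ \emph{is}, because killing one degree-zero element only quotients $\pi_{\mathrm{even}}$ and adds a finitely-generated piece to $\pi_{\mathrm{odd}}$ — so the inductive hypothesis stays in force along the chain $A \to A//x_1 \to \cdots \to A//(x_1,\dots,x_r)$, and locality of $\pi_0$ is likewise preserved at each step. I would state this preservation as a short preliminary observation before running the induction.
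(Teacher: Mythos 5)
Your main route is circular. The statement you propose to invoke at the end --- that $\pi_0(A)/(x) \to \pi_0(A//x)$ is a homeomorphism on $\spec$ (a finite universal homeomorphism) --- is exactly \Cref{nilpextmod}/\Cref{radicial2}, and in the paper those are proved \emph{from} the present lemma, not before it. What precedes the lemma is only the purely algebraic criterion: a finite map is a universal homeomorphism iff its fibers over residue fields are local artinian. Verifying that criterion for $\pi_0(A)/(x) \to \pi_0(A//x)$ is precisely the problem of showing that no new idempotents (equivalently, no new connected components in the fibers) appear after attaching the $1$-cell, which is the content of the lemma; your proposal never supplies an independent argument for this. Your alternative route (a ``nilpotent extension'' argument) is also not available as stated: by \eqref{ses} the map $\pi_0(A)/x\pi_0(A) \to \pi_0(A//x)$ is \emph{injective} with cokernel the $\pi_0(A)$-module $(\ker x)|_{\pi_{-1}(A)}$; there is no surjection $\pi_0(A//x) \twoheadrightarrow \pi_0(A)/x\pi_0(A)$, hence no ideal $I$ to prove nilpotent, and the short exact sequence indeed carries no multiplicative information --- which is exactly why a direct attack on the ring structure of $\pi_0(A//x)$ does not work. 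A smaller but real gap of the same nature occurs in your reduction to $r=1$: $\pi_0(A//x_1)$ is \emph{not} a quotient of $\pi_0(A)$ (again by \eqref{ses} it is a module-finite extension of $\pi_0(A)/x_1$), so its locality is not ``preserved'' for the reason you give; it holds only a posteriori, as a consequence of the lemma itself.

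The paper's actual proof avoids all contact with the ring structure of $\pi_0(A//(x_1,\dots,x_r))$ by a descent argument: form the cobar (Amitsur) cosimplicial object on $A \to A//(x_1,\dots,x_r)$, whose totalization is the $(x_1,\dots,x_r)$-adic completion of $A$; since $\idem$ is corepresented by $S^0 \times S^0$ it takes this homotopy limit to an equalizer, and idempotents of the completion agree with those of $\pi_0(A)/(x_1,\dots,x_r)$ by the classical lifting-idempotents theorem (this is where noetherianness and completeness enter). The key computation is then that $A//(x_1,\dots,x_r) \otimes_A A//(x_1,\dots,x_r) \simeq A//(x_1,\dots,x_r) \otimes \sym^*[y_1,\dots,y_r]$ with $|y_i|=1$ (one is coning off classes that are already zero), which has the same idempotents, so both maps of the equalizer are isomorphisms and the claim follows. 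If you want to salvage your write-up, you would need to replace the appeal to the universal-homeomorphism statement by an argument of this kind (or some other genuinely new input); as it stands, the proposal assumes what is to be proved.
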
 
\begin{proof} 
In fact, we have a map of $\e{\infty}$-rings
\( A \to A//(x_1, \dots, x_r),  \)
and if we form the cobar construction on this map,
we obtain
a coaugmented cosimplicial object
\[   A//(x_1, \dots, x_r) \rightrightarrows  A//(x_1, \dots, x_r) \otimes_A
A//(x_1, \dots, x_r) \triplearrows \dots, \]
whose homotopy limit is the $(x_1,\dots, x_r)$-adic completion of $A$. We
refer to 
\cite[\S 4]{DAGProp} for preliminaries on completions of ring spectra. In
particular, the idempotents in the totalization are the same as the idempotents
in the $(x_1, \dots, x_r)$-adic completion of $\pi_0 A$, or equivalently, by
the lifting idempotents theorem \cite[Cor. 7.5]{eisenbud}, in $\pi_0(A)/(x_1, \dots, x_r)$. 

Since the operation of taking idempotents commutes with homotopy limits, we
conclude that 
the set of idempotents in $\pi_0(A)/(x_1, \dots, x_r)$ is the reflexive
equalizer 
\[ \idem(A//(x_1, \dots, x_r)) \rightrightarrows \idem(A//(x_1, \dots, x_r)
\otimes_A A//(x_1, \dots, x_r)).  \]
However, we claim that the two maps in the reflexive equalizers are
isomorphisms (and thus equal).
In fact, 
\( A//(x_1, \dots, x_r) \otimes_A A//(x_1, \dots, x_r)  \)
is
obtained by attaching 1-cells to kill the classes $x_1, \dots, x_r$ in $A//(x_1, \dots,
x_r)$ which are \emph{already zero}; in particular, as an $\e{\infty}$-ring, we
have
$$ A//(x_1, \dots, x_r) \otimes_A A//(x_1, \dots, x_r)  \simeq A//(x_1, \dots,
x_r) \otimes \sym^*[y_1, \dots, y_r], \quad |y_i| = 1,$$
which has the same idempotents as 
$A//(x_1, \dots, x_r)$.
\end{proof}

\begin{proposition} 
\label{nilpextmod}
Let $A$ be a rational noetherian $\e{\infty}$-ring and let $x \in
\pi_0(A)$. 
The map $\pi_0(A)/(x) \to \pi_0(A//x)$ is a finite universal homeomorphism. 
\end{proposition}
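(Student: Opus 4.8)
The strategy is to analyze the homotopy groups of $A//x$ via the short exact sequence \eqref{ses} and then apply the criterion for finite universal homeomorphisms (the Corollary above) to the map $\pi_0(A)/(x) \to \pi_0(A//x)$. The key point to establish is that for every residue field $\pi_0(A) \to \ell$, the base change $\pi_0(A//x) \otimes_{\pi_0(A)} \ell$ is a local $\ell$-algebra with residue field $\ell$; equivalently (since we're in characteristic zero, so finite extensions are separable) that $\mathrm{Spec}$ of this algebra is connected — i.e. it has no nontrivial idempotents. So the heart of the matter is controlling idempotents.

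**Reduction to the local case.** First I would reduce to the situation where $\pi_0(A)$ is local. Given a residue field $\pi_0(A) \to \ell$ with kernel $\mathfrak{p}$, I'd like to base change $A$ along this, but that's not an $\e{\infty}$-operation in general. Instead, I'd replace $A$ by a localization/completion: the formation of $A//x$ commutes with base change in the sense that $(A \otimes_{\mathbb{Q}[t]} \mathbb{Q})$ is stable under tensoring, so after localizing $\pi_0(A)$ at $\mathfrak{p}$ and completing (the relevant ring spectrum operation, cf. \cite[\S 4]{DAGProp}), I reduce to $A$ noetherian rational with $\pi_0(A)$ a complete local ring. The subtle point here is making sure localization and completion at the level of ring spectra don't disturb the structure — but by the noetherian hypothesis these operations behave well on homotopy groups, which is exactly what that hypothesis is for.

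**The local case.** Once $\pi_0(A)$ is local with residue field $\ell$, I need to show $\pi_0(A//x)$ is local with residue field $\ell$. It is a quotient ring of a noetherian-ish ring, and by \eqref{ses} its homotopy in degree $0$ sits in an extension $0 \to \pi_0(A)/x \to \pi_0(A//x) \to (\ker x)|_{\pi_{-1}(A)} \to 0$; since $A$ is connective-ish... actually one cannot assume connectivity, so in general $\pi_0(A//x)$ is an extension of a submodule of $\pi_{-1}(A)$ by $\pi_0(A)/x\pi_0(A)$ as modules over $\pi_0(A)/x\pi_0(A)$. The crucial claim is that this extension has no idempotents beyond those of $\pi_0(A)/x\pi_0(A)$, which (as $\pi_0(A)$ is local) is itself local hence idempotent-free. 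This is precisely what \Cref{purelyinsepart} delivers: the map $\pi_0(A)/(x) \to \pi_0(A//x)$ induces an isomorphism on $\idem$. Since $\pi_0(A)/(x)$ is a quotient of the local ring $\pi_0(A)$, it is local, hence connected, hence so is $\pi_0(A//x)$; and its residue field is $\ell$ because $\pi_0(A)/(x)$ surjects onto $\ell$. Finiteness of the module $\pi_0(A//x)$ over $\pi_0(A)/(x)$ follows from the noetherian hypothesis applied to the extension in \eqref{ses} together with finite generation of $\pi_{-1}(A)$.

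**Main obstacle.** The genuine work is the reduction step: justifying that passing to the complete local ring at a prime $\mathfrak{p} \subset \pi_0(A)$ can be realized at the level of $\e{\infty}$-rings in a way compatible with $(-)//x$, and that the resulting ring spectrum is still noetherian rational so that \Cref{purelyinsepart} applies. I expect this to lean on the completion formalism of \cite{DAGProp} and the fact — guaranteed by the noetherian hypothesis — that completion affects $\pi_*$ by the algebraic completion. The idempotent computation \Cref{purelyinsepart} then does the rest almost formally, and characteristic zero (separability of finite extensions) converts "no idempotents after base change to $\ell$" into "local with residue field $\ell$", which is the stated criterion.
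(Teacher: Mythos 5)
Your overall skeleton matches the paper's (finiteness from \eqref{ses}, checking the criterion fiberwise, localizing and completing at a prime, and using \Cref{purelyinsepart} to control idempotents), but there is a genuine gap at the decisive step. You assert that, for the finite $\ell$-algebra $F = \pi_0(A//x) \otimes_{\pi_0(A)} \ell$, being ``local with residue field $\ell$'' is \emph{equivalent}, in characteristic zero, to $\spec F$ being connected, i.e.\ to $F$ having no nontrivial idempotents. That equivalence is false, and it is exactly the point of the proposition. Connectedness over $\ell$ does not rule out a residue field extension: $F = \ell'$ a nontrivial finite (separable) field extension of $\ell$ is local and idempotent-free, yet $\ell \to \ell'$ is not a universal homeomorphism. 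What characteristic zero buys (and what the paper's Corollary actually says) is that ``local with residue field $\ell$'' is equivalent to \emph{geometric} connectedness, i.e.\ no idempotents after base change to $\overline{\ell}$ (equivalently, to all finite extensions $\ell'/\ell$). Consequently your concluding sentence in the local case --- ``its residue field is $\ell$ because $\pi_0(A)/(x)$ surjects onto $\ell$'' --- is a non sequitur: $\pi_0(A//x)$ is a finite ring extension of $\pi_0(A)/(x)$ by \eqref{ses}, and nothing in your argument prevents its residue field from being a proper finite extension of $\ell$. Your use of \Cref{purelyinsepart} only yields that the fiber is connected (hence local artinian), which is strictly weaker than what the criterion demands.

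The paper closes precisely this hole, and this is the real reason it completes $A$: after completion, \Cref{secresfld} (the Cohen structure theorem in spectra) makes $A$ an $\e{\infty}$-$k$-algebra over the residue field $k$, so one may replace $A$ by $A \otimes_k k'$ for finite extensions $k'/k$ and apply the idempotent lemma again after this base change. Connectedness of the fiber after every such finite extension of the residue field is what gives ``local with residue field $k$'' (purely inseparable, hence trivial in characteristic zero). In your write-up the completion is invoked only so that homotopy groups behave well, and the $k$-algebra structure is never used, so this base-change step is missing and cannot be recovered from what you have written. To repair the proof, insert exactly this step: after completing, use \Cref{secresfld} to get the $k$-algebra structure, check that $A \otimes_k k'$ still satisfies the hypotheses of \Cref{purelyinsepart}, and deduce that the fiber has no idempotents after every finite extension $k'/k$; the rest of your argument then goes through as in the paper.
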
 

\begin{proof} 
We already know that $\pi_0(A//x)$ is a finitely generated
$\pi_0(A)/(x)$-module, by the short exact sequence \eqref{ses}. 
We will check that the map is a finite universal homeomorphism fiberwise at each
prime. 

Fix a prime ideal $\mathfrak{p}$ of $\pi_0(A)/(x)$.
Let $x_1, \dots, x_n \in  \pi_0(A)$ project to generators of $\mathfrak{p}$.
Localizing $A$ at
$\mathfrak{p}$, we may assume that $\pi_0(A)$ is local with maximal ideal
$\mathfrak{p}$. By completing $A$, we may assume that 
that $A$ admits the structure of an $\e{\infty}$-$k$-algebra for $k$ the
residue field of $\pi_0(A)$, in view of \Cref{secresfld}. 

We need to show that 
the map of commutative rings 
\begin{equation} \label{mapofcommrings} \pi_0(A)/(x, x_1, \dots, x_n) \to
\pi_0(A//x)/(x_1, \dots, x_n)  \end{equation}
is a finite universal homeomorphism.
The left-hand-side of \eqref{mapofcommrings} is the residue field
$k$
of $\pi_0(A)$ at the maximal ideal, and the right-hand-side is a finite module
over the left-hand-side and is in particular a product of local artinian
$k$-algebras. 
By replacing  $A$ with $A \otimes_k
k'$ for a finite extension $k'/k$, we may assume  that each of the residue fields of
the left-hand-side of
\eqref{mapofcommrings} is $k$ itself. Thus, it suffices to show
$\pi_0(A//x)/(x_1, \dots, x_n)$ has no nontrivial idempotents in this case.
As a result, our claim follows from
\Cref{purelyinsepart},
which implies that 
the connected components of 
$\spec \pi_0(A//x)/(x_1, \dots, x_n)$ are in bijection with those of 
$\spec \pi_0(A//(x, x_1, \dots, x_n))$, and in turn with those of $\spec
\pi_0(A)/(x, x_1, \dots, x_n)$, while the latter is just a point. 
\end{proof} 

By induction (and transitivity), one obtains an analogous result for any
finite sequence of elements in $\pi_0 A$. 
Moreover, by replacing $A$ with $A[t_2^{\pm 1}]$, we can thus obtain a result for quotients
by even degree elements. 
We find: 

\begin{theorem} 
\label{radicial2}
Let $A$ be a rational noetherian $\e{\infty}$-ring and let $x_1, \dots, x_n \in
\pi_{\mathrm{even}}(A)$ be a sequence of elements. Then the map 
\[ \pi_{\mathrm{even}}(A)/(x_1 , \dots, x_n) \to \pi_{\mathrm{even}}(A//(x_1,
\dots, x_n)),  \]
is a finite universal homeomorphism. 
\end{theorem}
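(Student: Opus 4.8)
I would bootstrap from \Cref{nilpextmod}, which already settles the case of a single element of degree zero, first to arbitrary finite sequences in $\pi_0$ and then, via the periodicity trick, to finite sequences in $\pi_{\mathrm{even}}$. So first treat a sequence $x_1,\dots,x_n\in\pi_0(A)$ by induction on $n$, with $n=1$ being exactly \Cref{nilpextmod}. For the inductive step put $B=A//(x_1,\dots,x_{n-1})$, so that $A//(x_1,\dots,x_n)=B//\bar{x}_n$, where $\bar{x}_n\in\pi_0(B)$ is the image of $x_n$. The first thing to check is that $B$ is again rational and noetherian, so that \Cref{nilpextmod} may be re-applied to the pair $(B,\bar x_n)$: rationality is automatic, and for noetherianness one verifies, one cell at a time, that if $A$ is rational noetherian and $x\in\pi_0(A)$ then, by the short exact sequence \eqref{ses}, $\pi_{\mathrm{even}}(A//x)$ is a finite module over the noetherian ring $\pi_{\mathrm{even}}(A)/(x)$ (hence is itself noetherian) while $\pi_{\mathrm{odd}}(A//x)$ is finitely generated over $\pi_{\mathrm{even}}(A//x)$.

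Granting this, \Cref{nilpextmod} shows that $\pi_0(B)/(\bar x_n)\to\pi_0(B//\bar x_n)=\pi_0(A//(x_1,\dots,x_n))$ is a finite universal homeomorphism, and the inductive hypothesis shows the same for $\pi_0(A)/(x_1,\dots,x_{n-1})\to\pi_0(B)$. Base-changing the latter along the quotient by $\bar x_n$ (universal homeomorphisms and finite maps are both stable under base change) gives that $\pi_0(A)/(x_1,\dots,x_n)\to\pi_0(B)/(\bar x_n)$ is a finite universal homeomorphism; composing with the first map and using that finite universal homeomorphisms are closed under composition (immediate from the definition, cf. \cite[Tag 0480]{stacks-project}) shows $\pi_0(A)/(x_1,\dots,x_n)\to\pi_0(A//(x_1,\dots,x_n))$ is one as well, completing the induction.

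Finally I would reduce the even-degree case to this by passing to $A[t_2^{\pm 1}]$, which is again rational noetherian because $\pi_{\mathrm{even}}(A[t_2^{\pm 1}])=\pi_{\mathrm{even}}(A)[t_2^{\pm 1}]$ and $\pi_{\mathrm{odd}}(A[t_2^{\pm 1}])=\pi_{\mathrm{odd}}(A)[t_2^{\pm 1}]$. Multiplication by powers of the invertible class $t_2$ gives a ring isomorphism $\pi_{\mathrm{even}}(A)\cong\pi_0(A[t_2^{\pm 1}])$ carrying $x_i\in\pi_{2k_i}(A)$ to the degree-zero element $x_i t_2^{-k_i}$ and the ideal $(x_1,\dots,x_n)$ to $(x_1 t_2^{-k_1},\dots,x_n t_2^{-k_n})$; applying the same twist to the pushout square defining $//$ yields an equivalence of $\e{\infty}$-rings $A[t_2^{\pm 1}]//(x_1 t_2^{-k_1},\dots,x_n t_2^{-k_n})\simeq\bigl(A//(x_1,\dots,x_n)\bigr)[t_2^{\pm 1}]$, the essential point being that $\Sigma^{2k_i}\mathbb{Q}[t_2^{\pm 1}]\simeq\mathbb{Q}[t_2^{\pm 1}]$ as $\mathbb{Q}[t_2^{\pm 1}]$-modules (since $t_2$ is a unit), which identifies the free $\e{\infty}$-$\mathbb{Q}[t_2^{\pm 1}]$-algebra on a degree-$2k_i$ class with the free one on a degree-zero class. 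Taking $\pi_0$ and using $\pi_0\bigl(-[t_2^{\pm 1}]\bigr)\cong\pi_{\mathrm{even}}(-)$, the preceding paragraphs --- applied to the degree-zero elements $x_i t_2^{-k_i}$ of the rational noetherian ring $A[t_2^{\pm 1}]$ --- say exactly that $\pi_{\mathrm{even}}(A)/(x_1,\dots,x_n)\to\pi_{\mathrm{even}}(A//(x_1,\dots,x_n))$ is a finite universal homeomorphism.

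The real content --- \Cref{nilpextmod} --- is already in hand, so the remaining work is organizational. The step that most needs care is the persistence of the noetherian hypothesis throughout the induction (without it \Cref{nilpextmod} could not be re-invoked on $B$), and, more minorly, the clean identification of the iterated one-cell quotient after inverting $t_2$; neither is difficult, but both are indispensable.
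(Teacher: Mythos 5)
Your proposal is correct and follows essentially the same route as the paper, which also deduces \Cref{radicial2} from \Cref{nilpextmod} by induction and transitivity for sequences in $\pi_0$ and then passes to $A[t_2^{\pm 1}]$ to handle even-degree elements. The details you supply (persistence of the noetherian hypothesis via \eqref{ses}, stability of finite universal homeomorphisms under base change and composition, and the identification of the twisted quotient after inverting $t_2$) are exactly what the paper leaves implicit.
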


\section{Degree $-1$ elements}

\label{degminusonesec}
We will also encounter odd degree elements in homotopy, and thus, in this
section, we  consider
the free $\e{\infty}$-$\mathbb{Q}$-algebra $\sym^* {\mathbb{Q}}[-1]$ on a generator in degree $-1$. 
It is the purpose of this section to use the coincidence
(\Cref{deg-1coincidence}) that $\sym^* \mathbb{Q}[-1]
\simeq C^*(S^1; \mathbb{Q})$ to prove certain basic facts (in
\Cref{sec:comparewithloc}) about $\sym^* \mathbb{Q}[-1]
\simeq C^*(S^1; \mathbb{Q})$-modules, and ultimately about the construction $A//y$
where $A$ is a rational $\e{\infty}$-ring and $y \in \pi_{-1}(A)$.

\subsection{The free $\e{\infty}$-ring on $k[-1]$}
Let $k$ be a field of characteristic zero, which we will work over. 
Recall that the free $\e{\infty}$-$k$-algebra on $k[-1]$ is 
\[ \sym^*k[-1] \simeq \bigoplus_{n \geq 0} \left(k[-1]\right)^{\otimes n}_{h \Sigma_n}.  \]
Here $k[-1]^{\otimes n} \simeq k[-n]$, and the
$\Sigma_n$-action is via the sign representation. 
For $n \geq 2$, this action is nontrivial, and it follows that the homotopy
coinvariants are \emph{zero}. In particular, 
we find: 
\begin{corollary} 
For $\mathrm{char} \ k = 0$, the homotopy groups of $\sym^* k[-1]$
are given by:
\[ \pi_i ( \sym^*  k[-1]) \simeq \begin{cases} 
k&  \text{if }i = 0\\
k & \text{if } i =  -1 \\
0 & \text{otherwise}
 \end{cases} ,\]
 and the multiplication is determined (``square zero'' in degree $-1$). 
\end{corollary}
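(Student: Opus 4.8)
The plan is to compute the homotopy groups of $\sym^* k[-1]$ by direct inspection of the summand-by-summand decomposition
\[
\sym^* k[-1] \simeq \bigoplus_{n \geq 0} \left(k[-1]^{\otimes n}\right)_{h\Sigma_n},
\]
and then to pin down the ring structure. First I would observe that $k[-1]^{\otimes n} \simeq k[-n]$, with $\Sigma_n$ acting through the sign representation on the $n$-fold tensor power of the shift generator; concretely, after identifying $k[-1]^{\otimes n}$ with an Eilenberg--MacLane spectrum $\Sigma^{-n} Hk$, a transposition of two tensor factors acts by $-1$ (this is the Koszul sign rule, and since $k[-1]$ is a shift of the unit it is literally the sign character). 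So we are computing $H\!\left(k^{\mathrm{sgn}}\right)_{h\Sigma_n}$ up to the shift $[-n]$, i.e. the group homology $H_*(\Sigma_n; k^{\mathrm{sgn}})$ placed in degrees $\geq -n$.

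The key point is that for $n \geq 2$ and $\mathrm{char}\, k = 0$, the group $\Sigma_n$ is finite and $k$ is a field of characteristic zero, so $H_i(\Sigma_n; M) = 0$ for all $i > 0$ and any $k[\Sigma_n]$-module $M$ (all such modules are projective, equivalently one can average). In degree zero, $H_0(\Sigma_n; k^{\mathrm{sgn}}) = (k^{\mathrm{sgn}})_{\Sigma_n}$, the coinvariants, which vanish for $n \geq 2$ precisely because the sign action of a transposition is by $-1 \neq 1$. Hence $\left(k[-1]^{\otimes n}\right)_{h\Sigma_n} \simeq 0$ for $n \geq 2$. The $n = 0$ term contributes $k$ in degree $0$ (the unit) and the $n = 1$ term contributes $k[-1]$ with trivial $\Sigma_1$-action, so summing over $n$ gives exactly
\[
\pi_i(\sym^* k[-1]) \simeq \begin{cases} k & i = 0 \\ k & i = -1 \\ 0 & \text{otherwise.}\end{cases}
\]

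Finally, for the multiplicative statement: the product of the degree $-1$ generator $\xi$ with itself lands in $\pi_{-2}(\sym^* k[-1])$, which is $0$ by the computation just made, so $\xi^2 = 0$; and the degree $0$ part is $k$ with its unit ring structure. Thus the ring is the trivial square-zero (exterior) extension $k \oplus k[-1]$, i.e. $k[\xi]/\xi^2$ with $|\xi| = -1$, which is what ``the multiplication is determined (`square zero' in degree $-1$)'' asserts. I expect the only genuine subtlety to be the verification that $\Sigma_n$ acts by the sign representation on $k[-1]^{\otimes n}$ rather than trivially --- i.e. correctly bookkeeping the Koszul signs for an odd-degree generator --- but this is standard and follows from the symmetry isomorphism for the smash product of spectra being given on an odd shift of the sphere by the degree $-1$ map. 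Everything else is elementary characteristic-zero group homology.
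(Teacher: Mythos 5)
Your proposal is correct and follows essentially the same route as the paper: the paper also uses the decomposition $\sym^* k[-1] \simeq \bigoplus_{n \geq 0} (k[-1])^{\otimes n}_{h\Sigma_n}$, identifies the $\Sigma_n$-action on $k[-1]^{\otimes n} \simeq k[-n]$ as the sign representation, and concludes that the homotopy coinvariants vanish for $n \geq 2$ in characteristic zero. Your additional remarks on group homology and on $\xi^2 = 0$ landing in $\pi_{-2} = 0$ just spell out the same argument in slightly more detail.
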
 
There are two other $\e{\infty}$-rings which have a similar multiplication
law on their homotopy groups: 
\begin{enumerate}
\item The cochain $\e{\infty}$-ring on $S^1$, $C^*(S^1; k)$.  
\item The square-zero $\e{\infty}$-ring $k \oplus k[-1]$. 
\end{enumerate}

\begin{proposition}
\label{deg-1coincidence}
Let $k$ be a field of characteristic zero. Then there are equivalences of
$\e{\infty}$-rings $\sym^* k[-1] \simeq C^*(S^1;k) \simeq k \oplus k[-1]$.
\end{proposition}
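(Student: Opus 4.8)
The plan is to establish the two equivalences separately, using in each case a universal-property argument. For the equivalence $\sym^* k[-1] \simeq C^*(S^1; k)$, I would start from the cofiber sequence $S^0 \to S^1_+ \to S^1$ of spaces (equivalently, the splitting $\Sigma^\infty_+ S^1 \simeq S^0 \oplus S^{-1}$ after passing to $k$-cochains, since $C^*(S^1; k) = \mathrm{Fun}(S^1_+, k)$ and $S^1$ rationally splits as a wedge of a point and a $1$-sphere). Thus as a $k$-module $C^*(S^1; k) \simeq k \oplus k[-1]$, and the inclusion of the summand $k[-1]$ gives a pointed map $S^{-1} \to C^*(S^1; k)$, i.e.\ an element of $\pi_{-1} C^*(S^1;k)$, hence by the universal property of the free $\e{\infty}$-algebra a map of $\e{\infty}$-$k$-algebras $\sym^* k[-1] \to C^*(S^1; k)$. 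By the Corollary above, $\pi_* \sym^* k[-1]$ is exactly $k$ in degrees $0$ and $-1$ and zero elsewhere, and by construction this map is an isomorphism on $\pi_0$ and on $\pi_{-1}$; it is therefore an equivalence.

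For the equivalence $\sym^* k[-1] \simeq k \oplus k[-1]$, I would use the same strategy in the other direction: the square-zero extension $k \oplus k[-1]$ is an $\e{\infty}$-$k$-algebra whose underlying $k$-module contains $k[-1]$ as a summand, and the inclusion $k[-1] \hookrightarrow k \oplus k[-1]$ is a map of $k$-modules classifying an element of $\pi_{-1}(k \oplus k[-1])$. By the universal property of $\sym^*$ this yields a map of $\e{\infty}$-$k$-algebras $\sym^* k[-1] \to k \oplus k[-1]$, which again is visibly an isomorphism on $\pi_0$ and $\pi_{-1}$ and hence, by the computation of $\pi_* \sym^* k[-1]$, an equivalence. (Alternatively one composes with the first equivalence and checks directly that $C^*(S^1; k) \simeq k \oplus k[-1]$ as $\e{\infty}$-rings, e.g.\ via the cofiber sequence $S^0 \to S^1_+ \to S^1$ and the observation that the relevant $\e{\infty}$-structure is forced by degree reasons.)

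The key point making all of this work is the vanishing of the higher symmetric powers: for $n \geq 2$ the $\Sigma_n$-action on $k[-1]^{\otimes n} \simeq k[-n]$ is through the sign representation, which is nontrivial, so the homotopy orbits $(k[-n])_{h\Sigma_n}$ vanish rationally. This is what collapses $\sym^* k[-1]$ down to a two-cell complex and, crucially, forces any $\e{\infty}$-$k$-algebra map out of it that is an equivalence on $\pi_0$ and $\pi_{-1}$ to be an equivalence. The main obstacle I anticipate is not the construction of the comparison maps — those are immediate from universal properties — but rather being careful that each target ($C^*(S^1; k)$ and $k \oplus k[-1]$) genuinely receives a $\pi_{-1}$-class inducing an \emph{iso} on $\pi_{-1}$ and the identity on $\pi_0$, and that no subtlety hides in identifying the multiplicative ($\e{\infty}$) structure; but since both targets have homotopy concentrated in the same two degrees with the only possible nontrivial product (the square in degree $-1$) being forced to vanish by graded-commutativity over a field of characteristic zero, there is no room for the structures to differ, and a map that is an equivalence on homotopy groups is an equivalence of $\e{\infty}$-rings.
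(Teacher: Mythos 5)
Your proposal is correct and follows essentially the same route as the paper: use the universal property of $\sym^*$ to produce maps $\sym^* k[-1] \to C^*(S^1;k)$ and $\sym^* k[-1] \to k \oplus k[-1]$ hitting a generator of $\pi_{-1}$, then conclude they are equivalences by inspecting homotopy groups, which collapse because the sign action kills the higher symmetric powers rationally. The extra care you take in identifying $\pi_* C^*(S^1;k)$ and ruling out multiplicative subtleties is just an expansion of the paper's ``by inspection of $\pi_*$.''
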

\begin{proof} In fact, we can produce maps
\[ \sym^* k[-1] \to C^*(S^1; k), \quad \sym^*
k[-1] \to k\oplus k[-1],  \]
such that they are isomorphisms on $\pi_{-1}$ (using the universal property of
$\sym^*$), and therefore are equivalences of $\e{\infty}$-rings by inspection
of  $\pi_*$.
So, all three are equivalent. 
\end{proof}

\begin{remark} 
If one worked over $\mathbb{F}_p$, the symmetric algebra $\sym^*(
\mathbb{F}_p[-1])$ is definitely much too large to be either $C^*(S^1;
\mathbb{F}_p)$ or $\mathbb{F}_p \oplus \mathbb{F}_p[-1]$, but 
$C^*(S^1; \mathbb{F}_p)$ and $\mathbb{F}_p \oplus \mathbb{F}_p[-1]$ have the
same square-zero multiplication on homotopy groups. They are \emph{not}
equivalent as $\e{\infty}$-rings under $\mathbb{F}_p$ because the zeroth
reduced power $\mathcal{P}^0$ acts as the identity on $\pi_{-1}$ of the former
and zero on the latter. \end{remark}

More generally, if $n$ is any \emph{odd} integer, we can repeat the above
reasoning: 

\begin{corollary} 
\label{oddcoinc}
If $n$ is odd, then we have equivalences of $\e{\infty}$-rings
\begin{equation}\label{oddfree}  \sym^* k[-n]  \simeq k\oplus
k[-n],  \end{equation}
and, if $n > 0$, then these are equivalent to cochains on the $n$-sphere, $C^*(S^n; k)$. 
\end{corollary}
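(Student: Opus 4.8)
The plan is to follow the proof of \Cref{deg-1coincidence} almost verbatim, the only new input being a homotopy-orbit vanishing statement for odd $n$. First I would compute $\pi_*\sym^* k[-n]$. The degree-$j$ summand of $\sym^* k[-n]$ is $(k[-n]^{\otimes j})_{h\Sigma_j}$. Since $k[-n]^{\otimes j}\simeq k[-jn]$ and $n$ is odd, a transposition of two tensor factors acts on this shifted copy of $k$ by the Koszul sign $(-1)^{n^2}=-1$, so the $\Sigma_j$-action is through the sign character; i.e.\ the summand is $(k(\mathrm{sgn})[-jn])_{h\Sigma_j}$. For $j\ge 2$ the sign representation of $\Sigma_j$ over a field of characteristic zero has vanishing homology in every degree: its coinvariants vanish because a transposition acts by $-2$, which is invertible, and higher group homology vanishes rationally. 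Hence each such summand is contractible, so $\pi_i(\sym^* k[-n])=k$ for $i\in\{0,-n\}$ and $0$ otherwise, and since $\pi_{-2n}=0$ the multiplication on the class in degree $-n$ is forced to be square zero.

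Next I would build comparison maps. By the universal property of $\sym^*$, a map of $\e{\infty}$-$k$-algebras $\sym^* k[-n]\to B$ is the same datum as a class in $\pi_{-n}(B)$. Applying this to $B=k\oplus k[-n]$ with the tautological degree $-n$ class, and, when $n>0$, to $B=C^*(S^n;k)$ with a generator of $\pi_{-n}C^*(S^n;k)\cong H^n(S^n;k)\cong k$, I obtain maps $\sym^* k[-n]\to k\oplus k[-n]$ and $\sym^* k[-n]\to C^*(S^n;k)$. Each is an isomorphism on $\pi_{-n}$ by construction, and an isomorphism on $\pi_0$ since all three rings have $\pi_0=k$ generated by the unit; as $C^*(S^n;k)$ and $k\oplus k[-n]$ both have homotopy concentrated in degrees $0$ and $-n$ (for the cochain algebra this is $H^{-*}(S^n;k)$, using $n>0$), these maps are equivalences of spectra, hence of $\e{\infty}$-rings.

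The only substantive point is the vanishing of $(k(\mathrm{sgn})[-jn])_{h\Sigma_j}$ for $j\ge 2$, which is exactly where the hypotheses ``$n$ odd'' and ``$\mathrm{char}\,k=0$'' are both used; for even $n$ one instead gets a polynomial ring, in the spirit of \Cref{modxmodxgr}. Everything else is the formal ``compare $\pi_*$'' argument already used for \Cref{deg-1coincidence}, so I expect no real obstacle beyond this computation.
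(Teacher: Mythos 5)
Your proposal is correct and matches the paper's argument, which likewise computes $\pi_*\sym^* k[-n]$ by noting the sign action kills the homotopy coinvariants $(k[-jn])_{h\Sigma_j}$ for $j\ge 2$ in characteristic zero and then compares with $k\oplus k[-n]$ and $C^*(S^n;k)$ via the universal property of $\sym^*$ and inspection of $\pi_*$ (the paper simply says to ``repeat the above reasoning'' from the $n=1$ case). The only quibble is phrasing: a transposition acts by $-1$ on the shifted copy of $k$, so the coinvariants are killed because $2$ is invertible, which is what you mean.
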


\subsection{Descent properties and $A//x$}

If $A$ is a rational $\e{\infty}$-ring and $x \in \pi_*(A)$ is an element in
an \emph{even} degree, then we saw in \Cref{modxmodxgr} that the construction $A//x$ was
reasonably hands-on: it gave us the underlying $A$-module $A/x$. If $x$ is
in 
odd degree, however, $A//x$ may be much bigger than $A$.
\begin{example} 
Let $x = 0 \in \pi_{-1}(A)$. Then $A//x \simeq A[t]$. 
\end{example} 
In fact, for $x $ in an odd degree, it is not  even a priori evident that if $A$ is nonzero, then
$A//x$ is also nonzero, even as $x^2 = 0$. We will prove this (and
more) using descent theory. 
We recall first a definition. 

\begin{definition} 
\label{admitsdescent}
Let $\phi\cl A \to A'$ be a morphism of $\e{\infty}$-rings. We say that 
$\phi$ \emph{admits descent} if the thick tensor-ideal that $A'$ generates, in
$\mathrm{Mod}(A)$, is all of $\mathrm{Mod}(A)$.
\end{definition} 

We
refer to \cite[\S 3-4]{galgp} for preliminaries on the notion of ``admitting
descent.'' Here is a simple example.
\begin{proposition} \label{quotdesc}
Let $A$ be a rational $\e{\infty}$-ring and let $x \in \pi_0 A$ be nilpotent.
Then the $\e{\infty}$-$A$-algebra $A//x$ admits descent over $A$. 
\end{proposition}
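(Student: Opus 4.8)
The plan is to use the explicit description of $A//x$ as an $A$-module together with a short thick-subcategory argument. By \Cref{modxmodx}, the underlying $A$-module of $A//x$ is the cofiber $A/x$ of $x\colon A\to A$. Recall (\Cref{admitsdescent}) that to say $A\to A//x$ admits descent means that the thick tensor-ideal generated by $A/x$ in $\md(A)$ is all of $\md(A)$. Since any thick tensor-ideal containing the unit $A$ is automatically the whole category (as $M\simeq A\otimes_A M$ for every $M$), and the thick subcategory generated by $A/x$ is contained in the thick tensor-ideal it generates, it suffices to prove that $A$ lies in the thick subcategory of $\md(A)$ generated by $A/x$.

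The key point is that the higher ``Moore objects'' $A/x^j$ are all built from $A/x$. Applying the octahedral axiom to the factorization $x^{j+1}=x\cdot x^j\colon A\to A\to A$ produces, for each $j\ge 1$, a cofiber sequence of $A$-modules
\[ A/x^j \longrightarrow A/x^{j+1} \longrightarrow A/x. \]
By induction on $j$ (the case $j=1$ being trivial), it follows that $A/x^j$ lies in the thick subcategory generated by $A/x$ for every $j$.

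Now I would invoke nilpotence: pick $n$ with $x^n=0$. Then $A/x^n$ is, by the definition of the cofiber construction, the cofiber of the zero map $A\xrightarrow{0}A$, namely $A\oplus\Sigma A$; in particular $A$ is a retract of $A/x^n$. By the previous paragraph $A/x^n$ lies in the thick subcategory generated by $A/x$, hence so does its retract $A$, and we are done.

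There is no serious obstacle here; the only things to be careful about are the bookkeeping in the octahedral argument — since $x$ lies in degree $0$, no suspensions enter the terms of the displayed cofiber sequence — and the formal observations that a thick tensor-ideal containing the unit is everything and that the thick subcategory generated by $A/x$ sits inside the corresponding thick tensor-ideal. Everything else is immediate from the constructions of \S 2.
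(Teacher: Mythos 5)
Your proof is correct and is essentially the paper's own argument: the paper likewise uses the octahedral axiom to show the thick subcategory generated by $A/x$ (which is the underlying $A$-module of $A//x$ by \Cref{modxmodx}) contains $A/x^2, A/x^3, \dots, A/x^N$, and then uses $x^N=0$ to identify $A/x^N \simeq A \oplus \Sigma A$, so that $A$ itself lies in this thick subcategory. The only difference is that you spell out the routine bookkeeping (the tensor-ideal versus thick-subcategory point and the retract step) which the paper leaves implicit.
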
 
\begin{proof} 
In fact, thanks to the octahedral axiom, the thick subcategory of $\md(A)$
generated by the $A$-module $A/x$ (which is equivalent to the underlying
$A$-module of $A//x$) contains $A/x^2, A/x^3, \dots, $, and eventually $A/x^N$
where $N$ is so large that $x^N = 0$. But $A/x^N \simeq A \oplus \Sigma A$ for
such $N$, and therefore the thick subcategory generated by $A//x$ actually
contains $A$. 
\end{proof}

Let $A$ be an $\e{\infty}$-ring and let $y \in \pi_{n}(A)$, with $n$ odd. Then
$y^2 = 0$, so that one would hope that the analog 
of \Cref{quotdesc} would be automatic. That is, one would hope that $A//y
\simeq A \otimes_{\sym^*(\mathbb{Q}[n])} \mathbb{Q}$ admits descent over $A$.
A priori, 
it is harder to control this tensor product, because $\mathbb{Q}$ is no longer
a perfect $\sym^* ( \mathbb{Q}[n])$-module for $n$ odd, so one cannot
imitate the above argument. However, we can still
prove the statement.  
\begin{proposition} 
\label{quotdegminusone}
If $A$ is nonzero and $y \in \pi_n A$ (for $n$ odd), then $A//y \in \clg_{A/}$
admits descent. In particular, $A//y \neq 0$.
\end{proposition}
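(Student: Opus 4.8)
The plan is to reduce the odd-degree case to a situation where we can exploit the coincidence $\sym^* \mathbb{Q}[-n] \simeq C^*(S^n;\mathbb{Q})$ from \Cref{oddcoinc}, since cochains on a sphere are a much more geometric object than a symmetric algebra and carry a natural descent structure. First I would observe that it suffices to treat the case $n < 0$: if $y \in \pi_n(A)$ with $n$ odd and positive, one can pass to $A[t_2^{\pm 1}]$ (the free $\e{\infty}$-$A$-algebra on an invertible degree two class), multiply $y$ by a power of $t_2^{-1}$ to land in degree $-1$, and use the fact that $A[t_2^{\pm 1}]$ is faithful over $A$ (it admits descent, being a polynomial-type extension), so that $A//y$ admits descent over $A$ once its base change to $A[t_2^{\pm 1}]$ does. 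Moreover, it is enough to handle $n = -1$, again by twisting with $t_2$. So assume $y \in \pi_{-1}(A)$.

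Next I would set up the relevant tensor product. By \Cref{deg-1coincidence}, we have $\sym^*\mathbb{Q}[-1] \simeq C^*(S^1;\mathbb{Q})$, and the element $y$ is classified by a map $C^*(S^1;\mathbb{Q}) \to A$; then $A//y \simeq A \otimes_{C^*(S^1;\mathbb{Q})} \mathbb{Q}$, where $\mathbb{Q} \simeq C^*(\mathrm{pt};\mathbb{Q})$ via the inclusion of a point. The key point is that $C^*(S^1;\mathbb{Q}) \to \mathbb{Q}$ is the map on cochains induced by $\mathrm{pt} \hookrightarrow S^1$, and modules over $C^*(S^1;\mathbb{Q})$ are, by the descent-theoretic results promised in \S\ref{sec:comparewithloc}, closely comparable to local systems on $S^1$; under this comparison $\mathbb{Q}$ corresponds to the skyscraper/pushforward local system at a point of $S^1$. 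The crucial fact I would extract is that $\mathbb{Q}$ \emph{generates} $\mathrm{Mod}(C^*(S^1;\mathbb{Q}))$ as a thick tensor-ideal — equivalently, the pushforward of the constant local system along $\mathrm{pt} \to S^1$ generates local systems on $S^1$. This is true because $S^1$ is a finite connected CW complex with basepoint, so the constant sheaf on $S^1$ itself is built (via the cell structure / the cofiber sequence relating $S^1$ to its $0$-cell and $1$-cell) from pushforwards along the point, hence lies in the thick tensor-ideal generated by $\mathbb{Q}$; since the unit $C^*(S^1;\mathbb{Q})$ is in that ideal, the ideal is everything.

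Given that, the proof concludes formally: if $\mathbb{Q}$ generates $\mathrm{Mod}(C^*(S^1;\mathbb{Q}))$ as a thick tensor-ideal, then base-changing along $C^*(S^1;\mathbb{Q}) \to A$ shows that $A \otimes_{C^*(S^1;\mathbb{Q})} \mathbb{Q} = A//y$ generates $\mathrm{Mod}(A)$ as a thick tensor-ideal, i.e.\ $A//y$ admits descent over $A$ in the sense of \Cref{admitsdescent}; in particular $A//y$ cannot be zero, since the zero ring generates only the zero category while $\mathrm{Mod}(A) \neq 0$ as $A \neq 0$. The main obstacle I anticipate is making precise and citing correctly the comparison between $C^*(S^1;\mathbb{Q})$-modules and local systems on $S^1$, and verifying within that framework that the point-pushforward generates — i.e.\ pinning down exactly the statement from \S\ref{sec:comparewithloc} that does the work; the rest is a standard manipulation of thick tensor-ideals and faithful base change, together with the degree-shifting trick via $t_2$ to reduce the general odd degree to degree $-1$.
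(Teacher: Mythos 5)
Your proposal is correct, and its skeleton --- prove descent for the universal augmentation $\sym^*\mathbb{Q}[n]\to\mathbb{Q}$ and then transfer it to $A\to A//y$ by base change --- is the same as the paper's; what differs is how you handle the universal case, and you insert a reduction the paper does not need. The paper's proof is two lines: by \Cref{oddcoinc}, for \emph{any} odd $n$ (positive or negative) one has $\sym^*\mathbb{Q}[n]\simeq\mathbb{Q}\oplus\mathbb{Q}[n]$, a square-zero extension, so the fiber of the augmentation is itself a shift of the $\sym^*\mathbb{Q}[n]$-module $\mathbb{Q}$; the resulting two-term cofiber sequence puts the unit in the thick tensor-ideal generated by $\mathbb{Q}$, and base change finishes. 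In particular your preliminary twisting into degree $-1$ over $A[t_2^{\pm 1}]$ is sound but superfluous: it works (the projection onto the degree-zero summand makes $A$ a retract of $A[t_2^{\pm1}]$ in $\md(A)$, $A[t_2^{\pm1}]//y\simeq A[t_2^{\pm1}]//(t_2^{k}y)$ because $t_2$ is a unit, and ``admits descent'' is transitive and stable under base change by \cite[\S 3]{galgp}), but it costs three extra verifications and buys nothing, since the square-zero identification --- unlike the identification with $C^*(S^n;\mathbb{Q})$ --- is available for negative $n$ as well. Your cell-structure argument that $C^*(S^1;\mathbb{Q})\to\mathbb{Q}$ admits descent is also correct (equivalently: the Mayer--Vietoris fiber sequence $C^*(S^1;\mathbb{Q})\to\mathbb{Q}\times\mathbb{Q}\to\mathbb{Q}\times\mathbb{Q}$ exhibits the unit as built from point-evaluations, or, in local systems, the standard resolution $\mathrm{Ind}(\mathbb{Q})\xrightarrow{\,t-1\,}\mathrm{Ind}(\mathbb{Q})\to\underline{\mathbb{Q}}$ of the constant local system), and indeed this exact statement is quoted later in the section from \cite[Prop.\ 3.35]{galgp}, so you could simply have cited it. One small imprecision: under Construction~\ref{imbedmodlocsys} the module $\mathbb{Q}$ corresponds to the \emph{induced} local system (left Kan extension from a point, fiber $\bigoplus_{\mathbb{Z}}\mathbb{Q}$ with the shift monodromy), not to the right-adjoint ``skyscraper'' pushforward with fiber $\prod_{\mathbb{Z}}\mathbb{Q}$; this does not affect your argument, which only uses the two-term resolution, but the identification should be stated correctly.
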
 
\begin{proof} 
The map $\sym^* \mathbb{Q}[n] \to \mathbb{Q}$ admits descent, in view of the equivalence $\sym^* ( \mathbb{Q}[n]) \simeq
\mathbb{Q} \oplus \mathbb{Q}[n]$ of \Cref{oddcoinc}. 
It follows that the map $A \to A//y$ admits descent as well by base-change.
\end{proof} 
\begin{example}
\Cref{quotdegminusone}
definitely fails for $\e{\infty}$-rings under $\mathbb{F}_p$. 
For example, if $p = 2$, then odd degree elements can be invertible (take the
Tate spectrum $\mathbb{F}_2^{t \mathbb{Z}/2}$). If $p > 2$, odd degree elements 
square to zero but can still be ``resilient.'' In the Tate spectrum $\mathbb{F}_p^{t
\mathbb{Z}/p}$, we have
\[ \pi_* ( \mathbb{F}_p^{t \mathbb{Z}/p}) \simeq \mathbb{F}_p[t_2^{\pm 1}]
\otimes E(\alpha_{-1}), \]
where the exterior generator $\alpha_{-1}$ has the property that $\beta
\mathcal{P}^0 \alpha_{-1} = t_2^{-1}$ is invertible. Thus, even though $\alpha_{-1}$
squares to zero, a basic power operation goes from it to an invertible element.
It follows that in any $\e{\infty}$-ring under $\mathbb{F}_p^{t \mathbb{Z}/p}$,
if $\alpha$ maps to zero, the whole $\e{\infty}$-ring has to be zero. Such
phenomena can never happen in characteristic zero. 
\end{example}

\subsection{Comparison with local systems}
\label{sec:comparewithloc}
In this subsection, we 
give the most important (for this paper) application of 
\Cref{deg-1coincidence}. 
We will be able to  describe the $\infty$-category of
\emph{modules} over the free algebra $\sym^* k[-1] \simeq C^*(S^1; k)$ for
$\mathrm{char} k = 0$. 
In fact, let $k$ be any field, not necessarily of characteristic zero. We will describe modules over the cochain algebra $C^*(S^1; k)$. 
For example, we will be able to give a complete classification of all perfect
modules.

We first recall a basic construction from \cite[\S 7.2]{galgp}.

\begin{definition}
Let $X$ be a space and let $\mathcal{C}$ be an $\infty$-category. We define
$\mathrm{Loc}_X(\mathcal{C}) = \mathrm{Fun}(X, \mathcal{C})$ and refer to it
as the \emph{$\infty$-category of $\mathcal{C}$-valued local systems on $X$.}
\end{definition}

\begin{cons}
\label{imbedmodlocsys}
Let $A$ be an $\e{\infty}$-ring and let $\mathcal{C} = \mathrm{Mod}(A)$. 
Let $X$ be a finite complex. 
We have a natural fully faithful, symmetric monoidal imbedding
\[ \md( C^*(X; A)) \subset \loc_{X}( \md(A)), \]
from modules over the cochain $\e{\infty}$-ring $C^*(X; A)$ into 
local systems of $A$-modules on $X$, whose image in $\loc_{X}( \md(A))$
is the localizing subcategory generated by the unit. 
\end{cons}

Informally, the functor 
sends a $C^*(X; A)$-module $M$ to the $A$-module $M
\otimes_{C^*(X;A )} A$, which lives as a local system over
$X$ (since there is an $X$'s worth of evaluation maps $C^*(X; A)
\to A$). 

In general, it is somewhat subtle to test whether an object in
$\mathrm{Loc}_X(\md(A))$ belongs to the essential image of $\md( C^*(X; A))$. 
However, if $X = S^1$ then things simplify considerably. 
 Given a local system
$N \in \loc_{S^1}(\md(A))$, the evaluation $N_q$ (for a fixed
basepoint $q\in S^1$) is an $A$-module, and $N_q$ acquires a
monodromy
automorphism $\phi$
\[ \phi\colon N_q \simeq N_q,  \]
coming from a choice of generator of $\pi_1(S^1; q)$. 

\begin{proposition}[{\cite[Remark 7.9]{galgp}}] 
\label{unipotentcrit}
Given $N$ as above, then $N$ belongs to the image of $\md( C^*(S^1; A))$ if
and only if the action of $\phi - 1$ on the homotopy groups $\pi_*(N_q)$ is
locally nilpotent. 
\end{proposition}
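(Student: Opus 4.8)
The plan is to exploit the equivalence $C^*(S^1;A) \simeq A \oplus A[-1]$ (the $A$-linear analogue of \Cref{deg-1coincidence}, or rather its consequence after base change from $\mathbb{Q}$ — though note that for the statement of \Cref{unipotentcrit} we only need $k$ a field and the argument is really about the algebra structure) together with \Cref{imbedmodlocsys}, which tells us that the essential image of $\md(C^*(S^1;A))$ inside $\loc_{S^1}(\md(A))$ is exactly the \emph{localizing subcategory generated by the unit $A$}. So the task reduces to identifying, on the other side of the equivalence $\loc_{S^1}(\md(A)) \simeq \md(A)^{B\mathbb{Z}}$ (i.e.\ $A$-modules with an automorphism), which objects lie in the localizing subcategory generated by the trivial local system $A$, and checking this coincides with the locally-nilpotent-monodromy condition.

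First I would set up the dictionary: a local system $N$ on $S^1$ is the data of an $A$-module $N_q$ together with the monodromy $\phi \colon N_q \xrightarrow{\sim} N_q$, and the unit corresponds to $(A, \mathrm{id})$. For the "only if" direction, I would argue that the class of local systems with locally nilpotent $\phi - 1$ on $\pi_*$ is a localizing subcategory of $\loc_{S^1}(\md(A))$ (it is closed under shifts, cofibers, retracts, and filtered colimits — for cofibers one uses the long exact sequence on homotopy and the fact that a nilpotent-action extension of nilpotent-action pieces is again locally nilpotent; for filtered colimits one uses that homotopy commutes with filtered colimits and "locally nilpotent" is a pointwise condition) and that it contains the unit (where $\phi - 1 = 0$). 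Hence it contains the whole localizing subcategory generated by the unit, which by \Cref{imbedmodlocsys} is the image of $\md(C^*(S^1;A))$.

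For the "if" direction — the substantive one — given $N$ with $\phi - 1$ locally nilpotent on $\pi_*(N_q)$, I want to show $N$ is built from the unit under colimits. The key input is that a single free module of finite rank with unipotent monodromy can be filtered, and more robustly: one can realize $N$ as a filtered colimit of objects admitting finite filtrations whose graded pieces are shifts of $A$ (equipped with trivial monodromy). Concretely, I would consider the "weight" or "monodromy" filtration: since $\phi - 1$ acts locally nilpotently on homotopy, one produces a sequence of sub-local-systems or, better, writes $N$ as a colimit over the kernels of $(\phi-1)^n$ acting appropriately, each successive quotient having $\phi$ acting as the identity on homotopy, hence being (after the identification) a sum of shifts of the unit up to extensions — and extensions of units stay in the localizing subcategory. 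Alternatively, and perhaps cleaner, one inverts \Cref{imbedmodlocsys}: an object of $\loc_{S^1}(\md(A))$ lies in the image of $\md(C^*(S^1;A))$ iff it is \emph{generated} by the unit under colimits, and one checks the cofiber of $A \to N$ along any map, iterated, eventually exhausts $N$ precisely when monodromy is locally nilpotent — the obstruction to a map $A^{\oplus ?} \to N$ hitting all of $\pi_*(N)$ compatibly is measured by the monodromy acting nontrivially "at infinity."

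The main obstacle I expect is the "if" direction: controlling the passage from the pointwise/homotopy-level condition (local nilpotence of $\phi - 1$ on $\pi_*$) to the spectrum-level categorical condition (membership in the localizing subcategory generated by the unit). One has to be careful that local nilpotence on homotopy groups need not mean $\phi - 1$ is nilpotent on the spectrum $N_q$ itself — only that it is in a pro-sense — so the filtration argument must be organized as a (sequential) colimit rather than a finite filtration, and one must verify the colimit is preserved under the equivalence with $\md(C^*(S^1;A))$. Since \Cref{galgp} is cited for exactly this remark, I would in practice lean on the $A \oplus A[-1]$ description to make the cofiber sequence $A[-1] \xrightarrow{\phi - 1} N_q \to N_q \to A \xrightarrow{\phi-1} \cdots$ (the bar-type resolution computing $N_q \otimes_{C^*(S^1;A)} A$, or rather its inverse) completely explicit, reducing everything to a computation with a two-term complex and its $(\phi-1)$-adic behavior.
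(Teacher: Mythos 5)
First, a point of comparison: the paper does not actually prove \Cref{unipotentcrit} --- it is quoted from \cite[Remark 7.9]{galgp} --- so your argument has to stand on its own. Your overall strategy is the right one: by \Cref{imbedmodlocsys} the question is whether the local systems $N$ with $\phi-1$ locally nilpotent on $\pi_*(N_q)$ are exactly the localizing subcategory of $\loc_{S^1}(\md(A))$ generated by the unit, and your ``only if'' direction is complete as sketched (those $N$ form a localizing subcategory containing the unit; the long-exact-sequence check for cofibers and the filtered-colimit check both work).

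The gap is in the ``if'' direction, and it sits exactly where you flagged it. The step ``each successive quotient having $\phi$ acting as the identity on homotopy, hence being \dots a sum of shifts of the unit up to extensions'' is not justified: if $\phi-1$ vanishes on $\pi_*(N_q)$ it need not be null as a map of spectra, so such an $N$ does not visibly split into, or admit a finite filtration by, trivial local systems --- showing that it lies in the localizing subcategory generated by the unit is essentially the original problem again. (Likewise, if you filter $N$ by the fibers of $(\phi-1)^n\colon N\to N$, the graded pieces are $\mathrm{fib}(\phi-1)$, for which the membership question is just as hard.) What closes the gap is the standard torsion argument your last paragraph gestures at but does not carry out. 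Identify $\loc_{S^1}(\md(A))$ with modules over the group algebra $B=A\otimes \Sigma^\infty_+\mathbb{Z}$, so that $\pi_*(B)\simeq \pi_*(A)[t^{\pm 1}]$ with $t$ acting on a module by the monodromy $\phi$; the unit local system is then the cofiber $B/(t-1)$ (Koszul resolution of the trivial module over the group ring of $\mathbb{Z}$). The claim becomes: the localizing subcategory of $\md(B)$ generated by $B/(t-1)$ is exactly the class of $M$ with $M[(t-1)^{-1}]=0$, i.e.\ with $t-1$ locally nilpotent on $\pi_*(M)$. For the nontrivial inclusion one uses cofibers of powers rather than fibers: $M[(t-1)^{-1}]=0$ gives $M\simeq \varinjlim_n \Sigma^{-1} M/(t-1)^n$; each $M/(t-1)^n$ is a finite iterated extension of $M/(t-1)\simeq M\otimes_B B/(t-1)$ by the octahedral axiom; and $M\otimes_B B/(t-1)$ lies in the localizing subcategory generated by $B/(t-1)$ because the class of $X\in\md(B)$ with $X\otimes_B B/(t-1)$ in that subcategory is itself localizing and contains $B$. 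One last caution: the $\e{\infty}$-equivalence $C^*(S^1;A)\simeq A\oplus A[-1]$ you lean on is a characteristic-zero coincidence (see the remark following \Cref{deg-1coincidence}), whereas \Cref{unipotentcrit} is stated, and used, over an arbitrary base; fortunately nothing in the correct argument needs it --- only \Cref{imbedmodlocsys} and the group-algebra description above.
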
 

It will be important for us to have the correspondence in \Cref{unipotentcrit}
in as clear terms as possible. Thus, we state the following construction. 
\begin{cons} \label{rightadjointloc}
The right adjoint $\loc_{S^1}( \md(A)) \to \md( C^*(S^1; A ))$ is given, for 
a local system $N$, by taking its global sections $\varprojlim_{S^1}  N$.
Explicitly, if $q \in S^1$ and $\phi\colon N_q \to N_q$ is as above, we have
\begin{equation} \label{lims1} \varprojlim_{S^1} N  = \mathrm{fib}\left( N_q \stackrel{\phi - 1}{\to} N_q
\right),  \end{equation}
and in particular, we can determine the homotopy groups of
$\varprojlim_{S^1} N$ via a long exact
sequence. 
\end{cons}

\begin{remark} 
This discussion is special to the case of $S^1$. For any finite complex $X$ and
any $\e{\infty}$-ring $A$, we have an
inclusion $\md( C^*(X; A)) \subset \loc_X( \md(A))$, and the
image always is contained in the subcategory of local systems satisfying an
ind-unipotence property on homotopy groups, but the precise identification of the image
 relies on
the 1-dimensionality of the circle. 
\end{remark}

Let $\mathcal{C}$ be an arbitrary $\infty$-category.
To give a local system on $S^1$ in some $\infty$-category $\mathcal{C}$ is
\emph{equivalent} to giving 
an object of that $\infty$-category and an automorphism, via $S^1 \simeq K(
\mathbb{Z}, 1)$. 
Fix 
two $\mathcal{C}$-valued local systems on $S^1$, $(x, \phi_x), (y, \phi_y)$,
where $x, y \in \mathcal{C} $ and $\phi_x\colon x \simeq x, \phi_y\colon y \simeq y$ 
are automorphisms in $\mathcal{C}$. 
Given a map $f\colon x \to y$ such that the diagram
\begin{equation} \label{commdiag}\xymatrix{
x \ar[d]^f \ar[r]^{\phi_x} &  x \ar[d]^f \\
y \ar[r]^{\phi_y} & y
},\end{equation}
commutes up to homotopy,
then we can produce a map of 
local systems $(x, \phi_x) \to (y, \phi_y)$ extending the map $f\cl x \to y$. 

\begin{remark}Specifying such a map amounts in
addition to
\emph{choosing} a homotopy to make the diagram commute, and there may be many
homotopy classes of 
such.
\end{remark}
We can state this formally: 

\begin{proposition} 
\label{isoclassess1}
Every object in $\mathrm{Loc}_{S^1}(\mathcal{C})$ is represented by a pair
$(x, \phi_x)$ where $x \in \mathcal{C}$ and $\phi_x\cl x \to x$ is an automorphism,
and two pairs $(x, \phi_x), (y, \phi_y)$ are isomorphic 
if and only if there exists an isomorphism $f\cl x \to y$ such that the diagram
\eqref{commdiag} is homotopy commutative.
\end{proposition}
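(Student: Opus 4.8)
The plan is to deduce Proposition~\ref{isoclassess1} from the standard fact that $S^1 \simeq B\mathbb{Z}$, so that $\mathrm{Loc}_{S^1}(\mathcal{C}) = \mathrm{Fun}(S^1, \mathcal{C})$ is the $\infty$-category of objects of $\mathcal{C}$ equipped with a $\mathbb{Z}$-action, i.e.\ with a single (coherent) automorphism. Concretely, evaluation at the basepoint $q$ gives a functor $\mathrm{Loc}_{S^1}(\mathcal{C}) \to \mathcal{C}$, and the chosen generator of $\pi_1(S^1, q) \cong \mathbb{Z}$ equips the image with the monodromy automorphism $\phi_x$ already discussed above (in the paragraph preceding Construction~\ref{rightadjointloc}). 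The two claims to prove are: (i) every object arises, up to equivalence, from a pair $(x,\phi_x)$; and (ii) two pairs give equivalent local systems exactly when there is an isomorphism $f\colon x \to y$ making \eqref{commdiag} commute up to homotopy.

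First I would address essential surjectivity. Since the only nontrivial homotopy of $S^1$ is $\pi_1 = \mathbb{Z}$, a functor $S^1 \to \mathcal{C}$ is the same datum as an object $x = N_q \in \mathcal{C}$ together with a map $\mathbb{Z} \to \mathrm{Aut}_{\mathcal{C}}(x)$ of group objects — but a group homomorphism out of the free group $\mathbb{Z}$ is freely determined by where the generator goes, hence by a single automorphism $\phi_x$ of $x$. (More precisely, $B\mathbb{Z}$ is the free $\infty$-groupoid on one object with one automorphism, so mapping out of it into $\mathcal{C}$ picks out an object and an automorphism; this is where one invokes $S^1 \simeq K(\mathbb{Z},1)$.) This shows every object of $\mathrm{Loc}_{S^1}(\mathcal{C})$ is of the form $(x, \phi_x)$, and moreover $\phi_x$ is automatically an automorphism (not merely an endomorphism) since it is the image of an invertible element of $\pi_1$.

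Next I would handle the criterion for an equivalence. Given $(x, \phi_x)$ and $(y, \phi_y)$, a map of local systems restricting to $f\colon x \to y$ on the basepoint is the same as a map $f$ together with a homotopy making \eqref{commdiag} commute (the higher coherences are automatic, again because $\mathbb{Z}$ is free on one generator — there are no relations to coherently witness). Such a map is an equivalence of local systems if and only if $f$ is an equivalence in $\mathcal{C}$, since equivalences in a functor category are detected pointwise and $S^1$ is connected. Conversely, if the two local systems are equivalent, restricting the equivalence to $q$ produces an isomorphism $f\colon x \to y$ and the underlying homotopy shows \eqref{commdiag} commutes. This establishes both directions of (ii).

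The main obstacle is purely expository rather than mathematical: making precise, in the cited framework, the assertion that a functor out of $B\mathbb{Z}$ into $\mathcal{C}$ is determined by an object-plus-automorphism with no additional coherence data. One clean way is to note $\mathrm{Fun}(B\mathbb{Z}, \mathcal{C}) \simeq \mathcal{C}^{h\mathbb{Z}}$ and that, since $\mathbb{Z}$ acts freely on a point with quotient $S^1$, this limit is computed by the two-term equalizer $\mathrm{fib}(N_q \xrightarrow{\phi - 1} N_q)$ of Construction~\ref{rightadjointloc} — a presentation that already makes transparent both the classification of objects and the equivalence criterion. Given how much of the relevant structure (the monodromy, the right adjoint, the diagram \eqref{commdiag}) has already been set up above, the proof is short: essential surjectivity and the isomorphism criterion both reduce to the freeness of $\mathbb{Z}$ on one generator together with the pointwise detection of equivalences in $\mathrm{Fun}(S^1, \mathcal{C})$.
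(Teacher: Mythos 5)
Your argument is correct and follows essentially the same route as the paper, which states this proposition as the formalization of the preceding discussion via $S^1 \simeq K(\mathbb{Z},1)$ (the paper gives no further proof beyond that discussion and the remark that a map of local systems is a map $f$ together with a chosen homotopy making \eqref{commdiag} commute). Your added justifications --- freeness of $\mathbb{Z}$ on one generator (equivalently, the $1$-dimensionality of $S^1$, so no higher coherences appear) and pointwise detection of equivalences in $\mathrm{Fun}(S^1,\mathcal{C})$ --- are exactly the points implicit in the paper's sketch.
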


We now specialize to the case where $A = k$ is a field. We will use
Construction~\ref{imbedmodlocsys}, \Cref{unipotentcrit}, and
\Cref{isoclassess1} 
to classify $C^*(S^1;k )$-modules. 
\begin{proposition} \label{shifteddisc}
Any local system $\mathcal{L} \in \loc_{S^1}( \md(k))$ 
decomposes uniquely as a direct sum $\mathcal{L} \simeq \bigoplus_{n \in \mathbb{Z}}
\mathcal{L}_n[n]$ where the fiber of $\mathcal{L}_n$ at a point of $S^1$ is
discrete.
\end{proposition}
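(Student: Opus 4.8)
The plan is to use the description of local systems on $S^1$ from \Cref{isoclassess1}: a local system $\mathcal{L}$ is a pair $(V, \phi)$ with $V \in \md(k)$ and $\phi\colon V \simeq V$ an automorphism. Since $k$ is a field, $V$ is formal: $V \simeq \bigoplus_{n \in \mathbb{Z}} H_n(V)[n]$ as a $k$-module, where each $H_n(V)$ is a (discrete) $k$-vector space. The first step is to promote this decomposition of the underlying $k$-module to a decomposition of the \emph{local system}, i.e., to check that the automorphism $\phi$ respects the splitting up to homotopy in the sense required by \Cref{isoclassess1}. Because $\md(k)$ is equivalent to the derived category of $k$, which is the same as the category of graded $k$-vector spaces, there are no higher homotopies to worry about: an automorphism of $\bigoplus_n V_n[n]$ is just a graded automorphism, hence automatically block-diagonal with respect to the grading. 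So $\phi = \bigoplus_n \phi_n$ with $\phi_n\colon V_n \simeq V_n$, and thus $(V,\phi) \simeq \bigoplus_n (V_n[n], \phi_n[n])$ in $\loc_{S^1}(\md(k))$, with each summand $\mathcal{L}_n[n]$ where $\mathcal{L}_n = (V_n, \phi_n)$ has discrete fiber.

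For uniqueness, I would argue that the shifted pieces $\mathcal{L}_n[n]$ are detected by homotopy groups: the fiber functor $\loc_{S^1}(\md(k)) \to \md(k)$ is conservative and preserves direct sums, and a summand with discrete fiber placed in degree $n$ contributes precisely to $\pi_n$ of the fiber. Concretely, if $\mathcal{L} \simeq \bigoplus_n \mathcal{L}_n[n] \simeq \bigoplus_n \mathcal{L}'_n[n]$ with all fibers discrete, then evaluating at the basepoint gives $\bigoplus_n \pi_n(\mathcal{L}_n)_q[n] \simeq \bigoplus_n \pi_n(\mathcal{L}'_n)_q[n]$ as $k$-modules with automorphism; comparing in degree $n$ forces $(\pi_n(\mathcal{L}_n)_q, \phi) \cong (\pi_n(\mathcal{L}'_n)_q, \phi)$, and since a local system with discrete fiber is determined by its fiber-with-monodromy (again by \Cref{isoclassess1}, as there is nothing in other degrees), we get $\mathcal{L}_n \simeq \mathcal{L}'_n$.

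The only genuinely delicate point is the formality/splitting step — making sure that ``an automorphism of a formal $k$-module splits along the grading'' is legitimate at the $\infty$-categorical level, i.e., that $\loc_{S^1}(\md(k))$ really does decompose as a product $\prod_n \loc_{S^1}(\md(k))$ indexed by the shift. This follows because $\md(k) \simeq \prod_{n} \md(k)^{\heartsuit}[n]$ is a splitting of stable $\infty$-categories (the derived category of a field is the category of graded vector spaces, $k$ being hereditary — in fact semisimple), and $\loc_{S^1}(-) = \mathrm{Fun}(S^1, -)$ preserves products of $\infty$-categories. Once this product decomposition is in place, the statement is immediate: an object of a product is uniquely a tuple of objects of the factors. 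I would phrase the proof around this observation rather than manipulating pairs $(V,\phi)$ by hand.
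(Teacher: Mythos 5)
Your first paragraph is essentially the paper's argument (decompose $M \simeq \bigoplus_n (\pi_n M)[n]$, observe that $\phi$ is homotopic to a sum of maps $\phi_*$ on homotopy groups, and invoke \Cref{isoclassess1} to turn the homotopy-commutative square into an equivalence of local systems), and your uniqueness argument via the fiber-with-monodromy is fine. The problem is the step you yourself single out as the ``only genuinely delicate point'' and the way you propose to resolve it: the claim that $\md(k) \simeq \prod_{n} \md(k)^{\heartsuit}[n]$ as (stable) $\infty$-categories, so that $\loc_{S^1}(\md(k)) = \mathrm{Fun}(S^1, \md(k))$ splits as a product, is false. In a product of copies of the discrete category of $k$-vector spaces all mapping spaces are discrete, and mapping spaces between objects lying in distinct factors are contractible; in $\md(k)$ neither holds. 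For instance $\mathrm{Map}_{\md(k)}(k, k[1])$ has $\pi_1 \cong k$ (it is a $K(k,1)$), even though its $\pi_0$ vanishes. So ``the derived category of $k$ is the same as the category of graded vector spaces, there are no higher homotopies'' is only true at the level of the \emph{homotopy} category, not of the $\infty$-category, and $\mathrm{Fun}(S^1,-)$ applied to a non-existent product decomposition proves nothing. Indeed, if that product decomposition were available, the splitting of a local system would be canonical and functorial; the actual subtlety is precisely that the splitting of $\phi$ requires a non-canonical choice of homotopy, which is why one cannot avoid ``manipulating pairs $(V,\phi)$ by hand.''

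The fix is the route you sketched and then abandoned: work in the homotopy category, where over a field $[\,V[n], W[m]\,] = 0$ for $n \neq m$ and maps are determined by their effect on homotopy groups, so $\phi$ is homotopic to $\bigoplus_n \phi_*$; then \Cref{isoclassess1} is exactly the statement that a homotopy-commutative square with vertical equivalences suffices to produce an equivalence of local systems $(M,\phi) \simeq \bigoplus_n \bigl((\pi_n M)[n], \phi_*\bigr)$, with no need for coherence beyond that single chosen homotopy. With that replacement (and your uniqueness paragraph unchanged) the argument is correct and coincides with the paper's proof.
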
 

\begin{proof} 
Let $\mathcal{L} = (M, \phi)$.
 The $k$-module $M$ decomposes as a sum of its homotopy groups,
i.e., $M \simeq \bigoplus_{n \in \mathbb{Z}} (\pi_n M)[n]$, and the automorphism
$\phi\colon M \to M$ is determined by its behavior on its homotopy groups. 
It follows that, for each $n$, we can produce squares
\[ \xymatrix{
(\pi_n M)[n] \ar[d] \ar[r]^{\phi_*} &  (\pi_n M)[n] \ar[d] \\
M \ar[r]^{\phi} & M
},\]
which commute up to homotopy. 
Putting this together, we can produce a square
\[ \xymatrix{
\bigoplus_{n \in \mathbb{Z}}(\pi_n M)[n] \ar[d] \ar[r]^{\phi_*} &
\bigoplus_{n \in \mathbb{Z}}(\pi_n M)[n] \ar[d] \\
M \ar[r]^{\phi} & M
},\]
which commutes up to homotopy, and where the vertical maps are now equivalences. 
It follows from \Cref{isoclassess1} that the \emph{pair} $(M, \phi)$  is equivalent  to 
the
direct sum of the pairs $\{((\pi_n M)[n], \phi_*)\}_{n \in \mathbb{Z}}$, where each of these is
concentrated in a single degree. 
\end{proof}

Using the imbedding $\md( C^*(S^1; k)) \subset \loc_{S^1}(
\md(k))$, we find from this: 
\begin{corollary} 
Any $C^*(S^1; k)$-module $M$ 
admits a unique decomposition  $M \simeq \bigoplus_{n \geq 0} M_n[n]$, where $M_n \in
\md( C^*(S^1;k ))$ has the property that $M \otimes_{C^*(S^1; k)} k$ is a
discrete $k$-module. 
\end{corollary}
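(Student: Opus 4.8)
The plan is to deduce this corollary directly from \Cref{shifteddisc} by transporting the decomposition along the fully faithful embedding of \Cref{imbedmodlocsys}. First I would observe that $C^*(S^1; k) = \sym^* k[-1]$ has homotopy concentrated in degrees $0$ and $-1$, so the monodromy description applies: via \Cref{imbedmodlocsys} we view a $C^*(S^1; k)$-module $M$ as a local system $\mathcal{L} = (M_q, \phi)$ on $S^1$ valued in $\md(k)$, namely $\mathcal{L} = M \otimes_{C^*(S^1;k)} k$ with its monodromy automorphism, with the caveat that the objects arising this way are exactly those satisfying the ind-unipotence criterion of \Cref{unipotentcrit}.

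Next I would apply \Cref{shifteddisc} to split $\mathcal{L} \simeq \bigoplus_{n \in \mathbb{Z}} \mathcal{L}_n[n]$ with each $\mathcal{L}_n$ fiberwise discrete. The key point is that this decomposition is compatible with the essential image of $\md(C^*(S^1;k))$: each summand $\mathcal{L}_n[n]$ is a retract of $\mathcal{L}$, and the essential image is a localizing (in particular, idempotent-complete and thick) subcategory by \Cref{imbedmodlocsys}, so each $\mathcal{L}_n[n]$ again lies in the image and hence equals $M_n[n]$ for a well-defined $C^*(S^1;k)$-module $M_n$; equivalently, one checks directly that $\phi - 1$ acting on $\pi_*(\mathcal{L}_n)$ is locally nilpotent because it is the restriction of the locally nilpotent operator on $\pi_*(M_q)$. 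Unwinding Construction~\ref{imbedmodlocsys}, the statement that $\mathcal{L}_n$ is fiberwise discrete translates precisely into the condition that $M_n \otimes_{C^*(S^1;k)} k$ is a discrete $k$-module. Uniqueness of the decomposition follows from uniqueness in \Cref{shifteddisc} together with the faithfulness of the embedding.

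The one genuinely delicate point — and the place I would slow down — is the indexing range ``$n \geq 0$'' in the corollary versus ``$n \in \mathbb{Z}$'' in \Cref{shifteddisc}. This is where the ind-unipotence/monodromy constraint does real work: if $M$ is a $C^*(S^1;k)$-module then $\pi_i(M \otimes_{C^*(S^1;k)} k)$ can be nonzero for negative $i$ in principle, but in fact for $n < 0$ the summand $\mathcal{L}_n$ must vanish. I would argue this by computing $\varprojlim_{S^1}$ via \Cref{rightadjointloc}: the module $M$ is recovered as $\mathrm{fib}(M_q \xrightarrow{\phi-1} M_q)$, and since $\phi - 1$ is locally nilpotent on homotopy, the long exact sequence identifies $\pi_j(M)$ with an extension of $\ker(\phi-1)|_{\pi_j(M_q)}$ by $\mathrm{coker}(\phi-1)|_{\pi_{j+1}(M_q)}$; one then checks that a nonzero fiberwise-discrete summand in negative degree would force $M$ itself to have homotopy in degrees that are incompatible with $M$ being a $C^*(S^1;k)$-module built as such a fiber — more cleanly, one notes that any such $M$ is a filtered colimit of perfect $C^*(S^1;k)$-modules, perfect modules are (by the classification one obtains from \Cref{unipotentcrit} and \Cref{isoclassess1}) built from shifts $C^*(S^1;k)[m]$ with $m \geq 0$, and the property of having the $k$-base-change supported in nonnegative degrees is closed under colimits. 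I expect this normalization of the grading to be the main obstacle; the rest is a formal transport of structure.
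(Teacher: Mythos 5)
The core of your argument is exactly how the paper deduces this corollary: \Cref{shifteddisc} splits the associated local system as $\bigoplus_{n\in\mathbb{Z}}\mathcal{L}_n[n]$ with fiberwise discrete pieces, and each summand lies back in the essential image of $\md(C^*(S^1;k))$ (either because the image is a localizing, hence thick and idempotent-complete, subcategory, or because local nilpotence of $\phi-1$ passes to direct summands of $\pi_*$), so full faithfulness transports both the decomposition and its uniqueness. Up to that point your proof is correct and is the paper's proof.

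The last paragraph, however, is where you go astray. The restriction to $n\geq 0$ cannot be established, because it is not a real restriction: the statement should be read with $n\in\mathbb{Z}$, exactly as in \Cref{shifteddisc}. Concretely, $C^*(S^1;k)[-1]$, or the module $k[-1]$ (with $k$ a $C^*(S^1;k)$-module via evaluation at a point), are perfectly good $C^*(S^1;k)$-modules whose base change to $k$ is concentrated in degree $-1$; indeed the paper itself later uses arbitrary (even, odd) shifts of $k$ as $C^*(S^1;k)$-modules in \Cref{keylem} and in the classification of perfect modules as sums of \emph{shifts} of the $N_i$, with no positivity constraint. Your specific claims in that paragraph fail: local nilpotence of $\phi-1$ is a statement about the monodromy, not about the grading, so it does not force $\mathcal{L}_n=0$ for $n<0$; perfect modules are generated by the unit under a \emph{thick} subcategory, which is closed under desuspension, so they are not built only from $C^*(S^1;k)[m]$ with $m\geq 0$; and the property of having $k$-base-change in nonnegative degrees simply does not hold for all modules. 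The correct reading is that the indexing in the corollary is over all integers, and no normalization argument is needed (or possible); with that understood, the rest of your proof stands.
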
 

In order to give a discrete local system on $S^1$, it suffices simply to give a
$k$-vector space with an automorphism. 
In the case we are interested, i.e., local systems coming from $C^*(S^1;
k)$-modules, 
it follows that to give an equivalence class of $C^*(S^1; k)$-modules
$M$
equates to giving, for each $n \in \mathbb{Z}$, a discrete $k[x]$-module on
which $x$ acts \emph{locally nilpotently}. 
The $n$th such object corresponds to $\pi_n( M  \otimes_{C^*(S^1; k)}
k)$ and $x$ is the monodromy automorphism minus the identity. 

In general, the classification of torsion modules over a PID is nontrivial, but
in the finitely generated case, we have a simple complete classification.
This leads to:

\begin{cons}
Fix $i \in \mathbb{Z}_{>0}$.
We consider the $n$-dimensional $k$-vector space $V_i = k[x]/x^n$ and the nilpotent
endomorphism given by multiplication by $x$. Let $\mathcal{V}_i$ be the
associated local system on $S^1$ with fiber $V_i$ and monodromy automorphism
$1 + x$. 
Then, by \Cref{unipotentcrit}, $\mathcal{V}_i$ corresponds to a $C^*(S^1;k
)$-module that we will denote by $N_i$.
Thus, $N_i \otimes_{C^*(S^1; k)}  k$ is discrete,
and 
$i$-dimensional, and the monodromy automorphism is unipotent with a single
Jordan block.

In order to determine the homotopy groups of $N_i$, we have to form the
associated local system, and take global sections over $S^1$, as in
Construction~\ref{rightadjointloc}. 
We have
\[ 
\pi_j( N_i) = \begin{cases} 
k&  \text{if }j = 0 \\
k & \text{if } j = -1 \\
0 & \text{otherwise}
 \end{cases} 
 .\]
 For example, $N_1 = C^*(S^1; k)$. 
\end{cons}

\begin{proposition} 
 $N_i$ is a perfect $C^*(S^1; k)$-module.
\end{proposition}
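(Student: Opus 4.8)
The plan is to show that $N_i$ is perfect by exhibiting it inside the thick subcategory of $\md(C^*(S^1;k))$ generated by the unit $N_1 = C^*(S^1;k)$. The natural strategy is induction on $i$: the local system $\mathcal{V}_i$ with fiber $V_i = k[x]/x^i$ and unipotent monodromy $1+x$ (note the dimension is $i$, not $n$, in the construction; I will treat this as a typo) sits in a short exact sequence of discrete $k[x]$-modules
\[ 0 \to k[x]/x \to k[x]/x^i \to k[x]/x^{i-1} \to 0, \]
where every map is $k[x]$-linear, hence compatible with the monodromy $1+x$. Since the equivalence of \Cref{isoclassess1} turns $k[x]$-linear maps of $(\text{vector space}, 1+x)$-pairs into maps of local systems on $S^1$, this short exact sequence of $k[x]$-modules produces a cofiber sequence of local systems $\mathcal{V}_1 \to \mathcal{V}_i \to \mathcal{V}_{i-1}$ in $\loc_{S^1}(\md(k))$. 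All three local systems have ind-unipotent (here: finite unipotent) monodromy on homotopy, so by \Cref{unipotentcrit} all three lie in the essential image of $\md(C^*(S^1;k)) \subset \loc_{S^1}(\md(k))$; since that imbedding is fully faithful and exact, we obtain a cofiber sequence $N_1 \to N_i \to N_{i-1}$ in $\md(C^*(S^1;k))$.

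From this cofiber sequence the result is immediate by induction. The base case is $N_1 = C^*(S^1;k)$, which is perfect as it is the unit. For the inductive step, $N_{i-1}$ is perfect by hypothesis and $N_1$ is perfect, so $N_i$, sitting in a cofiber sequence between two perfect modules, is perfect because $\md^\omega(C^*(S^1;k))$ is a thick subcategory (closed under cofibers and retracts).

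The main thing to be careful about — and the only real obstacle — is the passage from the short exact sequence of $k[x]$-modules to an honest cofiber sequence of $C^*(S^1;k)$-modules. What \Cref{isoclassess1} gives directly is that a commuting square of the form \eqref{commdiag} yields a map of local systems, but it does not by itself guarantee that a compatible family of such maps assembles into a cofiber sequence; one has to check that the inclusion $\mathcal{V}_1 \hookrightarrow \mathcal{V}_i$ can be chosen so that its cofiber in $\loc_{S^1}(\md(k)) = \mathrm{Fun}(S^1,\md(k))$ is $\mathcal{V}_{i-1}$. Here I would argue pointwise: cofibers in a functor category are computed objectwise, so it suffices to know that at the basepoint $q$ the map of fibers $V_1 \to V_i$ has cofiber $V_{i-1}$ (clear, since the fibers are discrete and the sequence of vector spaces is short exact) and that the induced monodromy on the cofiber is the one on $\mathcal{V}_{i-1}$ (clear, since the monodromy is functorial and the quotient $k[x]/x^i \twoheadrightarrow k[x]/x^{i-1}$ intertwines the two copies of $1+x$). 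Then apply the fully faithful exact functor $\md(C^*(S^1;k)) \hookrightarrow \loc_{S^1}(\md(k))$ in reverse, using \Cref{unipotentcrit} to see that each term is in the image, to transport the cofiber sequence back. Everything else — the thickness of $\md^\omega$, the identification of the base case — is standard.
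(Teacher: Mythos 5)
Your proof is correct, but it takes a genuinely different route from the paper's. The paper's argument is descent-theoretic and two lines long: by construction $N_i \otimes_{C^*(S^1;k)} k$ is a perfect (finite-dimensional, discrete) $k$-module, the map $C^*(S^1;k) \to k$ admits descent, and a general result from \cite{galgp} then says that a module whose base change along a descendable map is perfect is itself perfect. You instead build $N_i$ explicitly inside the thick subcategory generated by the unit: the socle filtration $0 \to k[x]/x \to k[x]/x^i \to k[x]/x^{i-1} \to 0$ of the Jordan block gives, in the heart of $\loc_{S^1}(\md(k))$ (equivalently, pointwise in the functor category, as you argue), a cofiber sequence $\mathcal{V}_1 \to \mathcal{V}_i \to \mathcal{V}_{i-1}$ of local systems with unipotent monodromy; all terms lie in the image of the fully faithful exact imbedding of \Cref{unipotentcrit}, so it descends to a cofiber sequence $N_1 \to N_i \to N_{i-1}$, and induction on $i$ with $N_1 = C^*(S^1;k)$ finishes the argument. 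Your handling of the one delicate point — promoting the $k[x]$-linear maps of (vector space, automorphism) pairs to genuine maps and cofiber sequences of local systems, where \Cref{isoclassess1} alone leaves a choice of homotopy — is adequate, since for discrete fibers the relevant mapping spaces are homotopy discrete and cofibers are computed objectwise; and you correctly noted the $n$ versus $i$ typo in the paper's construction. What each approach buys: yours is self-contained, avoids citing the descent results of \cite{galgp}, and yields strictly more information, namely an explicit finite cell structure exhibiting $N_i$ as an iterated extension of $i$ copies of the unit; the paper's is shorter given the available machinery and illustrates a general perfectness criterion (base change along a descendable map) that is reused elsewhere in the paper.
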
 
\begin{proof} 
By construction, $N_i \otimes_{C^*(S^1;k )} k$ is a perfect $k$-module.
Moreover, $C^*(S^1; k ) \to k$ admits descent \cite[Prop. 3.35]{galgp}. Therefore, $N_i$ is a perfect
$C^*(S^1; k)$-module in view of \cite[Prop. 3.27]{galgp}. 
\end{proof}

\begin{proposition} 
Any perfect $C^*(S^1;k )$-module $M$ decomposes uniquely as a sum of copies
of shifts of the $N_i$.  
\end{proposition}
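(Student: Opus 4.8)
The plan is to reduce, via the decomposition of \Cref{shifteddisc} recorded above for $C^*(S^1;k)$-modules, to the classical structure theorem for finitely generated torsion modules over the principal ideal domain $k[x]$, and then to read off both existence and uniqueness from the elementary divisor (equivalently Jordan) normal form.

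First I would use the corollary following \Cref{shifteddisc} to write $M \simeq \bigoplus_n M_n[n]$ inside $\md(C^*(S^1;k))$, where $M_n\otimes_{C^*(S^1;k)}k$ is discrete. Since each $M_n[n]$ is a retract of the perfect module $M$ it is again perfect, hence so is $M_n$; as $-\otimes_{C^*(S^1;k)}k$ is symmetric monoidal and, by \cite[Prop. 3.35]{galgp}, descent-admitting, it is conservative and sends perfect modules to perfect $k$-modules, so each $M_n\otimes_{C^*(S^1;k)}k$ is a finite-dimensional $k$-vector space $V_n$ and only finitely many $M_n$ are nonzero. Viewing $V_n$ as the fiber of the discrete local system on $S^1$ attached to $M_n$ by \Cref{imbedmodlocsys}, \Cref{unipotentcrit} says its monodromy automorphism $\phi_n$ has $\phi_n-1$ acting locally nilpotently; on the finite-dimensional $V_n$ this forces $\phi_n-1$ to be nilpotent. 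Thus $M_n$ is determined, up to equivalence, by the finite-dimensional $k[x]$-module $V_n$ with $x=\phi_n-1$ acting nilpotently: indeed by \Cref{isoclassess1} a discrete $\md(k)$-valued local system on $S^1$ is pinned down up to equivalence by the conjugacy class of its monodromy, and the embedding of \Cref{imbedmodlocsys} is fully faithful and preserves finite direct sums.

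Putting the nilpotent operator on $V_n$ into Jordan normal form yields unique non-negative integers $c_{n,i}$, almost all zero, with $(V_n,\phi_n)\cong\bigoplus_{i>0}(k[x]/x^i,\,1+x)^{\oplus c_{n,i}}$; by the construction of the $N_i$ this is exactly the local system attached to $\bigoplus_{i>0}N_i^{\oplus c_{n,i}}$, so $M_n\simeq\bigoplus_{i>0}N_i^{\oplus c_{n,i}}$ and therefore $M\simeq\bigoplus_{n}\bigoplus_{i>0}N_i[n]^{\oplus c_{n,i}}$, a finite sum. For uniqueness I would run this in reverse: applying $-\otimes_{C^*(S^1;k)}k$ to any such decomposition of $M$ recovers in homological degree $n$ the $k[x]$-module $\bigoplus_{i>0}(k[x]/x^i)^{\oplus c_{n,i}}$ with its nilpotent operator, whose Jordan type (by the uniqueness clause of the structure theorem) determines the $c_{n,i}$; and the $N_i[n]$ are pairwise inequivalent indecomposables because $N_i[n]\otimes_{C^*(S^1;k)}k$ is concentrated in degree $n$ and has dimension $i$. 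The existence half is essentially formal once \Cref{shifteddisc}, \Cref{unipotentcrit}, and \Cref{isoclassess1} are in hand; the only point requiring care is the uniqueness bookkeeping, i.e.\ correctly extracting the multiplicities $c_{n,i}$ from $M\otimes_{C^*(S^1;k)}k$, which is precisely what the uniqueness statement of the $k[x]$ structure theorem supplies.
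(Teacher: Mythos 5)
Your proposal is correct and follows essentially the same route as the paper's proof: decompose via \Cref{shifteddisc} into shifts of discrete local systems, observe the monodromy is unipotent on a finite-dimensional fiber, and invoke the Jordan (cyclic $k[x]$-module) classification to identify the indecomposable pieces with the $N_i$. You simply spell out the perfectness/finiteness checks and the uniqueness bookkeeping in more detail than the paper does.
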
 
\begin{proof} 
Given a perfect $C^*(S^1; k)$-module, the associated local system
(which is a local system of \emph{perfect} $k$-modules) splits as a
direct sum of shifts of local systems of discrete $k$-modules, by
\Cref{shifteddisc}. Each of these is determined by a finite-dimensional
$k$-vector space with an unipotent automorphism, and these are
classified as a direct sum of indecomposable ones determined by their Jordan
type, corresponding to the decomposition of a finitely generated
$k[x]$-module as a direct sum of cyclic ones. 
These summands correspond to the $C^*(S^1;k )$-modules $N_i$.
\end{proof} 

\subsection{Evenly graded $C^*(S^1; k)$-modules}
Let $k$ be a field.
We will have to work with certain non-perfect $C^*(S^1; k)$-modules
in the sequel, and here we will prove a basic technical result (\Cref{keylem})
concerning them. 
We need the following classical algebraic fact
with $R = k[x]_{(x)}$. 
\begin{theorem} \label{tordiv}
Let $R$ be a discrete valuation ring with quotient field $K$. Then any torsion divisible $R$-module
is a direct sum of copies of $K/R$. 
\end{theorem}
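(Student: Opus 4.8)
The statement (Theorem~\ref{tordiv}) is the classical structure theorem for torsion divisible modules over a discrete valuation ring $R$ with fraction field $K$, and I would prove it by the standard "$R$ modulo uniformizer acts as a field" reduction. First I would fix a uniformizer $\pi \in R$, so that the torsion of any $R$-module $M$ is $\pi$-power torsion, and recall that divisibility of $M$ means $\pi M = M$ (for a DVR, divisibility by $\pi$ implies divisibility by every nonzero element). The key object is the $\pi$-torsion submodule $M[\pi] = \{m \in M : \pi m = 0\}$; since $R/\pi R$ is a field, $M[\pi]$ is a vector space over $R/\pi R$. I would choose a basis $\{e_\alpha\}_{\alpha \in I}$ of $M[\pi]$ over $R/\pi R$.

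The heart of the argument is to lift this basis to a system of generators that exhibits $M$ as a direct sum of copies of $K/R$. For each $\alpha$, using divisibility I would inductively choose elements $e_\alpha^{(n)} \in M$ with $e_\alpha^{(1)} = e_\alpha$ and $\pi e_\alpha^{(n+1)} = e_\alpha^{(n)}$; the submodule generated by $\{e_\alpha^{(n)}\}_{n \geq 1}$ is a quotient of $K/R = \varinjlim R/\pi^n R$, and because $e_\alpha \neq 0$ in $M[\pi]$ this quotient is all of $K/R$ (the only proper quotients of $K/R$ are zero). Call this submodule $C_\alpha \cong K/R$. One then checks that the sum $\sum_\alpha C_\alpha$ is direct: if a finite sum of nonzero elements $c_\alpha \in C_\alpha$ vanished, multiplying by a suitable power of $\pi$ would push each $c_\alpha$ down to its socle, contradicting linear independence of the $e_\alpha$ in $M[\pi]$. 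Finally, $\bigoplus_\alpha C_\alpha = M$: the inclusion is an equality because the quotient $M / \bigoplus C_\alpha$ is again torsion and divisible, but its $\pi$-torsion is zero (any $\pi$-torsion element lifts to $M[\pi] = \bigoplus C_\alpha[\pi]$), and a divisible module with no $\pi$-torsion is $\pi$-divisible and $\pi$-torsion-free, hence uniquely $\pi$-divisible, hence a $K$-vector space — but a torsion $K$-vector space is zero.

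The main obstacle is the last step — showing $\bigoplus_\alpha C_\alpha = M$ — since it is the only place where one must combine divisibility and the torsion hypothesis with the choice of socle basis rather than merely manipulating a single cyclic piece. The cleanest route is the one just sketched: the quotient $Q = M/\bigoplus_\alpha C_\alpha$ is torsion (subquotient of a torsion module) and divisible (quotient of a divisible module), and $Q[\pi] = 0$ because $M[\pi] \subseteq \bigoplus_\alpha C_\alpha$ by construction; then $\pi \colon Q \to Q$ is an injective surjection, so $Q$ is a module over $R[\pi^{-1}] = K$, and being torsion over $R$ forces $Q = 0$. Everything else is routine module bookkeeping over a DVR, using only that $R/\pi R$ is a field and that $K/R$ is the unique "Prüfer-type" divisible torsion module with one-dimensional socle.
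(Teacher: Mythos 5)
The paper does not actually prove \Cref{tordiv}: it is quoted as a classical fact, with the proof deferred to \cite[Theorem 6.3]{DVRbook}, so there is no argument in the paper to compare yours against. Your proposal is the standard self-contained proof of this structure theorem and is essentially correct: lift a basis $\{e_\alpha\}$ of the socle $M[\pi]$ over $R/\pi R$ to compatible divided systems $e_\alpha^{(n)}$, note that each generates a copy of $K/R$, prove directness, and kill the quotient. Three small points deserve to be written more carefully. First, the reason $C_\alpha\cong K/R$ is best phrased via the map $K/R\to M$, $\pi^{-n}\mapsto e_\alpha^{(n)}$: its kernel is a submodule of $K/R$, and every nonzero submodule of $K/R$ contains the socle $\pi^{-1}R/R$, which is ruled out by $e_\alpha\neq 0$; your parenthetical "the only proper quotients of $K/R$ are zero" is only true up to isomorphism (quotients by the finite submodules are again $K/R$), so the injectivity statement is the one you want. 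Second, in the directness check, multiplying by $\pi^{N-1}$ where $N$ is the \emph{maximum} of the orders kills the terms of smaller order and pushes only the top-order terms into their socles; that still yields a nontrivial relation among the corresponding $e_\alpha$, so the argument goes through, but "push each $c_\alpha$ down to its socle" is not literally what happens. Third, the claim $Q[\pi]=0$ for $Q=M/\bigoplus_\alpha C_\alpha$ does not follow formally from $M[\pi]\subseteq\bigoplus_\alpha C_\alpha$ alone: given $m\in M$ with $\pi m\in D:=\bigoplus_\alpha C_\alpha$, you must use divisibility of $D$ to write $\pi m=\pi d$ with $d\in D$, so that $m-d\in M[\pi]\subseteq D$ and hence $m\in D$ (equivalently, $D$ is divisible, hence injective, hence a direct summand of $M$). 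Once $Q$ is torsion with $Q[\pi]=0$ you can finish even more directly than via $R[\pi^{-1}]=K$: a nonzero $q$ of minimal order $\pi^{n}$ gives $\pi^{n-1}q$ a nonzero element of $Q[\pi]$, a contradiction. With these touch-ups your argument is a complete and correct replacement for the citation.
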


We refer to \cite[Theorem 6.3]{DVRbook} for a proof of \Cref{tordiv}. 
We will translate it into our setting and prove the following.
\begin{proposition} 
\label{keylem}
Let $M$ be a $C^*(S^1; k)$-module. Suppose that $\pi_i M = 0$ if $i$
is odd. Then $M$ is a direct sum of copies of even shifts of the $C^*(S^1;
k)$-module $k$ (under the map of $\e{\infty}$-rings $C^*(S^1; k) \to k$ given
by evaluation at a point). 
\end{proposition}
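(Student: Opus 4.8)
The plan is to transport $M$ into the $\infty$-category of local systems on $S^1$ via the embedding $\iota\colon \md(C^*(S^1;k)) \hookrightarrow \loc_{S^1}(\md(k))$ of \Cref{imbedmodlocsys}, to use the classification of such local systems over a field from \Cref{shifteddisc} and \Cref{isoclassess1}, and to feed the structure theorem \Cref{tordiv} into this picture. First I would let $\bar M = M \otimes_{C^*(S^1;k)} k$ be the fiber at the basepoint of the local system $\iota(M)$, together with its monodromy automorphism $\phi$. Since $\iota$ is fully faithful and \Cref{rightadjointloc} computes its right adjoint on a local system $N$ as $\mathrm{fib}(N_q \xrightarrow{\phi-1} N_q)$, we recover $M \simeq \mathrm{fib}(\bar M \xrightarrow{\phi-1} \bar M)$, and by \Cref{unipotentcrit} the operator $\phi - 1$ is locally nilpotent on $\pi_*(\bar M)$. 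Being a $k$-module, $\bar M$ splits formally as $\bigoplus_n W_n[n]$ with $W_n = \pi_n(\bar M)$, and by \Cref{shifteddisc} (equivalently \Cref{isoclassess1}) this is a decomposition of local systems, so $\phi$ decomposes as $\bigoplus_n \phi_n$ with each $\phi_n - 1$ locally nilpotent on $W_n$.

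Next I would extract the homotopy groups. Since $\mathrm{fib}(-)$ is a shifted cofiber it commutes with direct sums, so $M \simeq \bigoplus_n \mathrm{fib}(W_n[n] \xrightarrow{\phi_n - 1} W_n[n])$ and therefore
\[ \pi_j(M) \;\cong\; \ker\bigl(\phi_j - 1 \mid W_j\bigr) \;\oplus\; \mathrm{coker}\bigl(\phi_{j+1} - 1 \mid W_{j+1}\bigr). \]
Imposing the hypothesis that $\pi_j(M) = 0$ for $j$ odd gives two things: for $j$ odd, $\phi_j - 1$ is injective on $W_j$, and an injective locally nilpotent endomorphism of a vector space forces that space to be $0$, so $W_j = 0$ for all odd $j$; and $\mathrm{coker}(\phi_{j+1}-1 \mid W_{j+1}) = 0$, i.e.\ $\phi_m - 1$ is surjective on $W_m$ for every even $m$.

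Then I would fix $m$ even and regard $W_m$ as a $k[x]$-module via $x \mapsto \phi_m - 1$. Local nilpotence makes every polynomial with nonzero constant term act invertibly, so $W_m$ is a torsion module over the discrete valuation ring $R = k[x]_{(x)}$, and surjectivity of $\phi_m - 1$ makes it divisible; \Cref{tordiv} then gives $W_m \cong (K/R)^{(I_m)}$ as $R$-modules for some index set $I_m$, where $K = k(x)$ — equivalently $(W_m, \phi_m)$ is a sum of $|I_m|$ copies of the discrete local system $(K/R,\, 1+x)$. It remains to recognise this local system. Writing $\epsilon$ for the degree $-1$ generator of $C^*(S^1;k)$, one has $k \simeq C^*(S^1;k)//\epsilon$, and the cofiber sequence $\Sigma^{-1}k \xrightarrow{\epsilon} C^*(S^1;k) \to k$ coming from \Cref{deg-1coincidence}, upon applying $-\otimes_{C^*(S^1;k)} k$, shows after a short computation that $k \otimes_{C^*(S^1;k)} k$ is discrete, equal to $K/R$, with monodromy $1+x$ — equivalently $(K/R,1+x) = \varinjlim_i \mathcal{V}_i$ corresponds to $\varinjlim_i N_i$ (along the maps induced by $V_i = k[x]/x^i \xrightarrow{\cdot x} V_{i+1}$), which has $\pi_* = k$ in degree $0$ and hence is the module $k$ — so $(K/R, 1+x) \simeq \iota(k)$. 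Since $\iota$ is fully faithful and, being a left adjoint whose image is a localizing subcategory, preserves shifts and direct sums, the three steps assemble to $\iota(M) \simeq \bigoplus_{m \text{ even}} \iota(k)^{(I_m)}[m] \simeq \iota\bigl(\bigoplus_{m \text{ even}} k^{(I_m)}[m]\bigr)$, whence $M \simeq \bigoplus_{m \text{ even}} k^{(I_m)}[m]$, a direct sum of even shifts of the $C^*(S^1;k)$-module $k$.

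The hard part will be the final recognition step: identifying the local system $(K/R, 1+x)$ with copies of the module $k$, rather than with the superficially similar module $\varinjlim_i N_i$ presented differently, forces one to pin down $k \otimes_{C^*(S^1;k)} k$ together with its monodromy, which is exactly where the algebraic structure theorem \Cref{tordiv} has to be matched against the homotopy-theoretic side. By contrast, the computation of $\pi_*(M)$ in terms of $\phi$ is routine given \Cref{rightadjointloc}, and the appeal to \Cref{tordiv} itself is immediate once divisibility and torsion-ness are in hand.
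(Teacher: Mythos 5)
Your main line of argument is the same as the paper's: push $M$ into local systems, split it by \Cref{shifteddisc}, read off homotopy via the fiber of $\phi-1$ from \Cref{rightadjointloc}, deduce from the vanishing of odd homotopy that the odd-degree pieces die and the even-degree pieces are torsion divisible $k[x]$-modules, and apply \Cref{tordiv}. All of that is correct and is exactly how the paper proceeds. The gap is in the step you yourself flag as the hard part: identifying the local system $(K/R,1+x)$ with $\iota(k)$. Your primary route is not actually a proof as stated: the existence of a cofiber sequence of $C^*(S^1;k)$-\emph{modules} $\Sigma^{-1}k \xrightarrow{\epsilon} C^*(S^1;k)\to k$ is itself a recognition problem of the same kind (a priori the fiber of the augmentation is merely some module with homotopy $k$ in degree $-1$; note the module-level cofiber $C^*(S^1;k)/\epsilon$ is \emph{not} $k$, since it has homotopy in two degrees), and even granting it, applying $-\otimes_{C^*(S^1;k)}k$ yields a triangle relating $k\otimes_{C^*(S^1;k)}k$ to itself, which does not by itself compute that object, let alone its monodromy. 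Your alternative clause --- ``$\varinjlim N_i$ has $\pi_*=k$ in degree $0$ and hence is the module $k$'' --- is circular: that a $C^*(S^1;k)$-module with homotopy $k$ concentrated in degree $0$ must be the augmentation module is precisely (a special case of) the statement being proved.

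The fix is short, and the paper's own device is worth internalizing: the module $k$ itself satisfies the hypothesis of the proposition (its homotopy is $k$ in degree $0$), so the analysis you have already carried out applies to it and shows $k$ is a direct sum of copies of the module $\widetilde M$ corresponding to $(K/R,1+x)$; since $k$ is indecomposable (or just by counting $\pi_0$), $k\simeq\widetilde M$, which is the recognition you need. Equivalently, you could run your own kernel/cokernel analysis on an arbitrary module $N$ with $\pi_*N=k$ in degree $0$: it forces $\iota(N)\cong(K/R,1+x)$ by \Cref{tordiv}, so by full faithfulness all such $N$ are equivalent, and $k$ is one of them. A secondary point: your identification $k\simeq C^*(S^1;k)//\epsilon$ via \Cref{deg-1coincidence} is special to characteristic zero, whereas \Cref{keylem} is stated (and proved in the paper) for an arbitrary field $k$; the self-application argument avoids this loss of generality as well.
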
 
\begin{proof} 
We know, first, that $M$ is a direct sum of copies of $C^*(S^1;
k)$-modules whose base-change to $k$ is shifted discrete (\Cref{shifteddisc}), so
we may assume this to begin with. 
That is, we may assume that $M \otimes_{C^*(S^1; k)} k$ is concentrated in one
degree, say $n$, in homotopy.
In particular, under the correspondence between $C^*(S^1;k )$-modules and
local systems with the unipotence property, $M$ comes from a discrete
$k[x]$-module $P_0$, on
which $x$ is locally nilpotent. Moreover, at most one of $\ker x, \coker x$ can nonzero
because of the hypothesis on the homotopy groups on $M$, and
Construction~\ref{rightadjointloc}, which describes how to get from $P_0$
to $M$. 

Since $x$ is locally nilpotent, $\ker x$ is always nonzero (if $P_0 \neq 0$),
so the conclusion must be that $\coker x = 0$, and we have an even shift of a
discrete local system. In other words, $P_0$ is a $x$-torsion divisible
$k[x]$-module. Any such is a direct sum of copies of
$k[x^{\pm 1}]/k[x]$ by \Cref{tordiv}.

It follows that if $\widetilde{M}$ is the $C^*(S^1; k)$-module corresponding
to the $k[x]$-module $k[x^{\pm 1}]/k$, then $M$ is a direct sum of even shifts
of copies of $\widetilde{M}$. It remains to argue that $k \simeq
\widetilde{M}$. In fact, our reasoning shows that $k$ must be a direct sum of
copies of $\widetilde{M}$, but clearly $k$ is indecomposable, so $k \simeq
\widetilde{M}$. 

\end{proof}

\begin{corollary} 
Let $M$ be a $C^*(S^1; k)$-module such that $\pi_i M = 0$ if $i$ is
odd. Then the $k$-module $M \otimes_{C^*(S^1; k)}k$
has the same property. 
\end{corollary}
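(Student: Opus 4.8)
The plan is to reduce the statement immediately to \Cref{keylem}. That proposition gives an equivalence of $C^*(S^1;k)$-modules
\[ M \;\simeq\; \bigoplus_{j\in J}\Sigma^{2n_j}\,k, \]
where $k$ denotes the $C^*(S^1;k)$-module obtained by restriction along an evaluation map $C^*(S^1;k)\to k$. Since the base change functor $-\otimes_{C^*(S^1;k)}k$ preserves direct sums and suspensions,
\[ M\otimes_{C^*(S^1;k)}k \;\simeq\; \bigoplus_{j\in J}\Sigma^{2n_j}\bigl(k\otimes_{C^*(S^1;k)}k\bigr), \]
and, since homotopy groups commute with direct sums, it is enough to show that $k\otimes_{C^*(S^1;k)}k$ is concentrated in even degrees. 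In fact I claim it is \emph{discrete}, i.e.\ concentrated in degree $0$.

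To see this, I would read the answer off the local systems picture of \S\ref{sec:comparewithloc}. Under the fully faithful embedding $\md(C^*(S^1;k))\hookrightarrow\loc_{S^1}(\md(k))$ of Construction~\ref{imbedmodlocsys}, the module $k$ corresponds to the local system which, in the proof of \Cref{keylem}, is identified with the discrete $k[x]$-module $k[x^{\pm1}]/k[x]$ equipped with monodromy $1+x$; in particular that local system has fibers concentrated in homotopical degree $0$. On the other hand, evaluating the associated local system at the basepoint $q\in S^1$ recovers precisely the base change $k\otimes_{C^*(S^1;k),\,\mathrm{ev}_q}k$ along $\mathrm{ev}_q\colon C^*(S^1;k)\to k$. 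Hence $k\otimes_{C^*(S^1;k)}k$ is a discrete $k$-module, and the corollary follows.

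One may also verify the discreteness directly, without the local-system dictionary. Since $\pi_*C^*(S^1;k)\cong k[\epsilon]/(\epsilon^2)$ with $|\epsilon|=-1$, the augmentation ideal $\bar B:=\mathrm{fib}\bigl(C^*(S^1;k)\to k\bigr)$ is equivalent to $k[-1]$, whence $\bar B^{\otimes_k n}\simeq k[-n]$ for every $n\ge 0$. Writing $k\otimes_{C^*(S^1;k)}k$ as the geometric realization of the normalized two-sided bar construction $B_\bullet\bigl(k,\,C^*(S^1;k),\,k\bigr)$, whose $n$-th term is $\bar B^{\otimes_k n}\simeq k[-n]$, the $n$-th associated graded of the (exhaustive) skeletal filtration is $\Sigma^n(k[-n])\simeq k$, concentrated in degree $0$; an induction along this filtration, using that $\pi_*$ commutes with the sequential colimit of skeleta, shows the realization is concentrated in degree $0$. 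Equivalently, the Tor spectral sequence $\mathrm{Tor}^{k[\epsilon]/(\epsilon^2)}_{s}(k,k)_t\Rightarrow\pi_{s+t}\bigl(k\otimes_{C^*(S^1;k)}k\bigr)$ has $E^2$-page supported on the antidiagonal $t=-s$ (via the minimal free resolution $P_s=\Sigma^{-s}\bigl(k[\epsilon]/(\epsilon^2)\bigr)$ of $k$), hence lies entirely in total degree $0$ and collapses with no extension problems.

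The argument is short and there is no genuine obstacle once \Cref{keylem} is available. The only point meriting a moment's care is the discreteness of $k\otimes_{C^*(S^1;k)}k$; but under either approach everything is concentrated in a single total degree, leaving no room for differentials or extension ambiguities, so this causes no difficulty.
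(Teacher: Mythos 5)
Your proof is correct and follows essentially the same route as the paper: the paper's own proof is simply ``this follows from \Cref{keylem},'' i.e.\ decompose $M$ into even shifts of the $C^*(S^1;k)$-module $k$ and observe (via the local-system dictionary, where $k$ corresponds to the discrete module $k[x^{\pm 1}]/k[x]$) that $k\otimes_{C^*(S^1;k)}k$ is concentrated in degree zero. Your second, bar-construction/Tor argument is exactly the kind of direct verification the paper alludes to with ``it could also have been seen directly,'' and it is carried out correctly.
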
 
\begin{proof} 
This follows from \Cref{keylem}, but it could also have been seen
directly. 
\end{proof} 
\section{Residue fields}

In this section, we will prove Theorems~\ref{existencerf} and \ref{ournilpthm} on
the existence of residue fields and the detection of nilpotence. 
It will be convenient to work throughout with an extra assumption
of a degree two unit. In this case, the attachment of even cells can always
be replaced with the attachment of degree zero cells. 

\subsection{Definitions}
Let $A$ be a rational, noetherian $\e{\infty}$-ring  such that 
$\pi_2(A)$ contains a unit. 
Fix a prime ideal $\mathfrak{p} \subset \pi_0(A)$.
\begin{definition} 
A \emph{residue field} for $A$ is an object $\kappa(\mathfrak{p}) \in
\clg_{A/}$  such that:
\begin{enumerate}
\item The map $\pi_0(A) \to \pi_0( \kappa(\mathfrak{p} ))$ exhibits $\pi_0 (
\kappa( \mathfrak{p}))$ as the residue field of $\pi_0(A)$ at $\mathfrak{p}$.
\item $\pi_1( \kappa( \mathfrak{p})) = 0$.
\end{enumerate}
In particular, if $k(\mathfrak{p})$ is the residue field of $\pi_0 A $ at
$\mathfrak{p}$, then $\pi_* ( \kappa(\mathfrak{p}))$ is a Laurent series ring
on $k( \mathfrak{p})$ on a generator in degree two.
\end{definition} 

In this section, we will show that residue fields for such $\e{\infty}$-rings
exist uniquely, and are sufficient to detect nilpotence in $\md(A)$. 
The rest of the paper will use these residue fields to describe certain
invariants of $\md(A)$. 

\begin{remark}
The name ``residue field'' is appropriate because of the perfect
K\"unneth isomorphism
\[ \kappa(\mathfrak{p})_* (M) \otimes_{\kappa(\mathfrak{p})_*} \kappa(\mathfrak{p})_*( N) \simeq
\kappa(\mathfrak{p})_*(M \otimes_A N) , \quad M, N \in \md(A);  \]
indeed, there is a map from left to right which is an isomorphism for $M = N =
A$, and both sides define two-variable homology theories on $\md(A)$, so the
natural map must be an isomorphism in general. 
Alternatively, any $\kappa(\mathfrak{p})$-module is a sum of shifts of free ones. 
\end{remark}

We describe the connection with the use of residue fields as in \cite{BR2}.
Given an even periodic $\e{\infty}$-ring $A$ (not necessarily over $\mathbb{Q}$) with 
$\pi_0(A)$ \emph{regular} noetherian, it is possible to form ``residue fields''
of $A$ as $\e{1}$-algebras in $\md(R)$, by successively quotienting by a
regular sequence. These residue fields have analogous
properties of detecting nilpotence \cite[Cor. 2.6]{thick_am} and are quite useful for
describing invariants of $\md(A)$ (e.g., \cite{BR, BR2, galgp}).  
These residue fields are usually \emph{not} $\e{\infty}$-algebras in $\md(A)$.
For example, in the ``chromatic'' setting, the associated residue fields (such
as the Morava $K$-theories $K(n)$ for the $\e{\infty}$-ring $E_n$) are
almost \emph{never} $\e{\infty}$. 

Over the rational numbers, we are able to produce residue fields without such
regularity hypotheses, and as $\e{\infty}$-algebras. 
However, we will have to work a bit harder: the 
residue fields of such an $A$ will no longer in general be perfect as
$A$-modules (or as $\e{\infty}$-$A$-algebras), and we will have to use a
countable limiting procedure, together with the techniques from the previous
sections. 

Let $A$ be as above. In order to construct a residue field for $A$ for the
prime ideal $\mathfrak{p} \in \spec \pi_0 A$, we may first localize at
$\mathfrak{p}$, and assume that $\pi_0 A$ is \emph{local} and that
$\mathfrak{p}$ is the \emph{maximal ideal.}
Then, given generators $x_1, \dots, x_n \in \pi_0 A$ for $\mathfrak{p}$, we
will need to set them equal to zero by attaching 1-cells. That of course will
introduce new elements (in both $\pi_0, \pi_1$) and we will have to kill them
in turn. 
This process will be greatly facilitated by the analysis in
\Cref{degminusonesec}.

\subsection{Detection of nilpotence}
Given an $\e{\infty}$-ring $A$, we start by reviewing what it means for a
collection of $A$-algebras to \emph{detect nilpotence,} following ideas of \cite{DHS, HS}. 

\begin{definition} 
\label{defresfield}
Let $A$ be an $\e{\infty}$-ring, and let $A'$ be an \emph{$A$-ring spectrum:}
that is, an associative algebra object in the \emph{homotopy category} of
$\md(A)$. 
We say that $A \to A'$ \emph{detects nilpotence} if, whenever $T$ is an
$A$-ring spectrum, then the map
of associative rings
\[ \pi_*(T) \to \pi_*(A' \otimes_A T)  \]
has the property that any $u \in \pi_*(T)$ which maps to a nilpotent
element is
nilpotent. 
More generally, a collection of $A$-ring
spectra $\left\{A'_\alpha\right\}_{\alpha \in S}$ is said to \emph{detect nilpotence}
if any $u \in \pi_*(T)$ which maps to nilpotent elements under each
map $\pi_*(T) \to
\pi_*(A'_\alpha \otimes_A T)$ is itself nilpotent. 
\end{definition} 

For example, the {nilpotence theorem} (\Cref{fieldsdetectnilp}) states
that the Morava $K$-theories and homology (rational and mod $p$) detect
nilpotence for $A = S^0$. The original form (in \cite{DHS}) states that the $\e{\infty}$-ring
$MU$ of complex bordism detects nilpotence by itself, again over $S^0$. 

As in \cite[\S 1]{DHS}, one has the following consequences of detecting nilpotence: 

\begin{proposition} 
Let $\left\{A'_\alpha\right\}_{\alpha \in S}$ be a collection of $A$-ring
spectra that detect nilpotence. 
\begin{enumerate}
\item Given a map of perfect $A$-modules $\phi\colon T \to T'$ such that
each $1_{A'_\alpha}
\otimes_A \phi\colon A'_\alpha \otimes_A T \to A'_\alpha \otimes_A T'$ is nullhomotopic as a map
of $A$-modules, then $\phi$ is smash nilpotent: $\phi^{\otimes N}\colon T^{\otimes N} \to
T'^{\otimes N}$ is nullhomotopic for $N \gg 0$. 
\item Given a self-map of perfect $A$-modules $v\colon \Sigma^k T \to T$, if
each $1_{A'_\alpha}
\otimes_A v\colon \Sigma^k (A'_\alpha \otimes_A T) \to A'_\alpha \otimes_A T $ is nilpotent in $\md(A)$, then $v$ itself is nilpotent. 
\end{enumerate}
\end{proposition}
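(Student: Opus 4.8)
The plan is to reproduce the argument of \cite[\S 1]{DHS} in the present setting: each of the two maps will be encoded as a homotopy class in $\pi_*$ of an auxiliary $A$-ring spectrum manufactured from $T$, and the hypothesis that $\{A'_\alpha\}$ detects nilpotence (\Cref{defresfield}) will be applied to that $A$-ring spectrum as the test object.

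For (1), note that since $T$ is perfect it is dualizable in $\md(A)$; write $T^\vee$ for its dual and set $M = T^\vee \otimes_A T' \simeq \underline{\mathrm{Hom}}_A(T,T')$, again a perfect $A$-module. Let $B = \bigoplus_{n\geq 0} M^{\otimes_A n}$ be the free associative $A$-algebra on $M$; this is an $A$-ring spectrum, so \Cref{defresfield} applies with $B$ as test object. The map $\phi$ determines a class $\bar\phi \in \pi_0 M$, and, since the multiplication of $B$ restricts on graded pieces to the canonical equivalences $M^{\otimes a}\otimes_A M^{\otimes b}\xrightarrow{\sim} M^{\otimes(a+b)}$, its $N$-th power is the class of $\phi^{\otimes N}$ in $\pi_0(M^{\otimes N})\cong[T^{\otimes N},T'^{\otimes N}]_A$; hence $\bar\phi$ is nilpotent in $\pi_* B$ if and only if $\phi$ is smash nilpotent. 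Now fix $\alpha$. Base change together with dualizability of $T$ gives $A'_\alpha\otimes_A M\simeq \underline{\mathrm{Hom}}_A(T, A'_\alpha\otimes_A T')$, and under this identification the image of $\bar\phi$ in $\pi_0(A'_\alpha\otimes_A M)$ is the composite $T\xrightarrow{\phi}T'\to A'_\alpha\otimes_A T'$. By naturality of the unit $\mathrm{id}\to A'_\alpha\otimes_A(-)$ applied to $\phi$, this composite also equals $T\to A'_\alpha\otimes_A T\xrightarrow{\,1_{A'_\alpha}\otimes_A\phi\,}A'_\alpha\otimes_A T'$, which is nullhomotopic by hypothesis. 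Thus the image of $\bar\phi$ in $\pi_*(A'_\alpha\otimes_A B)$ vanishes for every $\alpha$, so \Cref{defresfield} forces $\bar\phi$ to be nilpotent in $\pi_* B$; that is, $\phi$ is smash nilpotent.

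For (2), run the same argument with the endomorphism $A$-ring spectrum $E = \underline{\mathrm{End}}_A(T) \simeq T^\vee\otimes_A T$, whose homotopy ring is $[T,T]_A^{*}$ under composition; then $v$ corresponds to $\tilde v\in\pi_k E$, which is nilpotent precisely when $v$ is. As before, $A'_\alpha\otimes_A E\simeq \underline{\mathrm{Hom}}_A(T,A'_\alpha\otimes_A T)$, and the ring homomorphism $\pi_* E\to\pi_*(A'_\alpha\otimes_A E)$ carries $\tilde v^{\,N}$ (the class of $v^{\circ N}$) to the composite $\Sigma^{kN}T\xrightarrow{v^{\circ N}}T\to A'_\alpha\otimes_A T$, which by the same unit-naturality equals $\Sigma^{kN}T\to A'_\alpha\otimes_A T\xrightarrow{\,1_{A'_\alpha}\otimes_A v^{\circ N}\,}A'_\alpha\otimes_A T$. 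Since $1_{A'_\alpha}\otimes_A v$ is nilpotent in $\md(A)$, this is null for $N\gg0$, so the image of $\tilde v$ in $\pi_*(A'_\alpha\otimes_A E)$ is nilpotent for every $\alpha$; \Cref{defresfield} with test object $E$ then gives that $\tilde v$, hence $v$, is nilpotent.

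The only delicate point is the ``base-change dictionary'' used twice above — identifying $A'_\alpha\otimes_A(T^\vee\otimes_A T')$ (resp.\ $A'_\alpha\otimes_A(T^\vee\otimes_A T)$) with $\underline{\mathrm{Hom}}_A(T,A'_\alpha\otimes_A T')$ (resp.\ $\underline{\mathrm{Hom}}_A(T,A'_\alpha\otimes_A T)$) and checking that $\phi$ (resp.\ $v^{\circ N}$) is carried to the expected composite. This rests entirely on the perfectness, hence dualizability, of $T$, and I expect it to be the main — if modest — point requiring care; once it is in place, everything is formal and parallels \cite[\S 1]{DHS}, the only new input being that $B$ and $E$ are genuine $A$-ring spectra so that the detection hypothesis applies to them verbatim.
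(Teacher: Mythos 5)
Your proof is correct and is exactly the argument the paper intends: the paper gives no proof beyond citing \cite[\S 1]{DHS}, and your construction (the tensor algebra on $T^\vee\otimes_A T'$ for smash nilpotence, the endomorphism algebra $T^\vee\otimes_A T$ for self-maps, plus the dualizability-based base-change identification and the detection hypothesis applied to these $A$-ring spectra) is precisely that standard Devinatz--Hopkins--Smith argument transported to $\md(A)$.
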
 

\begin{example} \label{detectperfect}
Suppose $A'$ is an $A$-ring spectrum that detects nilpotence. Then $A'$ cannot
annihilate any nonzero perfect $A$-module $M$; in fact, that would force the identity
$M \to M$ to be nilpotent. 
\end{example} 

Moreover, one sees:
\begin{proposition} 
\label{easynilp}
Let $A$ be an $\e{\infty}$-ring. 
\begin{enumerate}
\item Let $A_1 \to A_2 \to A_3 \to \dots $ be a
diagram of $A$-ring spectra, such that the colimit $A_\infty = \varinjlim A_i$
has a compatible structure of an $A$-ring spectrum. If each $A_i$ detects
nilpotence over $A$, then $A_\infty$ detects nilpotence over $A$. 
\item Let $\left\{A'_\alpha\right\}_{\alpha \in S}$ be a collection of
$\e{\infty}$-$A$-algebras that detect nilpotence over $A$. For each $\alpha \in
S$, let $\{A''_{\alpha \beta}\}_{\beta \in T_\alpha}$ be a collection of
$\e{\infty}$-$A'_\alpha$-algebras that detect nilpotence over $A'_\alpha$. Then
the collection $\left\{A''_{\alpha \beta}\right\}_{\alpha \in S, \beta \in
T_\alpha}$ of $A$-algebras detects nilpotence over $A$. 
\item Suppose the $\left\{A'_\alpha\right\}_{\alpha \in S}$ are a
collection of $\e{\infty}$-$A$-algebras detecting nilpotence
over $A$ such that each $\pi_*(A'_\alpha)$ is a graded field. Then an $A$-ring spectrum $A''$ detects nilpotence over $R$ if and
only if $A'' \otimes_A A'_\alpha \neq 0$ for each $\alpha \in S$. 
\end{enumerate}
\end{proposition}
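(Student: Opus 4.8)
The plan is to prove the three parts more or less directly from the definitions, using only the formal behavior of colimits and tensor products in $\md(A)$. For part (1), suppose $u \in \pi_*(T)$ maps to a nilpotent element under $\pi_*(T) \to \pi_*(A_\infty \otimes_A T)$. Since $A_\infty \otimes_A T = \varinjlim (A_i \otimes_A T)$ and homotopy groups commute with filtered colimits, the image of $u$ in some $\pi_*(A_i \otimes_A T)$ already becomes nilpotent (the power witnessing nilpotence in the colimit is detected at a finite stage). Since $A_i$ detects nilpotence over $A$ and $T$ is an $A$-ring spectrum, $u$ is nilpotent. Part (2) is a two-step chase: given $u \in \pi_*(T)$ mapping to a nilpotent element in each $\pi_*(A''_{\alpha\beta} \otimes_A T) \simeq \pi_*(A''_{\alpha\beta} \otimes_{A'_\alpha} (A'_\alpha \otimes_A T))$, first use that the $\{A''_{\alpha\beta}\}_{\beta}$ detect nilpotence over $A'_\alpha$ to conclude that the image of $u$ in $\pi_*(A'_\alpha \otimes_A T)$ is nilpotent — here I must note that $A'_\alpha \otimes_A T$ is an $A'_\alpha$-ring spectrum, so Definition~\ref{defresfield} applies — and then use that the $\{A'_\alpha\}_\alpha$ detect nilpotence over $A$ to conclude $u$ is nilpotent.

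For part (3), the ``only if'' direction is immediate from Example~\ref{detectperfect}: if $A''$ detects nilpotence then it cannot annihilate the nonzero perfect $A$-module $A'_\alpha$, so $A'' \otimes_A A'_\alpha \neq 0$. For the ``if'' direction, suppose $A'' \otimes_A A'_\alpha \neq 0$ for each $\alpha$, and let $T$ be an $A$-ring spectrum with $u \in \pi_*(T)$ mapping to a nilpotent element of $\pi_*(A'' \otimes_A T)$. I want to conclude $u$ is nilpotent; since the $\{A'_\alpha\}$ detect nilpotence over $A$, it suffices to show the image of $u$ in $\pi_*(A'_\alpha \otimes_A T)$ is nilpotent for each $\alpha$. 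The key point is that $\pi_*(A'_\alpha)$ is a graded field, so $A'_\alpha \otimes_A T$ is a module over the graded field $\pi_*(A'_\alpha)$ and its homotopy ring is especially well-behaved: the further base change $A'' \otimes_A (A'_\alpha \otimes_A T) = (A'' \otimes_A A'_\alpha) \otimes_{A'_\alpha} (A'_\alpha \otimes_A T)$ is nonzero provided $A'_\alpha \otimes_A T \neq 0$, because tensoring a free $\pi_*(A'_\alpha)$-module (which $\pi_*(A'_\alpha \otimes_A T)$ is) with the nonzero object $A'' \otimes_A A'_\alpha$ stays nonzero. Now over a graded field, the detection statement simplifies: an element of $\pi_*(A'_\alpha \otimes_A T)$ is nilpotent if and only if it maps to a nilpotent element (equivalently, to a non-unit, in the relevant graded-local sense) after tensoring with the nonzero ring $A'' \otimes_A A'_\alpha$ over $A'_\alpha$ — this is because a homogeneous element of a graded ring over a graded field is either a unit or nilpotent modulo the consideration that its image generates, and the image of a power of $u$ vanishes downstairs. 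More precisely: if $u^N = 0$ in $\pi_*(A'' \otimes_A T)$, then the image of $u$ in $\pi_*(A'_\alpha \otimes_A T)$ has $N$-th power lying in the kernel of a ring map out of a free module over a graded field to a nonzero ring, hence cannot be a unit; but in a graded-commutative ring whose homotopy is a free module over a graded field, every homogeneous non-unit is nilpotent, so $u$ is nilpotent in $\pi_*(A'_\alpha \otimes_A T)$.

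The main obstacle is the last point in part (3): making precise the assertion that ``over a graded field, non-units are nilpotent'' in the relevant generality, and verifying that the image of $u$ in $\pi_*(A'_\alpha \otimes_A T)$ is genuinely a non-unit rather than merely non-invertible-in-some-quotient. The cleanest route is probably to observe that $\pi_*(A'_\alpha \otimes_A T)$, being a graded algebra over the graded field $\pi_*(A'_\alpha)$ with the property that every module splits as a sum of shifts of the unit, has all its homogeneous localizations behaving like local rings; combined with the fact that $A'' \otimes_A A'_\alpha \neq 0$ forces the unit not to be killed, one extracts nilpotence of $u$ upstairs from nilpotence of its image. I would also flag the small but necessary bookkeeping point that at each stage the relevant tensor product is again an $A'$-ring (resp. $A$-ring) spectrum, so that Definition~\ref{defresfield} legitimately applies; this is routine but should be stated.
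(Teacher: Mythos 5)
Your treatments of parts (1) and (2) are fine and are exactly the ``straightforward'' arguments the paper has in mind (filtered colimits of homotopy groups, and a two-step transitivity chase, noting that $A'_\alpha \otimes_A T$ is an $A'_\alpha$-ring spectrum so that \Cref{defresfield} applies). The problem is part (3), in both directions. For the ``only if'' direction you invoke \Cref{detectperfect}, but that example applies to \emph{perfect} $A$-modules, and $A'_\alpha$ is not assumed perfect -- indeed, in the paper's intended application the residue fields $\kappa(\mathfrak{p})$ are typically \emph{not} perfect $A$-modules. The repair is easy and does not need perfectness: apply detection directly to the $A$-ring spectrum $T = A'_\alpha$; if $A'' \otimes_A A'_\alpha = 0$, the unit of $\pi_*(A'_\alpha)$ maps to a nilpotent (namely zero) element, hence would itself be nilpotent, contradicting that $\pi_*(A'_\alpha)$ is a graded field.

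The genuine gap is in the ``if'' direction. Your pivotal claim -- ``in a graded-commutative ring whose homotopy is a free module over a graded field, every homogeneous non-unit is nilpotent'' -- is false: $x \in k[x]$ is a non-unit, non-nilpotent element of a free $k$-module, and $\pi_*(A'_\alpha \otimes_A T)$ is an arbitrary graded algebra over the graded field, not something graded-local. You flag this yourself as ``the main obstacle,'' and the proposed fix via ``homogeneous localizations behaving like local rings'' does not close it. The correct mechanism is injectivity, not a unit/nilpotent dichotomy: since $A'' \otimes_A A'_\alpha \neq 0$, the unit map $\pi_*(A'_\alpha) \to \pi_*(A'' \otimes_A A'_\alpha)$ is injective (its kernel is a proper homogeneous ideal of a graded field), hence split as a map of $\pi_*(A'_\alpha)$-modules; by the K\"unneth isomorphism over the graded field, for any $A'_\alpha$-ring spectrum $T'$ the map $\pi_*(T') \to \pi_*\bigl((A'' \otimes_A A'_\alpha) \otimes_{A'_\alpha} T'\bigr)$ is injective. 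Thus the nonzero ring spectrum $A'' \otimes_A A'_\alpha$ detects nilpotence over $A'_\alpha$ (it even detects vanishing). Applying this with $T' = A'_\alpha \otimes_A T$ and then feeding the collection $\{A'' \otimes_A A'_\alpha\}$ into the transitivity argument of part (2) yields the claim; this is precisely how the paper deduces the third assertion from the second, via the observation that \emph{any} nonzero ring spectrum over each $A'_\alpha$ detects nilpotence over $A'_\alpha$.
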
 
\begin{proof} 
By a \emph{graded field}, we mean a graded ring which is either a field
(concentrated in degree zero) or $k[t^{\pm 1}]$ for $|t|>0$ and $k$ a field.
The third assertion then follows from the second, since any nonzero ring
spectrum over each $A'_\alpha$ detects nilpotence over $A'_\alpha$. 
The proofs of the first and second assertions are straightforward. 
\end{proof} 
Finally, we need an important example of a pair that detects nilpotence. 
\begin{example} 
\label{quotloc}
Let $A$ be a rational $\e{\infty}$-ring, and let $x \in \pi_0(A)$. As 
before, the cofiber $A/x$ inherits the canonical structure of an
$\e{\infty}$-algebra under $A$, as $A//x$. The
localization $A[x^{-1}]$ always inherits a natural $\e{\infty}$-ring structure.
The claim is that the pair of $A$-algebras $\left\{A/x, A[x^{-1}]\right\}$
detects nilpotence. 

To see this, let $T$ be an $A$-ring spectrum, and let $\alpha \in \pi_j(A)$.
Suppose $\alpha$ maps to zero in $A[x^{-1}] = T \otimes_A A[x^{-1}]$. This
means that $x^N \alpha = 0$ for $N$ chosen large enough. Suppose also that
$\alpha$ maps to zero in $\pi_j(T/x) = \pi_j(T \otimes_A A/x)$. This means that
$\alpha = x\beta$ for some $\beta \in \pi_j(T)$. We then have
\[ \alpha^{2N} = \alpha^N \alpha^N = (x \beta)^N \alpha^N = \beta^N x^N
\alpha^N = 0,  \]
since $x^N \alpha = 0$. In other words, $\alpha$ is nilpotent. 

This example will be extremely important to us in making induction arguments on
the Krull dimension. 
\end{example}

\begin{example} 
Let $A \to A'$ be a map of $\e{\infty}$-rings. Suppose that $A \to A'$
{admits descent} (\Cref{admitsdescent}). Then $A \to A'$
detects nilpotence; see for instance \cite[Prop. 3.26]{galgp}. 
\end{example}

\subsection{The main result}
In this subsection, we prove the main technical result of this paper
(\Cref{residueflds}): the
existence of residue fields and the detection of nilpotence. 
We begin with a preliminary technical result. 

\begin{proposition} \label{countablerational}
Let $B$ be a rational $\e{\infty}$-ring such that: 
\begin{enumerate}
\item 
$\pi_0(B)$ is a field $k$. 
\item  $\pi_2(B)$ contains a unit $u$.  
\item  $\pi_{-1}(B)$ is a countably dimensional $k$-vector space. 
\end{enumerate} 
Then there exists a sequence of $\e{\infty}$-rings
\begin{equation} \label{sequence} B = B^{(0)} \to B^{(1)} \to B^{(2)} \to
\dots   \end{equation}
such that:
\begin{enumerate}
\item Each $B^{(i)} $ satisfies the three hypotheses above on $B$.
\item There exists an element $y_i \in \pi_{-1}(B^{(i-1)})$ such that $B^{(i)}
\simeq B^{(i-1)}//y_i$.
\item Given any element $y \in \pi_{-1}(B)$, there exists $N$ such that $y$
maps to zero under the map $B \to
B^{(N)}$. 
\end{enumerate}
\end{proposition}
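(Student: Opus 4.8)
The plan is to reduce to a single inductive step and then assemble the tower greedily. Call a rational $\e{\infty}$-ring \emph{good} if it satisfies the three listed hypotheses. The crucial claim is: if $B'$ is good and $y\in\pi_{-1}(B')$ is \emph{nonzero}, then $B'//y$ is good, the map $\pi_0(B')\to\pi_0(B'//y)$ is an isomorphism (so the residue field $k$ does not change), and $\ker\bigl(\pi_{-1}(B')\to\pi_{-1}(B'//y)\bigr)=k\cdot y$. Granting this, I would fix a countable $k$-basis $w_1,w_2,\dots$ of $\pi_{-1}(B)$ and, at stage $i$, let $y_i$ be the image in $\pi_{-1}(B^{(i-1)})$ of the first $w_j$ whose image there is nonzero, setting $B^{(i)}=B^{(i-1)}//y_i$. (If $\dim_k\pi_{-1}(B)\le 1$ the statement is trivial, so assume $\dim_k\pi_{-1}(B)\ge 2$; then the inductive step forces each $\pi_{-1}(B^{(i)})$, $i\ge 1$, to be countably \emph{infinite} dimensional, so the greedy process never stalls.) Goodness of each $B^{(i)}$ is precisely claim (1), and claim (3) follows because each $w_j$ gets killed at some finite stage, the maps $\pi_{-1}(B)\to\pi_{-1}(B^{(N)})$ are $\pi_0(B)$-linear, and every $y\in\pi_{-1}(B)$ is a finite $k$-combination of the $w_j$'s.

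To prove the inductive step I would first realize $B'$ as a $C^*(S^1;k)$-algebra. Since $\pi_0(B')=k$ is a characteristic-zero field, the proof of \Cref{secresfld} applies to $B'$ even without the noetherian hypothesis — Cohen's theorem is vacuous here, and the remainder only uses that $k$ is a filtered colimit of finite \'etale $\mathbb{Q}(\{t_\alpha\})$-algebras — yielding an $\e{\infty}$-ring map $k\to B'$ that induces the identity on $\pi_0$. Combining it with the map $\sym^*\mathbb{Q}[-1]\simeq C^*(S^1;\mathbb{Q})\to B'$ of \Cref{deg-1coincidence} classifying $y$ produces a map $C^*(S^1;k)\to B'$ carrying the degree $-1$ generator to $y$, and since $k\simeq C^*(S^1;k)\otimes_{C^*(S^1;\mathbb{Q})}\mathbb{Q}$ one gets $B'//y\simeq B'\otimes_{C^*(S^1;k)}k$. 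By \Cref{imbedmodlocsys}, \Cref{unipotentcrit} and \Cref{rightadjointloc}, $B'//y$ is then the fiber of a local system of $\e{\infty}$-$k$-algebras on $S^1$: there is an $\e{\infty}$-$k$-algebra automorphism $\phi$ of $B'//y$ with $\phi_*-1$ locally nilpotent on $\pi_*(B'//y)$ and a fiber sequence $B'\to B'//y\xrightarrow{\phi-1}B'//y$. Moreover the image of the degree two unit of $B'$ is a unit in $\pi_2(B'//y)$, so both rings are $2$-periodic and are determined by their homotopy in degrees $0$ and $-1$.

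The heart of the argument is then the long exact sequence of this fiber sequence. Set $E_0=\pi_0(B'//y)$, $E_1=\pi_{-1}(B'//y)$ and $\delta=\phi_*-1$. Exactness at $\pi_0$, together with the fact that $\pi_0(B')=k\to E_0$ is a ring map into a nonzero ring, gives $\ker(\delta|_{E_0})=k\cdot 1$; and naturality of the boundary map against the constant local system on $S^1$ identifies $y$ with the image of $1_{E_0}$ under the boundary map $E_0\to\pi_{-1}(B')$, so $y\ne 0$ forces $1\notin\mathrm{im}(\delta|_{E_0})$. The twisted Leibniz rule $\delta(ab)=\phi_*(a)\,\delta(b)+\delta(a)\,b$ now shows that the subspaces $F_n=\ker(\delta^n|_{E_0})$ satisfy $F_mF_n\subseteq F_{m+n-1}$, that $\delta$ induces injections $F_{n+1}/F_n\hookrightarrow F_n/F_{n-1}$, and that $F_1=k\cdot 1$; were $F_2\ne F_1$, one could pick $a\in F_2$ with $\delta a$ a nonzero scalar, contradicting $1\notin\mathrm{im}(\delta|_{E_0})$, so $F_2=F_1$ and hence $E_0=\bigcup_n F_n=k$. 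Feeding $E_0=k$ back in, exactness forces the boundary map $E_1\to\pi_0(B')=k$ to vanish, so $\delta|_{E_1}$ is surjective with kernel a quotient of $\pi_{-1}(B')\cong\pi_1(B')$ — hence at most countably dimensional; as $\delta|_{E_1}$ is locally nilpotent, $\ker(\delta^n|_{E_1})/\ker(\delta^{n-1}|_{E_1})\hookrightarrow\ker(\delta|_{E_1})$ for every $n$, so $E_1=\bigcup_n\ker(\delta^n|_{E_1})$ is countably dimensional. Tracking the boundary maps once more yields $\ker\bigl(\pi_{-1}(B')\to\pi_{-1}(B'//y)\bigr)=k\cdot y$, completing the step.

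The main obstacle is precisely the persistence of ``$\pi_0=k$''. This genuinely fails for $y=0$, where $B'//y\simeq B'[t]$ has $\pi_0=k[t]$, so the proof must use $y\ne 0$ essentially; the device above — identify $y$ with a boundary class and exploit the Leibniz filtration — is the cleanest route I see. Two subsidiary points also require care: that the proof of \Cref{secresfld} goes through without the noetherian assumption, and that $\phi$ acts on $B'//y$ through $\e{\infty}$-\emph{ring} maps (so that $\delta$ is a twisted derivation) — the latter being a consequence of the symmetric monoidality of the embedding in \Cref{imbedmodlocsys}.
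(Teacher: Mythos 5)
Your proof is correct and follows the same skeleton as the paper's: make $B$ an $\e{\infty}$-$k$-algebra by the argument of \Cref{secresfld} (the paper invokes that result in exactly this non-noetherian situation, so your remark that its proof needs no noetherian hypothesis when $\pi_0=k$ is the right reading), and kill degree $-1$ classes one at a time via $B\otimes_{C^*(S^1;k)}k$. Where you genuinely diverge is in how the persistence of the hypotheses is verified. The paper maps $k[t_2^{\pm 1}]\otimes_k C^*(S^1;k)\to B$, observes that the cofiber $C$ has homotopy concentrated in odd degrees, applies \Cref{keylem} (which rests on \Cref{tordiv}) to split $C$ into odd shifts of the module $k$, and tensors down; this yields $\pi_{\mathrm{even}}(B^{(1)})=k[t_2^{\pm1}]$ together with the injectivity of $\pi_*(C)\to\pi_*(C\otimes_{C^*(S^1;k)}k)$, which keeps the remaining basis vectors $u_2,u_3,\dots$ linearly independent so they can be killed in order. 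You instead take the monodromy fiber sequence $B'\to B'//y\xrightarrow{\phi-1}B'//y$ from \Cref{rightadjointloc} and argue on its long exact sequence: $\partial(1)=y\neq0$ rules out $1\in\mathrm{im}(\phi_*-1)$, the twisted-derivation filtration collapses $\pi_0(B'//y)$ to $k$, and surjectivity plus local nilpotence of $\phi_*-1$ (the latter from \Cref{unipotentcrit}) controls $\pi_{-1}$. This bypasses \Cref{keylem} entirely and gives marginally sharper information (the kernel on $\pi_{-1}$ is exactly $k\cdot y$, which is equivalent to the paper's linear-independence statement); your greedy choice of which class to kill versus the paper's fixed-basis bookkeeping is an inessential difference.

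Two points should be tightened. First, the parenthetical claim that each $\pi_{-1}(B^{(i)})$, $i\ge1$, is countably \emph{infinite} dimensional does not follow from the inductive step as you packaged it (goodness, $\pi_0$ unchanged, kernel $=k\cdot y$ only give $\dim\pi_{-1}(B^{(i)})\ge\dim\pi_{-1}(B^{(i-1)})-1$, which could decay to $0$); but it does follow from your proof, since $\phi_*-1$ is locally nilpotent and surjective on $\pi_{-1}(B'//y)$, and a locally nilpotent surjective endomorphism of a nonzero finite-dimensional space is impossible, so $\pi_{-1}(B'//y)$ is either zero (forcing $\pi_{-1}(B')=k\cdot y$) or infinite dimensional. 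State this explicitly; it is what keeps the tower from dying. Second, the case $\dim_k\pi_{-1}(B)\le1$ is not ``trivial'': if $\pi_{-1}$ ever vanishes one is forced to cone off $0$, making $\pi_0$ a polynomial ring, so the statement as literally written fails for, say, $B=k[t_2^{\pm1}]$ or $B=C^*(S^1;k[t_2^{\pm1}])$. This is a degenerate-case defect of the statement itself rather than of your argument --- the paper's proof silently chooses an infinite basis and the applications only use the colimit --- but it should be flagged as such rather than dismissed as trivial.
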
 
\begin{proof} Note first that, by \Cref{secresfld}, $B$ naturally admits the
structure of an $\e{\infty}$-$k$-algebra. 
Let $u_1, u_2, \dots \in \pi_{-1}(B)$ 
be a $k$-basis. We define
$B^{(1)} \simeq B//u_1 =  B \otimes_{\sym^* k[-1]} k$ via the map
$\sym^* k[-1] \to B$ classifying $u_1$. Next, we define the
$\e{\infty}$-$B^{(1)}$-algebra $B^{(2)} \simeq B^{(1)}//u_2 \simeq B^{(1)} \otimes_{\sym^*
k[-1]}k $ where the map $\sym^* k[-1] \to B^{(1)}$
classifies $u_2$. 
Inductively, we obtain a sequence
of $\e{\infty}$-rings $B^{(i)}$.
We need to verify the above three conclusions on the $B^{(i)}$. The second and
third conclusions are immediate from the construction. 

Consider the
$\sym^* k[-1]$-module $B$ under
the map $\sym^* k[-1] \to B$ classifying $u_1$. We have a map
\[ k[t_2^{\pm 1}] \otimes_k \sym^* k[-1]  \to B, \]
of $\sym^* k[-1]$-modules. The cofiber $C$, by hypothesis, is a $\sym^*
{k}[-1]$-module whose homotopy groups  are concentrated \emph{entirely}
in odd degrees. 
In particular, we find by \Cref{keylem} that $C$ is a direct sum of \emph{odd} shifts of the $\sym^*
k[-1]$-module $k$, so the cofiber sequence
\[ k[t_2^{\pm 1}] \to B \otimes_{\sym^*k[-1]} k \to C \otimes_{\sym^*k[-1]} k,  \]
(which has to induce \emph{split} exact sequences on the level of homotopy
groups)
implies that, at the level of homotopy groups, $B^{(1)} = B \otimes_{\sym^* k[-1]} k$ has
the same property as did $B$: the even homotopy groups are given by the Laurent
series ring. 
Moreover, the map $\pi_*(C) \to \pi_*(C \otimes_{\sym^* k[-1] } k)$ is
injective, so the $\left\{u_2, u_3, \dots, \right\}$ remain nonzero and
linearly independent in $\pi_{-1} (B^{(1)})$. 
We find inductively 
that all the $B^{(i)}$ have homotopy
groups entirely in odd degrees except for the Laurent series over $k$. 

\end{proof} 

\begin{lemma} 
\label{laurentseriesdetbyhomotopy}
Let $k$ be a  field of characteristic zero 
and let $A$ be any $\e{\infty}$-ring. Then the map
\begin{equation} \label{mapdetbyhomotopylaurent} \hom_{\clg} ( k[t_2^{\pm 1}], A) \to \hom_{\mathrm{Ring}_*} (
\pi_*(k[t_2^{\pm 1}]),\pi_*(A))  \end{equation}
is a bijection.
\end{lemma}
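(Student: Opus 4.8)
The plan is to compute both sides of \eqref{mapdetbyhomotopylaurent} by hand, reducing along the tower of field extensions $\mathbb{Q}\subset k$ in the same way as in the proof of \Cref{secresfld}. Write $\mathrm{Map}_{\clg}$ for mapping spaces, so that $\hom_{\clg}=\pi_0\mathrm{Map}_{\clg}$. First I would dispose of the non-rational case: a graded ring map $\pi_*(k[t_2^{\pm1}])=k[t^{\pm1}]\to\pi_*(A)$ restricts in degree $0$ to a ring map $k\to\pi_0(A)$, which forces $\pi_0(A)$, and hence $A$, to be rational, and likewise any $\e{\infty}$-ring map $k[t_2^{\pm1}]\to A$ restricts to a map $\mathbb{Q}\to A$; so if $A$ is not rational both sides of \eqref{mapdetbyhomotopylaurent} are empty, and we may assume $A$ rational. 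Then $\mathrm{Map}_{\clg}(\mathbb{Q},A)\simeq\ast$, and writing $k[t_2^{\pm1}]=k\otimes_{\mathbb{Q}}\mathbb{Q}[t_2^{\pm1}]$ as a coproduct in $\clg$ yields a splitting $\mathrm{Map}_{\clg}(k[t_2^{\pm1}],A)\simeq\mathrm{Map}_{\clg}(k,A)\times\mathrm{Map}_{\clg}(\mathbb{Q}[t_2^{\pm1}],A)$; the analogous decomposition $\hom_{\mathrm{Ring}_*}(k[t^{\pm1}],\pi_*A)=\hom_{\mathrm{Ring}}(k,\pi_0A)\times\hom_{\mathrm{Ring}_*}(\mathbb{Q}[t^{\pm1}],\pi_*A)$ holds on the algebraic side, and the map \eqref{mapdetbyhomotopylaurent} respects both. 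So it is enough to prove separately that (i) $\hom_{\clg}(\mathbb{Q}[t_2^{\pm1}],A)\to\hom_{\mathrm{Ring}_*}(\mathbb{Q}[t^{\pm1}],\pi_*A)$ and (ii) $\hom_{\clg}(k,A)\to\hom_{\mathrm{Ring}}(k,\pi_0A)$ are bijections.

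For (i), I would present $\mathbb{Q}[t_2^{\pm1}]$ as the free $\e{\infty}$-$\mathbb{Q}$-algebra $\sym^*(\mathbb{Q}[2])$ on one degree-two class $t$, whose homotopy is the polynomial ring $\mathbb{Q}[t]$, with $t$ inverted. Rationality of $A$ gives $\mathrm{Map}_{\clg}(\sym^*(\mathbb{Q}[2]),A)\simeq\mathrm{Map}_{\md(\mathbb{Q})}(\mathbb{Q}[2],A)\simeq\Omega^\infty\Sigma^{-2}A$, whose $\pi_0$ is $\pi_2(A)$, the identification sending a map $f$ to $\pi_*(f)(t)$. Inverting $t$ restricts this to the union of path components on which $\pi_*(f)(t)$ is an invertible element of $\pi_*(A)$, so $\hom_{\clg}(\mathbb{Q}[t_2^{\pm1}],A)$ is the set of invertible elements of $\pi_2(A)$ --- exactly $\hom_{\mathrm{Ring}_*}(\mathbb{Q}[t^{\pm1}],\pi_*A)$, read off via the image of $t$.

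For (ii), which is the content of the Remark following \Cref{secresfld}, I would reprove it by the method of that proposition. Choose a transcendence basis $\{t_\alpha\}_{\alpha\in\Gamma}$ of $k/\mathbb{Q}$, so that $\mathbb{Q}\subset L:=\mathbb{Q}(\{t_\alpha\})\subset k$ with $L$ a localization of a polynomial $\mathbb{Q}$-algebra and $k=\varinjlim_i k_i$ a filtered colimit of finite \'etale $L$-algebras (finite separable, as $\mathrm{char}\,k=0$). Since maps out of a polynomial $\e{\infty}$-$\mathbb{Q}$-algebra are detected on $\pi_0$, as $\mathrm{Map}_{\clg}(\mathbb{Q}[\{t_\alpha\}],A)=\prod_\Gamma\Omega^\infty A$ with $\pi_0$ equal to $\hom_{\mathrm{Ring}}(\mathbb{Q}[\{t_\alpha\}],\pi_0A)$, and localization merely restricts to a union of path components, one gets $\hom_{\clg}(L,A)=\hom_{\mathrm{Ring}}(L,\pi_0A)$ with the fibers of $\mathrm{Map}_{\clg}(L,A)\to\hom_{\mathrm{Ring}}(L,\pi_0A)$ being path components of $\prod_\Gamma\Omega^\infty A$, hence connected. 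By Lurie's theory of \'etale algebras \cite[\S 7.5]{higheralg}, for each $\e{\infty}$-map $L\to A$ the space $\mathrm{Map}_{\clg_{L/}}(k_i,A)$ is discrete and equal to $\hom_{\mathrm{Ring}_{L/}}(k_i,\pi_0A)$; combined with the connectedness just noted, this exhibits $\mathrm{Map}_{\clg}(k_i,A)$ as a disjoint union, over $r\in\hom_{\mathrm{Ring}}(k_i,\pi_0A)$, of the path component of $\mathrm{Map}_{\clg}(L,A)$ lying over $r|_L$. In particular $\hom_{\clg}(k_i,A)=\hom_{\mathrm{Ring}}(k_i,\pi_0A)$, and the transition maps of the filtered system $\{\mathrm{Map}_{\clg}(k_i,A)\}_i$ are equivalences on each path component --- they relabel a component by its restriction to the smaller field, which sits over the same map $L\to\pi_0A$. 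Hence $\mathrm{Map}_{\clg}(k,A)\simeq\varprojlim_i\mathrm{Map}_{\clg}(k_i,A)$ is again such a disjoint union, now indexed by $\varprojlim_i\hom_{\mathrm{Ring}}(k_i,\pi_0A)=\hom_{\mathrm{Ring}}(k,\pi_0A)$, which is (ii).

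The one point requiring care is (ii) when $k$ is not finitely generated over $\mathbb{Q}$: one must check that forming the inverse limit $\mathrm{Map}_{\clg}(k,A)=\varprojlim_i\mathrm{Map}_{\clg}(k_i,A)$ does not introduce extra $\pi_0$. What rescues this is that in the \'etale step the transition maps of the tower are equivalences on each connected component, so the limit is computed componentwise with no $\varprojlim^1$-contribution; for finitely generated $k$ this issue does not arise. Everything else --- the coproduct splitting, the polynomial and localization computations, and the \'etale input \cite[\S 7.5]{higheralg} --- is routine.
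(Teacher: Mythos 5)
Your proof is correct and takes essentially the same route as the paper: split $k[t_2^{\pm 1}] \simeq k \otimes_{\mathbb{Q}} \mathbb{Q}[t_2^{\pm 1}]$ as a coproduct, treat $\mathbb{Q}[t_2^{\pm 1}]$ by the free-algebra-plus-localization computation, and treat $k$ via a transcendence basis together with Lurie's \'etale rigidity, exactly as in the proof of \Cref{secresfld}. The only difference is that you make explicit the filtered-colimit bookkeeping (the componentwise equivalences coming from the discrete \'etale mapping spaces, so that the inverse limit contributes no extra $\pi_0$), which the paper leaves implicit in its citation of \cite[\S 7.5]{higheralg}.
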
 
\begin{proof} 
This follows as in the proof \Cref{secresfld}. That is, using a transcendence
basis for $k$ over $\mathbb{Q}$, one sees that  there exists a free
$\e{\infty}$-ring on a discrete $\mathbb{Q}$-module $V$ such that $k$ is a
filtered colimit of \'etale $\sym^*(V)$-algebras. 
The analog of \eqref{mapdetbyhomotopylaurent} is easily seen to be true for $\sym^*(V)$ and, by
the theory of \'etale extensions \cite[\S 7.5]{higheralg}, it must hold
for $k$.
In other words, 
the map
\begin{equation} \label{mapdetbyhomotopylaurent2} \hom_{\clg} ( k, A) \to \hom_{\mathrm{Ring}_*} (
k,\pi_0(A))  \end{equation}
is a bijection. 
It is similarly easy to see that the analog holds for $k$ replaced by
$\mathbb{Q}[t_2^{\pm 1}]$. 
Since $k[t_2^{\pm 1}] \simeq k \otimes_{\mathbb{Q}} \mathbb{Q}[t_2^{\pm 1}]$,
we may conclude. 
\end{proof} 

\begin{proposition} \label{nilpcountable}
Let $B$ satisfy the hypotheses of \Cref{countablerational}. 
\begin{enumerate}
\item Then there exists a
map of $\e{\infty}$-rings $B \to k[t_2^{\pm 1}]$ which detects nilpotence.
\item If $L$ is any field of characteristic zero, 
then the map
\begin{equation} \label{mapsBlaurent} \pi_0 \hom_{\clg}(B, L[t_2^{\pm 1}]) \to \hom_{\mathrm{Ring}_*}( \pi_*(B),
\pi_* ( L[t_2^{\pm 1}]))  \end{equation}
is a bijection. 
\end{enumerate}
\end{proposition}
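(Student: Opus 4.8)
The plan is to prove the two assertions separately, each building on \Cref{countablerational}.

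\emph{Part (1).} I would iterate the construction of \Cref{countablerational} so as to kill \emph{all} of the odd homotopy of $B$. Fixing, for each $\e{\infty}$-ring $B'$ satisfying the three hypotheses, a $k$-basis of $\pi_{-1}(B')$, and using that a single step $B'\mapsto B'//y$ with $y\in\pi_{-1}(B')$ preserves these hypotheses and leaves the even homotopy equal to $k[t_2^{\pm1}]$, a standard diagonalization produces a single $\mathbb{N}$-indexed tower of $\e{\infty}$-$k$-algebras $B=B^{(0)}\to B^{(1)}\to\cdots$, with $B^{(n)}=B^{(n-1)}//y_n$ for some $y_n\in\pi_{-1}(B^{(n-1)})$, in which every basis element of every $\pi_{-1}(B^{(n)})$ is eventually sent to $0$. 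Put $B^{(\infty)}=\varinjlim_n B^{(n)}$. Then $\pi_{-1}(B^{(\infty)})=0$; as $\pi_2(B^{(\infty)})$ still contains the unit $t_2$, multiplication by $t_2$ forces $\pi_{\mathrm{odd}}(B^{(\infty)})=0$, while $\pi_{\mathrm{even}}(B^{(\infty)})=k[t_2^{\pm1}]$. By \Cref{laurentseriesdetbyhomotopy} there is a map $k[t_2^{\pm1}]\to B^{(\infty)}$ inducing the identity on homotopy, hence an equivalence, so $B^{(\infty)}\simeq k[t_2^{\pm1}]$. Finally each step $B^{(n-1)}\to B^{(n)}=B^{(n-1)}//y_n$ admits descent by \Cref{quotdegminusone}, hence detects nilpotence, so by \Cref{easynilp}(1) the countable composite $B\to k[t_2^{\pm1}]$ detects nilpotence.

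\emph{Part (2).} First, (1) together with \Cref{laurentseriesdetbyhomotopy} yields a retraction $R:=k[t_2^{\pm1}]\xrightarrow{\,i\,}B\xrightarrow{\,r\,}R$ with $r\circ i\simeq\mathrm{id}_R$: take $r$ to be the map from (1), and $i$ the map realizing the inclusion $R_*\hookrightarrow\pi_*(B)$; then $r\circ i\simeq\mathrm{id}$ since it is the identity on homotopy. Restriction along $i$ gives, for each $\psi\in\pi_0\hom_{\clg}(R,L[t_2^{\pm1}])$, a fiber sequence
\[
\hom_{\clg_{R/}}\!\bigl(B,L[t_2^{\pm1}]_\psi\bigr)\longrightarrow\hom_{\clg}\!\bigl(B,L[t_2^{\pm1}]\bigr)\longrightarrow\hom_{\clg}\!\bigl(R,L[t_2^{\pm1}]\bigr),
\]
where $L[t_2^{\pm1}]_\psi$ denotes $L[t_2^{\pm1}]$ as an $R$-algebra via $\psi$; each fiber contains the point $\psi\circ r$, hence is nonempty. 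If every such fiber is moreover \emph{connected}, then $\pi_0\hom_{\clg}(B,L[t_2^{\pm1}])\xrightarrow{\ \sim\ }\pi_0\hom_{\clg}(R,L[t_2^{\pm1}])$, which by \Cref{laurentseriesdetbyhomotopy} is $\hom_{\mathrm{Ring}_*}(R_*,\pi_* L[t_2^{\pm1}])$. To identify this with $\hom_{\mathrm{Ring}_*}(\pi_*(B),\pi_* L[t_2^{\pm1}])$, observe that since $\pi_2(B)$ contains a unit, for $w,w'\in\pi_{-1}(B)$ one has $w'^2=0$ (odd degree, characteristic zero), so the associativity relation $(ww')w'=w(w'^2)=0$, combined with $ww'\in\pi_{-2}(B)=k\cdot t_2^{-1}$ and the invertibility of $t_2$, forces $ww'=0$; thus $\pi_{\mathrm{odd}}(B)\cdot\pi_{\mathrm{odd}}(B)=0$ and $\pi_*(B)$ is the trivial square-zero extension of $R_*$ by the graded module $\pi_{\mathrm{odd}}(B)$. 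A graded ring map out of such an extension into the evenly graded $\pi_* L[t_2^{\pm1}]$ is the same datum as a graded ring map on $R_*$, so the comparison map for $B$ agrees with that for $R$ under the isomorphism above, which proves (2).

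\emph{Connectedness of the fibers.} The plan is to present $B$ as an $R$-algebra built from $R$ by attaching countably many cells --- possible since $\pi_{-1}(B)$ is countable-dimensional --- of only two kinds: free cells in \emph{odd} degrees, and cells $C\mapsto C//z$ with $z$ in \emph{even} degree (coning off a generator in an even degree). Writing $B=\varinjlim_n B_n$ with $B_0=R$ along such a filtration, one has $\hom_{\clg_{R/}}(B,L[t_2^{\pm1}]_\psi)=\varprojlim_n\hom_{\clg_{R/}}(B_n,L[t_2^{\pm1}]_\psi)$; and since $L[t_2^{\pm1}]$ is even periodic, the space $\Omega^\infty\Sigma^{-d}L[t_2^{\pm1}]\simeq\mathrm{Map}_{\md(R)}(\Sigma^d R,L[t_2^{\pm1}])$ is connected when $d$ is odd and has vanishing $\pi_1$ when $d$ is even. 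Starting from $\hom_{\clg_{R/}}(R,-)=\ast$, a free odd cell then multiplies the mapping space by a connected space, and an even-degree $//z$ cell replaces it by the fiber of a map into a space with vanishing $\pi_1$ --- hence still connected; the same vanishing forces the $\pi_1$'s of the tower to surject, so $\lim^1$ vanishes and $\hom_{\clg_{R/}}(B,L[t_2^{\pm1}]_\psi)$ is connected. I expect the real obstacle to lie precisely in producing this cell structure: killing an even-degree relation introduces ``$\mathrm{Tor}$'' classes in both parities, and one must verify these can always be removed by further cells of the two allowed types --- never requiring a quotient by an odd-degree class, which would destroy connectedness of the mapping space. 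Carrying out this combinatorics, with the accompanying bookkeeping for the colimit, is where the work concentrates.
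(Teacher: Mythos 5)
Your part (1) is essentially the paper's argument: you iterate \Cref{countablerational}, coning off degree $-1$ classes and passing to the colimit, using \Cref{quotdegminusone} and \Cref{easynilp} to propagate detection of nilpotence; replacing the paper's two-level colimit ($B\to B_1\to B_2\to\cdots$, each $B_i\to B_{i+1}$ itself a countable colimit) by a single diagonalized tower is a harmless repackaging (just skip any $y_n$ whose image has already died, since $B'//0\simeq B'[t]$ would destroy hypothesis (1)).

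Part (2), however, has a genuine gap at its crux. Your reduction of \eqref{mapsBlaurent} to the statement that every fiber $\hom_{\clg_{R/}}(B,L[t_2^{\pm1}]_\psi)$ of restriction along $i\colon R=k[t_2^{\pm1}]\to B$ is connected is fine (the retraction, the square-zero observation $\pi_{\mathrm{odd}}(B)\cdot\pi_{\mathrm{odd}}(B)=0$, and the compatibility of the two comparison maps are all correct), but the connectedness itself rests entirely on the unproven claim that $B$ admits a countable cell presentation over $k[t_2^{\pm1}]$ using \emph{only} free odd cells and quotients $C\mapsto C//z$ with $z$ even --- never a quotient by an odd class. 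As you yourself note, coning off an even relation reintroduces odd classes in the kernel ($\pi_*(C/z)$ contains $\ker(z)$ shifted by one), and it is exactly the convergence of this bookkeeping --- that all unwanted odd classes can be disposed of without ever attaching the forbidden cell type --- that is left unverified; this is where the content of the statement lies, so the proposal as written does not constitute a proof. Note also that fiber connectedness is strictly stronger than what is needed: the paper never proves it. Instead, it observes that since $\pi_{-1}(L[t_2^{\pm1}])=0$, every map $B^{(i-1)}\to L[t_2^{\pm1}]$ extends over $B^{(i)}=B^{(i-1)}//y_i$, so (passing up the towers of part (1)) every homotopy class of maps $B\to L[t_2^{\pm1}]$ factors through the map $B\to k[t_2^{\pm1}]$ already constructed; injectivity then follows from the rigidity of $k[t_2^{\pm1}]$ (\Cref{laurentseriesdetbyhomotopy}), with no new cell structure and no connectivity analysis. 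If you want to salvage your approach, the cheapest fix is to replace the connectedness claim by this factorization argument, which only requires surjectivity of the restriction maps $\pi_0\hom_{\clg}(B^{(i)},L[t_2^{\pm1}])\to\pi_0\hom_{\clg}(B^{(i-1)},L[t_2^{\pm1}])$; otherwise you must actually construct the odd-cells-plus-even-quotients presentation, which is a nontrivial (minimal-model-style) convergence argument you have not supplied.
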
 
\begin{proof} 
Given $B$ and the basis $u_1, u_2,\dots, $ as above, we let $B_1$ be the
colimit $\varinjlim B^{(i)}$ of the sequence \eqref{sequence} obtained by
applying \Cref{countablerational}. Then $B_1$ satisfies the hypotheses of
this proposition as well. Moreover, the map
$\pi_{-1} (B) \to \pi_{-1}( B_1)$ is the zero map. Thus, we can repeat the above
sequential construction of \Cref{countablerational} to $B_1$ to produce
a new sequence
\[ B_1 \to B_1^{(1)} \to B_1^{(2)} \to \dots,  \]
obtained by iteratively coning off the degree $-1$ elements in $\pi_*(B_1)$. Let the
colimit be the $\e{\infty}$-$B_1$-algebra $B_2$. Then $B_2$ satisfies the
hypotheses of \Cref{countablerational}, but $\pi_{-1}(B_1) \to \pi_{-1}(B_2)$ is zero. Repeating the process, we get
a sequence
\begin{equation} \label{seq2} B \to B_1 \to B_2  \to \dots, \end{equation}
whose colimit, finally, is the $\e{\infty}$-ring $ k[t_2^{\pm 1}]$, since each of the maps is zero on
$\pi_{-1}$. 

We need to see that the map $B \to k[t_2^{\pm 1}]$ thus constructed detects nilpotence. For
this, it suffices to argue that each $B_i \to B_{i+1}$ in the above sequence
detects nilpotence, since detecting nilpotence is preserved in filtered
colimits. But $B_i \to B_{i+1}$ is a filtered colimit of maps each of which is
obtained by coning off a degree $-1$ element, and these maps detect
nilpotence by \Cref{quotdegminusone}. 

Finally, we need to analyze homotopy classes of maps $B \to L[t_2^{\pm 1}]$ where $L$ is a field of
characteristic zero. We have already shown that \eqref{mapsBlaurent} is a
surjection, so we only need to prove injectivity.
For this, we observe that any
map $B \to L[t_2^{\pm 1}]$ extends over $B \to B_1$. Indeed, $B \to B_1$ is a
filtered colimit of maps $B^{(i-1)} \to B^{(i-1)}//y_i$ where $|y_i| = -1$.
Since $\pi_{-1} L[t_2^{\pm 1}] = 0$, any map $B^{(i-1)} \to L[t_2^{\pm 1}]$
can be extended over $B^{(i)}$. In particular, 
\( \pi_0 \mathrm{Hom}_{\clg}(B^{(i)}, L[t_2^{\pm 1}]) \to   \pi_0
\mathrm{Hom}_{\clg}(B^{(i-1)}, L[t_2^{\pm 1}]) \)
is a surjection. Taking inverse limits, it follows easily that
\( \pi_0 \mathrm{Hom}_{\clg}(B_1, L[t_2^{\pm 1}]) \to   \pi_0
\mathrm{Hom}_{\clg}(B, L[t_2^{\pm 1}])
\)
is a surjection too, as claimed. 
Applying this to $B_i$, we find that each map
\(
 \pi_0 \mathrm{Hom}_{\clg}(B_i, L[t_2^{\pm 1}]) \to   \pi_0
\mathrm{Hom}_{\clg}(B_{i-1}, L[t_2^{\pm 1}])
,\)
is a surjection, and taking limits, that the map
\[ \pi_0 \mathrm{Hom}_{\clg}(k[t_2^{\pm 1}], L[t_2^{\pm 1}]) \to   \pi_0
\mathrm{Hom}_{\clg}(B, L[t_2^{\pm 1}])
  \]
  is a surjection, too. However, maps $k[t_2^{\pm 1}] \to L[t_2^{\pm 1}]$
  of $\e{\infty}$-rings
  are determined by their action on homotopy groups in view of
  \Cref{laurentseriesdetbyhomotopy}. This proves uniqueness and completes the proof of 
  \Cref{nilpcountable}.  
\end{proof}

\begin{proposition} 
\label{rsfldforart}
Let $A_0$ be a noetherian, rational $\e{\infty}$-ring containing a unit  in
degree two. 
Suppose $\pi_0(A_0)$ is  a local artinian ring with residue field $k$. 
Let $L$ be any field of characteristic zero.
Then:
\begin{enumerate}
\item The natural map $$\pi_0 \hom_{\clg} ( A_0, L[t_2^{\pm 1}]) \to
\hom_{\mathrm{Ring}_*}( \pi_* A_0 , L[t_2^{\pm 1}]) \simeq
\hom_{\mathrm{Ring}_*}( k[t_2^{\pm 1}], L[t_2^{\pm 1}])$$ is a bijection. 
\item Any map $A_0 \to L[t_2^{\pm 1}]$  of $\e{\infty}$-rings detects nilpotence. 
\end{enumerate}
\end{proposition}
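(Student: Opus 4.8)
The plan is to reduce \Cref{rsfldforart} to \Cref{nilpcountable} by exhibiting $A_0$ as a ``thickening'' of its residue field $k$, then arguing that passing through all the nilpotent elements of $\pi_0(A_0)$ changes neither the set of homotopy classes of maps to $L[t_2^{\pm 1}]$ nor the detection-of-nilpotence property. First I would invoke \Cref{secresfld}: since $\pi_0(A_0)$ is a complete (being artinian) local ring with residue field $k$ of characteristic zero, there is a map of $\e{\infty}$-rings $k \to A_0$ splitting the reduction on $\pi_0$. So $A_0$ is canonically an $\e{\infty}$-$k$-algebra, and the residue map $A_0 \to B$ where $B := A_0 \otimes_k k$... more to the point, I want to construct an $\e{\infty}$-ring $B$ with $\pi_0(B) = k$ that $A_0$ maps to, by killing off the (finitely generated, nilpotent) maximal ideal $\mathfrak{m} \subset \pi_0(A_0)$.

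Concretely: pick generators $x_1,\dots,x_r$ of the maximal ideal $\mathfrak{m} \subset \pi_0(A_0)$ and form $B := A_0//(x_1,\dots,x_r)$. By \Cref{radicial2} (applied with these degree-zero elements, or rather its degree-zero input \Cref{nilpextmod} iterated), the map $\pi_0(A_0)/\mathfrak{m} \to \pi_0(B)$ is a finite universal homeomorphism; since the source is the field $k$ and everything is a $\mathbb{Q}$-algebra, $\pi_0(B) = k$. I also need to know $B$ still satisfies the hypotheses of \Cref{countablerational}: it is rational, $\pi_0(B)=k$, $\pi_2(B)$ still contains a unit because we only attached $1$-cells in degree zero and $A_0$ had a degree-two unit (invert the image of $u$; attaching a $1$-cell to kill a degree-zero class does not touch a unit in $\pi_2$ since the short exact sequence \eqref{ses} shows $\pi_2$ only changes by the ideal generated by $x_i$, which lies in $\mathfrak m$, and a unit mod a nilpotent ideal is still a unit — here $B$ is an iterated cofiber so one argues step by step), and $\pi_{-1}(B)$ is at worst countably dimensional over $k$ (indeed $\pi_{\text{odd}}(A_0)$ is finitely generated over the artinian ring $\pi_0(A_0)$, hence finite-dimensional over $k$, and each quotient $//x_i$ enlarges $\pi_{-1}$ by at most the cokernel/kernel pieces in \eqref{ses}, keeping it countable). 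Then \Cref{nilpcountable} applies to $B$: there is a nilpotence-detecting map $B \to k[t_2^{\pm 1}]$, and \eqref{mapsBlaurent} for $B$ is a bijection.

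Now I assemble the two conclusions. For part (2): the composite $A_0 \to B \to k[t_2^{\pm 1}] \to L[t_2^{\pm 1}]$ detects nilpotence — the first map $A_0 \to B$ is an iterated cofiber by nilpotent degree-zero elements, hence admits descent by \Cref{quotdesc} (and a composite/colimit argument) and so detects nilpotence; the second detects nilpotence by \Cref{nilpcountable}; the third does since $k[t_2^{\pm 1}] \to L[t_2^{\pm 1}]$ is faithfully flat on homotopy; and composites of nilpotence-detecting maps detect nilpotence by \Cref{easynilp}(2). But an \emph{arbitrary} map $A_0 \to L[t_2^{\pm 1}]$ need not a priori factor through $B$; here the point is that it automatically does, since the classes $x_i \in \mathfrak m$ are nilpotent and must map to nilpotent, hence zero, elements of the graded field $\pi_*(L[t_2^{\pm1}])$, and by the universal property (\Cref{modxgeneral}) a nullhomotopy of each $x_i$ lets one extend the map over each $//x_i$ — one has to be slightly careful that the space of such extensions is nonempty at each stage, which follows because $\hom_{A_0/}(A_0//x_i, -)$ is the space of nullhomotopies of $x_i$, nonempty once $x_i \mapsto 0$ on $\pi_0$. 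For part (1): combining the bijection \eqref{mapsBlaurent} for $B$ with the fact just argued that every map $A_0 \to L[t_2^{\pm 1}]$ factors essentially uniquely through $B$ — uniqueness because $A_0 \to B$ is epi enough in the relevant sense, or more safely because both $\pi_0\hom_{\clg}(A_0,L[t_2^{\pm1}])$ and $\pi_0\hom_{\clg}(B,L[t_2^{\pm1}])$ biject with $\hom_{\mathrm{Ring}_*}(k[t_2^{\pm1}],L[t_2^{\pm1}])$ via the same $\pi_*$ map — gives the claimed bijection, using that $\pi_*(A_0)$ and $\pi_*(B)$ have the same maps to $\pi_*(L[t_2^{\pm1}])$ since $k \hookrightarrow \pi_0(A_0)$ induces a bijection $\hom(\pi_0 A_0, L) = \hom(k,L)$ (a ring map from $\pi_0 A_0$ to a field kills the nilpotent $\mathfrak m$).

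The main obstacle I expect is the bookkeeping in part (1): showing not just surjectivity of the map on $\pi_0\hom$ but \emph{injectivity}, i.e. that two maps $A_0 \rightrightarrows L[t_2^{\pm 1}]$ agreeing on homotopy are homotopic. The clean route is the factorization-through-$B$ argument together with the uniqueness half of \eqref{mapsBlaurent}, but one must verify that the extension of a map over each $1$-cell attachment $A_0//x_i$ is unique up to homotopy — the space of extensions is a torsor-like object over $\Omega^{\infty+1}$ of the target (cf. \Cref{modx0}), which is contractible here because $\pi_1(L[t_2^{\pm1}]) = 0 = \pi_{-1}(L[t_2^{\pm1}])$, so the relevant obstruction and ambiguity groups vanish. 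Once that vanishing is in hand, the inverse-limit argument exactly as in the proof of \Cref{nilpcountable} gives injectivity, and the proposition follows.
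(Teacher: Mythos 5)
There is a genuine gap at the very first step, and it is exactly the point where the paper has to work harder. You form $B = A_0//(x_1,\dots,x_r)$ with the $x_i$ generating $\mathfrak{m}$ and assert that $\pi_0(B)=k$ because \Cref{radicial2} makes $k=\pi_0(A_0)/\mathfrak{m}\to\pi_0(B)$ a finite universal homeomorphism and we are over $\mathbb{Q}$. That conclusion does not follow: in characteristic zero a finite universal homeomorphism out of a field $k$ only forces the target to be a local artinian $k$-algebra with residue field $k$ — it may well contain nonzero nilpotents. Indeed, the exact sequence \eqref{ses} shows that attaching a $1$-cell to kill a degree-zero class can \emph{create} new elements of $\pi_0$ coming from $\ker(x_i)$ acting on $\pi_{-1}$, and $\pi_{-1}(A_0)$ need not vanish (it is only assumed finitely generated). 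So $B$ need not satisfy hypothesis (1) of \Cref{countablerational}, and you cannot apply \Cref{nilpcountable} to it; your reduction collapses at this point. (A smaller instance of the same over-optimism: your parenthetical aside that $\pi_2$ ``only changes by the ideal generated by $x_i$'' again ignores the odd-degree contribution in \eqref{ses}.)

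The repair is the iteration the paper carries out: after killing $\mathfrak{m}$ you get a new noetherian ring $A_1$ whose $\pi_0$ is again local artinian with residue field $k$ (this is what \Cref{radicial2} actually gives), so you kill its maximal ideal in turn, and so on, and only the colimit $A_\infty=\varinjlim A_i$ has $\pi_0=k$ (with $\pi_{-1}$ countably dimensional), at which point \Cref{nilpcountable} applies. With that colimit in place, the rest of your outline is essentially the paper's argument: each stage $A_i\to A_{i+1}$ kills nilpotent degree-zero classes, hence admits descent (\Cref{quotdesc}) and detects nilpotence, so the composite $A_0\to A_\infty\to k[t_2^{\pm 1}]$ detects nilpotence; and for part (1) one uses surjectivity of the restriction maps $\pi_0\hom_{\clg}(A_{i+1},L[t_2^{\pm 1}])\to\pi_0\hom_{\clg}(A_i,L[t_2^{\pm 1}])$ (since $\pi_1 L[t_2^{\pm 1}]=0$, nullhomotopies exist) together with the uniqueness statement of \Cref{nilpcountable} applied to $A_\infty$, rather than trying to prove uniqueness of extensions stage by stage. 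So the architecture of your proposal is right, but as written the key intermediate object does not have the properties you need, and the countable iteration is not optional.
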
 

\begin{proof} 
We first treat existence in case $L = k$. Our goal is to produce a map of $\e{\infty}$-rings
from $A_0$ to $k[t_2^{\pm 1}]$. 
For this, we will need to kill  the degree zero elements, 
and the degree $-1$ elements. 
We will first kill the degree $0$ elements by adding cells in dimension one, to
reduce to the case where there is nothing (except for $k$) in
odd dimensions. Then, we will use a separate argument to kill the odd homotopy. 

We first make $A_0$ into an $\e{\infty}$-$k$-algebra, using \Cref{secresfld}.
Given $A_0$, let $\mathfrak{m} \subset \pi_0(A_0)$ be the maximal ideal, which
is a finite-dimensional $k$-vector space. Consider the map
\[ \sym^* ( \mathfrak{m}) \to A_0,  \]
of $\e{\infty}$-$k$-algebras, and form the pushout
\[ A_1 \stackrel{\mathrm{def}}{=}A_0 \otimes_{\sym^*( \mathfrak{m})} k,  \]
which has the same property as $A_0$: $\pi_*(A_1)$ satisfies the desired
noetherianness assumptions (in fact, all the homotopy groups are
finite-dimensional $k$-vector spaces), and $\pi_0(A_1)$ is local
artinian with residue field $k$ by \Cref{radicial2}. Note that $A_0 \to A_1$ admits descent in view
of \Cref{quotdesc}. 

Let $\mathfrak{m}_1 \subset \pi_0(A_1)$ be the maximal ideal, and continue the
process with
\[ A_2 \stackrel{\mathrm{def}}{=}A_1 \otimes_{\sym^*(\mathfrak{m}_2)} k,  \]
and
repeating this, we find a sequence
of 2-periodic, noetherian $\e{\infty}$-rings
\[ A_0 \to A_1 \to A_2 \to \dots ,  \]
where each $A_i$ has the following properties: 
\begin{enumerate}
\item $\pi_0(A_i)$ is a local artinian ring with residue field
$k$ and $\pi_1(A_i)$ is a finite-dimensional
$k$-vector space. 
\item $A_{i+1} $ is obtained from $A_i$ by attaching a 1-cell for each element
in a $k$-basis of the maximal ideal of $\pi_0(A_i)$, and thus $A_i \to A_{i+1}$ admits descent. 
\item In particular, the map $\pi_0(A_i) \to \pi_0(A_{i+1})$ annihilates the
maximal ideal of the former. 
\end{enumerate}

If we take the colimit $A_\infty = \varinjlim_{i} A_i$, we find an
$\e{\infty}$-$A$-algebra $A_\infty$ such that $\pi_0(A_\infty) =
k$. This
process of iteratively attaching 1-cells has likely introduced elements
in $\pi_{-1}$, but we know that $\pi_{-1}(A_\infty)$ is a
\emph{countably} dimensional $k$-vector space. 
Note that $A_0 \to A_\infty$ detects nilpotence, as it is the sequential
colimit of a sequence of objects in $\clg_{A_0/}$ that admit descent. 
But now we 
can appeal to \Cref{nilpcountable} to obtain a map $A_\infty \to k[t_2^{\pm
1}]$ which detects nilpotence. 
This completes the proof of existence. 

We may handle the first assertion (of which we now only need to prove
\emph{injectivity} of the map) in a similar manner as in the
proof of \Cref{nilpcountable}. 
That is, we observe that
\[ \hom_{\clg}(A_i, L[t_2^{\pm 1}]) \to \hom_{\clg} (A_{i-1}, L[t_2^{\pm 1}]) \]
is surjective for each $i$, because $A_{i} \simeq A_{i-1}//(x_{i}^{(1)}, \dots,
x_i^{(n)})$ where the $x_i^{(j)}$ are nilpotent. In the limit, we find that 
the map
\[ \hom_{\clg}(A_\infty, L[t_2^{\pm 1}]) \to \hom_{\clg}(A_0, L[t_2^{\pm 1}])  \]
is surjective. 
But now we can apply the uniqueness statement of \Cref{nilpcountable} to
complete the proof.
That is, maps $A_\infty \to L[t_2^{\pm 1}]$ are determined by their behavior
on homotopy groups. 
So, if we had two different maps $A_0 \to L[t_2^{\pm 1}]$ inducing the same
behavior on homotopy groups, we would get two different maps $A_\infty \to
L[t_2^{\pm 1}]$ inducing the same behavior on homotopy groups, and this is a
contradiction. 
\end{proof}

\begin{proposition} 
\label{mapstogradedfields}
Let $A$ be  a noetherian, rational $\e{\infty}$-ring containing a unit in
degree two. Suppose $k$ is a field of characteristic zero. Then the map 
\[ \pi_0 \hom_{\clg}(A, k[t_2^{\pm 1}]) \to \hom_{\mathrm{Ring}_*}(
\pi_*(A) , k[t_2^{\pm 1}])  \]
is a bijection.
\end{proposition}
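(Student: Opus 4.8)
The plan is to reduce the statement to the case where $\pi_0(A)$ is local artinian, which is exactly the content of \Cref{rsfldforart}, and to glue the pieces using the fact that a ring homomorphism $\pi_*(A) \to k[t_2^{\pm 1}]$ factors through a localization-then-completion of $\pi_*(A)$ at a point. Given a graded ring homomorphism $f \colon \pi_*(A) \to k[t_2^{\pm 1}]$, its restriction to $\pi_0(A)$ picks out a prime $\mathfrak{p} = f^{-1}(0) \subset \pi_0(A)$. The first step is to pass to the localization $A_{\mathfrak{p}}$ (using that localizations of rational $\e{\infty}$-rings at multiplicative subsets of $\pi_0$ inherit $\e{\infty}$-structures, and that $\hom_{\clg}(A_{\mathfrak{p}}, k[t_2^{\pm 1}])$ agrees with the subspace of $\hom_{\clg}(A, k[t_2^{\pm 1}])$ of maps inverting $\pi_0(A) \setminus \mathfrak{p}$; similarly on the level of homotopy rings). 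This reduces us to the case $\pi_0(A)$ local with maximal ideal $\mathfrak{p}$.

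Next, I would complete: since $f$ kills $\mathfrak{p}$ and lands in a ring with no $\mathfrak{p}$-divisibility issues, it factors through the $\mathfrak{p}$-adic completion $\widehat{A}$ of $A$ (using the completion theory of \cite[\S 4]{DAGProp}, the fact that the noetherian hypothesis makes completion behave well on homotopy, and that $k[t_2^{\pm 1}]$ is already $\mathfrak{p}$-complete as it is a $k$-algebra). So we may assume $\pi_0(A)$ is complete local noetherian with residue field $k'$, and by \Cref{secresfld} that $A$ is an $\e{\infty}$-$k'$-algebra. Now the point is that $\widehat{A}$ is a filtered limit (or at least is built from) its artinian quotients $A/\mathfrak{p}^n$ — more precisely $A \simeq \holim_n A//(\text{generators of } \mathfrak{p}^n)$, each of which has $\pi_0$ local artinian with residue field $k'$ by \Cref{radicial2}. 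Since the target $k[t_2^{\pm 1}]$ is artinian-like (its $\pi_0$ is a field), any map $A \to k[t_2^{\pm 1}]$ factors through some such artinian quotient, and likewise for maps of homotopy rings. Thus both sides of the asserted bijection are colimits over $n$ of the corresponding sets for the artinian $\e{\infty}$-rings $A//(\dots)$, and \Cref{rsfldforart} gives the bijection at each finite stage.

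The main obstacle I anticipate is the bookkeeping for the completion/limit step: one must check that the natural map $\varinjlim_n \hom_{\clg}(A/\mathfrak{p}^n\text{-quotient}, k[t_2^{\pm 1}]) \to \hom_{\clg}(A, k[t_2^{\pm 1}])$ is a bijection, and that the analogous statement holds on homotopy rings, uniformly and compatibly. This is where the noetherian hypothesis is essential — it guarantees that $\pi_*$ of the artinian quotients are finite-dimensional over $k'$ and that the $\mathfrak{p}$-adic filtration is exhaustive and well-behaved — and it is the place where care is needed to ensure the completion does not change the homotopy type in a way that is invisible to $\pi_*$. Once the reduction to \Cref{rsfldforart} is in place, surjectivity and injectivity both follow formally from the finite-stage statements. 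A cleaner alternative, which I would try first, is to avoid completions entirely: observe that a map $\pi_*(A) \to k[t_2^{\pm 1}]$ factors through $\pi_*(A)/\mathfrak{p}^N\pi_*(A)$ for some $N$ already (the kernel contains $\mathfrak{p}$ hence, being a ring map to an artinian-ish ring, some power), so one works directly with the $\e{\infty}$-ring $A//(x_1^{m},\dots,x_r^{m})$ for generators $x_i$ of $\mathfrak{p}$ and large $m$, whose $\pi_0$ is local artinian with residue field a finite extension of $k'$ by \Cref{radicial2}, and applies \Cref{rsfldforart} to that, taking a colimit over $m$. The hard part either way is confirming the compatibility of these colimit identifications on the two sides.
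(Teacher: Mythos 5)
Your ``cleaner alternative'' is essentially the paper's argument: localize at $\mathfrak{p}$, kill generators of the maximal ideal to land in the local artinian case, and invoke \Cref{rsfldforart}. But as written, both of your routes hinge on a colimit identification that you yourself flag as unresolved, and in your primary (completion) route that step is genuinely problematic: writing the completed ring as $\holim_m A//(x_1^m,\dots,x_n^m)$ does not give you that $\pi_0\hom_{\clg}(A, k[t_2^{\pm 1}])$ is $\varinjlim_m$ of the finite-stage mapping sets --- maps out of a homotopy limit need not factor through a finite stage, and even granting factorization (which does hold here, but for a different reason), you never address injectivity of the comparison map $\varinjlim_m \pi_0\hom_{\clg}(A//(x_1^m,\dots,x_n^m), k[t_2^{\pm 1}]) \to \pi_0\hom_{\clg}(A, k[t_2^{\pm 1}])$. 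The completion detour (and the appeal to \Cref{secresfld}) is also unnecessary for this proposition.

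The point you are missing is that no tower, no completion, and no colimit are needed, because the given graded map kills $\mathfrak{p}$ itself, not merely a power: after localizing, take the single quotient $A_0 = A//(x_1,\dots,x_n)$ for generators $x_i$ of $\mathfrak{p}$. For surjectivity, $\pi_0(A_0)$ is local artinian with residue field $k' = \pi_0(A)/\mathfrak{p}$ by \Cref{radicial2}, so \Cref{rsfldforart} produces $A_0 \to k'[t_2^{\pm 1}]$, and the composite $A \to A_0 \to k'[t_2^{\pm 1}] \to k[t_2^{\pm 1}]$ realizes the prescribed map on $\pi_*$. For injectivity, any $\e{\infty}$-map $A \to k[t_2^{\pm 1}]$ inducing such a $\pi_*$-map extends over $A_0$, since a factorization is just a choice of nullhomotopies of the images of the $x_i$, which exist because these images vanish in $\pi_0(k[t_2^{\pm 1}])$; moreover any such extension induces the \emph{uniquely determined} map on $\pi_*(A_0)$ (a ring map $\pi_0(A_0) \to k$ factors through the residue field $k'$ and is pinned down by its restriction to $\pi_0(A)$, and likewise in degree $2$ via the image of the unit). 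Hence two distinct homotopy classes $A \to k[t_2^{\pm 1}]$ with the same effect on $\pi_*$ would extend to two distinct classes $A_0 \to k[t_2^{\pm 1}]$ with the same effect on $\pi_*$, contradicting the injectivity statement of \Cref{rsfldforart}. With this observation the compatibility issue you worried about simply does not arise, and the rest of your outline goes through.
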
 
\begin{proof} 
We begin with surjectivity.
Suppose given a map $\pi_0(A) \to k$. We want to realize this at the level of
$\e{\infty}$-rings.
By localizing, we may assume that $\pi_0(A)$ is a local ring and that
$\mathfrak{p} \subset \pi_0(A)$ is its maximal ideal; let $k'$ be the residue
field $\pi_0( A)/\mathfrak{p}$. By hypothesis, we have a map $k' \to
k$. Choose ideal generators $x_1,
\dots, x_n \in \mathfrak{p}$ and use them to construct a map
\[ \mathbb{Q}[u_1, \dots, u_n] \to A, \quad u_i \mapsto x_i.  \]
We let $A_0$ be the $\e{\infty}$-ring  $A \otimes_{\mathbb{Q}[u_1, \dots, u_n]}
\mathbb{Q} \simeq A//(u_1, \dots, u_n)$. 
Then we 
get a map $A \to A_0 \to k'[t_2^{\pm 1}]$ by 
\Cref{rsfldforart} realizing the map on $\pi_*$ desired. 
We can compose this with the map 
$k [t_2^{\pm 1} ] \to k[t_2^{\pm 1}]$.

Suppose now we have two distinct $\e{\infty}$-maps $A \to k[t_2^{\pm 1}]$
realizing the same map on $\pi_*$. 
The same argument as before shows that both maps extend to (necessarily
distinct) maps $A_0 \to k[t_2^{\pm 1}]$ realizing the same map on homotopy
groups, but this contradicts \Cref{rsfldforart}. 
\end{proof} 

We can now prove our main result. 

\begin{theorem} 
Let $A$ be a noetherian, rational $\e{\infty}$-ring containing a unit in
degree two. 
\label{residueflds}
\begin{enumerate}
\item 
For each prime ideal $\mathfrak{p} \subset \pi_0(A)$, a residue field
$\kappa(\mathfrak{p}) \in \clg_{A/}$ for
$A$ at $\mathfrak{p}$ exists and is unique up to homotopy. 
\item The $\e{\infty}$-$A$-algebras $\kappa(\mathfrak{p})$ detect nilpotence. 
\end{enumerate}
\end{theorem}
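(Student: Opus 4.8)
The plan is to derive both parts from the machinery already assembled, with part~(2) running an induction on Krull dimension keyed to \Cref{quotloc}. For part~(1) I would take $\kappa(\mathfrak p) := k(\mathfrak p)[t_2^{\pm 1}]$ and equip it with an $\e{\infty}$-$A$-algebra structure via \Cref{mapstogradedfields}. The point is that the reduction $\pi_0(A) \to k(\mathfrak p)$ extends to a (necessarily unique) graded ring homomorphism $\pi_*(A) \to k(\mathfrak p)[t_2^{\pm 1}]$: a degree-two unit $u$ makes $\pi_{\mathrm{even}}(A) = \pi_0(A)[u^{\pm 1}]$, and for $a,b \in \pi_{\mathrm{odd}}(A)$ the element $ab\,u^{-1} \in \pi_0(A)$ becomes a non-unit in the local ring $\pi_0(A)_\mathfrak p$ — otherwise $a$ would be an invertible element of $\pi_*(A_\mathfrak p)$ with $a^2 = 0$ — hence maps to $0$ in $k(\mathfrak p)$, so the odd part must go to zero. \Cref{mapstogradedfields} then lifts this to the required $\e{\infty}$-map, and by construction $\pi_0(\kappa(\mathfrak p)) = k(\mathfrak p)$ and $\pi_1(\kappa(\mathfrak p)) = 0$. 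For uniqueness, any residue field $\kappa$ has $\pi_*(\kappa) \cong k(\mathfrak p)[t_2^{\pm 1}]$ (the degree-two unit of $A$ maps to a unit, forcing $2$-periodicity, and $\pi_1(\kappa) = 0$), so \Cref{laurentseriesdetbyhomotopy} provides an equivalence of $\e{\infty}$-rings $k(\mathfrak p)[t_2^{\pm 1}] \xrightarrow{\sim} \kappa$; transporting the $A$-algebra structure along it and applying \Cref{mapstogradedfields} once more shows $\kappa \simeq \kappa(\mathfrak p)$ in $\clg_{A/}$.

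For part~(2) I would prove, by induction on $n \ge 0$, the statement $(\star_n)$: for every noetherian rational $\e{\infty}$-ring $A$ with a unit in degree two and $\dim\pi_0(A) \le n$, the family $\{\kappa(\mathfrak p)\}_{\mathfrak p}$ detects nilpotence over $A$. For $n=0$, $\pi_0(A)$ is a finite product of artinian local rings, $A$ splits as a corresponding finite product, nilpotence is detected factorwise, and on each factor \Cref{rsfldforart}(2) shows the unique residue field detects nilpotence. For the inductive step I would first treat $A$ with $\pi_0(A)$ local of dimension $n \ge 1$ (if smaller, $(\star_{n-1})$ applies): by prime avoidance choose $x$ in the maximal ideal avoiding every minimal prime, so $\dim\pi_0(A)/(x) \le n-1$; by \Cref{nilpextmod} the map $\pi_0(A)/(x) \to \pi_0(A//x)$ is a finite universal homeomorphism, so $\dim\pi_0(A//x) \le n-1$, and since $x$ lies in the maximal ideal every prime of $\pi_0(A)[x^{-1}]$ is strictly below it, whence $\dim\pi_0(A)[x^{-1}] \le n-1$. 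Now \Cref{quotloc} says $\{A//x, A[x^{-1}]\}$ detects nilpotence over $A$; $(\star_{n-1})$ applies to each of $A//x$ and $A[x^{-1}]$; and by the uniqueness in part~(1) the residue fields of those two rings are, as $\e{\infty}$-$A$-algebras, precisely the $\kappa_A(\mathfrak p)$ for $\mathfrak p$ containing $x$, resp.\ not containing $x$ — together all primes of $\pi_0(A)$. Transitivity of nilpotence detection (\Cref{easynilp}(2)) then gives $(\star_n)$ for local $A$. Finally I would reduce the general case of $(\star_n)$ to the local one: given an $A$-ring spectrum $T$ and a non-nilpotent $x \in \pi_*(T)$, the ascending chain of ideals $\mathrm{Ann}_{\pi_0(A)}(x^k)$ stabilizes at a proper ideal $\mathrm{Ann}(x^N)$ (since $x^N \ne 0$); any maximal ideal $\mathfrak m$ containing it makes $x$ non-nilpotent in $\pi_*(T \otimes_A A_\mathfrak m)$, while each $\kappa(\mathfrak q)$ with $\mathfrak q \subseteq \mathfrak m$ is canonically an $A_\mathfrak m$-algebra and $T \otimes_A \kappa(\mathfrak q) \simeq (T \otimes_A A_\mathfrak m) \otimes_{A_\mathfrak m} \kappa(\mathfrak q)$, so the hypotheses transfer to the local ring $A_\mathfrak m$ of dimension $\le n$.

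The main obstacle I anticipate is organizational rather than conceptual: one must carefully identify the residue fields of $A$, $A//x$ and $A[x^{-1}]$ with one another (this is exactly where the uniqueness clause of part~(1) is indispensable), be disciplined about the two-layer induction — in particular noting that the $A[x^{-1}]$ branch is not local, so the inductive statement must be stated for arbitrary $\pi_0$ together with a separate localization reduction — and verify the dimension drops, where \Cref{nilpextmod} (equivalently \Cref{radicial2}) does the essential work. Everything else is a routine application of the results of the preceding sections, of \Cref{quotloc}, and of the formal closure properties in \Cref{easynilp}.
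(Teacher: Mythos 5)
Your proposal is correct and follows essentially the same route as the paper: part (1) is deduced from \Cref{mapstogradedfields} by constructing the residue field at the level of homotopy groups, and part (2) is an induction on Krull dimension with base case \Cref{rsfldforart}, inductive step via the pair $\{A//x,\, A[x^{-1}]\}$ from \Cref{quotloc} combined with \Cref{easynilp}, followed by a reduction to the local case. The extra details you supply (prime avoidance, the annihilator argument for localizing at a maximal ideal, and identifying the residue fields of $A//x$ and $A[x^{-1}]$ with the $\kappa(\mathfrak{p})$ via the uniqueness clause) are exactly the steps the paper leaves implicit.
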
 

\begin{proof} 
The existence and uniqueness of residue fields is a consequence of
\Cref{mapstogradedfields}, since we can of course construct them uniquely in the
setting of commutative algebra (i.e., at the level of homotopy groups).

Finally, we need to show that the residue fields detect nilpotence. 
If $\pi_0 A$ is local artinian, then we have already seen this
(\Cref{rsfldforart}). 
Now, assume the result on detection of nilpotence proved for  all noetherian
$A$ with the Krull dimension of $\pi_0(A)$ at most $n-1$. We will then prove it for
dimension $\leq n$. In fact, we may assume that $\pi_0 A$  is noetherian
\emph{local} of Krull dimension $\leq n$. Choose $x \in \pi_0 A$ such that
$\pi_0 A/(x)$ has Krull dimension $\leq n-1$. As we saw in \Cref{quotloc}, the pair
of $\e{\infty}$-$A$-algebras $A//x, A[x^{-1}]$ detect nilpotence over $A$, and
each of these is noetherian with $\pi_0$ having Krull dimension $\leq n-1$.
Therefore, the residue fields of the $\e{\infty}$-rings $A/x, A[x^{-1}]$ are sufficient to detect
nilpotence over each of them, and thus detect nilpotence over $A$ by
\Cref{easynilp}. 

Given \emph{any} rational noetherian $\e{\infty}$-ring $A$, in order to prove
that the residue fields $\left\{\kappa(\mathfrak{p})\right\}_{\mathfrak{p} \in \spec
\pi_0 A}$ detect nilpotence over $A$, it suffices to reduce to the case where
$\pi_0 A$ is \emph{local}, and thus of finite Krull dimension, so that what we
have already done suffices to show that the residue fields detect nilpotence.

\end{proof}

\begin{proposition} 
\label{whenzero}
Let $A$ be a rational noetherian $\e{\infty}$-ring containing a unit in
$\pi_2$. Suppose $B \in \clg_{A/}$ is noetherian as well. Let $\mathfrak{p}
\in \spec \pi_0 A$. 
Let $k(\mathfrak{p})$ be the associated algebraic residue field and let
$\kappa( \mathfrak{p}) \in \clg_{A/}$ be the topological one.
Then the following are equivalent: 
\begin{enumerate}
\item  $\pi_0(B) \otimes_{\pi_0(A)} k(\mathfrak{p}) \neq 0$.
\item $B \otimes_A \kappa(\mathfrak{p}) \neq 0$. 
\end{enumerate}
\end{proposition}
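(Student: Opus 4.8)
The plan is to prove that $(1) \Leftrightarrow (2)$ by reducing to the case where $\pi_0(A)$ is local with maximal ideal $\mathfrak p$ (so $k(\mathfrak p) = \pi_0(A)/\mathfrak p$), and then relating both conditions to whether a certain nilpotence-detecting algebra kills $B$. The implication $(2) \Rightarrow (1)$ should be easy: there is a map of $\e{\infty}$-rings $A \to \kappa(\mathfrak p)$, and on $\pi_0$ it factors through $k(\mathfrak p)$; hence $B \otimes_A \kappa(\mathfrak p)$ receives a map from $B \otimes_A k(\mathfrak p)$-ish data, and more precisely $\pi_0(B \otimes_A \kappa(\mathfrak p))$ is a quotient (or at least receives a ring map from) $\pi_0(B) \otimes_{\pi_0(A)} k(\mathfrak p)$ — if the latter is zero, so is the former, and since $\kappa(\mathfrak p)$ is even periodic with $\pi_1 = 0$, vanishing of $\pi_0$ forces $B \otimes_A \kappa(\mathfrak p) = 0$. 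Actually I would be a little careful here and instead argue the contrapositive of $(1)\Rightarrow(2)$ directly as below, which is cleaner.

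For the substantive direction $(1) \Rightarrow (2)$, assume $\pi_0(B) \otimes_{\pi_0(A)} k(\mathfrak p) \neq 0$; I want to produce a nonzero $B \otimes_A \kappa(\mathfrak p)$. First localize $A$ at $\mathfrak p$ so that $\pi_0 A$ is local with maximal ideal $\mathfrak p$; pick generators $x_1, \dots, x_n$ of $\mathfrak p$ and form $A_0 = A//(x_1, \dots, x_n)$ as in the construction of residue fields (using \Cref{modxmodx}), so that $\pi_0(A_0)$ is local artinian with residue field a finite universal-homeomorphic extension of $k(\mathfrak p)$ by \Cref{radicial2}, and in fact the completion lets us assume $\pi_0(A_0)$ has residue field exactly $k = k(\mathfrak p)$. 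The key point is that $\pi_0(B \otimes_A A_0)$ is, by \Cref{radicial2} applied to $B$ (or rather to $B \otimes_A A$ with the images of the $x_i$), a finite universal homeomorphism over $\pi_0(B)/(x_1,\dots,x_n) = \pi_0(B)\otimes_{\pi_0(A)}\pi_0(A)/\mathfrak p$; since a finite universal homeomorphism is faithfully flat-ish in the sense of being nonzero whenever the source is nonzero (its $\spec$ is surjective), $\pi_0(B \otimes_A A_0) \neq 0$, hence $B \otimes_A A_0 \neq 0$. Then I apply the map $A_0 \to k[t_2^{\pm 1}] = \kappa(\mathfrak p)$ from \Cref{rsfldforart} which detects nilpotence: since $B \otimes_A A_0$ is a nonzero $A_0$-ring spectrum, detection of nilpotence (see \Cref{detectperfect}, or rather the general statement that a nilpotence-detecting algebra cannot annihilate a nonzero ring spectrum, applied via the identity element) forces $(B \otimes_A A_0) \otimes_{A_0} \kappa(\mathfrak p) = B \otimes_A \kappa(\mathfrak p) \neq 0$.

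The main obstacle I anticipate is making sure $B$ itself is noetherian so that \Cref{radicial2} applies to $B$ (and to the relevant quotients of $B$) — this is exactly why the hypothesis ``$B$ is noetherian as well'' is imposed, so I would invoke it at the point where I claim $\pi_*(B \otimes_A A_0)$ has controlled homotopy. A secondary subtlety is the passage to the completion/finite-extension of $A_0$ that is needed to identify its residue field with $k(\mathfrak p)$ on the nose: one should note that this base change only enlarges things and does not affect the vanishing statements (a finite faithfully-flat or finite universal-homeomorphism base change reflects nonvanishing), so it is harmless. For the reverse implication, I would simply observe: if $B \otimes_A \kappa(\mathfrak p) \neq 0$, then since $\kappa(\mathfrak p)$ is even periodic with $\pi_*$ a graded field, $\pi_0(B \otimes_A \kappa(\mathfrak p)) \neq 0$; but this is an algebra over $\pi_0(B) \otimes_{\pi_0(A)} \pi_0(\kappa(\mathfrak p)) = \pi_0(B)\otimes_{\pi_0(A)} k(\mathfrak p)$ (localizing at $\mathfrak p$ as needed), so the latter is nonzero, giving $(1)$.
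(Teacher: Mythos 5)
Your proposal is correct and follows essentially the same route as the paper's proof: the easy direction via the ring map $\pi_0(B)\otimes_{\pi_0(A)}k(\mathfrak{p})\to\pi_0(B\otimes_A\kappa(\mathfrak{p}))$, and the substantive direction by localizing at $\mathfrak{p}$, forming $A_0 = A//(x_1,\dots,x_n)$, using \Cref{radicial2} (where the noetherianness of $B$ enters) to see $B\otimes_A A_0\neq 0$, and then invoking nilpotence detection for $A_0\to\kappa(\mathfrak{p})$ from \Cref{rsfldforart}. The only small remark is that your detour through completions to identify the residue field of $\pi_0(A_0)$ is unnecessary: in characteristic zero a finite universal homeomorphism over the field $k(\mathfrak{p})$ already forces the residue field of $\pi_0(A_0)$ to be $k(\mathfrak{p})$ itself.
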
 
\begin{proof} 
There is a map of commutative rings
\( \pi_0(B) \otimes_{\pi_0(A)} k(\mathfrak{p}) \to \pi_0 ( B \otimes_A
\kappa( \mathfrak{p})),  \)
so if the latter is nonzero, clearly the former is as well. Suppose the former
is nonzero now. Without loss of generality, we may assume that 
$\pi_0(A)$ is local and $\mathfrak{p}$ is its maximal ideal. Let $(x_1, \dots,
x_n)$ be a system of generators for $\mathfrak{p}$ and let $A_0 = A//(x_1,
\dots, x_n)$. Then $B \otimes_A A_0 \neq 0$ as $\pi_0(B)/(x_1, \dots, x_n)
\neq 0$,  in view of \Cref{radicial2}; it is
here that we use that $B$ is noetherian. 
However, the map $A_0 \to \kappa(\mathfrak{p})$ detects nilpotence as
$\pi_0(A_0)$ is local artinian, so that if $B \otimes_A A_0 \neq 0$, then $B
\otimes_A \kappa(\mathfrak{p}) \neq 0$ too. 
\end{proof}

\begin{remark} 
\Cref{whenzero} is false if we do not assume $B$ is noetherian. 
We refer to \Cref{bigcompact} for a counterexample: the map $\mathbb{Q}[x,y]
\to A$ constructed is an isomorphism on $\pi_0$, but $B//(x,y) = 0$.
\end{remark} 

\begin{corollary} 
Let $A$ be a rational noetherian $\e{\infty}$-ring containing a unit in
$\pi_2$. Given two different prime ideals $\mathfrak{p},
\mathfrak{q} \subset \pi_0 A$, the tensor product $\kappa(\mathfrak{p}) \otimes_A
\kappa(\mathfrak{q})$ is contractible.\end{corollary}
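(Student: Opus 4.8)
The plan is to deduce the statement directly from \Cref{whenzero}, whose whole purpose is to translate nonvanishing of such a tensor product into a question of ordinary commutative algebra. First I would observe that $\kappa(\mathfrak{p}) \otimes_A \kappa(\mathfrak{q})$ is naturally an $\e{\infty}$-algebra under both $\kappa(\mathfrak{p})$ and $\kappa(\mathfrak{q})$, hence in particular an object of $\clg_{A/}$, so that \Cref{whenzero} applies with the $\e{\infty}$-$A$-algebra $B$ taken to be $\kappa(\mathfrak{q})$ and the prime ideal taken to be $\mathfrak{p}$. Before invoking it, I would check the standing hypothesis that $B = \kappa(\mathfrak{q})$ is noetherian: by \Cref{residueflds} and the definition of a residue field, $\pi_*(\kappa(\mathfrak{q})) \cong k(\mathfrak{q})[u^{\pm 1}]$ with $|u| = 2$, so $\pi_{\mathrm{even}}(\kappa(\mathfrak{q}))$ is a Laurent polynomial ring over a field, hence noetherian, and $\pi_{\mathrm{odd}}(\kappa(\mathfrak{q})) = 0$ is trivially finitely generated.

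Granting this, \Cref{whenzero} reduces the claim to showing that the discrete ring $\pi_0(\kappa(\mathfrak{q})) \otimes_{\pi_0(A)} k(\mathfrak{p}) = k(\mathfrak{q}) \otimes_{\pi_0(A)} k(\mathfrak{p})$ is zero whenever $\mathfrak{p} \neq \mathfrak{q}$: if this algebraic tensor product vanishes, then condition (1) of \Cref{whenzero} fails, hence so does condition (2), i.e. $\kappa(\mathfrak{q}) \otimes_A \kappa(\mathfrak{p})$ is contractible.

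The remaining commutative-algebra input is elementary, and I would argue it as follows. Since $\mathfrak{p} \neq \mathfrak{q}$, there is an element $x \in \pi_0(A)$ lying in one of the two primes but not the other; by symmetry, say $x \in \mathfrak{p} \setminus \mathfrak{q}$. Then the image of $x$ in $k(\mathfrak{p})$ is $0$, while its image in $k(\mathfrak{q}) = (\pi_0 A)_{\mathfrak{q}}/\mathfrak{q}(\pi_0 A)_{\mathfrak{q}}$ is nonzero, hence a unit. Both of these compute the action of $x$ on $M := k(\mathfrak{q}) \otimes_{\pi_0(A)} k(\mathfrak{p})$ induced by the $\pi_0(A)$-module structure, so multiplication by $x$ on $M$ is at once the zero map and an isomorphism; therefore $M = 0$, as needed.

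The only point requiring any care is the bookkeeping: one must be sure that the object fed into \Cref{whenzero} as ``$B$'' is one of the two residue fields (so the noetherian hypothesis is genuinely available), not their tensor product, and it is harmless which of $\mathfrak{p}, \mathfrak{q}$ is singled out. There is no genuine homotopical obstacle to this corollary — all the real work has already gone into \Cref{whenzero} and \Cref{residueflds}; what remains is the one-line observation that residue fields at distinct primes are algebraically disjoint.
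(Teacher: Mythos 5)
Your proof is correct and is exactly the argument the paper intends: the corollary is stated as an immediate consequence of \Cref{whenzero} applied with $B = \kappa(\mathfrak{q})$ (which is noetherian since its homotopy is a Laurent ring over a field), combined with the elementary fact that $k(\mathfrak{p}) \otimes_{\pi_0(A)} k(\mathfrak{q}) = 0$ for distinct primes. Nothing is missing.
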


\begin{corollary} 
Let $A$ be a rational noetherian $\e{\infty}$-ring containing a unit in degree
two. Let $A', A'' \in \clg_{A/}$ be noetherian. 
Then the following are equivalent: 
\begin{enumerate}
\item $\pi_0(A') \otimes_{\pi_0(A)} \pi_0(A'') \neq 0$.
\item $A' \otimes_{A} A'' \neq 0$.
\end{enumerate}
\end{corollary}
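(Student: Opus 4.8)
The plan is to reduce the statement to \Cref{whenzero} by base changing along an appropriate residue field $\kappa(\mathfrak{p})$. The implication $(2) \Rightarrow (1)$ is purely formal: there is a natural ring map $\pi_0(A') \otimes_{\pi_0(A)} \pi_0(A'') \to \pi_0(A' \otimes_A A'')$, and if $A' \otimes_A A'' \neq 0$ then its $\pi_0$ is nonzero, since any nonzero $\e{\infty}$-ring has $1 \neq 0$ in $\pi_0$; a unital ring map to a nonzero ring has nonzero source, so $\pi_0(A') \otimes_{\pi_0(A)} \pi_0(A'') \neq 0$.

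For $(1) \Rightarrow (2)$, suppose $\pi_0(A') \otimes_{\pi_0(A)} \pi_0(A'') \neq 0$ and fix a prime ideal of this ring, lying over a prime $\mathfrak{p} \subset \pi_0(A)$. Then $\mathfrak{p}$ lies in the image of both $\spec \pi_0(A') \to \spec \pi_0(A)$ and $\spec \pi_0(A'') \to \spec \pi_0(A)$, so the fibre rings $\pi_0(A') \otimes_{\pi_0(A)} k(\mathfrak{p})$ and $\pi_0(A'') \otimes_{\pi_0(A)} k(\mathfrak{p})$ are both nonzero. As $A'$ and $A''$ are noetherian, \Cref{whenzero} applies and gives $A' \otimes_A \kappa(\mathfrak{p}) \neq 0$ and $A'' \otimes_A \kappa(\mathfrak{p}) \neq 0$. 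Using the base-change identification $(A' \otimes_A A'') \otimes_A \kappa(\mathfrak{p}) \simeq (A' \otimes_A \kappa(\mathfrak{p})) \otimes_{\kappa(\mathfrak{p})} (A'' \otimes_A \kappa(\mathfrak{p}))$, it then suffices to show the right-hand side is nonzero, for then $A' \otimes_A A''$ is nonzero as well.

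To finish, I would observe that $\pi_*(\kappa(\mathfrak{p}))$ is a graded field, so every $\kappa(\mathfrak{p})$-module is a direct sum of shifts of $\kappa(\mathfrak{p})$; writing $A' \otimes_A \kappa(\mathfrak{p})$ as such a nonempty direct sum and tensoring over $\kappa(\mathfrak{p})$ exhibits $(A' \otimes_A \kappa(\mathfrak{p})) \otimes_{\kappa(\mathfrak{p})} (A'' \otimes_A \kappa(\mathfrak{p}))$ as a nonempty direct sum of shifts of the nonzero module $A'' \otimes_A \kappa(\mathfrak{p})$, hence nonzero. The substantive input is \Cref{whenzero} (whose proof, in turn, is where the noetherianness of $A'$ and $A''$ is genuinely used); beyond that the argument is essentially routine, the only points needing care being the elementary fact that a nonempty fibre product of affine schemes forces a common image point downstairs — which is what makes the two fibre rings above nonzero — and the bookkeeping of the base-change identifications.
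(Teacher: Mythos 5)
Your proposal is correct and follows essentially the same route as the paper: both directions are handled the same way, with $(1)\Rightarrow(2)$ reducing to \Cref{whenzero} at a prime $\mathfrak{p}$ over which both fibre rings are nonzero, and then concluding via the fact that $\kappa(\mathfrak{p})$ has graded-field homotopy so a tensor product of nonzero $\kappa(\mathfrak{p})$-modules is nonzero. Your write-up merely spells out a bit more explicitly the formal $(2)\Rightarrow(1)$ direction and the shift-decomposition justifying the last step, which the paper leaves implicit.
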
 
\begin{proof} 
Consider the $\pi_0(A)$-algebra 
$R = \pi_0(A') \otimes_{\pi_0(A)} \pi_0(A'')$. Since it is a nonzero algebra,
there exists $\mathfrak{p} \in \spec \pi_0(A)$ such that $R \otimes_{\pi_0(A)}
k(\mathfrak{p}) \neq 0$, where $k(\mathfrak{p})$ is the residue field of
$\pi_0 A$ at $\mathfrak{p}$. 
Thus, $\pi_0(A') \otimes_{\pi_0(A)} k(\mathfrak{p}) \neq 0$ and 
$\pi_0(A'') \otimes_{\pi_0(A)} k(\mathfrak{p}) \neq 0$. By \Cref{whenzero}, 
the $\e{\infty}$-rings $A' \otimes_A \kappa(\mathfrak{p})$ and 
$A'' \otimes_A \kappa(\mathfrak{p})$ are nonzero, and thus their relative tensor
product over $\kappa(\mathfrak{p})$ is nonzero. Thus, $(A' \otimes_A A'')
\otimes_A \kappa(\mathfrak{p}) \neq 0$, so that $A' \otimes_A A'' \neq 0$ as well.
\end{proof} 

Replacing $A$ by $A[t_2^{\pm 1}]$, we also get the following result, which
will be important for the next section. 
\begin{corollary} 
\label{tensorproductnonzero}
Let $A$ be a rational noetherian $\e{\infty}$-ring (not necessarily containing a unit in
degree two). Let $A', A'' \in \clg_{A/}$ be noetherian. Then the following are
equivalent: 
\begin{enumerate}
\item $\pi_{\mathrm{even}} (A') \otimes_{\pi_{\mathrm{even}(A)}}
\pi_{\mathrm{even}}(A'') \neq 0$.
\item $A' \otimes_{A} A'' \neq 0$.
\end{enumerate}
\end{corollary}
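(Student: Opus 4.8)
The plan is to reduce to the preceding corollary (the one requiring a unit in degree two, phrased in terms of $\pi_0$) by formally inverting a degree-two class. Concretely, I would set $\tilde A = A[t_2^{\pm 1}]$, and $\tilde A' = A'[t_2^{\pm 1}] \simeq A' \otimes_A \tilde A$, $\tilde A'' = A''[t_2^{\pm 1}] \simeq A'' \otimes_A \tilde A$. Then $\tilde A$ carries the tautological unit $t_2 \in \pi_2(\tilde A)$, and $\tilde A', \tilde A'' \in \clg_{\tilde A/}$, so the hypotheses of the earlier corollary are in place once noetherianness is verified.

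First I would check that $\tilde A, \tilde A', \tilde A''$ are again noetherian. For any rational $\e{\infty}$-ring $C$ one has $\pi_*(C[t_2^{\pm 1}]) \simeq \pi_*(C)[t_2^{\pm 1}]$ as graded rings, hence $\pi_{\mathrm{even}}(C[t_2^{\pm 1}]) \simeq \pi_{\mathrm{even}}(C)[t_2^{\pm 1}]$ and $\pi_{\mathrm{odd}}(C[t_2^{\pm 1}]) \simeq \pi_{\mathrm{odd}}(C)[t_2^{\pm 1}]$. Since $\pi_{\mathrm{even}}(C)$ noetherian implies $\pi_{\mathrm{even}}(C)[t_2^{\pm 1}]$ noetherian (Hilbert's basis theorem, then localization at $t_2$), and since finite generation of $\pi_{\mathrm{odd}}(C)$ over $\pi_{\mathrm{even}}(C)$ is visibly inherited by the base change, the noetherian condition passes from $C$ to $C[t_2^{\pm 1}]$. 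Applying this with $C = A, A', A''$ gives what we need.

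Next I would apply the preceding corollary to $\tilde A, \tilde A', \tilde A''$, obtaining that $\pi_0(\tilde A') \otimes_{\pi_0(\tilde A)} \pi_0(\tilde A'') \neq 0$ if and only if $\tilde A' \otimes_{\tilde A} \tilde A'' \neq 0$, and then translate both sides back down to $A$. On the algebraic side, for a $\mathbb{Z}$-graded ring $R_*$ with a unit in degree two the degree-zero subring $(R_*[t_2^{\pm 1}])_0$ is isomorphic, naturally once $t_2$ is fixed, to the ungraded ring underlying $R_{\mathrm{even}}$ (send $r \in R_{2i}$ to $r\, t_2^{-i}$); applying this to $R_* = \pi_*(A), \pi_*(A'), \pi_*(A'')$ identifies $\pi_0(\tilde A') \otimes_{\pi_0(\tilde A)} \pi_0(\tilde A'')$ with the ordinary tensor product of the underlying rings of $\pi_{\mathrm{even}}(A'), \pi_{\mathrm{even}}(A), \pi_{\mathrm{even}}(A'')$, whose nonvanishing is equivalent to that of the graded tensor product $\pi_{\mathrm{even}}(A') \otimes_{\pi_{\mathrm{even}}(A)} \pi_{\mathrm{even}}(A'')$, since graded and ungraded tensor products over a graded ring have the same underlying abelian group. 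On the topological side, $\tilde A' \otimes_{\tilde A} \tilde A'' \simeq (A' \otimes_A A'') \otimes_A \tilde A = (A' \otimes_A A'')[t_2^{\pm 1}] \simeq \bigoplus_{k \in \mathbb{Z}} \Sigma^{2k}(A' \otimes_A A'')$, which is nonzero if and only if $A' \otimes_A A''$ is nonzero (it is a retract). Stringing these equivalences together yields the corollary.

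The argument is almost entirely formal once the previous corollary is available; the one substantive step is verifying that inverting $t_2$ preserves the noetherian hypothesis, which is exactly where one must be careful in light of the Warning that finite presentation over a noetherian base is not enough. Here it causes no trouble precisely because $A[t_2^{\pm 1}]$ is literally a localization of a single-variable polynomial-type extension, so Hilbert's basis theorem does apply. The remaining care is bookkeeping: spelling out that the identifications $\pi_0(\tilde A') \simeq \pi_{\mathrm{even}}(A')$ (and likewise for $A$ and $A''$) are compatible with the structure maps, so that the tensor products genuinely match; this is routine but worth stating explicitly to make the reduction airtight.
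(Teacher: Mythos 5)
Your proposal is correct and is exactly the paper's argument: the paper proves this corollary simply by the phrase ``replacing $A$ by $A[t_2^{\pm 1}]$'' and invoking the preceding corollary, which is precisely the reduction you carry out (with the noetherianness of $A[t_2^{\pm 1}]$, the identification $\pi_0(A[t_2^{\pm 1}]) \simeq \pi_{\mathrm{even}}(A)$, and the splitting $(A'\otimes_A A'')[t_2^{\pm 1}] \simeq \bigoplus_k \Sigma^{2k}(A'\otimes_A A'')$ spelled out). Your added bookkeeping is sound and just makes explicit what the paper leaves implicit.
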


\begin{remark}
Given a rational, noetherian $\e{\infty}$-ring 
$A$ containing a unit in $\pi_2$, we constructed a family of residue fields
$\left\{\kappa(\mathfrak{p})\right\}_{\mathfrak{p} \in \spec A}$ that detect
nilpotence in the $\infty$-category $\md(A)$. One could ask if one has in fact
a \emph{Bousfield decomposition.} That is, if an $A$-module $M$ (not
necessarily perfect) has the
property that $\kappa(\mathfrak{p})_* (M) = 0$ for all $\mathfrak{p} \in \spec A$,
does that force $M $ to be contractible? In the regular and even periodic case,
this is known (e.g., \cite[Prop. 2.5]{thick_am}). 
The analog over the sphere fails: there are noncontractible spectra that smash
to zero with every residue field, for instance the Brown-Comenentz dual $I$ of
the sphere (\cite[Appendix B]{HoveyS}). We do not know what happens in $\md(A)$. 
\end{remark}

\section{A thick subcategory theorem}

In this section, we use \Cref{residueflds} to obtain the classification of thick subcategories
of perfect modules over a rational, noetherian $\e{\infty}$-ring.  

\subsection{Review of the axiomatic argument}
Let $A$ be an $\e{\infty}$-ring together with a collection 
$\left\{\kappa(\mathfrak{p})_*\right\}$ of multiplicative homology
theories on $\md(A)$, satisfying perfect K\"unneth isomorphisms,
such that together they detect nilpotence over $A$. 
In this case, it is well-known that they are sufficient to detect thick
subcategories as well. We note that these are the same as thick
tensor-ideals, since the unit generates $\md^\omega(A)$  as a thick subcategory. 
 In particular, consider $M , N \in
\md^\omega(A)$. 
We recall:

\begin{theorem}[{Hopkins-Smith-Hovey-Palmieri-Strickland 
\cite[Th. 5.2.2 and Cor. 5.2.3]{axiomatic}}]
\label{axiomaticarg}
Suppose $M, N \in \md^\omega(A)$ are such that 
 whenever $\kappa( \mathfrak{p})_*(M)
\neq 0$, then $\kappa(\mathfrak{p})_* (N) \neq 0$ too. Then the thick subcategory
that  $N$ generates in $\md^\omega(A)$ contains $M$.
\end{theorem}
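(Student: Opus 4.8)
The plan is to run the standard ``axiomatic'' argument (essentially that of \cite[\S 5.2]{axiomatic}), which needs only three facts, all available at this point: the residue fields $\kappa(\mathfrak{p})$ detect nilpotence over $A$ (\Cref{residueflds}); every $\kappa(\mathfrak{p})$-module is a direct sum of shifts of free modules, so a map of $\kappa(\mathfrak{p})$-modules is null iff it vanishes on homotopy, and a tensor product of two $\kappa(\mathfrak{p})$-modules over $\kappa(\mathfrak{p})$ is zero only if one factor is zero (the K\"unneth remark following the definition of residue field); and thick subcategories of $\md^\omega(A)$ coincide with thick tensor-ideals, since the unit $A$ generates. I would first record a preliminary observation on perfect modules: if $P \in \md^\omega(A)$ satisfies $\kappa(\mathfrak{p})_*(P) = 0$ for every prime $\mathfrak{p} \subset \pi_0(A)$, then $P \simeq 0$; otherwise $\mathrm{id}_P$ would be a non-nilpotent element of $\pi_0 \mathrm{End}_A(P)$ mapping to the (necessarily nilpotent) identity of $\kappa(\mathfrak{p}) \otimes_A P \simeq 0$ for each $\mathfrak{p}$, contradicting detection of nilpotence (this is the collection version of \Cref{detectperfect}).

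The heart of the argument is a single cofiber sequence. Since $N$ is perfect it is dualizable; I would set $DN = F_A(N,A)$, take the evaluation map $\epsilon \colon DN \otimes_A N \to A$, and let $Z$ be its cofiber, giving a cofiber sequence $DN \otimes_A N \xrightarrow{\epsilon} A \to Z$ of perfect $A$-modules. The claim is that $Z \otimes_A M \simeq 0$, which by the preliminary observation reduces to checking $\kappa(\mathfrak{p})_*(Z \otimes_A M) = 0$ for all $\mathfrak{p}$, in two cases. If $\kappa(\mathfrak{p})_*(N) \neq 0$, then (base change being symmetric monoidal and $N$ dualizable) $\kappa(\mathfrak{p}) \otimes_A \epsilon$ is the evaluation map of the nonzero perfect $\kappa(\mathfrak{p})$-module $\kappa(\mathfrak{p}) \otimes_A N$, which, being a finite sum of shifts of $\kappa(\mathfrak{p})$, makes this evaluation map split surjective; hence its cofiber $\kappa(\mathfrak{p}) \otimes_A Z$ vanishes and so does $\kappa(\mathfrak{p}) \otimes_A (Z \otimes_A M)$. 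If instead $\kappa(\mathfrak{p})_*(N) = 0$, the hypothesis of the theorem forces $\kappa(\mathfrak{p})_*(M) = 0$, and then $\kappa(\mathfrak{p}) \otimes_A (Z \otimes_A M) \simeq (\kappa(\mathfrak{p}) \otimes_A Z) \otimes_{\kappa(\mathfrak{p})} (\kappa(\mathfrak{p}) \otimes_A M) \simeq 0$. Once $Z \otimes_A M \simeq 0$ is in hand, tensoring the cofiber sequence with $M$ shows that $\epsilon \otimes_A 1_M \colon DN \otimes_A N \otimes_A M \to M$ is an equivalence, so $M \simeq N \otimes_A (DN \otimes_A M)$ with $DN \otimes_A M \in \md^\omega(A)$; since the thick subcategory generated by $N$ is a thick tensor-ideal, it contains $M$, as desired.

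I do not expect a serious obstacle here: the only points that require any care are the base-change behavior of the evaluation map (that $\kappa(\mathfrak{p}) \otimes_A \epsilon$ is again an evaluation map), and the reduction ``a perfect $A$-module killed by all residue fields is zero,'' which is precisely where \Cref{residueflds} is used. The genuinely new content of this paper is not this theorem but the construction and nilpotence-detection of the $\kappa(\mathfrak{p})$ themselves; accordingly, an acceptable alternative would be to cite \cite[Th.~5.2.2 and Cor.~5.2.3]{axiomatic} directly, since $\{\kappa(\mathfrak{p})_*\}$ is exactly a set of field-like homology theories on $\md(A)$ with perfect K\"unneth isomorphisms detecting nilpotence, which is its hypothesis.
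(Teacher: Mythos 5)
There is a genuine error at the heart of your main argument: from the fact that $\kappa(\mathfrak{p}) \otimes_A \epsilon$ is split surjective you conclude that its cofiber $\kappa(\mathfrak{p}) \otimes_A Z$ vanishes. In a stable category a split surjection is not an equivalence; its cofiber is the suspension of its fiber, and what split surjectivity actually gives is only that the connecting map $\kappa(\mathfrak{p}) \to \kappa(\mathfrak{p}) \otimes_A Z$ is null. Already for $A = \kappa$ a field, $N = A \oplus A$, $M = A$, one has $\kappa \otimes_A Z \simeq \Sigma(\kappa^{\oplus 3}) \neq 0$ and $\epsilon \otimes 1_M \colon A^{\oplus 4} \to A$ is not an equivalence, so your central claims that $Z \otimes_A M \simeq 0$ and that $M \simeq N \otimes_A (DN \otimes_A M)$ are false in general (the latter is far stronger than the theorem: it would exhibit $M$ as built from $N$ in a single tensoring step). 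The theorem cannot be obtained by this one-step cofiber argument.

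The standard repair -- and this is what the cited Hopkins--Smith/Hovey--Palmieri--Strickland argument does -- keeps your cofiber sequence $DN \otimes_A N \xrightarrow{\epsilon} A \to Z$ but extracts only the weaker conclusion that $\kappa(\mathfrak{p}) \otimes_A (A \to Z)$ is null whenever $\kappa(\mathfrak{p})_*(N) \neq 0$, while $\kappa(\mathfrak{p}) \otimes_A M = 0$ in the remaining case. One then proves that the iterated map $M \to Z^{\otimes k} \otimes_A M$ is null for $k \gg 0$: for instance, its homotopy class is the $k$-th power of the class in $\pi_0\bigl(\bigl(\bigoplus_{n \geq 0} Z^{\otimes n}\bigr) \otimes_A \mathrm{End}_A(M)\bigr)$ determined by $A \to Z$, and this $A$-ring spectrum dies after base change to every $\kappa(\mathfrak{p})$ in the relevant degree, so nilpotence detection (\Cref{residueflds}, via the consequences recorded after \Cref{defresfield}) applies. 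The fiber of $M \to Z^{\otimes k} \otimes_A M$ is built, by an octahedral induction on the cofiber sequences $DN \otimes_A N \otimes_A Z^{\otimes i} \otimes_A M \to Z^{\otimes i} \otimes_A M \to Z^{\otimes (i+1)} \otimes_A M$, out of objects of the thick tensor-ideal generated by $N$; nullity of the map makes $M$ a retract of that fiber, and since thick subcategories of $\md^\omega(A)$ are tensor-ideals, $M \in \mathrm{thick}(N)$. Your preliminary observation (a perfect module killed by all $\kappa(\mathfrak{p})$ is zero) and your closing remark are both fine; in fact the paper does exactly what you suggest as the alternative, namely it quotes \cite[Th. 5.2.2 and Cor. 5.2.3]{axiomatic} without proof, the hypotheses being supplied by \Cref{residueflds} and the K\"unneth isomorphism for the $\kappa(\mathfrak{p})$.
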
 

At least under noetherian hypotheses, every thick subcategory $\mathcal{C}\subset\md^\omega(A)$ is then determined by a subset of
the $\left\{\kappa(\mathfrak{p})_*\right\}$ (the $\kappa(\mathfrak{p})_*$ such that
there exists $X \in \mathcal{C}$ with $\kappa(\mathfrak{p})_*(X) \neq 0$), and the
classification of thick subcategories reduces to the determination of which
subsets arise from thick subcategories; or equivalently, which subsets of the
$\left\{\kappa(\mathfrak{p})\right\}$ arise as $\left\{\mathfrak{p}\colon
\kappa(\mathfrak{p})_* (M) \neq 0\right\}$ for some $M \in \md^\omega(A)$.

\subsection{The even periodic case}
The main subtleties of the present section will revolve around the grading. As a
result, we start with the simplest case, where the ring contains a unit in
degree two. 
This is a direct consequence of the work of the previous section and 
the axiomatic argument, \Cref{axiomaticarg}. 

\begin{cons}
\label{cons:thick1}
Let $A$ be a rational, noetherian $\e{\infty}$-ring containing a unit in
degree two. 
Let $Z \subset \spec \pi_0 A$ be a specialization-closed subset. We define a
thick subcategory $\md_Z^\omega(A) \subset \md^\omega(A)$ consisting of
modules $M$ such that $\pi_0(M) \oplus \pi_1(M)$ is set-theoretically
supported on a closed subset of $Z$.
\end{cons}
Construction~\ref{cons:thick1} clearly defines thick subcategories of $\md^\omega(A)$.
We start by noting that they can also be defined in terms of the residue
fields of $A$.

\begin{proposition} \label{supportfield}
Let $A$ be a rational, noetherian 2-periodic $\e{\infty}$-ring. 
Let $M$ be a perfect $A$-module. Then the following are equivalent, for
$\mathfrak{p} \in \spec \pi_0 A$: 
\begin{enumerate}
\item $M_{\mathfrak{p}}  \neq 0$. 
\item $\kappa(\mathfrak{p})_*(M) \neq 0$ (where $\kappa(\mathfrak{p}) \in
\clg_{A/}$ is the residue
field of \Cref{residueflds}). 
\end{enumerate}
\end{proposition}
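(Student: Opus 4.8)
The plan is to reduce both conditions to statements about the residue field $\kappa(\mathfrak{p})$ after base-change and to invoke the nilpotence/detection results already established. First I would pass to the localization $\pi_0(A)_{\mathfrak{p}}$; since $M$ is perfect, $M_{\mathfrak{p}} \neq 0$ is equivalent to $M \neq 0$ after replacing $A$ by (a suitable localization witnessing) the local ring $\pi_0(A)_{\mathfrak{p}}$ with maximal ideal $\mathfrak{p}$, so I may assume $\pi_0(A)$ is local with maximal ideal $\mathfrak{p}$ and that $M \neq 0$ is to be shown equivalent to $\kappa(\mathfrak{p})_*(M) \neq 0$. The direction $(2) \Rightarrow (1)$ is automatic: if $\kappa(\mathfrak{p}) \otimes_A M \neq 0$ then $M$ cannot be zero (tensoring the zero module gives zero).

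For the converse $(1) \Rightarrow (2)$, the key point is that $\kappa(\mathfrak{p})$ detects nilpotence by \Cref{residueflds}(2), and by \Cref{detectperfect} a nilpotence-detecting $A$-algebra cannot annihilate any nonzero \emph{perfect} $A$-module: if $\kappa(\mathfrak{p}) \otimes_A M$ were contractible, then the identity map of $M$ would become nullhomotopic after tensoring with $\kappa(\mathfrak{p})$, hence (by the smash-nilpotence consequence of detecting nilpotence, or directly since $M$ is perfect) the identity of $M$ would be nilpotent in $\md(A)$, forcing $M = 0$. This uses perfectness of $M$ essentially, which is available by hypothesis.

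Alternatively, and perhaps more cleanly, I would identify $\kappa(\mathfrak{p})_*(M) \neq 0$ with $M \otimes_A \kappa(\mathfrak{p}) \neq 0$, set $B = A$ (which is trivially noetherian over itself) in \Cref{whenzero}, and observe that $\pi_0(M)$ being nonzero after tensoring with $k(\mathfrak{p})$ — which for $M$ perfect and $\pi_0(A)$ local follows from $M \neq 0$ via Nakayama applied to the perfect $\pi_*(A)$-module built from $M$ — gives condition (1) of \Cref{whenzero}, hence condition (2), i.e. $M \otimes_A \kappa(\mathfrak{p}) \neq 0$. One small subtlety: \Cref{whenzero} is phrased for $B \in \clg_{A/}$, not for a mere module, so to use it directly one should instead argue with $M$ replaced by an $A$-algebra that detects the same support, or simply run the nilpotence argument of the previous paragraph, which does not need the algebra hypothesis.

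The main obstacle is the bookkeeping in the reduction to the local case: one must check that for a perfect $A$-module $M$, the condition $M_{\mathfrak{p}} \neq 0$ is genuinely captured by passing to a local model of $A$, and that perfectness is preserved under the relevant localization so that \Cref{detectperfect} applies. Once that is in place, the equivalence is immediate from the detection-of-nilpotence property of $\kappa(\mathfrak{p})$.
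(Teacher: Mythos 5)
There is a genuine gap in the main direction $(1)\Rightarrow(2)$. You write that ``$\kappa(\mathfrak{p})$ detects nilpotence by \Cref{residueflds}(2),'' but that theorem only asserts that the \emph{whole family} $\{\kappa(\mathfrak{q})\}_{\mathfrak{q}\in\spec\pi_0 A}$ detects nilpotence jointly; a single residue field does not detect nilpotence over $A$, even after localizing so that $\pi_0(A)$ is local with maximal ideal $\mathfrak{p}$. Concretely, if $x\in\mathfrak{p}$ is any nonzero element of the maximal ideal (so $\pi_0 A$ is not artinian), take $T=A[x^{-1}]$: then $\kappa(\mathfrak{p})\otimes_A T=0$, so the unit of $T$ maps to a nilpotent element, yet it is not nilpotent. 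Hence \Cref{detectperfect} cannot be applied with $A'=\kappa(\mathfrak{p})$ at this stage, and knowing only $\kappa(\mathfrak{p})_*(M)=0$ for the single prime $\mathfrak{p}$ gives nothing from the family statement. The paper's proof supplies exactly the steps you skip: after localizing, pass to the completion of $\pi_0(A)$ (faithfully flat, so harmless); since $M$ is perfect it is $(x_1,\dots,x_n)$-adically complete for generators $x_i$ of $\mathfrak{p}$, so $M=0$ if and only if its base change to $A//(x_1,\dots,x_n)$ vanishes; by \Cref{radicial2} this quotient has local \emph{artinian} $\pi_0$, and only in that case does the single map to the residue field detect nilpotence (\Cref{rsfldforart}), whereupon \Cref{detectperfect} finishes the argument. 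This completion-plus-Nakayama reduction is the substance of the proof, not bookkeeping.

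Your alternative route does not repair this. Taking $B=A$ in \Cref{whenzero} is vacuous (it only says $\kappa(\mathfrak{p})\neq 0$ when $k(\mathfrak{p})\neq 0$) and never engages $M$; and the remark that Nakayama gives $\pi_*(M)\otimes_{\pi_0(A)}k(\mathfrak{p})\neq 0$ does not by itself yield $\kappa(\mathfrak{p})_*(M)\neq 0$, since $\pi_*(M)$ need not be flat and the relevant $\mathrm{Tor}_0$ term can in principle be destroyed in the K\"unneth spectral sequence; some version of the completion/artinian reduction (or another argument) is still needed. The easy direction $(2)\Rightarrow(1)$ and the initial localization are fine and agree with the paper.
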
 

\begin{proof} 
Without loss of generality, we can assume that $\pi_0(A)$ is local and that 
$\mathfrak{p}$ is the maximal ideal of $\pi_0(A)$. 
Then we need to show that if $\kappa(\mathfrak{p})_*(M) = 0$, then $M$ itself is
contractible, a form of Nakayama's lemma. 
Without loss of generality, we can assume that $\pi_0(A)$ is \emph{complete}
local, because the completion is faithfully flat over $A$
\cite[Theorem 56, \S 24]{matsumura}. 

Let $x_1, \dots, x_n \in \pi_0(A)$ be generators for the maximal ideal
$\mathfrak{p}$. 
Then it suffices to show that $M/(x_1, \dots, x_n)$, which is the base-change
of $M$ to $A //(x_1, \dots, x_n)$, is
contractible, because $M$ is $(x_1, \dots, x_n)$-adically complete. 
In particular, we may replace $A$ with $A//(x_1, \dots, x_n)$ and thus assume
that $\pi_0 (A)$ is actually \emph{local artinian.} We thus reduce to this case. 

But if $\pi_0(A)$ is local artinian, we know that the map $A \to
\kappa(\mathfrak{p})$ actually
\emph{detects nilpotence}: in particular, it cannot annihilate a nonzero perfect
$A$-module (\Cref{detectperfect}). So, if $\kappa(\mathfrak{p})_*(M) = 0$, then $M$ is contractible. 
\end{proof}

\begin{proposition} \label{2periodicthick}
Let $A$ be a rational, noetherian $\e{\infty}$-ring containing a unit in
degree two. Then the thick subcategories of $\md^\omega(A)$ are in natural
bijection with the specialization-closed subsets of $\spec \pi_0 A$, via the
correspondence given in Construction~\ref{cons:thick1}.
\end{proposition}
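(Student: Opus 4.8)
The plan is to produce two inclusion-preserving maps between the poset of thick subcategories of $\md^\omega(A)$ and the poset of specialization-closed subsets of $\spec\pi_0(A)$, and show they are mutually inverse; this identifies $\md_Z^\omega(A)$ of \Cref{cons:thick1} with $\{M\in\md^\omega(A):\supp(M)\subseteq Z\}$, where $\supp(M):=\{\mathfrak p:M_{\mathfrak p}\neq0\}$. First I would record the bookkeeping. Since $A$ is noetherian with a degree-two unit, $\pi_{\mathrm{even}}(A)\cong\pi_0(A)[u^{\pm1}]$ is noetherian and $\pi_{\mathrm{odd}}(A)$ is finitely generated over it, so for a perfect $A$-module $M$ the $\pi_0(A)$-modules $\pi_0(M),\pi_1(M)$ are finitely generated and $\supp(M)=\supp(\pi_0 M)\cup\supp(\pi_1 M)$ is a \emph{closed} subset; combined with \Cref{supportfield} this gives $\supp(M)=\{\mathfrak p:\kappa(\mathfrak p)_*(M)\neq0\}$. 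The maps are $\mathcal C\mapsto Z(\mathcal C):=\bigcup_{M\in\mathcal C}\supp(M)$, a union of closed sets hence specialization-closed, and $Z\mapsto\md_Z^\omega(A)$; both are monotone, so it remains to check they are inverse to one another.

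For $Z(\md_Z^\omega(A))=Z$ the inclusion $\subseteq$ is immediate. For $\supseteq$, given $\mathfrak p\in Z$ I would choose generators $x_1,\dots,x_n$ of $\mathfrak p$ (possible, $\pi_0 A$ being noetherian) and use the $A$-module $M=A//(x_1,\dots,x_n)\simeq A\otimes_{\mathbb Q[u_1,\dots,u_n]}\mathbb Q$ of \Cref{modxgeneral}; it is perfect because $\mathbb Q$ is a perfect $\mathbb Q[u_1,\dots,u_n]$-module. For a prime $\mathfrak q$ one has $M\otimes_A\kappa(\mathfrak q)\simeq\kappa(\mathfrak q)//(x_1,\dots,x_n)$, which is nonzero exactly when each $x_i$ maps to $0$ in the field $\pi_0\kappa(\mathfrak q)$, i.e.\ when $\mathfrak p\subseteq\mathfrak q$. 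Hence $\supp(M)=V(\mathfrak p)$, which is contained in $Z$ since $Z$ is specialization-closed, so $\mathfrak p\in Z(\md_Z^\omega(A))$.

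For $\md_{Z(\mathcal C)}^\omega(A)=\mathcal C$, the containment $\mathcal C\subseteq\md_{Z(\mathcal C)}^\omega(A)$ is immediate from the definition of $Z(\mathcal C)$. The substantive direction is $\md_{Z(\mathcal C)}^\omega(A)\subseteq\mathcal C$: given perfect $M$ with $\supp(M)\subseteq Z(\mathcal C)$, write the closed set $\supp(M)$ as a finite union $\bigcup_{i=1}^r V(\mathfrak p_i)$ of its irreducible components (finitely many, by noetherianness). Each $\mathfrak p_i\in Z(\mathcal C)$, so I can pick $N_i\in\mathcal C$ with $\kappa(\mathfrak p_i)_*(N_i)\neq0$, i.e.\ $\mathfrak p_i\in\supp(N_i)$; since $\supp(N_i)$ is closed it contains $V(\mathfrak p_i)$, and therefore $\supp(M)\subseteq\supp\!\big(\bigoplus_i N_i\big)$ with $\bigoplus_i N_i\in\mathcal C$. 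Then \Cref{axiomaticarg}, applied via the translation of $\kappa(\mathfrak p)_*$-support into $\supp$ furnished by \Cref{supportfield}, shows $M$ lies in the thick subcategory generated by $\bigoplus_i N_i$, hence in $\mathcal C$.

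I expect the last step to be the main obstacle: passing from pointwise membership $\mathfrak p\in Z(\mathcal C)$ to a \emph{single} object of $\mathcal C$ whose support dominates $\supp(M)$. This is precisely where the noetherian hypothesis (finiteness of irreducible components, closedness of supports of finitely generated modules) and the axiomatic thick-subcategory theorem \Cref{axiomaticarg} do the real work; the remaining steps amount to unwinding \Cref{cons:thick1} and \Cref{supportfield}.
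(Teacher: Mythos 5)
Your proposal is correct and follows essentially the same route as the paper: the translation between homotopy-group support and $\kappa(\mathfrak{p})$-support via \Cref{supportfield}, the axiomatic argument \Cref{axiomaticarg} to get thick containment from support containment, realization of every closed subset as the support of $A//(x_1,\dots,x_n)$, and noetherianness to reduce to a finite direct sum of objects of $\mathcal{C}$. The only cosmetic difference is that you compute the support of $A//(x_1,\dots,x_n)$ by base-changing to the residue fields $\kappa(\mathfrak{q})$, whereas the paper reads it off from \Cref{radicial2}; both are fine.
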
 
\begin{proof} 
By \Cref{axiomaticarg} and \Cref{supportfield}, it follows that if $M, N \in
\md^\omega(A)$ and the set-theoretic support of $\pi_0(M) \oplus \pi_1(M)$
contains that of $\pi_0(N) \oplus \pi_1(N)$, then $N$ belongs to the thick
subcategory generated by $M$. 

Next, we argue that
any closed subset of $\spec \pi_0 A$ is realized as the support of $\pi_0 M$
for some $M \in \md^\omega(A)$. If the closed subset is defined by the ideal
$(x_1, \dots, x_n) \in \pi_0 (A)$, then we can take $M = A/(x_1, \dots,
x_n)$, thanks to \Cref{radicial2}.
This implies that if $Z, Z' \subset \spec \pi_0 A$ are two distinct
specialization-closed subsets, say $Z \setminus Z' \neq \emptyset$ then there exists a module $M \in
\md^\omega(A)$ which belongs to $\md_Z^\omega(A) \setminus \md_{Z'}^\omega(A)$.
In other words, the map of Construction~\ref{cons:thick1} from subsets to
thick subcategories is injective. 

Finally, if $\mathcal{C} \subset \md^\omega(A)$ is a thick subcategory, we let
$Z$ be the specialization-closed subset of those $\mathfrak{p} \in \spec \pi_0
(A)$ such that there there exists $M \in \mathcal{C}$ with
$\kappa(\mathfrak{p})_* (M) \neq 0$. Clearly, $\mathcal{C} \subset
\md_Z^\omega(A)$. To see equality, fix an arbitrary $M \in \md_Z^\omega(A)$. 
Then there exists a set $\left\{M_\alpha\right\}_{\alpha \in S}$ of objects in $\mathcal{C}$
such that the support of $\pi_0 M \oplus \pi_1 M$ is contained in the union of 
the supports of the $\{\pi_0 M_\alpha \oplus \pi_1 M_\alpha\}$. Therefore,
since $\pi_0 (A)$ is noetherian,
there exists a finite subcollection $S' \subset S$ such that the same
conclusion holds, and \Cref{axiomaticarg} implies that $M$ belongs to the
thick subcategory generated by $\bigoplus_{\alpha \in S'} M_\alpha \in
\mathcal{C}$. 
\end{proof} 
\subsection{Graded rings}
In the rest of the section, we will explain how to adapt the argument of
\Cref{2periodicthick} to the general case, where we do not assume the
existence of a unit in degree two. 
We begin with a review of some facts about  graded rings. 
We will work with graded rings which 
are
commutative (in the \emph{ungraded} sense) such as $\pi_{\mathrm{even}}$ of an
$\e{\infty}$-ring. 

\begin{definition} 
Let $R_*$ be a commutative, graded ring. 
The topological space 
$\grspec(R_*)$  consists of the homogeneous prime ideals of $R_*$. The topology
on $\grspec(R_*)$ is defined by taking as a basis of open sets the subsets $V(a)
\stackrel{\mathrm{def}}{=} \left\{\mathfrak{p} \in \grspec(R_*): a \notin
\mathfrak{p}\right\}$ for each homogeneous element $\mathfrak{p}$.
Note that $\grspec(R_*) \subset \spec R_*$ and the inclusion map is continuous
(for the usual Zariski topology on the latter). 
\end{definition}

Given a graded ring $R_{*}$, the space $\grspec(R_*)$ is also the underlying
topological space \cite[Ch. 5]{LMB} of the algebraic stack $( \spec R_*)/\mathbb{G}_m$, where
the $\mathbb{G}_m$-action on $\spec R_*$ is given by the grading of $R_*$. 
Given a point of $( \spec R_*)/\mathbb{G}_m$, represented a map $\spec k \to (\spec
R_*)/\mathbb{G}_m$ where $k$ is a field, one obtains a map of graded rings
$R_* \to k[t^{\pm 1}]$ where $|t| = 1$. The kernel of this map is a
homogeneous prime ideal of $\grspec(R_*)$, which gives the correspondence
between points of the stack and $\grspec(R_*)$. 

\begin{example} 
Suppose $R_*$ is nonnegatively graded, i.e., $R_i = 0$ for $i < 0$, and
suppose $R_0$ is a field. Then $\grspec(R_*)$ is the union of
$\mathrm{Proj}(R_*)$ and one additional point, corresponding to the irrelevant
ideal $\bigoplus_{i>0}R_i$. 
\end{example} 

\begin{definition}
Let $R_*$ be a commutative, graded ring. A collection $\mathfrak{C} \subset
\grspec(R_*)$ of homogeneous prime
ideals of $R_*$ is
\emph{closed under specialization} if, whenever $\mathfrak{p} \in \mathfrak{C}$
and $\mathfrak{q} \supset \mathfrak{p}$ is a larger homogeneous prime ideal,
then $\mathfrak{q} \in \mathfrak{C}$ too.  
$\mathfrak{C} \subset \grspec(R_*)$ is closed under specialization if and only
if it is a union of closed subsets. 
\end{definition}

We next observe that there is a notion of ``support'' in the graded setting. 
\begin{definition} 
\label{grsupport}
Suppose $R_*$ is noetherian and $M_*$ is a finitely generated graded $R_*$-module. Let
$\mathrm{Supp}(M_*)$ denote the collection 
of all $\mathfrak{p} \in \grspec(R_*)$ such that the 
localization $(M_*)_{\mathfrak{p}}$ does not vanish. Then
$\mathrm{Supp}(M_*) \subset \grspec(R_*)$ is closed as it is the intersection of
the usual support of $M_*$ in $\spec(R_*)$ with $\grspec(R_*) \subset \spec(R_*)$.
\end{definition} 

A priori, the construction of the localization $(M_*)_{\mathfrak{p}}$ (which
is not a graded $R_*$-module) is somewhat 
unnatural. However, it is easy to see that the condition that 
$(M_*)_{\mathfrak{p}} \neq 0$ is equivalent to the condition that the
localization of $M_*$ at the multiplicative subset $S = \left\{r \in R_* \
\text{homogeneous} : r \notin \mathfrak{p}\right\}$ should not vanish, and
this latter localization is naturally a graded $R_*$-module.

We will next need the notion of a graded-local ring. 
\begin{definition} 
$R_*$ is \emph{graded-local} if it has a unique maximal homogeneous ideal.
$R_*$ is a \emph{graded field} if either:
\begin{enumerate}
\item $R_* = k$, concentrated in degree zero, where $k$ is a field.  
\item $R_* =k[u, u^{-1}]$ where $|u| > 0$ and $k$ is a field.
\end{enumerate}
\end{definition} 

It is easy to see that a graded ring is a graded field if and only if the
zero ideal is a maximal homogeneous ideal. 
In fact, this condition implies that any homogeneous element is either zero or a
unit. As a result, given any $\mathfrak{p}  \in \grspec(R_*)$, we can 
form the graded $R_*$-algebra $R_{*[\mathfrak{p}]}/\mathfrak{p}$, where
$R_{*[\mathfrak{p}]}$ is the localization of $R_*$ at the set of homogeneous
elements not in $\mathfrak{p}$. This is a graded field. 

We will next need to understand a little about the interaction between homogeneous and
inhomogeneous prime ideals. 
\begin{cons}
Let $\mathfrak{q} \in \spec (R_*)$ be a  prime ideal (not assumed
homogeneous). We define a homogeneous prime ideal $\mathfrak{p} \in \grspec(R_*)$ such that $x \in
\mathfrak{p}$ if and only if each homogeneous component of $x$  belongs to
$\mathfrak{q}$. Clearly, $\mathfrak{p} \subset \mathfrak{q}$ is the maximal
homogeneous ideal contained in $\mathfrak{q}$. 
\end{cons}

In the language of stacks, we have a quotient map $\spec R_* \to (\spec
R_*)/\mathbb{G}_m$, which induces a map on points $\spec R_* \to
\grspec(R_*)$. This map sends $\mathfrak{q} \mapsto \mathfrak{p}$ as above.

Finally, to use both graded and ungraded techniques, we need the following
construction.
\begin{cons}\label{Deg1gr}
Let $R_*$ be a commutative, graded noetherian ring. Let $R'_*$ be the graded
ring $R_*[u^{\pm 1}]$ where $|u| = 1$. Then graded $R'_*$-modules are
canonically in correspondence with \emph{ungraded} $R_*$-modules.
For example, let $\mathfrak{q} \in \spec(R_*)$ be a prime ideal, not assumed
homogeneous. We let $k(\mathfrak{q})_*$ denote the graded $R'_*$-module
corresponding to ungraded $R_*$-module which is the residue field of $R_*$ at
$\mathfrak{q}$.
\end{cons}

Given an ungraded $R_*$-module $M$, we can form a graded $R'_*$-module $M'_*$ as in
Construction~\ref{Deg1gr}
and restrict to get a graded $R_*$-module (still denoted $M'_*$), such that
$M'_n = M$ for every $n$.
The following elementary lemma will be crucial in the next subsection. 

\begin{lemma} 
\label{nonzeroalg}
Let $R_*$ be a graded, commutative noetherian ring. Let $R'_* = R_*[u^{\pm
1}]$ where $|u| = 1$. 
Let $\mathfrak{q} \in \spec R_*$ and let $\mathfrak{p} \in \grspec(R_*)$ be
the homogeneous part. 
Define the graded $R'_*$-algebras $k(\mathfrak{p})_*, k(\mathfrak{q})_*$
as 
in Construction~\ref{Deg1gr}.
Then $k(\mathfrak{p})_* \otimes_{R_*}
k(\mathfrak{q})_* \neq 0$.
\end{lemma}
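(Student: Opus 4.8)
The plan is to make both $k(\mathfrak{p})_*$ and $k(\mathfrak{q})_*$ completely explicit through the equivalence of Construction~\ref{Deg1gr}, and then to reduce the whole computation to modules over a single graded field, namely $R_{*[\mathfrak{p}]}/\mathfrak{p}$. Concretely: writing $R_*^{\mathrm{ungr}}$ for the ungraded ring underlying $R_*$ (this is the degree-zero subring of $R'_*$), the equivalence of Construction~\ref{Deg1gr} carries an ungraded $R_*^{\mathrm{ungr}}$-module $M$ to the graded $R'_*$-module $M[u^{\pm 1}]$ with $M$ in degree zero. Hence, as graded $R_*$-algebras,
\[ k(\mathfrak{q})_* \simeq \kappa(\mathfrak{q})[u^{\pm 1}], \qquad k(\mathfrak{p})_* \simeq \kappa(\mathfrak{p})[u^{\pm 1}], \]
where $\kappa(\mathfrak{q})$ and $\kappa(\mathfrak{p})$ are the ordinary residue fields of $R_*^{\mathrm{ungr}}$ at $\mathfrak{q}$ and $\mathfrak{p}$, and the structure map sends a homogeneous element $r \in R_k$ to $\bar{r}\,u^{k}$, the bar denoting reduction into the relevant residue field. (If one prefers to read $k(\mathfrak{p})_*$ as the graded field $R_{*[\mathfrak{p}]}/\mathfrak{p}$ promoted to an $R'_*$-algebra, the argument below is unchanged.)

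The crux is to show that both structure maps factor through the graded localization
\[ F_* := (R_*/\mathfrak{p})[\, s^{-1} : s\ \text{homogeneous},\ s \notin \mathfrak{p}\,] = R_{*[\mathfrak{p}]}/\mathfrak{p}, \]
which the excerpt records is a graded field. Indeed, the kernel of $R_* \to \kappa(\mathfrak{q})[u^{\pm 1}]$ consists of those elements all of whose homogeneous components lie in $\mathfrak{q}$, that is, it is exactly the homogeneous part of $\mathfrak{q}$, which by hypothesis equals $\mathfrak{p}$; and $R_* \to \kappa(\mathfrak{p})[u^{\pm 1}]$ visibly kills $\mathfrak{p}$ as well, so both factor through the graded domain $R_*/\mathfrak{p}$. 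Moreover, if $s \in R_k$ is homogeneous with $s \notin \mathfrak{p}$, then $s \notin \mathfrak{q}$ (a homogeneous element of $\mathfrak{q}$ lies in its homogeneous part $\mathfrak{p}$), so $\bar{s}$ is a unit of the field $\kappa(\mathfrak{q})$ and hence $\bar{s}\,u^{k}$ is a unit of $\kappa(\mathfrak{q})[u^{\pm 1}]$; the same holds, a fortiori, for $\kappa(\mathfrak{p})[u^{\pm 1}]$. Therefore both $\kappa(\mathfrak{q})[u^{\pm 1}]$ and $\kappa(\mathfrak{p})[u^{\pm 1}]$ are $F_*$-algebras.

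To conclude, note that $F_*$, being a graded localization of $R_*/\mathfrak{p}$, is flat over it with $F_* \otimes_{R_*/\mathfrak{p}} F_* \simeq F_*$, so for these two $F_*$-algebras (hence $F_*$-modules) the tensor product over $R_*$ agrees with the tensor product over $F_*$:
\[ k(\mathfrak{p})_* \otimes_{R_*} k(\mathfrak{q})_* \simeq \kappa(\mathfrak{p})[u^{\pm 1}] \otimes_{F_*} \kappa(\mathfrak{q})[u^{\pm 1}]. \]
Now $\kappa(\mathfrak{p})[u^{\pm 1}]$ is a nonzero module over the graded field $F_*$, hence free on a nonempty set of homogeneous (possibly shifted) generators; tensoring that decomposition with the nonzero $F_*$-module $\kappa(\mathfrak{q})[u^{\pm 1}]$ exhibits $k(\mathfrak{p})_* \otimes_{R_*} k(\mathfrak{q})_*$ as a nonempty direct sum of shifted copies of $\kappa(\mathfrak{q})[u^{\pm 1}] \neq 0$, so it is nonzero.

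The only step carrying real content is the middle one: keeping the gradings straight and recognizing that the hypothesis ``$\mathfrak{p}$ is the homogeneous part of $\mathfrak{q}$'' is exactly what forces $R_* \to k(\mathfrak{q})_*$ to annihilate $\mathfrak{p}$ and to invert every nonzero homogeneous element of $R_*/\mathfrak{p}$, i.e.\ to factor through the graded fraction field $F_*$. The identification of the $k(-)_*$ via Construction~\ref{Deg1gr}, the flat base change, and the freeness of modules over a graded field are all formal.
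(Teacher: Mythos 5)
Your proof is correct and takes essentially the same route as the paper's: reduce everything to modules over the graded field $R_{*[\mathfrak{p}]}/\mathfrak{p}$ and use that nonzero graded modules over a graded field are free on homogeneous generators, so their tensor product is nonzero. The only difference is presentational: you make explicit, via the factorization of both structure maps through $F_*$ (using that $\mathfrak{p}$ is the homogeneous part of $\mathfrak{q}$), the ``without loss of generality'' localization and quotient steps that the paper performs tacitly.
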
 
\begin{proof} 
We can assume without loss of generality that $R_*$ is graded-local with
maximal homogeneous ideal $\mathfrak{p}$. After replacing $R_*$ with
$R_*/\mathfrak{p}$, we can assume $\mathfrak{p} =0$. In this case, $R_*$ is a
graded field so that the tensor product of any two nonzero graded
$R_*$-modules (e.g., $k(\mathfrak{p})_*, k(\mathfrak{q})_*$, under pull-back
from $R_* \to R'_*$) is nonzero. 
\end{proof}

\subsection{The thick subcategory theorem}

Let $A$ be a rational, noetherian
$\e{\infty}$-ring. 
The purpose of this subsection is to give the proof that thick subcategories
of $\md^\omega(A)$ correspond to specialization-closed subsets of $\grspec(
\pi_{\mathrm{even}}(A))$, without assuming the existence of a unit in degree
two. 
We first state formally the map that realizes the correspondence.
We note that our results prove that the map from the \emph{spectrum} (cf.
\cite{Bal05}) of the
$\otimes$-triangulated category associated to $\md^\omega(A)$ to the
homogeneous spectrum of $\pi_*(A)$, as constructed by Balmer \cite{Balmer2}, is
an isomorphism. 

\begin{definition} 
Let $M \in \md^\omega(A)$. We define the \emph{support} $\mathrm{Supp}(M)$ to
be the support (in the sense of \Cref{grsupport}) of the graded $\pi_{\mathrm{even}}(A)$-module $\pi_*(M)$.
We will denote this by $\supp M$; it is a subset of $\grspec(
\pi_{\mathrm{even}}(A))$.
Given a specialization-closed subset $ Z\subset \grspec(
\pi_{\mathrm{even}}(A))$, we define $\md_Z^\omega(A) \subset \md^\omega(A)$ to
be the full subcategory spanned by those perfect $A$-modules $M$ with
$\mathrm{Supp} M \subset Z$. 

\end{definition}

\begin{theorem} 
\label{thickwithoutperiodic}
Let $A$ be a rational, noetherian $\e{\infty}$-ring.
Then the construction $Z \mapsto \md_Z^\omega(A)$ defines a correspondence between the thick subcategories 
of $\md^\omega(A)$ and specialization-closed subsets of $\grspec(
\pi_{\mathrm{even}}(A))$. 
\end{theorem}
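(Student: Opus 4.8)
The plan is to reduce \Cref{thickwithoutperiodic} to the degree-two-periodic case \Cref{2periodicthick} by passing to $A[t_2^{\pm 1}]$, using the base-change functor $\md^\omega(A) \to \md^\omega(A[t_2^{\pm 1}])$ to transport thick subcategories and supports back and forth. The first step is to record that $A[t_2^{\pm 1}]$ is again rational and noetherian (its even homotopy is $\pi_{\mathrm{even}}(A)[t_2^{\pm 1}]$, a localization of a polynomial ring over a noetherian ring, hence noetherian, and its odd homotopy is finitely generated over it), so \Cref{2periodicthick} applies to it and classifies the thick subcategories of $\md^\omega(A[t_2^{\pm 1}])$ by specialization-closed subsets of $\spec \pi_0(A[t_2^{\pm 1}]) = \spec \pi_{\mathrm{even}}(A)[t_2^{\pm 1}]$. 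By \Cref{Deg1gr} (applied with $R_* = \pi_{\mathrm{even}}(A)$), ungraded $\pi_{\mathrm{even}}(A)[t_2^{\pm 1}]$-modules correspond to graded $\pi_{\mathrm{even}}(A)$-modules, and under this correspondence $\spec \pi_{\mathrm{even}}(A)[t_2^{\pm1}]$ is homeomorphic to $\grspec(\pi_{\mathrm{even}}(A))$ — this is exactly the last paragraph of the subsection on graded rings, identifying prime ideals of $R_*[u^{\pm 1}]$ with homogeneous primes of $R_*$. So the target of the classification is already the right set; what remains is to match up the categories.

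The key step is to show that base change along $A \to A[t_2^{\pm 1}]$ induces a bijection between thick subcategories of $\md^\omega(A)$ and thick subcategories of $\md^\omega(A[t_2^{\pm 1}])$, compatibly with supports. For surjectivity onto thick subcategories: given $M \in \md^\omega(A)$ with $\supp M = Z$, I would compute that $\supp(M \otimes_A A[t_2^{\pm 1}])$ (a subset of $\spec \pi_{\mathrm{even}}(A)[t_2^{\pm1}] \cong \grspec \pi_{\mathrm{even}}(A)$) equals $Z$, since $\pi_*(M \otimes_A A[t_2^{\pm1}]) \cong \pi_*(M)[t_2^{\pm1}]$ as graded $\pi_{\mathrm{even}}(A)[t_2^{\pm1}]$-modules and localization commutes with this. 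Conversely, every perfect $A[t_2^{\pm1}]$-module is a retract of one of the form $N \otimes_A A[t_2^{\pm1}]$ for $N$ perfect over $A$ (because $A[t_2^{\pm1}]$ is a filtered colimit of perfect $A$-modules — indeed of copies of $A$ in each even degree, so base change is "almost surjective" on perfect objects up to retracts, and thick subcategories are closed under retracts). Using \Cref{radicial2}, every closed subset of $\grspec \pi_{\mathrm{even}}(A)$ is realized as $\supp A//(x_1,\dots,x_n)$ for homogeneous $x_i$, so the map $Z \mapsto \md_Z^\omega(A)$ is injective exactly as in the proof of \Cref{2periodicthick}. The containment $\mathcal{C} \subset \md^\omega_Z(A)$ for $Z = \bigcup_{M \in \mathcal{C}} \supp M$ is immediate; for the reverse containment I would mimic \Cref{2periodicthick}: given $M \in \md_Z^\omega(A)$, base-change to $A[t_2^{\pm1}]$, apply \Cref{2periodicthick} (together with the axiomatic argument \Cref{axiomaticarg} over $A[t_2^{\pm1}]$), and descend using that $A \to A[t_2^{\pm1}]$ admits descent, so a thick subcategory of $\md^\omega(A)$ is detected after base change.

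**The main obstacle** I expect is the descent step: base change $\md^\omega(A) \to \md^\omega(A[t_2^{\pm1}])$ is faithful (since $A \to A[t_2^{\pm1}]$ admits descent — one direction of \Cref{quotdesc}-style reasoning, or directly because $A$ is a retract of $A[t_2^{\pm1}]$ as an $A$-module via $t_2 \mapsto$ anything won't work, but $A[t_2^{\pm1}]$ does generate $\md(A)$ as a localizing tensor-ideal since it contains $A$ as a summand in degree zero), but I need that if $N$ lies in the thick subcategory of $\md^\omega(A[t_2^{\pm1}])$ generated by $M \otimes_A A[t_2^{\pm1}]$, then $N$ "comes from" and lies in the thick subcategory generated by $M$ in $\md^\omega(A)$. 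The cleanest route is probably: a perfect $A$-module $N$ lies in the thick subcategory generated by a perfect $A$-module $M$ if and only if $\kappa(\mathfrak{p})_*(N) \neq 0 \Rightarrow \kappa(\mathfrak{p})_*(M)\neq 0$ for every residue field, where here one uses the residue fields of $A[t_2^{\pm1}]$ pulled back — i.e., use \Cref{residueflds} for $A[t_2^{\pm1}]$ and \Cref{tensorproductnonzero} to see $\kappa(\mathfrak{p})_*(N) \neq 0$ iff $\kappa(\mathfrak{p})_*(N \otimes_A A[t_2^{\pm1}]) \neq 0$. Thus the entire classification transports along base change, and \Cref{thickwithoutperiodic} follows formally from \Cref{2periodicthick} applied to $A[t_2^{\pm1}]$ plus the homeomorphism $\spec\pi_{\mathrm{even}}(A)[t_2^{\pm1}] \cong \grspec\pi_{\mathrm{even}}(A)$.
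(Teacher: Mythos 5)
Your reduction breaks at its central step: the identification of the answer set upstairs. You assert that $\spec \pi_0(A[t_2^{\pm 1}])$ is homeomorphic to $\grspec(\pi_{\mathrm{even}}(A))$, citing Construction~\ref{Deg1gr}; but that construction identifies \emph{graded} $R_*[u^{\pm 1}]$-modules with \emph{ungraded} $R_*$-modules, which is a different statement. In fact $\pi_0(A[t_2^{\pm 1}])$ is the ungraded ring $\pi_{\mathrm{even}}(A)$ (identify $\pi_{2n}(A)$ with degree $0$ via $t_2^{-n}$), so $\spec \pi_0(A[t_2^{\pm 1}])$ consists of \emph{all} primes of $\pi_{\mathrm{even}}(A)$, not just the homogeneous ones --- exactly the subtlety the paper flags after \Cref{homog_nilp}, where it notes that the $\kappa(\mathfrak{p})$ for homogeneous $\mathfrak{p}$ do \emph{not} detect nilpotence over $A[t_2^{\pm 1}]$. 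Consequently your claimed bijection between thick subcategories of $\md^\omega(A)$ and of $\md^\omega(A[t_2^{\pm 1}])$ is false. Concretely, take $\pi_*(A) = \mathbb{Q}[\beta^{\pm 1}]$ with $|\beta| = 4$: then $\pi_*(A)$ is a graded field, so $\md^\omega(A)$ has only the two trivial thick subcategories (matching the single point of $\grspec(\pi_{\mathrm{even}}(A))$), while $\pi_0(A[t_2^{\pm 1}]) \cong \mathbb{Q}[u^{\pm 1}]$ with $u = \beta t_2^{-2}$, so \Cref{2periodicthick} produces a thick subcategory upstairs for every specialization-closed subset of a punctured affine line. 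The same example refutes your auxiliary claim that every perfect $A[t_2^{\pm 1}]$-module is a retract of an induced module $N \otimes_A A[t_2^{\pm 1}]$: the module $A[t_2^{\pm 1}]/(u-1)$ has nonzero torsion homotopy over $\pi_*(A[t_2^{\pm 1}])$, hence cannot be a retract of a module whose homotopy is finitely generated free. (Also, the support of $M \otimes_A A[t_2^{\pm 1}]$ in $\spec \pi_0(A[t_2^{\pm 1}])$ is not ``$Z$'' but the full preimage of $\supp M$ under the map $\spec \pi_{\mathrm{even}}(A) \to \grspec(\pi_{\mathrm{even}}(A))$ sending a prime to its homogeneous part.)

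The missing content is precisely the homogeneous-versus-inhomogeneous comparison, and there is no shortcut around it: if you wanted to salvage the descent strategy you would have to determine which specialization-closed subsets of $\spec \pi_0(A[t_2^{\pm 1}])$ come from thick subcategories of $\md^\omega(A)$, which is the same problem. The paper instead uses $A[t_2^{\pm 1}]$ only to \emph{construct} the residue fields $\kappa(\mathfrak{p})$ at homogeneous primes, viewed as $\e{\infty}$-$A$-algebras, and then proves directly that this smaller family detects nilpotence over $A$: for an arbitrary prime $\mathfrak{q}$ of $\pi_0(A[t_2^{\pm 1}])$ with homogeneous part $\mathfrak{p}$, one has $\kappa(\mathfrak{q}) \otimes_A \kappa(\mathfrak{p}) \neq 0$ (\Cref{tensornonzeroring}, resting on the algebraic \Cref{nonzeroalg} and on \Cref{tensorproductnonzero}), and then \Cref{easynilp} yields \Cref{homog_nilp}. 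With that in hand the axiomatic argument \Cref{axiomaticarg} is run directly over $A$, with closed subsets of $\grspec(\pi_{\mathrm{even}}(A))$ realized as supports of $A/(x_1, \dots, x_n)$ via \Cref{radicial2}. Your proposal never supplies an argument playing the role of \Cref{tensornonzeroring}/\Cref{homog_nilp}, so the gap is genuine.
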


The primary goal of this section is to give a proof of \Cref{thickwithoutperiodic},
which we already did (in \Cref{2periodicthick}) in case $\pi_2(A)$ contains a unit. 

To begin with, we will need to discuss residue fields for $A$. 
Let $A$ be a noetherian rational $\e{\infty}$-ring and $\mathfrak{p}
\subset \pi_{\mathrm{even}}(A)$ a homogeneous prime ideal. We form the
$\e{\infty}$-ring $A' = A[t_2^{\pm 1}]$  and we then
have
\[  \pi_0 A' \simeq \pi_{\mathrm{even}} A. \]
In particular, $\mathfrak{p}$ becomes a prime ideal of $\pi_0 A'$. As a result,
in view of \Cref{residueflds}, we can construct a residue field
$\kappa(\mathfrak{p})$ of $A'$ at $\mathfrak{p}$ and we obtain maps $A \to A' \to
\kappa(\mathfrak{p})$. Rather than considering the
$\left\{\kappa(\mathfrak{p})\right\}$ as $\e{\infty}$-$A'$-algebras, we consider them as
$\e{\infty}$-$A$-algebras. They satisfy a perfect K\"unneth isomorphism as
homology theories on $\md(A)$, as
before. 

\begin{lemma} \label{tensornonzeroring}
Let $\mathfrak{q} \subset \pi_{\mathrm{even}} (A)$ be an inhomogeneous prime ideal
and let $\mathfrak{p} \subset \pi_{\mathrm{even}}(A)$ be the homogeneous part.
Let $\kappa(\mathfrak{q}), \kappa(\mathfrak{p}) \in \clg_{A'}$ denote the respective
residue fields, which we regard as $\e{\infty}$-$A$-algebras under $A \to A'$.
Then $\kappa(\mathfrak{q}) \otimes_A \kappa(\mathfrak{p}) \neq 0$. 
\end{lemma}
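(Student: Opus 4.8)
The plan is to reduce the statement to the purely algebraic fact proved in \Cref{nonzeroalg}. The key observation is that both $\kappa(\mathfrak{q})$ and $\kappa(\mathfrak{p})$ are $\e{\infty}$-$A'$-algebras, where $A' = A[t_2^{\pm 1}]$, so their tensor product over $A$ factors as
\[ \kappa(\mathfrak{q}) \otimes_A \kappa(\mathfrak{p}) \simeq \kappa(\mathfrak{q}) \otimes_{A'} \left( A' \otimes_A A' \right) \otimes_{A'} \kappa(\mathfrak{p}). \]
Now $A' \otimes_A A' \simeq A'[t] = A' \otimes \Sigma^\infty_+ \mathbb{Z}_{\geq 0}$ is simply a polynomial extension of $A'$ on a degree-zero generator (coming from the two copies of the invertible class $t_2$), so this does not change nonvanishing: it suffices to show $\kappa(\mathfrak{q}) \otimes_{A'} \kappa(\mathfrak{p}) \neq 0$. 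Alternatively, and perhaps more cleanly, I would just observe that both residue fields receive maps from $A'$ compatibly with $A \to A'$, so there is no loss in working over $A'$ from the start; the map $A \to A'$ exhibits $A'$ as $A$-algebra that all the relevant objects live under.

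Having reduced to $A'$, which is a noetherian rational $\e{\infty}$-ring containing a unit in degree two, I would apply \Cref{tensorproductnonzero} (or directly the corollary preceding it): for noetherian $A'$-algebras $B', B''$, one has $B' \otimes_{A'} B'' \neq 0$ if and only if $\pi_{\mathrm{even}}(B') \otimes_{\pi_{\mathrm{even}}(A')} \pi_{\mathrm{even}}(B'') \neq 0$. Here $\pi_{\mathrm{even}}(A') = \pi_0(A')[t_2^{\pm 1}] = \pi_{\mathrm{even}}(A)[t_2^{\pm 1}]$, and the even homotopy of $\kappa(\mathfrak{q})$, resp. $\kappa(\mathfrak{p})$, is exactly the graded $R'_*$-module $k(\mathfrak{q})_*$, resp. $k(\mathfrak{p})_*$, in the notation of \Cref{Deg1gr} (with $R_* = \pi_{\mathrm{even}}(A)$ and $u = t_2$, up to rescaling the degree of $u$). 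So the nonvanishing of the topological tensor product is equivalent to $k(\mathfrak{p})_* \otimes_{R_*} k(\mathfrak{q})_* \neq 0$, which is precisely \Cref{nonzeroalg}.

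The main obstacle, and the step I would be most careful about, is matching the homotopy-theoretic residue fields against the algebraic modules $k(\mathfrak{p})_*, k(\mathfrak{q})_*$ of \Cref{Deg1gr}. One needs to check that $\pi_{\mathrm{even}}(\kappa(\mathfrak{q}))$, as a graded $\pi_{\mathrm{even}}(A')$-module, is indeed the graded module associated to the ungraded residue field of $\pi_{\mathrm{even}}(A) = \pi_0(A')$ at $\mathfrak{q}$ — this follows from the construction of $\kappa(\mathfrak{q})$ in \Cref{residueflds}, since by definition $\pi_0(\kappa(\mathfrak{q}))$ is the residue field $k(\mathfrak{q})$ and $\kappa(\mathfrak{q})$ is even periodic with periodicity generator $t_2$, so $\pi_{2n}(\kappa(\mathfrak{q})) \cong k(\mathfrak{q})$ for all $n$, which is exactly the definition of $k(\mathfrak{q})_*$ as a graded $\pi_{\mathrm{even}}(A)[t_2^{\pm 1}]$-module. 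The same remark applies to $\kappa(\mathfrak{p})$, now a residue field at the homogeneous prime, whose even homotopy is the graded field $R_{*[\mathfrak{p}]}/\mathfrak{p}$ made periodic; this is $k(\mathfrak{p})_*$ in the sense of \Cref{Deg1gr}. Once this bookkeeping is in place the conclusion is immediate from \Cref{nonzeroalg} and \Cref{tensorproductnonzero}.
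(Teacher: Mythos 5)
Your final citations are the right ones --- the lemma does follow from \Cref{tensorproductnonzero} together with \Cref{nonzeroalg}, which is exactly how the paper argues --- but the reduction you run first is not only unnecessary, it is false, and as written your argument passes through a statement that is the negation of something the paper proves. You claim it suffices to show $\kappa(\mathfrak{q}) \otimes_{A'} \kappa(\mathfrak{p}) \neq 0$. But $\mathfrak{p}$ and $\mathfrak{q}$ are two \emph{distinct} prime ideals of $\pi_0(A') = \pi_{\mathrm{even}}(A)$, so by the (unnumbered) corollary following \Cref{whenzero} their residue fields satisfy $\kappa(\mathfrak{p}) \otimes_{A'} \kappa(\mathfrak{q}) \simeq 0$. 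This is precisely the point made right after the statement of \Cref{homog_nilp}: the $\kappa(\mathfrak{p})$ for homogeneous $\mathfrak{p}$ do \emph{not} detect nilpotence over $A'$, and the whole content of \Cref{tensornonzeroring} is that the tensor product becomes nonzero only when taken over the smaller base $A$. Relatedly, your justification for the reduction does not work: from $\kappa(\mathfrak{q}) \otimes_A \kappa(\mathfrak{p}) \simeq \kappa(\mathfrak{q}) \otimes_{A'} \left( A' \otimes_A A' \right) \otimes_{A'} \kappa(\mathfrak{p})$ one cannot conclude that passing from base $A$ to base $A'$ "does not change nonvanishing,'' because $A' \otimes_A A'$ enters as an $(A',A')$-bimodule whose right-hand $A'$-structure is twisted: writing $A' \otimes_A A' \simeq A'[u^{\pm 1}]$ with $u$ an invertible degree-zero class, the second copy of $A'$ maps in via $t_2 \mapsto t_2 u$, not $t_2 \mapsto t_2$. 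It is exactly this twist that makes the $A$-relative answer differ from the $A'$-relative one.

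The fix is to delete the detour entirely: apply \Cref{tensorproductnonzero} with base $A$ itself (which is permitted there, since $A$ is not required to contain a degree-two unit), taking the two noetherian objects of $\clg_{A/}$ to be $\kappa(\mathfrak{q})$ and $\kappa(\mathfrak{p})$. The criterion is then the nonvanishing of $\pi_{\mathrm{even}}(\kappa(\mathfrak{q})) \otimes_{\pi_{\mathrm{even}}(A)} \pi_{\mathrm{even}}(\kappa(\mathfrak{p}))$, i.e. of $k(\mathfrak{q})_* \otimes_{R_*} k(\mathfrak{p})_*$ with $R_* = \pi_{\mathrm{even}}(A)$, which is exactly \Cref{nonzeroalg}. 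Your last paragraph's bookkeeping, identifying the even homotopy of the two residue fields with the modules $k(\mathfrak{q})_*$ and $k(\mathfrak{p})_*$ of \Cref{Deg1gr}, is correct and is what is needed; but note that when you wrote the criterion as a tensor over $R_*$ you had silently reverted to base $A$ --- had you genuinely worked over $A'$ as proposed, \Cref{tensorproductnonzero} would give a tensor over $\pi_{\mathrm{even}}(A') \simeq R_*[t_2^{\pm 1}]$, and that tensor product vanishes.
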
 
\begin{proof} 
This follows from \Cref{tensorproductnonzero} 
and \Cref{nonzeroalg}. 
\end{proof} 

As a result, we can now show that the residue fields $\kappa( \mathfrak{p})
\in \clg_{A/}$
for $\mathfrak{p} \in \grspec(\pi_{\mathrm{even}}(A))$ suffice to detect
nilpotence. 
\begin{theorem} \label{homog_nilp}
The $\left\{\kappa(\mathfrak{p})\right\}$, as $\mathfrak{p}$ ranges over the
homogeneous prime ideals in $\spec \pi_{\mathrm{even}} (A)$, detect
nilpotence over $A$. 
\end{theorem}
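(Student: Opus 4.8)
The plan is to reduce the detection of nilpotence for the homogeneous residue fields of $A$ to the already-established detection of nilpotence for the residue fields of $A' = A[t_2^{\pm 1}]$ from \Cref{residueflds}. First I would recall that since $A'$ admits descent over... actually $A \to A'$ does not admit descent, so I cannot argue that way. Instead, the key observation is the following: if $T$ is an $A$-ring spectrum and $x \in \pi_*(T)$ maps to a nilpotent element in $\pi_*(T \otimes_A \kappa(\mathfrak{p}))$ for every homogeneous prime $\mathfrak{p} \subset \pi_{\mathrm{even}}(A)$, I want to conclude $x$ is nilpotent. Form $T' = T \otimes_A A'$, an $A'$-ring spectrum, and let $x' \in \pi_*(T')$ be the image of $x$. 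By \Cref{residueflds}, the residue fields $\{\kappa(\mathfrak{q})\}_{\mathfrak{q} \in \spec \pi_0 A'}$ of $A'$ detect nilpotence over $A'$; here $\pi_0 A' = \pi_{\mathrm{even}}(A)$, so the primes $\mathfrak{q}$ range over \emph{all} (not necessarily homogeneous) primes of $\pi_{\mathrm{even}}(A)$. So it suffices to show that $x'$ maps to a nilpotent element in $\pi_*(T' \otimes_{A'} \kappa(\mathfrak{q}))$ for every such $\mathfrak{q}$.

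The bridge between an inhomogeneous prime $\mathfrak{q}$ and its homogeneous part $\mathfrak{p}$ is precisely \Cref{tensornonzeroring}, which guarantees $\kappa(\mathfrak{q}) \otimes_A \kappa(\mathfrak{p}) \neq 0$. The next step is to use this together with \Cref{easynilp}(3): the residue fields $\kappa(\mathfrak{q}) \otimes_A \kappa(\mathfrak{p})$ — or rather, we should pass to residue fields of this $\e{\infty}$-ring, whose homotopy is a graded field tensored appropriately — detect nilpotence. More precisely, fix a homogeneous prime $\mathfrak{p}$. The $\e{\infty}$-ring $\kappa(\mathfrak{p})$ is even periodic with $\pi_0 = k(\mathfrak{p})$, a field, so it is itself a ``graded field'' $\e{\infty}$-ring; by \Cref{easynilp}(3), an $\e{\infty}$-$A$-algebra $C$ detects nilpotence over $A$ iff $C \otimes_A \kappa(\mathfrak{p}) \neq 0$ for each $\mathfrak{p}$ — provided we already know the $\{\kappa(\mathfrak{p})\}$ detect nilpotence, which is what we are trying to prove, so I must be careful not to argue circularly.

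Let me restructure to avoid the circularity. The clean route: to show $x \in \pi_*(T)$ is nilpotent, it suffices (by \Cref{residueflds} applied to $A'$, with $T' = T \otimes_A A'$) to show $x$ maps to nilpotent in $\pi_*(T \otimes_A \kappa(\mathfrak{q}))$ for every prime $\mathfrak{q}$ of $\pi_{\mathrm{even}}(A)$. Fix such a $\mathfrak{q}$ and let $\mathfrak{p}$ be its homogeneous part. I claim the single map $\kappa(\mathfrak{q}) \to \kappa(\mathfrak{q}) \otimes_A \kappa(\mathfrak{p})$ detects nilpotence over $\kappa(\mathfrak{q})$: this is because $\kappa(\mathfrak{q}) \otimes_A \kappa(\mathfrak{p})$ is a nonzero $\e{\infty}$-algebra over $\kappa(\mathfrak{q})$ (by \Cref{tensornonzeroring}) and $\pi_*(\kappa(\mathfrak{q}))$ is a graded field, so any nonzero algebra over it is faithfully flat and hence detects nilpotence (this is the elementary case of \Cref{easynilp}(3), not the circular one). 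Consequently, if $x$ maps to nilpotent in $\pi_*(T \otimes_A \kappa(\mathfrak{p}))$ — hence in $\pi_*(T \otimes_A \kappa(\mathfrak{p}) \otimes_A \kappa(\mathfrak{q}))$ — then $x$ maps to nilpotent in $\pi_*(T \otimes_A \kappa(\mathfrak{q}))$. This holds for every $\mathfrak{q}$, so $x$ is nilpotent, as desired.

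\begin{proof}
Let $A' = A[t_2^{\pm 1}]$, so that $\pi_0 A' \simeq \pi_{\mathrm{even}}(A)$, and recall that by \Cref{residueflds} the residue fields $\{\kappa(\mathfrak{q})\}$ of $A'$, as $\mathfrak{q}$ ranges over \emph{all} prime ideals of $\pi_0 A' = \pi_{\mathrm{even}}(A)$, detect nilpotence over $A'$. Regard these $\kappa(\mathfrak{q})$, together with the $\kappa(\mathfrak{p})$ for $\mathfrak{p}$ homogeneous, as $\e{\infty}$-$A$-algebras via $A \to A'$.

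Let $T$ be an $A$-ring spectrum and suppose $x \in \pi_*(T)$ has nilpotent image in $\pi_*(T \otimes_A \kappa(\mathfrak{p}))$ for every homogeneous prime $\mathfrak{p} \subset \pi_{\mathrm{even}}(A)$. Set $T' = T \otimes_A A'$, an $A'$-ring spectrum, and let $x'$ be the image of $x$ in $\pi_*(T')$. Since the residue fields $\{\kappa(\mathfrak{q})\}_{\mathfrak{q} \in \spec \pi_0 A'}$ detect nilpotence over $A'$, it suffices to show that $x'$ has nilpotent image in $\pi_*(T' \otimes_{A'} \kappa(\mathfrak{q})) \simeq \pi_*(T \otimes_A \kappa(\mathfrak{q}))$ for every prime $\mathfrak{q}$ of $\pi_{\mathrm{even}}(A)$.

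Fix such a prime $\mathfrak{q}$ and let $\mathfrak{p} \subset \pi_{\mathrm{even}}(A)$ be its homogeneous part. By \Cref{tensornonzeroring}, the $\e{\infty}$-ring $\kappa(\mathfrak{q}) \otimes_A \kappa(\mathfrak{p})$ is nonzero. As $\pi_*(\kappa(\mathfrak{q}))$ is a graded field, any nonzero $\e{\infty}$-$\kappa(\mathfrak{q})$-algebra is faithfully flat over $\kappa(\mathfrak{q})$ and hence detects nilpotence over it; in particular the map $\kappa(\mathfrak{q}) \to \kappa(\mathfrak{q}) \otimes_A \kappa(\mathfrak{p})$ detects nilpotence. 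Applying this to the $\kappa(\mathfrak{q})$-ring spectrum $T \otimes_A \kappa(\mathfrak{q})$: the image of $x$ in
\[ \pi_*\!\left( (T \otimes_A \kappa(\mathfrak{q})) \otimes_{\kappa(\mathfrak{q})} (\kappa(\mathfrak{q}) \otimes_A \kappa(\mathfrak{p})) \right) \simeq \pi_*\!\left( T \otimes_A \kappa(\mathfrak{p}) \otimes_A \kappa(\mathfrak{q}) \right) \]
is nilpotent, because it is further the image of the nilpotent element $x \in \pi_*(T \otimes_A \kappa(\mathfrak{p}))$. Therefore the image of $x$ in $\pi_*(T \otimes_A \kappa(\mathfrak{q}))$ is nilpotent.

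Since this holds for every prime $\mathfrak{q}$ of $\pi_{\mathrm{even}}(A)$, we conclude that $x'$, and hence $x$, is nilpotent. Thus the $\{\kappa(\mathfrak{p})\}$ for $\mathfrak{p}$ homogeneous detect nilpotence over $A$.
\end{proof}
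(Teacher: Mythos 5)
Your proof is correct and follows the same route as the paper: the paper's (terse) argument is exactly the combination of \Cref{residueflds} applied to $A'=A[t_2^{\pm 1}]$, \Cref{tensornonzeroring}, and the third part of \Cref{easynilp}, and your write-up simply unwinds that appeal, using that a nonzero algebra over a ring spectrum with graded-field homotopy detects nilpotence to pass from an inhomogeneous prime $\mathfrak{q}$ to its homogeneous part $\mathfrak{p}$. (The parenthetical worry in your plan is also harmless: $A$ is a retract of $A[t_2^{\pm 1}]$ as an $A$-module, so that map does detect nilpotence, which is all you use at the final step.)
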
 
This is not an immediate consequence of \Cref{residueflds} applied to $A'$,
because the homogeneous $\mathfrak{p}$ do not exhaust all the prime ideals of
$\pi_0(A')$. In other words, the $\kappa(\mathfrak{p})$ in question do not detect
nilpotence over $A'$. 
\begin{proof} 
We know that the $\left\{\kappa(\mathfrak{q})\right\} \subset \clg_{A'}$ for $\mathfrak{q} \in \spec
\pi_{\mathrm{even}} A = \pi_0 A'$ detect nilpotence over $A'$
(\Cref{residueflds}), and thus over
$A$. 
Thus, in order to prove that the $\left\{\kappa(\mathfrak{p})\right\}$ for
$\mathfrak{p}$ ranging over the \emph{homogeneous} prime ideals detect
nilpotence over $A$, we appeal to the third part of \Cref{easynilp} 
and \Cref{tensornonzeroring}.
\end{proof} 

We note now that if $M \in \md^\omega(A)$, then the support of $M$ in
$\grspec(\pi_{\mathrm{even}}(A))$ is equivalently the set of $\mathfrak{p} \in
\grspec(\pi_{\mathrm{even}}(A))$ such
that $M \otimes_A \kappa(\mathfrak{p}) \neq 0$; this is a consequence of
\Cref{supportfield}.

\begin{proof}[Proof of \Cref{thickwithoutperiodic}]
In particular, it now follows formally (via the axiomatic argument
 given in \Cref{axiomaticarg}, together with the noetherianness of $\grspec(
\pi_{\mathrm{even}}(A)$) from \Cref{homog_nilp} that a thick
subcategory of $\md^\omega(A)$ is determined by a subcollection of the
$\left\{\kappa(\mathfrak{p})\right\}$ as $\mathfrak{p}$ ranges over the homogeneous
prime ideals of $\pi_{\mathrm{even}}(A)$. It remains to determine what subsets
are allowed to arise. 

We will show that those subsets are precisely those which
are closed under specialization. To see this, we need to show that every
closed subset of $\grspec( \pi_{\mathrm{even}}(A))$ (associated to a
homogeneous
ideal $I \subset \pi_{\mathrm{even}}(A)$) can be realized as the support of some $M$,
but this follows by forming $A/(x_1, \dots, x_n)$ where $x_1, \dots, x_n \in
\pi_{\mathrm{even}}(A)$ generate $I$. In particular, this completes the proof
of \Cref{thickwithoutperiodic}. 
\end{proof}

\section{Galois groups}

Let $A$ be an $\e{\infty}$-ring such that $\pi_0 (A)$ has no nontrivial
idempotents. In \cite{galgp}, we introduced the \emph{Galois group} $\pi_1
\md(A)$ of $A$, a
profinite group defined ``up to conjugacy'' (canonically as a profinite
\emph{groupoid}), by developing a version of the \'etale fundamental group
formalism. The Galois group 
$\pi_1 \md(A)$ has the property that if $G$ is a finite group, then to give a
continuous group homomorphism $\pi_1 \md(A) \to G$ is equivalent to giving a 
faithful $G$-Galois extension of $A$ in the sense of Rognes \cite{rognes}. 
More generally, we introduced (\cite[Def. 6.1]{galgp}) the notion of a \emph{finite cover}
of an $\e{\infty}$-ring $A$, as a homotopy-theoretic version of the classical
notion of a finite \'etale algebra over a commutative ring. A continuous action
of $\pi_1 \md(A)$ on a finite set is equivalent to a finite cover of the
$\e{\infty}$-ring $A$. 
The Galois group can be a fairly sensitive invariant of
$\e{\infty}$-rings; for instance (\cite[Ex. 7.21]{galgp}) two different
$\e{\infty}$-structures on the same $\e{1}$-ring can yield different Galois
groups, and computing it appears to be a subtle problem in general. 
Here, we will show that the Galois group is much less sensitive over the
rational numbers, under noetherian hypotheses.

The Galois group
comes with a surjection
\begin{equation} \label{galsurj} \pi_1 \md(A) \twoheadrightarrow
\pi_1^{\mathrm{et}} \spec \pi_0 (A), \end{equation}
since every algebraic Galois cover of $\spec \pi_0 (A)$ can be realized
topologically. More generally, to every finite \'etale $\pi_0 (A)$-algebra $A'_0$
one can canonically associate $A' \in \clg_{A/}$ such that $\pi_0 A' \simeq
A'_0$ and such that $\pi_k A' \simeq A'_0 \otimes_{\pi_0 A} \pi_k A$
\cite[\S 7.5]{higheralg}. 
This yields a full subcategory of the category of finite covers which corresponds to
the above surjection. 
In general, however, it is an insight of Rognes that the above
surjection has a nontrivial kernel: that is, there exist finite covers that
do not arise algebraically in this fashion. A basic example is the
complexification map $KO \to
KU$. 

In \cite{galgp}, we computed Galois groups in certain instances. Our basic
ingredient (\cite[Th. 6.30]{galgp}) was a strengthening of work of
Baker-Richter \cite{BR2} to show that the Galois theory is entirely algebraic
for even periodic $\e{\infty}$-rings with \emph{regular} $\pi_0$, using the
theory of residue fields. Over the rational numbers, the methods of the present
paper enable one to construct these ``residue fields'' without
regularity assumptions. 
In particular, we will show in this section 
that, for noetherian rational $\e{\infty}$-rings, 
the computation of the Galois group can be reduced to a problem of pure
algebra. (For instance, we will show that \eqref{galsurj} is an isomorphism if
$A$ contains a unit in degree two.) In general, \eqref{galsurj} will not be an isomorphism, because over
$\mathbb{Q}$, it is permissible to adjoin square roots of invertible elements
in homotopy in degrees divisible by four, for instance. But we will see that
such issues of grading are the \emph{only} failure of \eqref{galsurj} to be an isomorphism. 

\subsection{Review of invariance properties}

To obtain the results of the present section, we will need some basic tools for
working with Galois groups, which will take the form of the ``invariance
results'' of \cite{galgp}. 
For example, we will need to know that killing a nilpotent degree zero class
does not affect the Galois group. 
For convenience, we will assume that all $\e{\infty}$-rings $A$ considered in
this section have no nontrivial idempotents in $\pi_0$, so that we can speak
about a Galois group.

\begin{theorem} \label{nilpinv}
Let $A$ be a rational $\e{\infty}$-ring and let $x \in \pi_0 A$ be a nilpotent
element. Then the map $A \to A//x$ induces an isomorphism on Galois groupoids. 
\end{theorem}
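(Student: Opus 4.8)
The plan is to deduce this from the descent statement already engineered in \Cref{quotdesc}, together with the invariance results of \cite{galgp}. Since $x$ is nilpotent, \Cref{quotdesc} says that $A \to A//x$ admits descent, so $\md(A)$ is the totalization of the cosimplicial $\infty$-category $\md(A^\bullet)$ attached to the cobar construction $A^\bullet$ of $A \to A//x$. Because a finite cover is defined purely in terms of the symmetric monoidal $\infty$-category of modules, this descent passes to finite covers: the category $\mathrm{FinCov}(A)$ — equivalently, the profinite groupoid $\pi_1\md(A)$ — is the totalization of the cosimplicial category $\mathrm{FinCov}(A^\bullet)$. (Descent of $\mathrm{FinCov}$ along descendable maps is one of the basic facts from \cite{galgp}.) It therefore suffices to show that the coaugmentation-free part of this cosimplicial category is \emph{essentially constant} with value $\mathrm{FinCov}(A//x)$: granting that, its totalization is $\mathrm{FinCov}(A//x)$, and comparing with the coaugmentation $A \to A//x$ yields the desired equivalence $\mathrm{FinCov}(A) \simeq \mathrm{FinCov}(A//x)$, i.e.\ an isomorphism of Galois groupoids.

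Next I would identify the terms of the cobar construction. Exactly as in the proof of \Cref{purelyinsepart} — the classes being coned off are \emph{already zero} in $A//x$ — the $n$th cobar term is
\[ A^n \simeq (A//x)^{\otimes_A (n+1)} \simeq (A//x) \otimes \sym^*\!\left(\mathbb{Q}[1]^{\oplus n}\right), \]
as an $\e{\infty}$-$(A//x)$-algebra, and all the coface and codegeneracy maps are maps of $\e{\infty}$-$(A//x)$-algebras. By the reasoning behind \Cref{oddcoinc} (here is where rationality is used, as it was for $A//x \simeq A/x$), $\sym^*(\mathbb{Q}[1]) \simeq \mathbb{Q} \oplus \mathbb{Q}[1]$, so $\sym^*(\mathbb{Q}[1]^{\oplus n})$ is the exterior algebra on $n$ degree-one generators; in particular $A^n$ is a trivial square-zero extension of $A//x$ built by iteratively adjoining summands that are positive shifts of $A//x$, and the augmentation $A^n \to A//x$, as well as every structure map of $A^\bullet$, is a split map of $\e{\infty}$-$(A//x)$-algebras whose kernel is square-zero (hence nilpotent) on $\pi_0$.

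The remaining, and to my mind the only nontrivial, step is the invariance input: that $\mathrm{FinCov}$ (equivalently the Galois groupoid) is unchanged along such a trivial square-zero extension $B \rightsquigarrow B \otimes_{\mathbb{Q}} \sym^*(\mathbb{Q}[1]^{\oplus n})$, with $B = A//x$. This should be an instance of the ``invariance results'' of \cite{galgp}; morally it holds because, over such an extension, a finite cover is obstruction-theoretically forced to be pulled back from $B$ — one climbs the finite Postnikov filtration of the exterior-algebra factor and uses that finite covers are insensitive to square-zero thickenings. I would quote the precise statement from \cite{galgp}, verify its hypotheses against $\sym^*(\mathbb{Q}[1]^{\oplus n})$, and then assemble the three steps. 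The subtlety to handle with care is the \emph{naturality}: one needs each structure map of $A^\bullet$ (not just the levelwise objects) to be carried to an equivalence, which holds because all of them are maps of $\e{\infty}$-$(A//x)$-algebras of the type to which the invariance result applies; this is what upgrades ``levelwise equivalent to a constant'' to ``essentially constant,'' and hence lets the totalization be computed as $\mathrm{FinCov}(A//x)$.
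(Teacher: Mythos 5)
The paper's own proof is a one-line citation: it applies the invariance theorem \cite[Thm.~8.13]{galgp} to the map $\mathbb{Q}[[t]] \to A$, $t \mapsto x$, which exists and satisfies $A \otimes_{\mathbb{Q}[[t]]} \mathbb{Q} \simeq A//x$ precisely because $x$ is nilpotent, so its hypotheses are immediate. Your outer skeleton --- descent along $A \to A//x$ (valid by \Cref{quotdesc} plus descent for finite covers from \cite{galgp}), identification of the cobar terms as $(A//x) \otimes \sym^*(\mathbb{Q}[1]^{\oplus n})$ exactly as in \Cref{purelyinsepart}, and essential constancy of the resulting cosimplicial diagram --- is a reasonable way to try to reprove such a statement, but the step carrying all the weight, invariance of finite covers along $B \to B \otimes \sym^*(\mathbb{Q}[1]^{\oplus n})$ for $B = A//x$, is a genuine gap: it is not a statement you can discharge by the vague citation you gesture at.

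The point is that this map is a square-zero thickening of $B$ whose fiber is built from \emph{positive shifts} $B[m]$, and these are not connective unless $B$ is; \Cref{nilpinv} has no connectivity or noetherian hypothesis, and in this paper's applications $B$ contains a degree-two unit. The ``finite covers are insensitive to square-zero thickenings, climb the Postnikov tower'' principle you invoke is calibrated to connective fibers (where it reduces to algebraicity of finite covers of connective rings); here it is exactly the nontrivial claim. Indeed, when $B$ is $2$-periodic one has $B[1] \simeq B[-1]$, so $B \otimes \sym^*(\mathbb{Q}[1]) \simeq \sym^*_B(B[-1]) \simeq C^*(S^1;B)$, and your levelwise claim becomes the assertion that $B$ and $C^*(S^1;B)$ have the same Galois theory for an \emph{arbitrary} rational $B$ --- precisely the degree $-1$ phenomenon the paper treats as delicate: \Cref{1surj} only establishes a surjection on Galois groups in that direction, and the isomorphism is recovered only later, for noetherian rings, using the very theorem you are proving. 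So you would have to prove this invariance yourself (vanishing of the cotangent complex of a finite cover, unique lifting of the algebra together with its dualizability and separability along these non-connective square-zero extensions, full faithfulness), which is work comparable in depth to the cited \cite[Thm.~8.13]{galgp}. A smaller inaccuracy: the cosimplicial structure maps are not all maps of $\e{\infty}$-$(A//x)$-algebras for a fixed such structure (e.g.\ $d^0$ is not); what is true, and suffices by two-out-of-three, is that each coface becomes, under a suitable identification of its target, a standard inclusion $C \to C \otimes \sym^*(\mathbb{Q}[1])$, and it is to maps of that form that your (unproven) invariance statement would have to apply.
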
 
\begin{proof} 
This is \cite[Theorem 8.13]{galgp}, for the map $\mathbb{Q}[[t]] \to A$ sending $t
\mapsto x$. 
\end{proof}

\begin{proposition} \label{1surj}
Let $A$ be a rational $\e{\infty}$-ring and let $x \in \pi_{-1} A$ be a class.
Then the map
\[ A \to A//x \simeq A \otimes_{\sym ^* \mathbb{Q}[-1]} \mathbb{Q},  \]
obtained by coning off $x$, induces a surjection on Galois groups. 
\end{proposition}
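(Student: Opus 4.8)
The statement to prove is that coning off a degree $-1$ class $x \in \pi_{-1}(A)$ induces a surjection on Galois groups. The natural strategy is to use the criterion from \cite{galgp} that a map of $\e{\infty}$-rings induces a surjection on Galois groups provided it is ``conservative enough'' in an appropriate sense — concretely, that it does not kill the ring (so $A//x \neq 0$, which we have by \Cref{quotdegminusone}) and that every finite cover of $A$ pulls back to a finite cover of $A//x$ \emph{without losing information}, i.e.\ that the base-change functor on finite covers is faithful on objects up to the relevant equivalence. The cleanest route is via the descent formalism already invoked: by \Cref{quotdegminusone}, the map $A \to A//x$ admits descent, and a map admitting descent induces an \emph{injection} (in fact an equivalence onto a full sub-1-category) on the category of finite covers in the reverse direction, hence a surjection on Galois groups. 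So the first thing I would do is recall the precise statement from \cite[\S 6--7]{galgp} (or \cite[Th. 6.30 / its proof ingredients]{galgp}) that ``admitting descent $\Rightarrow$ surjection on $\pi_1$,'' and cite it.

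First I would reduce to a statement purely about $A$ and the free algebra on a degree $-1$ generator. Since $A//x \simeq A \otimes_{\sym^* \mathbb{Q}[-1]} \mathbb{Q}$ by definition, and since $\sym^* \mathbb{Q}[-1] \to \mathbb{Q}$ admits descent (by the splitting $\sym^* \mathbb{Q}[-1] \simeq \mathbb{Q} \oplus \mathbb{Q}[-1]$ of \Cref{oddcoinc}, exactly as in the proof of \Cref{quotdegminusone}), the map $A \to A//x$ admits descent by base-change. Then I would invoke the general principle: if $\phi\colon A \to B$ admits descent, then the pullback functor on finite covers $\mathrm{Cov}(A) \to \mathrm{Cov}(B)$ is fully faithful, hence the induced map $\pi_1 \md(B) \to \pi_1 \md(A)$ is a surjection of profinite groupoids (a fully faithful pullback on the fiber categories of the respective Galois categories corresponds precisely to a surjection of fundamental groups). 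This is the heart of the argument and it is essentially a citation.

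The step I expect to be the main obstacle is making sure the descent-implies-surjection implication is stated in \cite{galgp} in exactly the form needed — i.e.\ for the \emph{Galois groupoid} / finite-cover formalism rather than just for, say, the behavior of the category of modules or perfect modules. If the cited result is only phrased for module categories, I would need to bridge to finite covers: a finite cover of $A$ is in particular a dualizable, faithful $\e{\infty}$-$A$-algebra with certain separability properties, and descent for $A \to B$ implies that such an algebra is detected after base-change to $B$, so no nonzero cover is annihilated and the pullback functor is conservative; combined with the fact that finite covers form a Galois category, conservativity of pullback on a Galois category yields surjectivity on fundamental groups. I would therefore structure the proof as: (1) $A \to A//x$ admits descent, by base-change from $\sym^* \mathbb{Q}[-1] \to \mathbb{Q}$ and \Cref{oddcoinc}; (2) cite the relevant invariance result of \cite{galgp} that a map admitting descent induces a surjection on Galois groups (or groupoids); (3) conclude. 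The whole argument is short precisely because \Cref{quotdegminusone} has already done the real work of establishing descent.

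Concretely:

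\begin{proof}
The map $\sym^* \mathbb{Q}[-1] \to \mathbb{Q}$ admits descent: by \Cref{oddcoinc} we have $\sym^* \mathbb{Q}[-1] \simeq \mathbb{Q} \oplus \mathbb{Q}[-1]$ as $\e{\infty}$-rings, so $\mathbb{Q}$ is a perfect $\sym^* \mathbb{Q}[-1]$-module whose thick tensor-ideal contains the cofiber $\mathbb{Q}[-1]$ and hence $\sym^* \mathbb{Q}[-1]$ itself. By base-change along $\sym^* \mathbb{Q}[-1] \to A$ (the map classifying $x$), the morphism $A \to A//x \simeq A \otimes_{\sym^* \mathbb{Q}[-1]} \mathbb{Q}$ admits descent as well; this is \Cref{quotdegminusone}. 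A map of $\e{\infty}$-rings which admits descent induces a surjection on Galois groups (equivalently, the base-change functor on finite covers is fully faithful) by \cite[\S 6--7]{galgp}. Applying this to $A \to A//x$ gives the claim.
\end{proof}
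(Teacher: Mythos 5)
There is a genuine gap: the principle your whole argument rests on --- ``if $A \to B$ admits descent then the pullback functor on finite covers is fully faithful, hence $\pi_1 \md(B) \to \pi_1 \md(A)$ is surjective'' --- is false, and no such statement appears in \cite{galgp}. Any faithful $G$-Galois extension admits descent (e.g.\ $\mathbb{Q} \to \mathbb{Q}(i)$ as discrete rational $\e{\infty}$-rings, or $KO \to KU$), yet the induced map on Galois groups is the inclusion of an open subgroup, not a surjection; and pullback of finite covers along such a map is not fully faithful, since the cover $B$ itself becomes split after base change to $B$ (already $\mathbb{Q}(i)\otimes_{\mathbb{Q}}\mathbb{Q}(i) \simeq \mathbb{Q}(i)\times\mathbb{Q}(i)$ acquires new maps to $\mathbb{Q}(i)$). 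Your fallback argument has the same problem: descent does give that base change is \emph{conservative} on covers, but in the Galois-category formalism conservativity of the fiber-wise pullback does not imply surjectivity of fundamental groups; what is needed is that \emph{connected} covers pull back to \emph{connected} covers (equivalently full faithfulness), and that is exactly what fails in the examples above.

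The paper's proof supplies the missing connectedness input. The criterion invoked from \cite[\S 8.1]{galgp} is not ``admits descent'' but ``universally connected'': one must show that for every $A' \in \clg_{C^*(S^1;\mathbb{Q})/}$ the map $A' \to A' \otimes_{C^*(S^1;\mathbb{Q})} \mathbb{Q}$ induces an isomorphism on $\idem$. Descent for $C^*(S^1;\mathbb{Q}) \to \mathbb{Q}$ (your step (1), which is correct and is \Cref{quotdegminusone}) enters only as a computational tool: it expresses $\idem(A')$ as a reflexive equalizer of idempotents of the \v{C}ech stages, and the key point is the coincidence $\mathbb{Q} \otimes_{C^*(S^1;\mathbb{Q})} \mathbb{Q} \simeq \mathbb{Q}[t]$ (coning off a class that is already zero produces a polynomial extension), so that one of the two maps in the equalizer is the inclusion into a polynomial extension, which is an isomorphism on idempotents; the reflexive-equalizer trick then forces the two coface maps to agree and gives the isomorphism on $\idem$. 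So your proof needs to be restructured around the universal-connectedness criterion and this idempotent computation; the descent statement alone cannot carry the surjectivity conclusion.
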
 
\begin{proof} 
By \cite[\S 8.1]{galgp}, it suffices to show that the map $C^*(S^1; \mathbb{Q}) \simeq
\sym^* \mathbb{Q}[-1] \to \mathbb{Q}$ is \emph{universally connected}: that is,
for any  $A \in \clg_{C^*(S^1; \mathbb{Q})/}$, the natural map
\( A \to A \otimes_{C^*(S^1; \mathbb{Q})}  \mathbb{Q} \)
induces an isomorphism on $\mathrm{Idem}$. 

Since $C^*(S^1; \mathbb{Q}) \to \mathbb{Q}$ admits descent, the set $\idem(A)$
of idempotents in $A$ is the equalizer of the two maps
\[ A \otimes_{C^*(S^1; \mathbb{Q})} \mathbb{Q} \rightrightarrows 
A \otimes_{C^*(S^1; \mathbb{Q})} \mathbb{Q} \otimes_{C^*(S^1; \mathbb{Q})}
\mathbb{Q} \simeq 
\left(A \otimes_{C^*(S^1; \mathbb{Q})} \mathbb{Q} \right)[t].
\]
This is a reflexive equalizer, and one of the maps is the natural inclusion 
$$A \otimes_{C^*(S^1; \mathbb{Q})} \mathbb{Q}  \to 
\left(A \otimes_{C^*(S^1; \mathbb{Q})} \mathbb{Q} \right)[t],$$
which induces an isomorphism on idempotents. It follows that all the maps in
the reflexive equalizer are isomorphisms and thus the two forward maps are
equal, proving that 
$A \to A \otimes_{C^*(S^1; \mathbb{Q})} \mathbb{Q} $ induces an isomorphism on
idempotents. 
\end{proof}

\subsection{The periodic case}
We are now ready to show (\Cref{galep} below) that the Galois theory of a
noetherian rational $\e{\infty}$-ring containing a degree two unit is algebraic. 

\begin{lemma} 
\label{evenpdicartiniangal}
Let $A$ be a rational, noetherian $\e{\infty}$-ring  containing a unit in
$\pi_2$ such that $\pi_0 A$
is local artinian. Then the Galois theory of $A$ is algebraic. 
\end{lemma}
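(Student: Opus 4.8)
The plan is to connect $A$ to the ``standard'' even periodic ring $k[t_2^{\pm 1}]$, where $k$ is the residue field of the local artinian ring $\pi_0 A$, by a chain of maps of controlled behavior on Galois groups, and then run a short diagram chase against the algebraic surjection \eqref{galsurj}. Concretely, ``the Galois theory of $A$ is algebraic'' means that \eqref{galsurj}, i.e. $\gamma\colon \pi_1\md(A)\twoheadrightarrow \pi_1^{\mathrm{et}}\spec\pi_0(A)$, is an isomorphism.

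First I would re-use the tower $A = A_0 \to A_1 \to \cdots$ built in the proof of \Cref{rsfldforart}: since $\pi_0 A$ is local artinian its maximal ideal is nilpotent, and each $A_{i+1} = A_i//(\text{a }k\text{-basis of the maximal ideal of }\pi_0 A_i)$ is obtained by coning off finitely many nilpotent degree-zero classes; by \Cref{radicial2} these classes remain nilpotent at the next stage, so \Cref{nilpinv} applies at every step and $A_i \to A_{i+1}$ is an isomorphism on Galois groupoids. Passing to $A_\infty = \varinjlim A_i$, and using that finite covers are finitely presented (so that the Galois groupoid takes such sequential colimits of $\e{\infty}$-rings to inverse limits of profinite groupoids), gives $\pi_1\md(A)\simeq\pi_1\md(A_\infty)$. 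The proof of \Cref{rsfldforart} also records that $A_\infty$ satisfies the hypotheses of \Cref{countablerational} (notably $\pi_0 A_\infty = k$, $\pi_2 A_\infty$ has a unit, $\pi_{-1}A_\infty$ is countably dimensional over $k$), so the construction underlying \Cref{nilpcountable} produces a map $A_\infty \to k[t_2^{\pm 1}]$ exhibited as a transfinite composite of maps each coning off a class in $\pi_{-1}$. By \Cref{1surj} each such map is a surjection on Galois groups, so the same colimit argument yields a surjection $\beta\colon \pi_1\md(k[t_2^{\pm 1}]) \twoheadrightarrow \pi_1\md(A_\infty)\simeq\pi_1\md(A)$.

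For the base case, $k[t_2^{\pm 1}]$ is even periodic with $\pi_0 = k$ regular, so its Galois theory is algebraic by \cite[Th. 6.30]{galgp}, i.e. $\pi_1\md(k[t_2^{\pm 1}]) \to \pi_1^{\mathrm{et}}\spec k$ is an isomorphism. (A self-contained argument is also available: in characteristic zero $|G|$ is invertible, so a $G$-Galois extension $C$ of $k[t_2^{\pm 1}]$ has $\pi_*C$ étale over $\pi_*(k[t_2^{\pm 1}]) = k[t_2^{\pm 1}]$, hence reduced, hence — odd-degree elements being square-zero — concentrated in even degrees; thus $\pi_*C = \pi_0 C[t_2^{\pm 1}]$ with $\pi_0 C$ finite étale over $k$, and \Cref{laurentseriesdetbyhomotopy} identifies $C$ with the algebraically induced extension $\pi_0 C\otimes_k k[t_2^{\pm 1}]$.) Now observe that, since $\pi_0 A$ is local artinian, the map $\pi_0 A \to k$ induces an isomorphism $\pi_1^{\mathrm{et}}\spec k \xrightarrow{\sim}\pi_1^{\mathrm{et}}\spec\pi_0 A$; by naturality of \eqref{galsurj} along $A \to k[t_2^{\pm 1}]$, the composite $\gamma\circ\beta$ equals $\pi_1\md(k[t_2^{\pm 1}]) \xrightarrow{\sim}\pi_1^{\mathrm{et}}\spec k \xrightarrow{\sim}\pi_1^{\mathrm{et}}\spec\pi_0 A$ and is therefore an isomorphism. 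Since $\beta$ is surjective and $\gamma\circ\beta$ is injective, $\beta$ is an isomorphism, whence $\gamma = (\gamma\circ\beta)\circ\beta^{-1}$ is an isomorphism, which is the claim.

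I expect the step needing the most care to be the passage to colimits: one must know that $\pi_1\md(-)$ (equivalently the profinite Galois groupoid) carries the relevant sequential colimits of $\e{\infty}$-rings to inverse limits of profinite groupoids, so that isomorphisms and surjections at each finite stage assemble correctly in the limit. This follows from the finite-presentation properties of finite covers in \cite{galgp}, but it is what makes the two-phase tower — first killing nilpotent degree-zero classes (for isomorphisms via \Cref{nilpinv}), then killing degree $(-1)$ classes (for surjections via \Cref{1surj}) — actually deliver the conclusion; everything else is a routine assembly of results already proved.
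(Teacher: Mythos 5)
Your proposal is correct and follows essentially the same route as the paper: the nilpotent-killing tower from the proof of \Cref{rsfldforart} with \Cref{nilpinv} and compatibility of Galois groups with filtered colimits, then the degree $(-1)$ killing of \Cref{countablerational}/\Cref{nilpcountable} with \Cref{1surj}, reducing to $k[t_2^{\pm 1}]$ whose Galois theory is algebraic. The only differences are cosmetic: the paper invokes the K\"unneth argument of \cite[Prop. 6.28]{galgp} for the base case and leaves the final comparison with \eqref{galsurj} implicit (``bounded by $\mathrm{Gal}(\overline{k}/k)$''), whereas you spell out that diagram chase explicitly.
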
 
\begin{proof} 
The strategy is to imitate the proof of \Cref{residueflds}, while cognizant of
the invariance results for Galois groups reviewed in the previous subsection. 
Namely, we not only showed that $A$ had a residue field, but we constructed it
via  a specific recipe. 
Let $k$ be the residue field of $\pi_0 A$. 

In proving \Cref{residueflds} (that is, in the course of the proof of
\Cref{rsfldforart}), we first formed a sequence of rational, noetherian, 
$\e{\infty}$-rings with artinian $\pi_0$,
\[ A  = A_0 \to A_1 \to A_2 \to \dots \to A_\infty = \varinjlim A_i,  \]
such that:
\begin{enumerate}
\item $A_{i+1}$ is obtained from $A_i$ by attaching 1-cells along a finite
number of nilpotent elements in $\pi_0(A_i)$.
\item All the $\pi_0(A_i)$ are local artinian rings with residue field $k$, and each map $\pi_0(A_i)
\to \pi_0(A_{i+1})$ annihilates the maximal ideal. 
\end{enumerate}

By \Cref{nilpinv}, at no finite stage do we
change the Galois group; each map $A \to A_i$ induces an isomorphism on Galois
groups. Now, by \cite[Th. 6.21]{galgp}, the Galois group is compatible with
filtered colimits and therefore $A \to A_\infty$ induces an isomorphism on
Galois groups.\footnote{In fact, all we need for the proof of this lemma is that $A \to A_\infty$ induces a
\emph{surjection} on Galois groups. This does not require the obstruction
theory used in proving \cite[Th. 6.21]{galgp}, and is purely formal.}

Now, the $\e{\infty}$-ring $A_\infty$ has the properties of
\Cref{countablerational}: it has a
unit in degree two, its $\pi_0$ is isomorphic to $k$, and $\pi_1$ is countably
dimensional. We showed in the proof of \Cref{countablerational} that by killing degree $-1$ cells repeatedly
and forming countable colimits, and repeating countably many times, we could
start with $A_\infty$ and reach $k[t_2^{\pm 1}]$. It follows by \Cref{1surj} (along
with the compatibility of Galois groups and filtered colimits, again) that the
map
\[ A_\infty \to k[t_2^{\pm 1}],  \]
induces a \emph{surjection} on Galois groups. But the Galois group of
$k[t_2^{\pm 1}]$ is algebraic (i.e., $\mathrm{Gal}(\overline{k}/k)$) in view of the K\"unneth isomorphism \cite[Prop.
6.28]{galgp}, so the Galois group of $A_\infty$ must be bounded by
$\mathrm{Gal}(\overline{k}/k)$, and therefore that of $A$ must
be, too. 
\end{proof} 

We can now prove the main result of the present subsection. 

\begin{theorem} 
\label{galep}
Let $A$ be a rational, noetherian $\e{\infty}$-ring containing a unit in
degree two. Then the Galois
theory of $A$ is algebraic, i.e., $\pi_1 \md(A) \simeq
\pi_1^{\mathrm{et}}\spec \pi_0(A)$. 
\end{theorem}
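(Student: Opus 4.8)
The plan is to show that the surjection $\pi_1\md(A)\twoheadrightarrow\pi_1^{\mathrm{et}}\spec\pi_0(A)$ of \eqref{galsurj} is an isomorphism, equivalently that every finite cover $B$ of $A$ is \emph{algebraic}: of the form $\widetilde B$ for $\widetilde B$ the canonical realization of a finite \'etale $\pi_0(A)$-algebra. Fully faithfulness of the realization functor from finite \'etale $\pi_0(A)$-algebras to finite covers of $A$ is part of the theory of \'etale algebras \cite[\S 7.5]{higheralg}, so only essential surjectivity is at issue.

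First I would record the consequences of the degree-two unit. If $u\in\pi_2(A)$ is a unit, its image in $\pi_2(B)$ is a unit, so $B$ is even periodic and $\pi_*(B)\simeq\pi_0(B)\otimes_{\pi_0(A)}\pi_*(A)$; since $B$ is a finite cover it is dualizable, and, extracting from the definition of a finite cover that it is \emph{separable} over $A$, one finds that $\pi_0(B)$ is a finite projective $\pi_0(A)$-module. Similarly, each residue field $\kappa(\mathfrak p)$ produced by \Cref{residueflds} satisfies $\kappa(\mathfrak p)\simeq k(\mathfrak p)[t_2^{\pm 1}]$ as an $\e{\infty}$-ring (by \Cref{laurentseriesdetbyhomotopy}), so by \cite[Prop.\ 6.28]{galgp} --- consistently with, and also extractable from, \Cref{evenpdicartiniangal} --- the Galois theory of $\kappa(\mathfrak p)$ is algebraic; this is where the work on residue fields genuinely enters.

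The heart of the argument is then to show $\pi_0(B)$ is finite \'etale over $\pi_0(A)$, which I would do by the fibrewise criterion: a finite projective $\pi_0(A)$-algebra is \'etale iff its fibre over every residue field $k(\mathfrak p)$ of $\pi_0(A)$ is \'etale. Base-changing $B$ along $A\to\kappa(\mathfrak p)$, the $\e{\infty}$-ring $B\otimes_A\kappa(\mathfrak p)$ is a finite cover of $\kappa(\mathfrak p)$, hence algebraic over it; in particular $\pi_*(B\otimes_A\kappa(\mathfrak p))$ is free over $\pi_*(\kappa(\mathfrak p))$ and concentrated in even degrees, and (using even periodicity to rule out Tor-contributions) one identifies $\pi_0(B\otimes_A\kappa(\mathfrak p))\cong\pi_0(B)\otimes_{\pi_0(A)}k(\mathfrak p)$, which is therefore a finite separable $k(\mathfrak p)$-algebra. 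Hence $\pi_0(B)$ is finite \'etale. Finally, $\mathrm{id}_{\pi_0(B)}$ determines, by the universal property of the \'etale realization $\widetilde B$ of $\pi_0(B)$ over $A$, a canonical map $\widetilde B\to B$ in $\clg_{A/}$ inducing the identity on $\pi_0$; as both sides are even periodic with homotopy base-changed from $\pi_0$ along $\pi_*(A)$, this map is an equivalence, so $B$ is algebraic.

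The main obstacle I anticipate is controlling $\pi_0(B)$: deducing from the finite-cover structure (not just dualizability) that $\pi_0(B)$ is \emph{flat} over $\pi_0(A)$, since without flatness the fibrewise criterion is unavailable and a non-flat $\pi_0(B)$ with the correct fibre dimensions could a priori occur over a non-reduced $\pi_0(A)$. Relatedly one must pin down the homotopy base-change identities on the nose. Should the flatness be awkward to extract directly, an alternative route is a Noetherian induction on $\spec\pi_0(A)$ using the pair $\{A/\!/x,\,A[x^{-1}]\}$ of \Cref{quotloc}, the invariance results \Cref{nilpinv} and \Cref{1surj}, the artinian base case \Cref{evenpdicartiniangal}, and Zariski descent of finite covers over $\spec\pi_0(A)$ to glue.
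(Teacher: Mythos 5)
Your opening reduction is where the argument breaks down: you assert that a finite cover $B$ of $A$ automatically satisfies $\pi_*(B)\simeq\pi_0(B)\otimes_{\pi_0(A)}\pi_*(A)$ with $\pi_0(B)$ finite projective over $\pi_0(A)$, "extracted" from dualizability and separability. That statement is essentially the theorem itself. A finite cover in the sense of \cite[Def.~6.1]{galgp} is only required to become algebraically \'etale after base change along some descendable $A$-algebra; in general its homotopy groups are \emph{not} flat over $\pi_*(A)$ (the complexification $KO\to KU$ is the standard example), and the non-noetherian rational examples of \Cref{strangegalois} show that the conclusion of the theorem can genuinely fail without the noetherian hypothesis, so no formal argument from separability and dualizability alone can deliver this flatness --- noetherianness must enter somewhere, and your main route never uses it. The "main obstacle" you flag at the end is thus not a loose end but the entire content of the proof. (Two smaller points: a unit in $\pi_2$ gives only $2$-periodicity, not evenness, since $\pi_1(A)$ need not vanish; and identifying $\pi_0(B\otimes_A\kappa(\mathfrak p))$ with $\pi_0(B)\otimes_{\pi_0(A)}k(\mathfrak p)$ again presupposes the unproved flatness, because $\kappa(\mathfrak p)$ is far from flat over $A$, so Tor terms cannot be ruled out "by even periodicity.") The fibrewise part of your argument --- finite covers of $\kappa(\mathfrak p)\simeq k(\mathfrak p)[t_2^{\pm 1}]$ are algebraic, hence the fibres of $\pi_0(B)$ are separable --- is fine once flatness is available, but it cannot get started without it.

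For comparison, the paper treats flatness as the conclusion, not an input: one localizes and then completes $\pi_0(A)$ at a maximal ideal (completion being faithfully flat), writes the resulting $A$ as the inverse limit of the tower $A_m=A//(x_1^m,\dots,x_n^m)$ whose $\pi_0$ is local artinian (\Cref{radicial2}), and uses the fully faithful embedding $\md^\omega(A)\subset\varprojlim_m\md^\omega(A_m)$ together with \cite[\S 7.1]{galgp} to bound the Galois group of $A$ by that of the $A_m$; the artinian case is \Cref{evenpdicartiniangal}, proved by running the residue-field construction of \Cref{residueflds} while invoking the invariance results \Cref{nilpinv} and \Cref{1surj}. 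Your fallback sketch does name the artinian base case and the invariance results, but it replaces the completion/inverse-limit mechanism by "Noetherian induction with $\{A//x,\,A[x^{-1}]\}$ and Zariski descent"; that pair is used in the paper only for the nilpotence and thick-subcategory arguments, and there is no evident induction or Mayer--Vietoris scheme that assembles the Galois group of $A$ from those of $A//x$ and $A[x^{-1}]$, so as it stands the fallback is not a proof either.
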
 
\begin{proof} 
Fix  a finite cover $A \to A'$ of $\e{\infty}$-rings. We need to show that
$A'$ is \emph{flat} over $A$: that is, the natural map $\pi_0(A) \to \pi_0(A')$
is flat, and the map $\pi_*(A) \otimes_{\pi_0(A)} \pi_0(A') \to \pi_*(A')$ is
an isomorphism. 
This is a local question, so we may assume that $\pi_0 (A)$ is a local noetherian
ring. Moreover, by completing $A$ at the maximal ideal $\mathfrak{m} \subset
\pi_0 A$, we may assume that $\pi_0 A$ is \emph{complete}; we may do this
because the completion of any noetherian local ring is faithfully flat over it.

Let $k$ 
be the residue field of the (discrete) commutative ring $\pi_0 A$. The \'etale
fundamental group of $\spec \pi_0 A$ is naturally isomorphic to that of $\spec
k$, via the inclusion $\spec k \hookrightarrow \spec \pi_0 A$ as the closed
point, since $\pi_0 A$ is a complete local ring. 
Let $x_1, \dots, x_n \in \pi_0 A$ be generators for the maximal ideal. Consider
the tower of $\e{\infty}$-$A$-algebras
\[\dots \to  A//(x_1^3, \dots, x_n^3) \to A//(x_1^2, \dots, x_n^2) \to A//(x_1,
\dots, x_n) , \]
whose inverse limit is given by $A$ itself (by completeness). 
Observe that the maps at each stage are not uniquely determined. For instance,
to give a map $A//(x_1^2, \dots, x_n^2) \to A//(x_1, \dots, x_n)$ amounts to
giving nullhomotopies of each of $x_1^2, \dots, x_n^2$
in $A//(x_1, \dots, x_n)$, and there are many possible choices of
nullhomotopies. One has to make choices at each stage. 

Denote the $\e{\infty}$-algebras in this tower by $\left\{A_m\right\}$. 
As a result, the equivalence $A \simeq \varprojlim A_m$ leads to a fully
faithful imbedding
\[ \md^\omega(A) \subset \varprojlim \md^\omega(A_m),  \]
from the $\infty$-category $\md^\omega(A)$ of perfect $A$-modules into the
homotopy limit of the $\infty$-categories $\md^\omega(A_m)$ of perfect
$A_m$-modules. As discussed in \cite[\S 7.1]{galgp}, this implies that if we show
that the Galois group of each $A_m$ is equivalent to that of $k$ (i.e., is
algebraic), then the Galois group of $A$ itself is forced to be algebraic.
This, however, is precisely what we proved in \Cref{evenpdicartiniangal} above. 

\end{proof}

\subsection{The general case}

In the previous parts of this section, we showed that the Galois theory of a
rational noetherian $\e{\infty}$-ring $A$ containing a unit in $\pi_2$ was entirely algebraic. In this
subsection, we will explain the modifications needed to handle the case where
we do not have a unit in $\pi_2$; in this case, the structure of the entire
homotopy ring $\pi_* A$ (rather than simply $\pi_0 A$) intervenes. We will
begin with some generalities from \cite{BRalg} which, incidentally, shed further light on
Galois groups of general $\e{\infty}$-rings.

Let $R_*$ be a commutative, $\mathbb{Z}$-graded ring (\emph{not}
graded-commutative!), such as $\pi_{\mathrm{even}}(A)$ for $A \in \clg$. 
We start by setting up a Galois formalism for $R_*$ that takes into account the
grading. 

\begin{definition} 
A \emph{graded finite \'etale} $R_*$-algebra is a commutative, graded
$R_*$-algebra $R'_*$ such that, as underlying commutative rings, the map $R_*
\to R'_*$ is finite \'etale. 
\end{definition} 

We list two fundamental examples: 

\begin{example} 

Given a finite \'etale $R_0$-algebra $R'_0$, then one can build from this a
graded finite \'etale $R_*$-algebra via $R'_* \stackrel{\mathrm{def}}{=} R'_0
\otimes_{R_0} R_*$. 

\end{example} 

\begin{example} 
Let $R_* = \mathbb{Z}[1/n, x_n^{\pm 1}]$ where $|x_n| = n$. Then the map
$R_* \to R'_* =  \mathbb{Z}[1/n, y_1^{\pm 1}], x_n \mapsto y_1^n$ is graded
finite \'etale. 
In other words, one can adjoin $n$th roots of invertible generators in 
degrees divisible by $n$, over a $\mathbb{Z}[1/n]$-algebra. 
\end{example}

Consider the category $\mathcal{C}_{R_*}$ of graded finite \'etale $R_*$-algebras and
graded $R_*$-algebra homomorphisms. 
We start by observing that it is opposite to a Galois category. One can formulate this in
the following manner. The grading on $R_*$ determines an action of the
multiplicative group $\mathbb{G}_m$ on $\spec R_*$, in such a manner that to
give a quasi-coherent sheaf on the quotient stack $(\spec R_*)/\mathbb{G}_m$
is equivalent to giving a graded $R_*$-module. To give a finite \'etale
cover of the quotient stack $(\spec R_*)/\mathbb{G}_m$ is equivalent to giving
a graded $R_*$-algebra which is finite \'etale over $R_*$. In other words,
graded finite \'etale $R_*$-algebras are equivalent to finite \'etale covers
of the \emph{stack} $\spec R_*/\mathbb{G}_m$. 

\begin{definition} 
We define the \emph{graded \'etale fundamental group} $\pi_1^{\mathrm{et, gr}} \spec R_*$ to be the \'etale fundamental
group of the stack $(\spec R_*)/\mathbb{G}_m$. 
\end{definition}

\begin{example} 
\label{grep}
Suppose $R_*$ contains a unit in degree $1$. In this case, $R_* \simeq R_0
\otimes_{\mathbb{Z}} \mathbb{Z}[t^{\pm 1}]$ where $|t| =1 $. In particular, the
quotient stack $(\spec R_*)/\mathbb{G}_m$ is simply $\spec R_0$, so the graded
\'etale fundamental group of $R_*$ is the fundamental group of $\spec R_0$. 
\end{example}

Now, let $A_{\mathrm{even}} = \pi_{\mathrm{even}}(A)$, constructed with the degree
halved (so that $(A_{\mathrm{even}})_1 = \pi_2(A)$) and fix a graded, finite
\'etale $A_{\mathrm{even}}$-algebra $A'_*$. We will construct an
$\e{\infty}$-$A$-algebra $A'$ equipped with an isomorphism
$\pi_{\mathrm{even}}A' \simeq A'_*$ which is a finite cover of $A$,
generalizing the construction that starts with a finite \'etale $\pi_0
A$-algebra and obtains a finite cover of $A$. 
As a result, we will obtain: 

\begin{theorem}[Baker-Richter \cite{BRalg}] 
There is a natural fully faithful imbedding from the category of graded, finite \'etale
$A_{\mathrm{even}}$-algebras into the category of finite covers of $A$  in
$\e{\infty}$-rings. 
\end{theorem}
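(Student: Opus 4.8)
The plan is to factor the desired imbedding through the $\e{\infty}$-ring $B := A[t_2^{\pm 1}]$, for which $\pi_0(B) \simeq A_{\mathrm{even}}$ as ungraded rings: first pass from $A$ to $B$, then apply the étale-algebra machinery of \cite[\S 7.5]{higheralg} over $\pi_0(B)$, and finally descend back along $A \to B$ using the $\mathbb{G}_m$-action carried by the $t_2$-variable.

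First I would set up the (weight) grading formalism. The $\infty$-category $\clg_{A/}$ should be equivalent to the $\infty$-category of \emph{graded} $\e{\infty}$-$B$-algebras, i.e. $\e{\infty}$-$B$-algebras internal to $\mathbb{G}_m$-equivariant (equivalently $\mathbb{Z}$-weight-graded) spectra, where $t_2$ has weight $\pm 1$; this is the homotopy-theoretic analogue of Construction~\ref{Deg1gr}, since multiplication by the invertible weight generator $t_2$ identifies all the graded pieces up to a shift, so a weight-graded $B$-module is the same datum as its weight-zero part, an $A$-module. Under this dictionary, finite covers of $A$ correspond to $\mathbb{G}_m$-equivariant finite covers of $B$, and on homotopy a $\mathbb{G}_m$-equivariant finite cover of $B$ is precisely a weight-graded finite \'etale $\pi_0(B) = A_{\mathrm{even}}$-algebra, i.e. a graded finite \'etale $A_{\mathrm{even}}$-algebra. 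Establishing this correspondence functorially — in particular that the weight grading genuinely permits descending a $B$-algebra to an $A$-algebra rather than merely detecting one — is the technical heart of the statement; here I would lean on the analysis of Baker-Richter \cite{BRalg}.

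Granting Step 1, the realization is then essentially formal. Given a graded finite \'etale $A_{\mathrm{even}}$-algebra $A'_*$, I would forget the grading, view it as a finite \'etale $\pi_0(B)$-algebra, and invoke \cite[\S 7.5]{higheralg} to produce a canonical $\e{\infty}$-$B$-algebra $\widetilde{A'}$ with $\pi_0(\widetilde{A'}) \simeq A'_*$ and $\pi_k(\widetilde{A'}) \simeq A'_* \otimes_{\pi_0 B} \pi_k(B)$; in particular $\widetilde{A'}$ is faithfully flat and a finite cover of $B$. Since the formation of \'etale $\e{\infty}$-algebras is functorial, the $\mathbb{G}_m$-action on $B$ together with the grading on $A'_*$ equips $\widetilde{A'}$ with a compatible weight grading, so $\widetilde{A'}$ upgrades to a $\mathbb{G}_m$-equivariant finite cover of $B$; applying the equivalence of Step 1 yields an $\e{\infty}$-$A$-algebra $A'$ that is a finite cover of $A$ with $\pi_{\mathrm{even}}(A') \simeq A'_*$ as graded rings.

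For full faithfulness I would use that mapping spaces between \'etale $\e{\infty}$-algebras over $\pi_0(B)$ are discrete and computed by $\pi_0$-algebra homomorphisms (\cite[\S 7.5]{higheralg}); imposing compatibility with the weight gradings cuts these down to graded $A_{\mathrm{even}}$-algebra homomorphisms, which via Step 1 are exactly the $\e{\infty}$-$A$-algebra maps between the corresponding finite covers, and naturality in $A$ is inherited from each step. The main obstacle, as indicated, is Step 1: making precise and functorial the equivalence $\clg_{A/} \simeq \clg^{\mathrm{gr}}_{B/}$ and the matching of finite covers under it; once the descent along the $\mathbb{G}_m$-action is in hand, the rest is bookkeeping.
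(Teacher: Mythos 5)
Your strategy---descend along the weight grading of $B = A[t_2^{\pm 1}]$ and invoke the \'etale machinery of \cite[\S 7.5]{higheralg} over $\pi_0(B) \simeq A_{\mathrm{even}}$---is genuinely different from the paper's, which tensors a graded Galois object up to $\pi_*(A)$, cites the obstruction-theoretic realization theorem of Baker--Richter \cite[Theorem 2.1.1]{BRalg} to produce the corresponding $G$-Galois extension of $A$, proves full faithfulness by a Galois-descent and idempotent computation of $\hom_{\clg_{A/}}(A', -)$, and then handles arbitrary graded finite \'etale algebras by splitting them with Galois objects. As written, however, your proposal has two genuine gaps. First, the theorem (due to Baker--Richter) concerns an arbitrary $\e{\infty}$-ring $A$, and your Step 1 only makes sense rationally: it is only in characteristic zero that the free $\e{\infty}$-$A$-algebra on an invertible degree-two class splits as $\bigoplus_{n \in \mathbb{Z}} \Sigma^{2n} A$ with $\pi_0 \simeq A_{\mathrm{even}}$; in general, power operations (and already the sign in the $k$-invariant of the Picard spectrum of the sphere) obstruct both this computation of $\pi_0$ and the existence of the weight decomposition, so the reduction to $B$ is unavailable and the general statement is not proved. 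Second, the asserted dictionary that, on homotopy, a $\mathbb{G}_m$-equivariant finite cover of $B$ ``is precisely'' a graded finite \'etale $A_{\mathrm{even}}$-algebra is false, and if granted would render the main theorem of this section trivial without noetherian hypotheses: finite covers are not detected by $\pi_0$ (that failure is the point of the section; cf.\ $KO \to KU$, and rationally the extension of \Cref{strangegalois}, whose base change along $A \to A[t_2^{\pm 1}]$ is a graded finite cover that is not \'etale on $\pi_0$). For an imbedding you only need the forward direction, but you lean on the biconditional and defer exactly this point to \cite{BRalg}, which proves something different (an obstruction-theoretic realization theorem, not a graded-descent equivalence).

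Even granting rationality and discarding the false direction, the step you yourself flag as the technical heart is left unargued: transporting the grading of $A'_*$ across the equivalence between \'etale $B$-algebras and \'etale $\pi_0(B)$-algebras is not a matter of bare functoriality, since a weight grading is coherent data (a lift to $\mathrm{Fun}(\mathbb{Z}, \md(A))$, equivalently a comodule structure over $\Sigma^\infty_+ \mathbb{Z}$), and one must check that the \'etale rigidity theorem, being natural in the base, is compatible with all of that structure, and likewise that graded mapping spaces of the realizations are computed by graded $A_{\mathrm{even}}$-algebra homomorphisms; the symmetric monoidal identification $\clg_{A/} \simeq \clg^{\mathrm{gr}}_{B/}$ on which everything rests also needs to be established. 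I believe this can all be carried out for rational $A$, which would give an alternative, obstruction-theory-free proof in the only case the paper actually uses; but as it stands the proposal proves less than the stated theorem and leaves its central steps unestablished.
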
 
Dually, we obtain surjections of profinite groups
\begin{equation} \pi_1 \md(A) \twoheadrightarrow 
\pi_1^{\mathrm{et, gr}}( \spec A_{\mathrm{even}}) \twoheadrightarrow
\pi_1^{\mathrm{et}} (\spec
\pi_0 A),
\end{equation}
refining the surjection \eqref{galsurj}. 
Since our formulation of the (essentially same) result is slightly different 
from that of Baker-Richter, we give the deduction from their statement below. 

\begin{proof} 

Start with a $G$-Galois object $A'_*$ in the category of graded, finite \'etale
$A_{\mathrm{even}}$-algebras (for $G$ a finite group). Then $A'_*$ is a projective
$A_{\mathrm{even}}$-module. Let $\widetilde{A}'_* = A'_*
\otimes_{A_{\mathrm{even}}} \pi_*(A)$ be the graded-commutative algebra
obtained by tensoring up; it acquires a $G$-action. Moreover, it is a
finitely generated, projective $\pi_*(A)$-module, and the map
\[ \widetilde{A}'_* \otimes_{\pi_*(A)} \widetilde{A}'_* \to \prod_G
\widetilde{A}'_*,  \]
given by all the twisted multiplications $a_1 \otimes a_2 \mapsto a_1 g(a_2)$
for $g \in G$, is an isomorphism. By \cite[Theorem 2.1.1]{BRalg}, we can construct an object $A' \in \clg_{A/}$
with a $G$-action such that $A'$ is a $G$-Galois extension of $A$ and realizes
the above map $\pi_*(A) \to \widetilde{A}'_*$ on homotopy groups.

We can now describe the universal property of $A' $ as an
$\e{\infty}$-$A$-algebra. 
In fact, we claim that
for any $\e{\infty}$-$A$-algebra $B$, we have
a natural homotopy equivalence
\begin{equation}\label{univprop}  \hom_{\clg_{A/}}(A', B) = \hom_{A_{\mathrm{even}}}( A'_*,
\pi_{\mathrm{even}}(B)), \end{equation}
so in particular the left-hand-side is discrete. But by Galois descent, this is also 
the $G$-fixed points of the set of maps 
\begin{align*}\hom_{\clg_{A'}}( A' \otimes_A A' , B
\otimes_A A') & \simeq \prod_G \mathrm{Idem}(B \otimes_A A') \\
& = \hom_{A'_*}( A'_* \otimes_{\pi_{\mathrm{even}(A)}} A'_*,
\pi_{\mathrm{even}}(B) \otimes_{\pi_{\mathrm{even}(A)}} A'_*)
,\end{align*}
since $A'$ has homotopy groups which are flat over $\pi_*(A)$ and since $A'_*
\otimes_{\pi_{\mathrm{even}(A)}} A'_* \simeq \prod_G A'_*$. But using the
algebraic form of Galois descent, we get that
$$
\hom_{A'_*}( A'_* \otimes_{\pi_{\mathrm{even}(A)}} A'_*,
\pi_{\mathrm{even}}(B) \otimes_{\pi_{\mathrm{even}(A)}} A'_*)^G = 
 \hom_{A_{\mathrm{even}}}( A'_*,
\pi_{\mathrm{even}}(B))
,$$
so we get \eqref{univprop}
as claimed. 

This imbeds the Galois objects in the category of graded, finite \'etale
$\pi_{\mathrm{even}}(A)$-algebras \emph{fully faithfully} (by \eqref{univprop}) in the category of finite covers of the
$\e{\infty}$-ring. To associate a finite cover to any 
graded, finite \'etale
$\pi_{\mathrm{even}}(A)$-algebra, one now uses Galois descent: the Galois
objects can be used to split any finite \'etale algebra object. Full
faithfulness on these more general covers can now be checked locally, using
descent. 
\end{proof}

With this in mind, we can state and prove our main result. 
\begin{theorem} 
Let $A$ be a noetherian, rational $\e{\infty}$-ring. Then the natural map 
\( \pi_1 \md(A) \to  \pi_1^{\mathrm{et, gr}}( \spec \pi_{\mathrm{even}}(A))  \)
is an isomorphism of profinite group(oid)s. 
\end{theorem}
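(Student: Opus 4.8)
The plan is to reduce the general (non-periodic) case to the periodic case already handled in \Cref{galep}, using the device $A \rightsquigarrow A[t_2^{\pm 1}]$ together with the invariance results for Galois groups under coning off degree $-1$ classes. First I would observe that, by \Cref{grep} applied to $\pi_{\mathrm{even}}(A)[t^{\pm 1}]$ with $|t|=1$, one has $\pi_1^{\mathrm{et, gr}}(\spec \pi_{\mathrm{even}}(A[t_2^{\pm 1}])) \simeq \pi_1^{\mathrm{et}}(\spec \pi_0(A[t_2^{\pm 1}])) = \pi_1^{\mathrm{et}}(\spec \pi_{\mathrm{even}}(A))$. Since $A[t_2^{\pm 1}]$ contains a unit in degree two and is still noetherian rational, \Cref{galep} gives $\pi_1\md(A[t_2^{\pm 1}]) \simeq \pi_1^{\mathrm{et}}(\spec \pi_{\mathrm{even}}(A))$, which therefore equals $\pi_1^{\mathrm{et, gr}}(\spec \pi_{\mathrm{even}}(A[t_2^{\pm 1}]))$. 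So the content is to compare the Galois group of $A$ with that of $A[t_2^{\pm 1}]$, and the graded \'etale fundamental group of $\pi_{\mathrm{even}}(A)$ with that of $\pi_{\mathrm{even}}(A[t_2^{\pm 1}]) = \pi_{\mathrm{even}}(A)[t^{\pm 1}]$.

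Next I would set up the algebraic side. Recall from the Baker--Richter discussion that finite covers of an $\e{\infty}$-ring $B$ that ``come from $\pi_{\mathrm{even}}$'' are classified by $\pi_1^{\mathrm{et, gr}}(\spec \pi_{\mathrm{even}}(B))$, i.e.\ by finite \'etale covers of the stack $(\spec \pi_{\mathrm{even}}(B))/\mathbb{G}_m$. For $B = A[t_2^{\pm 1}]$ this stack is just $\spec \pi_{\mathrm{even}}(A)[t^{\pm 1}]/\mathbb{G}_m \simeq \spec\pi_0(A[t_2^{\pm 1}])$; there is a natural map of stacks $(\spec \pi_{\mathrm{even}}(A)[t^{\pm 1}])/\mathbb{G}_m \to (\spec \pi_{\mathrm{even}}(A))/\mathbb{G}_m$ (adjoining an invertible degree-one element, i.e.\ the $\mathbb{G}_m$-torsor), and I claim this induces an isomorphism on $\pi_1^{\mathrm{et}}$: a $\mathbb{G}_m$-torsor over a stack does not change the \'etale fundamental group, since $\mathbb{G}_m$ is connected with trivial \'etale $\pi_1$ over a $\mathbb{Q}$-scheme and the relevant homotopy fiber sequence of \'etale homotopy types degenerates. (Alternatively, one checks directly that a graded finite \'etale $\pi_{\mathrm{even}}(A)$-algebra is the same as a graded finite \'etale $\pi_{\mathrm{even}}(A)[t^{\pm 1}]$-algebra, via $R'_* \mapsto R'_*[t^{\pm 1}]$, which is an equivalence of categories because inverting $t$ is fully faithful on graded modules and reflects finite \'etaleness by faithfully flat descent along $\pi_{\mathrm{even}}(A) \to \pi_{\mathrm{even}}(A)[t^{\pm 1}]$.) Hence $\pi_1^{\mathrm{et, gr}}(\spec \pi_{\mathrm{even}}(A)) \simeq \pi_1^{\mathrm{et, gr}}(\spec \pi_{\mathrm{even}}(A[t_2^{\pm 1}]))$.

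For the topological side, I would compare $\pi_1\md(A)$ with $\pi_1\md(A[t_2^{\pm 1}])$. Note $A[t_2^{\pm 1}] \simeq A \otimes \mathbb{Q}[t_2^{\pm 1}]$, and $\mathbb{Q}[t_2^{\pm 1}]$ can be built from $A$'s perspective as follows: first adjoin a (non-invertible) degree two polynomial generator, which does not affect Galois theory since $\mathbb{Q}[t_2] = \sym^*(\mathbb{Q}[2])$ is free and the relevant map $\sym^*(\mathbb{Q}[2]) \to \mathbb{Q}$ (or rather the inclusion of the unit) behaves well; then invert $t_2$, which is a localization and is known to induce an isomorphism on Galois groupoids by the invariance results of \cite{galgp}. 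So $A \to A[t_2^{\pm 1}]$ induces an isomorphism $\pi_1\md(A) \xrightarrow{\sim} \pi_1\md(A[t_2^{\pm 1}])$. Stringing the four identifications together,
\begin{equation}
\pi_1\md(A) \simeq \pi_1\md(A[t_2^{\pm 1}]) \simeq \pi_1^{\mathrm{et}}(\spec \pi_{\mathrm{even}}(A)) \simeq \pi_1^{\mathrm{et, gr}}(\spec \pi_{\mathrm{even}}(A[t_2^{\pm 1}])) \simeq \pi_1^{\mathrm{et, gr}}(\spec \pi_{\mathrm{even}}(A)),
\end{equation}
and one checks this composite is the natural map, proving the theorem. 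The main obstacle I expect is making precise that inverting $t_2$ does not change the Galois groupoid and that the naturality squares commute — i.e.\ verifying that the abstract isomorphisms produced above are in fact compatible with the canonical comparison maps \eqref{galsurj} and its graded refinement, rather than merely abstractly isomorphic groups; this requires tracking the Baker--Richter universal property \eqref{univprop} through the localization $A \to A[t_2^{\pm 1}]$.
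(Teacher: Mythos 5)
Your reduction strategy founders on the two comparison steps, which are precisely the hard content of the theorem. The assertion that the $\mathbb{G}_m$-torsor $(\spec \pi_{\mathrm{even}}(A)[t^{\pm 1}])/\mathbb{G}_m \to (\spec \pi_{\mathrm{even}}(A))/\mathbb{G}_m$ does not change $\pi_1^{\mathrm{et}}$ rests on the claim that $\mathbb{G}_m$ has trivial \'etale fundamental group in characteristic zero, which is false: $\pi_1^{\mathrm{et}}(\mathbb{G}_{m,\overline{\mathbb{Q}}}) \simeq \widehat{\mathbb{Z}}$ via the Kummer covers. Concretely, take $A = \mathbb{Q}[x_4^{\pm 1}]$ (noetherian, rational, no unit in degree two), so $\pi_{\mathrm{even}}(A)$ is $\mathbb{Q}[x^{\pm 1}]$ with $|x| = 2$ in the halved grading. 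Then $(\spec \pi_{\mathrm{even}}(A))/\mathbb{G}_m \simeq B\mu_{2,\mathbb{Q}}$, with geometric fundamental group $\mathbb{Z}/2$, whereas $(\spec \pi_{\mathrm{even}}(A)[t^{\pm 1}])/\mathbb{G}_m \simeq \mathbb{G}_{m,\mathbb{Q}} = \spec \mathbb{Q}[(xt^{-2})^{\pm 1}]$, with geometric fundamental group $\widehat{\mathbb{Z}}$. So your step (3) fails, and your ``alternative'' argument fails with it: $R'_* \mapsto R'_*[t^{\pm 1}]$ is fully faithful but not essentially surjective (the cover adjoining a cube root of the degree-zero unit $xt^{-2}$ is graded finite \'etale over $\pi_{\mathrm{even}}(A)[t^{\pm 1}]$ but does not come from a graded finite \'etale $\pi_{\mathrm{even}}(A)$-algebra, since a cube root of $x$ would have to sit in degree $2/3$). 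The topological step (4) fails for the same reason: by \Cref{galep}, $\pi_1 \md(A[t_2^{\pm 1}]) \simeq \pi_1^{\mathrm{et}}(\mathbb{G}_{m,\mathbb{Q}})$, which is strictly larger than $\pi_1 \md(A)$; base change to $A[t_2^{\pm 1}]$ creates genuinely new covers (arbitrary roots of $x_4 t_2^{-2}$). In particular the cited ``invariance under localization'' does not exist in \cite{galgp} and is false in general -- inverting an element is an open restriction and can enlarge or shrink the Galois group (compare the punctured-spectrum examples in \S 8 of this paper).

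The paper's proof avoids any claim that $A \to A[t_2^{\pm 1}]$ preserves fundamental groups. Instead it works with the two \emph{categories} -- graded finite \'etale $\pi_{\mathrm{even}}(A)$-algebras and finite covers of $A$ -- between which the Baker--Richter functor is already known to be fully faithful, and proves essential surjectivity by descent along $\mathbb{Q} \to \mathbb{Q}[t_2^{\pm 1}]$: both categories satisfy descent for the cobar construction on $A \to A[t_2^{\pm 1}]$, every term of that cosimplicial diagram is an $\e{\infty}$-ring with a unit in degree two, so \Cref{galep} gives an equivalence termwise, and passing to the limit of descent data yields the equivalence for $A$. The descent data is exactly what cuts the larger group $\pi_1^{\mathrm{et}}(\spec \pi_0 A[t_2^{\pm 1}])$ back down to the graded fundamental group of $\pi_{\mathrm{even}}(A)$; your chain of four isomorphisms tries to dispense with it, and the middle terms are genuinely not isomorphic to the two ends. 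If you want to salvage your outline, replace steps (3) and (4) by the statement that both functors of $A$ (graded finite \'etale algebras and finite covers) send the limit $A \simeq \mathrm{Tot}\bigl(A[t_2^{\pm 1}]^{\otimes_A \bullet + 1}\bigr)$ to a limit of categories, and compare the two cosimplicial categories termwise.
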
 
\begin{proof} 
We have already proved this result if $A$ has a 
unit in degree two, thanks to \Cref{galep} (see also \Cref{grep}). 
We want to claim that for any $A$ satisfying the conditions of this result, the
functor from graded, finite \'etale $\pi_{\mathrm{even}}(A)$-algebras to finite
covers of the $\e{\infty}$-ring $A$ is an equivalence of categories
$\mathcal{C}_1 \simeq \mathcal{C}_2$. We already know that the functor
$\mathcal{C}_1 \to \mathcal{C}_2$ is fully faithful. 

Both
categories depend functorially on $A$ and have a good theory of descent via the
base-change of $\e{\infty}$-rings $\mathbb{Q} \to \mathbb{Q}[t_2^{\pm 1}]$,
where $\mathbb{Q}[t_2^{\pm 1}]$ is the free rational $\e{\infty}$-ring on an
invertible degree two class. By descent theory, we can thus reduce to the case
of a $\mathbb{Q}[t_2^{\pm 1}]$-algebra, for which we have proved
the result in \Cref{galep}. 
\end{proof}

\section{The Picard group}

\subsection{Generalities}
Another natural invariant that one might attempt to study using the theory of
residue fields is the {Picard group}. Recall that the \emph{Picard
group} of an $\e{\infty}$-ring $A$, 
denoted $\mathrm{Pic}(A)$, is the group of isomorphism classes of
$\otimes$-invertible $A$-modules. 
In fact, these techniques were
originally introduced in \cite{HMS} in the study of the $K(n)$-local Picard
group. 
It is known that if $A$ is an $\e{\infty}$-ring which is even periodic and
whose $\pi_0 $ is regular, then the Picard group is algebraic (\cite{BR}). 
 
In this paper, we have given global descriptions (in terms of algebra) of both
the Galois theory and the thick subcategories 
of $\md(A)$, for $A$ a noetherian rational $\e{\infty}$-ring.
We do not know if it is possible to describe the Picard group of rational
$\e{\infty}$-rings in a similar global manner. However, 
the following example
shows that any answer will be necessarily more complicated. 

\begin{example} 
\label{badpic}
Consider the $\e{\infty}$-ring $A = \sym^* \mathbb{Q}[-1] \otimes
\mathbb{Q}[\epsilon]/\epsilon^2$, which is obtained from the ring of ``dual
numbers'' by freely adding a generator in degree $-1$. 
By the results of \Cref{sec:comparewithloc}, we have
\[ \md(A) \subset \loc_{S^1}(\md( \mathbb{Q}[\epsilon]/\epsilon^2)),  \]
that is, to give an $A$-module is equivalent to giving a
$\mathbb{Q}[\epsilon]/\epsilon^2$-module together with an automorphism whose action on
homotopy groups is ind-unipotent.

For example, we might consider the
$\mathbb{Q}[\epsilon]/\epsilon^2$-module 
$\mathbb{Q}[\epsilon]/\epsilon^2$ and equip it with the automorphism given by
$1 + r \epsilon$, for any $r \in \mathbb{Q}$. 
For any $r \in \mathbb{Q}$, this defines an $A$-module $M_r$. 
Since the underlying 
$\mathbb{Q}[\epsilon]/\epsilon^2$-module is invertible, it follows that $M_r
\in \mathrm{Pic}( \md(A))$. 
Moreover, $M_r \otimes M_{r'} \simeq M_{r+ r'}$ (by composing automorphisms). 
This shows that there is a copy
of $\mathbb{Q}$ inside the Picard group of $\md(A)$ that one does not see from
the homotopy groups of $A$. 
In fact, one sees easily that $\mathrm{Pic}(A) \simeq \mathbb{Z} \oplus
\mathbb{Q}$.
\end{example} 

Nonetheless, we will be able to obtain a weak partial result about the
Picard groups of noetherian rational $\e{\infty}$-rings in comparison to the
algebraic analog. We will first need some algebraic preliminaries. 
\begin{definition}
Given a graded-commutative ring $R_*$, 
the category of graded $R_*$-modules has a symmetric monoidal structure (the
graded tensor product). 
We let $\mathrm{Pic}(R_*)$ denote the Picard group
of the category of graded $R_*$-modules, i.e., the group of
isomorphism classes of invertible graded
$R_*$-modules.
\end{definition}

Note that any invertible graded $R_*$-module must be flat, since tensoring
with it is an autoequivalence (hence exact).

\begin{proposition} 
\label{invertible2periodic}
Suppose $R_*$ is a graded-commutative ring such that $R_2$ contains a unit and
$R_0$ is a noetherian ring, and $R_1$ is a finitely generated
$R_0$-module. Suppose $R_0$ has no nontrivial idempotents.
Then $\mathrm{Pic}(R_*) = \mathbb{Z}/2 \oplus \mathrm{Pic}(R_0)$, where the
$\mathbb{Z}/2$ comes from the shift of $R_*$.
\end{proposition}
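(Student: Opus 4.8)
The plan is to reduce everything to the category of invertible graded $R_*$-modules and to exploit the $2$-periodicity coming from the unit $u\in R_2$. First I would record three preliminary observations. Since $u$ is a unit of even degree, multiplication by $u$ identifies $R_n$ with $R_{n+2}$, so a graded $R_*$-module is determined by its parts in degrees $0$ and $1$. Next, for $a\in R_1$ graded-commutativity gives $a^2=-a^2$, so (using that $2$ is invertible in $R_0$, as holds in all our applications, e.g.\ when $R_*=\pi_*(A)$ is rational) $a^2=0$; a short computation then shows that the image of the multiplication $R_1\otimes_{R_0}R_1\to R_2\cong R_0\cdot u$ consists of nilpotent elements, whence every homogeneous prime of $R_*$ contains $R_{\mathrm{odd}}$ and the map $\mathrm{GrSpec}(R_*)\to\spec(R_0)$, $\mathfrak q_*\mapsto\mathfrak q_*\cap R_0$, is a homeomorphism. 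In particular $\mathrm{GrSpec}(R_*)$ is connected, since $R_0$ has no nontrivial idempotents. Finally, the assignment $(\epsilon,[L])\mapsto[R_*[\epsilon]\otimes_{R_0}L]$ defines a homomorphism $\Phi\colon\mathbb Z/2\oplus\mathrm{Pic}(R_0)\to\mathrm{Pic}(R_*)$ (here $R_*[1]$ has order two because $R_*[2]\simeq R_*$ via $u$), and the claim is that $\Phi$ is an isomorphism.

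For injectivity, suppose $R_*[\epsilon]\otimes_{R_0}L\cong R_*$ as graded $R_*$-modules via some $\phi$. If $R_0=0$ everything is trivial, so assume $R_0\neq 0$. Comparing degree-zero parts: if $\epsilon=0$ this reads $L\cong R_0$, so $[L]=0$ and we are done. If $\epsilon=1$, comparing the parts in degrees $0$ and $1$ forces $R_1\cong L$ as $R_0$-modules, so $R_1$ is invertible; but an alternating pairing on a rank-one module vanishes, so $R_1\cdot R_1=0$ in $R_*$. Then $R_*$-linearity of $\phi$ forces its degree-one component to be given by the multiplication $R_1\cdot R_1=0$ acting on the degree-zero part, hence $\phi_1=0$, contradicting $R_1\cong L\neq 0$. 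So $\epsilon=0$ and $[L]=0$.

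For surjectivity --- the main content --- let $M_*$ be an invertible graded $R_*$-module; it is finitely generated and projective. For each homogeneous prime $\mathfrak q_*$ the localization $(R_*)_{(\mathfrak q_*)}$ (inverting homogeneous elements outside $\mathfrak q_*$) is graded-local with graded residue field a genuine graded field $k(\mathfrak p)[u^{\pm1}]$, $\mathfrak p=\mathfrak q_*\cap R_0$; so the reduction of $(M_*)_{(\mathfrak q_*)}$ to the residue field is a shift of it, and graded Nakayama gives $(M_*)_{(\mathfrak q_*)}\cong(R_*)_{(\mathfrak q_*)}[\epsilon_{\mathfrak q_*}]$ for a unique $\epsilon_{\mathfrak q_*}\in\mathbb Z/2$. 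A standard argument shows $\mathfrak q_*\mapsto\epsilon_{\mathfrak q_*}$ is locally constant on $\mathrm{GrSpec}(R_*)$, hence constant $=\epsilon$ by connectedness; after replacing $M_*$ by $M_*\otimes_{R_*}R_*[\epsilon]$ I may assume $(M_*)_{(\mathfrak q_*)}\cong(R_*)_{(\mathfrak q_*)}$ for all $\mathfrak q_*$. Then the degree-zero part $M_0$ is a finitely generated $R_0$-module (using that $R_0$ is noetherian and $R_1$ is finitely generated) with $(M_0)_{\mathfrak p}\cong(R_0)_{\mathfrak p}$ for all $\mathfrak p$, hence invertible; and the multiplication map $R_*\otimes_{R_0}M_0\to M_*$ is a morphism of graded $R_*$-modules which, localized at any homogeneous prime, carries a generator of a free rank-one module to a generator, hence is an isomorphism there, hence an isomorphism. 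Thus $M_*\cong R_*\otimes_{R_0}M_0$ with $[M_0]\in\mathrm{Pic}(R_0)$, so $\Phi$ is surjective.

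The step I expect to be the main obstacle is the surjectivity argument, and within it the identification $\mathrm{GrSpec}(R_*)\cong\spec(R_0)$ --- which rests on the nilpotence of $R_1\cdot R_1$ and fails badly without inverting $2$ (e.g.\ $R_*=\mathbb F_2[a^{\pm1}]$ with $|a|=1$ would otherwise violate the statement) --- together with the passage from the graded-local computation to the global statement via the locally constant shift and the local triviality $M_*\cong R_*\otimes_{R_0}M_0$. The bookkeeping in the $\epsilon=1$ case of injectivity is also slightly delicate, but routine once one uses that invertible modules are flat.
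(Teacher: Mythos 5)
Your proof is correct and follows essentially the same route as the paper's: local triviality of an invertible graded module up to a shift, obtained by Nakayama against the $2$-periodic graded residue fields $k(\mathfrak{p})[u^{\pm 1}]$ (the paper's $k[t_2^{\pm 1}]$ with nilpotent kernel), followed by gluing over $\spec R_0$ using connectedness --- you simply spell out the local-to-global step and the injectivity of $\mathrm{Pic}(R_0)\to\mathrm{Pic}(R_*)$ that the paper compresses into ``it suffices to assume $R_0$ local.'' Your two caveats are harmless: the $2$-invertibility you flag is likewise implicit in the paper's claim that $R_*\to k[t_2^{\pm 1}]$ has nilpotent kernel (there $R_*=\pi_*$ of a rational ring, so it holds), and in the $\epsilon=1$ injectivity step it is an even-degree component of the putative isomorphism (not the degree-one component) that $R_*$-linearity forces to factor through $R_1\cdot R_1=0$ while having to be an isomorphism onto a nonzero module --- a one-line bookkeeping fix.
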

\begin{proof}
It suffices to show that if $R_0$ is local, then 
$\mathrm{Pic}(R_*)  = \mathbb{Z}/2$.
We prove this first if $R_0$ is a field $k$, so there is a map $R_* \to
k[t_2^{\pm 1}]$ of graded rings with nilpotent kernel.
Let $M_*$ be an invertible
$R_*$-module. Then $M \otimes_{R_*} k[t_2^{\pm 1}]$ is either $k[t_2^{\pm 1}]$
or its shift since $\mathrm{Pic}(k[t_2^{\pm 1}]) \simeq \mathbb{Z}/2$; assume the former without loss of generality.
Choose a homogeneous $\overline{x} \in (M_* \otimes_{R_*} k[t_2^{\pm 1}])_0$ which is a generator
and lift it to a homogeneous  element $x \in M_0$. 
We then obtain a map $R_* \to M_*$ which induces an isomorphism after
tensoring with $k[t_2^{\pm 1}]$. By Nakayama's lemma, one sees that it
is surjective. Let $K_*$ be the kernel. Then the short exact sequence
\[ 0 \to K_* \to R_* \to M_* \to 0  \]
has the property that
\[  0 \to K_ * \otimes_{R_*}  k[t_2^{\pm 1} ] 
\to k[t_2^{\pm 1}] \to k[t_2^{\pm 1}] \to 0
\]
is still exact, by flatness of $M_*$, and it shows that $K_* = 0$ by
Nakayama's lemma. 

If $R_0$ is not a field $k$, then we can consider the maximal ideal
$\mathfrak{m} \subset R_0$ and consider the map $R_* \to R_*/R_*
\mathfrak{m}$ to reduce to this case. Using a similar argument with Nakayama's lemma, and the fact
that the Picard group of $R_*/R_* \mathfrak{m}$ is $\mathbb{Z}/2$, we find
that the Picard group of $R_*$ is $\mathbb{Z}/2$ as well. 
\end{proof}

Recall that if $A$ is an $\e{\infty}$-ring, one has the following basic
construction. 
\begin{cons}[\cite{BR}]
\label{algpiccons}
There is an inclusion $\mathrm{Pic}(\pi_*(A)) \to \mathrm{Pic}( A)$ which
sends an invertible graded $\pi_*(A)$-module $M_*$ to an invertible $A$-module
$M$
(which is uniquely determined) with $\pi_*(M) \simeq M_*$.
The image consists of those invertible $A$-modules $M$ such that $\pi_*(M)$ is a
\emph{flat} $\pi_*(A)$-module.
\end{cons}

Elements in the image of the map  $\mathrm{Pic}(\pi_*(A)) \to \mathrm{Pic}(A)$ are said to be \emph{algebraic;} if every element is algebraic, then the
Picard group itself is said to be algebraic. 
There are many cases in which the Picard group of an $\e{\infty}$-ring can be
shown to be algebraic. For instance, if $A$ is even periodic with regular
noetherian $\pi_0$, or if $A$ is connective, then it is known \cite{BR} that
the Picard group of $A$ is algebraic (see also \cite[\S 2.4.6]{pictmf}). 
\Cref{badpic} shows that the Picard group of a rational noetherian
$\e{\infty}$-ring need not be algebraic. 
Our main result (\Cref{rationalpic}), however, implies that any \emph{torsion}
in the Picard group is necessarily algebraic.

To prove this result, we will need to use some techniques from \cite{pictmf}
which we review briefly here. Recall (\cite[Def. 2.2.1]{pictmf}) that the
Picard group  $\mathrm{Pic}(A)$ is the group of connected components of a
connective spectrum $\pics(A)$ called the \emph{Picard spectrum} of $A$.
The infinite loop space $\Omega^\infty \pics(A)$ is associated to the
symmetric monoidal $\infty$-groupoid of invertible $A$-modules (under tensor
product). 
The use of $\pics(A)$ (as opposed to simply $\mathrm{Pic}(A)$) is critical when one wishes to appeal to
descent-theoretic techniques.

We now outline a basic descent-theoretic technique in the study of Picard
groups of ring spectra. 
\begin{cons}[{Compare \cite[\S 3]{pictmf}}]
Let $A \to B$ be a morphism of $\e{\infty}$-rings which admits descent. In
this case, we can obtain an expression for the $\infty$-category $\md(A)$ as
the totalization
\[ \md(A) \simeq \mathrm{Tot} \left( \md(B) \rightrightarrows \md(B
\otimes_A B) \triplearrows \dots \right),  \]
by descent theory \cite[\S 3]{galgp}. 
As a result, one obtains an expression for the spectrum $\pics(A)$,
\begin{equation} \label{pictot} \pics(A) = \tau_{\geq 0} \mathrm{Tot}\left(
\pics( B^{(\otimes \bullet + 1)})
\right) , \end{equation}
and a resulting homotopy spectral sequence
\begin{equation} \label{picBKSS}  E_2^{s,t} = H^s\left( \pi_t   \pics(
B^{(\otimes \bullet + 1)})
\right)\implies \pi_{t-s} \pics(A), \quad t-s \geq 0
. \end{equation}
\end{cons}

We recall, moreover, that for any $\e{\infty}$-ring
$A$, we have natural isomorphisms $\pi_1( \pics(A)) \simeq (\pi_0
A)^{\times}$  and $\pi_t( \pics(A)) \simeq \pi_{t-1}(
A)$ for $t \geq 2$. In particular, the descent spectral sequence
\eqref{picBKSS}, for $t \geq 2$, has the same $E_2$-page as the usual Adams
spectral sequence. If $A \to B$ admits descent, it follows from
\cite[Comparison Tool 5.2.4]{pictmf} together with the analogous result for
the $A \to B$ Adams spectral sequence \cite[Cor. 4.4]{galgp} that
\eqref{picBKSS} degenerates (for $t -s \geq 0$) after a finite stage with a
horizontal vanishing line. 

\begin{remark}
We refer to \cite{pictmf} for
several computational examples and applications of this spectral sequence. 
In this paper, we will only use the existence of this spectral sequence and
its degeneration at a finite stage with a horizontal vanishing line. 
\end{remark}

\subsection{The main result}
\begin{theorem} 
\label{rationalpic}
Let $A$ be a rational, noetherian $\e{\infty}$-ring. 
Then the cokernel of the map $\mathrm{Pic}(\pi_*(A)) \to \mathrm{Pic}(A)$  is
torsion-free.
\end{theorem}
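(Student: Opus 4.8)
The plan is to reduce to an even-periodic, then complete-local, then Artinian situation, and finally to run the descent spectral sequence for the Picard spectrum, using crucially that everything is rational. Throughout, recall (Construction~\ref{algpiccons}) that an invertible $A$-module $M$ is algebraic exactly when $\pi_*(M)$ is flat over $\pi_*(A)$; since $M$ is dualizable, hence perfect, $\pi_*(M)$ is a finitely generated module over the graded noetherian ring $\pi_*(A)$, so this is equivalent to $\pi_*(M)$ being an invertible graded $\pi_*(A)$-module. Thus the claim to prove is: if $M^{\otimes n}$ is algebraic for some $n \geq 1$, then $M$ is algebraic.

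\emph{Step 1 (degree-two unit).} The map $A \to A[t_2^{\pm 1}]$ is faithfully flat ($\mathbb{Q} \to \mathbb{Q}[t_2^{\pm 1}]$ is conservative and flat on homotopy), and $A[t_2^{\pm 1}]$ is again rational and noetherian, now with a unit in $\pi_2$. Since $\pi_*(A) \to \pi_*(A)[t^{\pm 1}]$ is faithfully flat, $\pi_*(M)$ is flat over $\pi_*(A)$ iff $\pi_*(M \otimes_A A[t_2^{\pm 1}])$ is flat over $\pi_*(A[t_2^{\pm 1}])$; applying this to $M$ and to $M^{\otimes n}$ reduces us to the case $\pi_2(A)$ contains a unit. \emph{Step 2 (complete local).} Flatness of a finitely generated module over a noetherian ring is local on $\spec \pi_0(A)$, and $(-) \otimes_A A_{\mathfrak p}$ commutes with $(-)^{\otimes n}$, so we may assume $\pi_0(A)$ is local; then, using faithful flatness of $\pi_*(A) \to \pi_*(\widehat{A})$ as in Step 1, we may assume $\pi_0(A) = R$ is complete local with residue field $k$. \emph{Step 3 (Artinian).} Write $\mathfrak{m} = (x_1, \dots, x_r)$ and $A_m = A/\!/(x_1^m, \dots, x_r^m)$, so $A \simeq \varprojlim_m A_m$ and, by \Cref{radicial2}, each $A_m$ is rational, noetherian, even periodic, with $\pi_0(A_m)$ local Artinian with residue field $k$. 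If the theorem holds for every $A_m$: given $M$ with $M^{\otimes n}$ algebraic, each $M \otimes_A A_m$ is invertible with $(M \otimes_A A_m)^{\otimes n}$ algebraic over $A_m$ (base change of a flat module, using the Künneth property of flat modules), hence $\pi_*(M \otimes_A A_m)$ is free of rank one; twisting $M$ by the algebraic class $\Sigma A$ if necessary, these may be taken evenly concentrated, compatibly in $m$. The Milnor sequence for $M \simeq \varprojlim_m (M \otimes_A A_m)$ then has vanishing $\varprojlim^1$ (the even homotopy is a Mittag--Leffler tower of surjections of rank-one free modules) and yields $\pi_{\mathrm{odd}}(M) = 0$ and $\pi_0(M) = \varprojlim_m \pi_0(M \otimes_A A_m)$ free of rank one over $R = \varprojlim_m R/(x_1^m,\dots,x_r^m)$; two-periodicity then gives $\pi_*(M) \cong \pi_*(A)$, so $M$ is algebraic. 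We are thus reduced to: $A$ rational, noetherian, even periodic, with $\pi_0(A)$ local Artinian, $M$ invertible with $M^{\otimes n} \simeq A$; show $M \simeq A$ or $\Sigma A$.

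\emph{Step 4 (the spectral sequence).} Let $\kappa = \kappa(\mathfrak{m})$ be the residue field of $A$ built in the proof of \Cref{rsfldforart}. By that construction $A \to \kappa$ is a sequential colimit of maps that each attach $1$-cells along nilpotent degree-zero classes (\Cref{quotdesc}) or a cell along a degree $-1$ class (\Cref{quotdegminusone}); since admitting descent is closed under filtered colimits, $A \to \kappa$ admits descent, and $\pi_*(\kappa) = k[t^{\pm 1}]$ is a graded field. Feed this into the Picard descent spectral sequence \eqref{picBKSS}, $E_2^{s,t} = H^s\bigl(\pi_t\, \pics(\kappa^{\otimes_A \bullet + 1})\bigr) \Rightarrow \pi_{t-s}\pics(A)$, which by \cite{pictmf} together with \cite[Cor.~4.4]{galgp} degenerates after a finite stage with a horizontal vanishing line. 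This gives a finite filtration $F^\bullet$ on $\mathrm{Pic}(A) = \pi_0 \pics(A)$ with $F^s/F^{s+1}$ a subquotient of $E_2^{s,s}$. Here $\mathrm{Pic}(\pi_*A) = \mathbb{Z}/2$ by \Cref{invertible2periodic}, and the algebraic shift $[\Sigma A]$ maps to the generator of $\mathrm{Pic}(\kappa) \cong \mathbb{Z}/2 \supseteq E_\infty^{0,0} = \mathrm{Pic}(A)/F^1$; hence $\{[A],[\Sigma A]\} \to \mathrm{Pic}(A)/F^1$ is an isomorphism and $\mathrm{coker}\bigl(\mathrm{Pic}(\pi_*A) \to \mathrm{Pic}(A)\bigr) \cong F^1$. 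It remains to see $F^1$ is torsion-free. For $s \geq 2$, $E_2^{s,s} = H^s(\pi_{s-1}(\kappa^{\otimes_A \bullet + 1}))$ is the cohomology of a cosimplicial $\mathbb{Q}$-vector space, hence a $\mathbb{Q}$-vector space, so $F^s/F^{s+1}$ is torsion-free.

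The one delicate term, and the step I expect to be the main obstacle, is $s = 1$: $E_2^{1,1} = H^1\bigl((\pi_0\kappa^{\otimes_A \bullet + 1})^\times\bigr)$, whose coefficient groups are units of the (reduced) $\mathbb{Q}$-algebras occurring in the cobar construction and \emph{a priori} contain roots of unity. The mechanism I would use: the torsion subgroup of this cosimplicial unit group is the group of roots of unity, and since each $\pi_0(\kappa^{\otimes_A \bullet + 1})$ is a reduced $\mathbb{Q}$-algebra whose components have residue fields purely transcendental over $k$ (an analysis of the cobar terms via the $\mathbb{Q}\otimes_{\sym^* \mathbb{Q}[-1]}\mathbb{Q} \simeq \mathbb{Q}[u^{\pm 1}]$-type identifications from \Cref{keylem}), this torsion subgroup is the \emph{constant} cosimplicial group $\mu(k)$. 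A constant cosimplicial abelian group has vanishing higher cohomology, so $E_2^{1,1}$ is the $H^1$ of the torsion-free quotient; and that quotient is assembled from the unipotent parts $1 + (\text{nilpotents})$, which are $\mathbb{Q}$-vector spaces via $\log$, together with the ``divisible'' pieces introduced by inverting the polynomial generators of the cobar terms --- granting that this contributes no torsion to $H^1$, $E_2^{1,1}$ is torsion-free, hence so is its subquotient $F^1/F^2$. Since $F^1$ is then a finite iterated extension of torsion-free abelian groups, it is torsion-free, and the theorem follows. I expect the proof of \Cref{rationalpic} to spend most of its effort making this last paragraph precise.
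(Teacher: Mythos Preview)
Your reductions in Steps 1--3 are essentially the paper's, and the overall architecture is right. The real gap is in Step~4.

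\textbf{The descent claim is unjustified.} You assert that $A \to \kappa$ admits descent because ``admitting descent is closed under filtered colimits.'' That statement is not in the paper (only \emph{detecting nilpotence} is preserved by filtered colimits, \Cref{easynilp}), and it is false in the relevant generality. Concretely, after you have killed the maximal ideal you are at an $\e{\infty}$-ring $A_\infty$ with $\pi_0 = k$ and $\pi_{-1}$ countably infinite-dimensional; the Amitsur complex for $A_\infty \to k[t_2^{\pm 1}]$ involves $\mathrm{Tor}$ over a ring with infinitely many odd generators, and there is no reason for a horizontal vanishing line. So you cannot simply run \eqref{picBKSS} for $A \to \kappa$.

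\textbf{The $E_2^{1,1}$ analysis is not carried out.} Even if descent held, computing $\pi_0(\kappa^{\otimes_A \bullet+1})^\times$ directly is hard: you would need to control the idempotents, nilpotents, and residue fields of each cobar term, and your sketch (``purely transcendental over $k$'', ``constant $\mu(k)$'') is not substantiated.

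The paper avoids both problems by never descending along $A \to \kappa$ in one step. Instead it proves that each \emph{individual} cell attachment in the construction of $\kappa$ induces an injection on $\mathrm{Pic}(-)_{\mathrm{tors}}$, using descent only along that single step, where the cobar terms are explicit: for $A \to A//y$ with $|y|=-1$ one has $(A//y)^{\otimes_A(n+1)} \simeq (A//y)[u_1,\dots,u_n]$ so $\pi_0$ is cosimplicially constant and $H^1$ of units vanishes (\Cref{quotdegminusonepic}); for $A \to A//x$ with $x$ nilpotent one uses \Cref{radicial2} to see all cobar $\pi_0$'s are local artinian with residue field $k$, so units split as $k^\times \oplus (\text{unipotent $\mathbb{Q}$-vector space})$ and again the kernel on $\mathrm{Pic}_{\mathrm{tors}}$ is trivial (\Cref{modnilpotentinjpic}). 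One then passes to the colimit using that $\mathrm{Pic}$ commutes with filtered colimits, obtaining $\mathrm{Pic}(A)_{\mathrm{tors}} \hookrightarrow \mathrm{Pic}(k[t_2^{\pm1}])_{\mathrm{tors}} = \mathbb{Z}/2$ (\Cref{countablepic}, \Cref{torsionart2periodicpic}). The limit over the tower $\{A_m\}$ is handled via the Milnor sequence on $\varprojlim_m \pics(A_m)$, as you do.

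In short: replace your single big descent spectral sequence by a sequence of small ones along each elementary cell attachment, and use compatibility of $\mathrm{Pic}$ with filtered colimits to chain them together.
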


The main goal of this subsection 
is to prove \Cref{rationalpic}, which while not entirely satisfying still
provides significant information. In other settings, most of the interesting
information in such Picard groups is precisely the torsion. We will prove this
in several steps, following the construction of residue fields in
\Cref{residueflds}.

\begin{lemma} \label{quotdegminusonepic}
Let $A$ be a rational $\e{\infty}$-ring and let $y \in \pi_{-1}(A)$. 
Then the kernel of $\mathrm{Pic}(A) \to
\mathrm{Pic}(A//y)$ is a $\mathbb{Q}$-vector space.
\end{lemma}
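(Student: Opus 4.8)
The plan is to understand $A//y = A \otimes_{\sym^* \mathbb{Q}[-1]} \mathbb{Q}$ via the descent-theoretic machinery reviewed above, using the fact (\Cref{quotdegminusone}) that $A \to A//y$ admits descent. First I would recall the equivalence $\sym^* \mathbb{Q}[-1] \simeq C^*(S^1;\mathbb{Q})$ from \Cref{deg-1coincidence}, so that $A//y$ is the base change of $A$ along $C^*(S^1;\mathbb{Q}) \to \mathbb{Q}$. The cobar construction on $A \to A//y$ then identifies $A$ with the totalization of the cosimplicial $\e{\infty}$-ring $[n] \mapsto (A//y)^{\otimes_A (n+1)}$, and — exactly as in the proof of \Cref{1surj} — the iterated tensor powers are obtained from $A//y$ by freely adjoining degree $-1$ generators, i.e. $(A//y)^{\otimes_A(n+1)} \simeq (A//y) \otimes \sym^*(\mathbb{Q}[-1]^{\oplus n})$, whose homotopy is $\pi_*(A//y) \otimes \Lambda(e_1,\dots,e_n)$ with $|e_i| = -1$.

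Next I would apply the Picard spectral sequence \eqref{picBKSS} for the descendable map $A \to A//y$:
\[ E_2^{s,t} = H^s\left( \pi_t \pics\big( (A//y)^{(\otimes_A \bullet + 1)} \big) \right) \implies \pi_{t-s}\pics(A), \qquad t - s \geq 0, \]
which degenerates after a finite stage with a horizontal vanishing line. The kernel of $\mathrm{Pic}(A) \to \mathrm{Pic}(A//y) = \pi_0\pics(A//y)$ receives a filtration whose associated graded subquotients are the groups $E_\infty^{s,s}$ for $s \geq 1$, which are subquotients of $E_2^{s,s} = H^s$ of the cosimplicial abelian group $[n] \mapsto \pi_s \pics\big((A//y)^{(\otimes_A n+1)}\big)$. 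For $s \geq 2$ this is $\pi_{s-1}$ of the cosimplicial $\e{\infty}$-ring, and since passing from $A//y$ to its tensor powers only adjoins exterior generators in degree $-1$, each such cohomology group is a module over $\mathbb{Q}$ — indeed the whole cosimplicial object is one of $\mathbb{Q}$-module spectra — hence a $\mathbb{Q}$-vector space. For $s = 1$, $E_2^{1,1} = H^1$ of $[n] \mapsto \pi_1\pics((A//y)^{(\otimes_A n+1)}) = (\pi_0 (A//y)^{(\otimes_A n+1)})^\times$; since $\pi_0$ of these tensor powers agrees with $\pi_0(A//y)$ (the exterior generators sit in degree $-1$, contributing nothing to $\pi_0$), the relevant $H^1$ is the \v{C}ech $H^1$ of the constant cosimplicial group $(\pi_0(A//y))^\times$, which vanishes. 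Therefore the kernel is built from $\mathbb{Q}$-vector spaces in finitely many filtration steps, so it is a $\mathbb{Q}$-vector space.

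The main obstacle I expect is bookkeeping the edge of the spectral sequence correctly: one must check that the only contributions to $\ker(\mathrm{Pic}(A) \to \mathrm{Pic}(A//y))$ come from $E_\infty^{s,s}$ with $s \geq 1$ (the $E_\infty^{0,0}$ term maps isomorphically to a subgroup of $\mathrm{Pic}(A//y)$, detecting the image), and that an extension of $\mathbb{Q}$-vector spaces — in the category of abelian groups — is again a $\mathbb{Q}$-vector space, which holds since being uniquely divisible is closed under extensions. A secondary point to verify carefully is the claim that tensoring $A//y$ with $\sym^*(\mathbb{Q}[-1]^{\oplus n})$ over $\mathbb{Q}$ leaves $\pi_0$ unchanged and, more importantly, that on homotopy the relevant normalized cochain complexes genuinely consist of $\mathbb{Q}$-vector spaces in the range $t \geq 2$ — this is where the characteristic zero hypothesis, via \Cref{deg-1coincidence}, is essential, as the analogous statement fails integrally or mod $p$.
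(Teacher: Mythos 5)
Your overall strategy is the same as the paper's: use that $A \to A//y$ admits descent (\Cref{quotdegminusone}), run the descent spectral sequence \eqref{picBKSS} for the Picard spectrum, observe that the rows with $t \geq 2$ are $\mathbb{Q}$-vector spaces because everything in sight is rational, and then control the filtration-one contribution by computing $\pi_0$ of the cobar tensor powers; the closing bookkeeping (only $E_\infty^{s,s}$ with $s \geq 1$ contributes to the kernel, unique divisibility is closed under extensions, horizontal vanishing line gives finitely many steps) is also how the paper concludes. However, there is a concrete error in the key computation. The iterated tensor powers are \emph{not} obtained from $A//y$ by adjoining free generators in degree $-1$: killing a degree $-1$ class and then tensoring again adjoins free generators in degree $0$. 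Indeed $\mathbb{Q} \otimes_{\sym^*\mathbb{Q}[-1]} \mathbb{Q} \simeq \mathbb{Q}[t]$ with $|t| = 0$ (the bar construction shifts the degree $-1$ class up by one), so $(A//y) \otimes_A (A//y) \simeq (A//y)[t]$ and more generally $\pi_0\big((A//y)^{\otimes_A k}\big) \simeq \pi_0(A//y)[u_1, \dots, u_{k-1}]$; this is exactly the identification used in the proof of \Cref{1surj}, which you cite but misquote. Consequently your assertion that ``$\pi_0$ of these tensor powers agrees with $\pi_0(A//y)$'' is false in general (even on your own identification it would fail, since products of degree $-1$ generators with positive-degree homotopy would land in degree $0$), and so the claim that $E_2^{1,1}$ is the cohomology of a \emph{constant} cosimplicial group, hence zero, is not justified as stated.

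The repair is short and keeps your structure. With the correct identification, the cosimplicial abelian group $[n] \mapsto \big(\pi_0(A//y)[u_1,\dots,u_n]\big)^{\times}$ splits as the constant part $\pi_0(A//y)^{\times}$ (which has no higher cohomology) together with the unipotent units $1 + u \cdot \mathrm{Nil}$, and the latter form cosimplicial $\mathbb{Q}$-vector spaces since $\pi_0(A//y)$ is a $\mathbb{Q}$-algebra (use $\log$ and $\exp$). Hence $E_2^{1,1}$ is a $\mathbb{Q}$-vector space --- possibly nonzero, but that is all the lemma requires, since the kernel is then a finite iterated extension of $\mathbb{Q}$-vector spaces. (The paper simply records the identification $\pi_0\big((A//y)^{\otimes k}\big) \simeq \pi_0(A//y)[u_1,\dots,u_{k-1}]$ and concludes that the filtration-one term gives no obstruction.) With this correction your proof goes through and coincides with the paper's.
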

\begin{proof} 
In fact, $A \to A//y$ admits descent (\Cref{quotdegminusone}), so we use the homotopy spectral sequence
associated to the expression 
\eqref{pictot}. 
We observe that, for $i \geq 2$, the cosimplicial abelian group $\pi_i ( \pics( B^{\otimes \bullet + 1}))$
consists of rational vector spaces. Therefore, to prove the lemma, it suffices to
show that there is no contribution in filtration one. 
However, we note that $\pi_0 ( (A//y)^{\otimes k} ) \simeq \pi_0(A//y)[u_1,
\dots, u_{k-1}]$. As a result, the cosimplicial abelian group $\pi_0 (
(A//y)^{\otimes \bullet + 1})^{\times}$ is actually constant and has no nontrivial
cohomology. This proves the claim. 
\end{proof} 

\begin{lemma} 
\label{countablepic}
Let $A$ be a rational $\e{\infty}$-ring such that: 
\begin{enumerate}
\item $\pi_2(A)$ contains a unit. 
\item $\pi_0(A)$ is a field $k$.
\item $\pi_{-1}(A)$ is countably dimensional vector space over $k$.
\end{enumerate}
Then the torsion subgroup of  $\mathrm{Pic}(A)$  is $\left\{A, \Sigma
A\right\}$. 
\end{lemma}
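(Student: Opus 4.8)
The plan is to leverage the map $f\colon A \to k[t_2^{\pm 1}]$ constructed in \Cref{nilpcountable} together with the computation $\mathrm{Pic}(k[t_2^{\pm 1}]) \simeq \mathbb{Z}/2$ and the descent estimate \Cref{quotdegminusonepic}. The key assertion I would establish is that
\[ K := \ker\big(\mathrm{Pic}(A) \to \mathrm{Pic}(k[t_2^{\pm 1}])\big) \]
is a $\mathbb{Q}$-vector space. Granting this, the lemma falls out: given a torsion class $[M] \in \mathrm{Pic}(A)$, say $M^{\otimes n} \simeq A$, its image $M \otimes_A k[t_2^{\pm 1}]$ is a torsion element of $\mathrm{Pic}(k[t_2^{\pm 1}]) \simeq \mathbb{Z}/2$, hence equivalent to $k[t_2^{\pm 1}]$ or $\Sigma k[t_2^{\pm 1}]$; replacing $M$ by $\Sigma M$ if needed (still torsion, since $\Sigma^2 A \simeq A$ because $\pi_2 A$ contains a unit) we may assume $[M] \in K$, and since a torsion element of a $\mathbb{Q}$-vector space vanishes, $[M] = [A]$. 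Thus the original class lies in $\{[A],[\Sigma A]\}$; conversely both of these are torsion ($[\Sigma A]^{\otimes 2} = [\Sigma^2 A] = [A]$), so the torsion subgroup of $\mathrm{Pic}(A)$ is precisely $\{A, \Sigma A\}$.

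For the needed input $\mathrm{Pic}(k[t_2^{\pm 1}]) \simeq \mathbb{Z}/2$: since $\pi_*(k[t_2^{\pm 1}])$ is a graded field, every module over $k[t_2^{\pm 1}]$ is a direct sum of shifts of the unit, and $t_2$ identifies $\Sigma^2$ with the identity, so the only invertible modules are the unit and its suspension (one may also cite \Cref{invertible2periodic} together with the algebraicity of $\mathrm{Pic}$ for even periodic $\e{\infty}$-rings with regular $\pi_0$ \cite{BR}).

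The real work is the claim about $K$. Here I would recall from the proof of \Cref{nilpcountable} that $k[t_2^{\pm 1}]$ is assembled from $A$ by an iterated colimit construction: there is a sequence $A = B_0 \to B_1 \to B_2 \to \cdots$ with $k[t_2^{\pm 1}] \simeq \varinjlim_j B_j$, where each map $B_j \to B_{j+1}$ is itself a filtered colimit of single-step quotients, $B_{j+1} \simeq \varinjlim_i B_j^{(i)}$ with $B_j^{(0)} = B_j$ and $B_j^{(i)} \simeq B_j^{(i-1)}//y_i$ for suitable $y_i \in \pi_{-1}(B_j^{(i-1)})$. Two general facts then combine. First, $\mathrm{Pic}$ carries a filtered colimit of $\e{\infty}$-rings to the filtered colimit of the Picard groups, because every invertible module is perfect, the assignment $R \mapsto \md^\omega(R)$ commutes with filtered colimits, and invertibility of a perfect module is detected at a finite stage. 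Second, by \Cref{quotdegminusonepic} each single-step map $B_j^{(i-1)} \to B_j^{(i)}$ has $\mathbb{Q}$-vector-space kernel on $\mathrm{Pic}$. Since the $\mathbb{Q}$-vector spaces form a full subcategory of abelian groups closed under extensions and under filtered colimits (increasing unions of subgroups), an induction up the two nested colimits shows that $\ker(\mathrm{Pic}(A) \to \mathrm{Pic}(B_j))$ is a $\mathbb{Q}$-vector space for each $j$, and therefore so is $K = \bigcup_j \ker(\mathrm{Pic}(A) \to \mathrm{Pic}(B_j))$. I expect the main obstacle to be precisely this transfinite bookkeeping — keeping the ``$\mathbb{Q}$-vector space'' property under control as one climbs through the colimits — rather than any single conceptual step.
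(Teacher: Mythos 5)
Your proposal is correct and follows essentially the same route as the paper: the paper runs exactly the two nested colimits from the proof of \Cref{nilpcountable}, and combines \Cref{quotdegminusonepic} with the commutation of $\mathrm{Pic}$ with filtered colimits to obtain injections on torsion subgroups terminating in $\mathrm{Pic}(k[t_2^{\pm 1}])_{\mathrm{tors}} \simeq \mathbb{Z}/2$, which is your argument phrased as injectivity on torsion rather than as a property of the kernel $K$. One small correction: your bookkeeping does not actually show $K$ is a $\mathbb{Q}$-vector space, since at each extension step the quotient is only a subgroup of a $\mathbb{Q}$-vector space (torsion-free but not necessarily divisible); however, torsion-freeness of $K$ is all your deduction uses, so the argument goes through, and the paper's formulation in terms of injections on torsion subgroups sidesteps this issue entirely.
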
 
\begin{proof} We will imitate the argument of \Cref{nilpcountable}.
Consider the sequence of $\e{\infty}$-rings 
$A \simeq A^{(0)} \to A^{(1)} \to A^{(2)} \to \dots $ of
\Cref{countablerational}. By 
\Cref{quotdegminusonepic}, the maps
\[ \mathrm{Pic}(A^{(i)})_{\mathrm{tors}} \to
\mathrm{Pic}(A^{(i+1)})_{\mathrm{tors}}  \]
are injections. 

Let $A_1 = \varinjlim A^{(i)}$. 
It follows that the map $A \to A_1$ induces an injection
$\mathrm{Pic}(A)_{\mathrm{tors}} \to
\mathrm{Pic}(A_1)_{\mathrm{tors}}$, because the Picard functor commutes with
filtered colimits \cite[Prop.
2.4.1]{pictmf}. Moreover, $A_1$ satisfies the same hypotheses, and we can
construct a sequence  of $\e{\infty}$-rings
$A_1 \simeq A_1^{(0)} \to A_1^{(1)} \to \dots $ from \Cref{countablerational}. 
Similarly, each of the maps $A_1^{(i)} \to A_1^{(i+1)}$ induces an injection 
$\mathrm{Pic}(A_1^{(i)})_{\mathrm{tors}} \to
\mathrm{Pic}(A_1^{(i+1)})_{\mathrm{tors}}$. 
If we set $A_2 = \varinjlim A_1^{(i)}$, we get that the map 
$A_1 \to A_2$ induces an injection $\mathrm{Pic}(A_1)_{\mathrm{tors}} \to
\mathrm{Pic}(A_2)_{\mathrm{tors}}$. Inductively, we follow the proof
of \Cref{nilpcountable} and construct the sequence 
\[ A = A_0 \to A_1 \to A_2 \to \dots ,  \]
such that each $A_i$ satisfies the same hypotheses as $A$ did, and such that
$A_i  \to A_{i+1}$ is a colimit of a sequence constructed in
\Cref{countablerational}, 
whose colimit is $k[t_2^{\pm 1}]$.
Our reasoning shows that we get a sequence of injections
\[ \mathrm{Pic}(A)_{\mathrm{tors}} \to \mathrm{Pic}(A_1)_{\mathrm{tors}} \to
\mathrm{Pic}(A_2)_{\mathrm{tors}} \to \dots \to \mathrm{Pic}(k[t_2^{\pm
1}])_{\mathrm{tors}} \simeq \mathbb{Z}/2  \]
because, again, the Picard functor commutes with filtered colimits. This
completes the proof. 
\end{proof}

\begin{lemma} 
\label{modnilpotentinjpic}
Let $A$ be a rational, noetherian $\e{\infty}$-ring. Suppose that $\pi_2(A)$
contains a unit and that $\pi_0(A)$ is a local artinian ring with residue
field. 
Let $x \in \pi_0(A)$ belong to the maximal ideal. Then the map $A \to A//x$ induces an injection 
$\mathrm{Pic}(A)_{\mathrm{tors}} \to \mathrm{Pic}(A//x)_{\mathrm{tors}}$.
\end{lemma}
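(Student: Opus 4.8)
The plan is to leverage that $x$ lies in the maximal ideal of the local artinian ring $\pi_0(A)$, hence is nilpotent; by \Cref{quotdesc} the map $A\to A//x$ then admits descent, and I would run the descent spectral sequence \eqref{picBKSS} for the Picard spectrum $\pics$, exactly along the lines of \Cref{quotdegminusonepic}.

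The first step is to pin down the cobar terms on $\pi_0$. By the computation in the proof of \Cref{purelyinsepart}, $(A//x)^{\otimes_A(k+1)}\simeq (A//x)\otimes\sym^*(\mathbb{Q}[1]^{\oplus k})$, and since $\sym^*(\mathbb{Q}[1])\simeq\mathbb{Q}\oplus\mathbb{Q}[1]$ by \Cref{oddcoinc}, the extra cells sit in odd degrees, so $\pi_0\big((A//x)^{\otimes_A(k+1)}\big)\cong\pi_0(A//x)$ for all $k$. Moreover every cosimplicial coface map is a section of a multiplication map that is an isomorphism on $\pi_0$, so the two canonical maps $A//x\to(A//x)^{\otimes_A 2}$ already agree on $\pi_0$, and likewise higher up; hence the cosimplicial abelian group $[n]\mapsto\big(\pi_0((A//x)^{\otimes_A(n+1)})\big)^{\times}$ is \emph{constant}, with vanishing cohomology in positive degrees. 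Note also that $\pi_0(A//x)$ is again local artinian with residue field $k$, being a finite universal homeomorphism over $\pi_0(A)/(x)$ by \Cref{nilpextmod}; since $\mathbb{Q}\subseteq\pi_0(A//x)$ and its maximal ideal $\mathfrak{m}$ is nilpotent, $1+\mathfrak{m}$ is uniquely divisible, so the torsion subgroup of $(\pi_0(A//x))^{\times}$ is exactly $k^{\times}_{\mathrm{tors}}$.

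Now feed this into \eqref{picBKSS}, which degenerates after a finite stage with a horizontal vanishing line. The kernel $K$ of $\mathrm{Pic}(A)\to\mathrm{Pic}(A//x)$ is the part of $\pi_0\pics(A)$ in filtration $\ge 1$, and it carries a finite filtration whose graded pieces are the $E_\infty^{s,s}$ for $s\ge 1$. The $s=1$ piece is a subquotient of $E_2^{1,1}=H^1$ of the constant cosimplicial group $(\pi_0((A//x)^{\otimes_A\bullet+1}))^{\times}$, hence $0$. For $s\ge 2$ we have $E_2^{s,s}=H^s\big(\pi_{s-1}((A//x)^{\otimes_A\bullet+1})\big)$, the cohomology of a cosimplicial $\mathbb{Q}$-vector space since $A$ is rational; as any additive map between $\mathbb{Q}$-vector spaces is $\mathbb{Q}$-linear, the only differential touching column $s\ge 2$ whose source need not be rational is $d_s\colon E_s^{0,1}\to E_s^{s,s}$, and since the torsion of $E_2^{0,1}=(\pi_0(A//x))^{\times}$ is $k^{\times}_{\mathrm{tors}}$, which is detected by the edge map $(\pi_0 A)^{\times}\to(\pi_0(A//x))^{\times}$ (the identity on $k^{\times}$, as both rings are local with residue field $k$), that torsion survives to $E_\infty^{0,1}$ and in particular cannot be hit. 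Thus each $E_\infty^{s,s}$ with $s\ge 2$ is a $\mathbb{Q}$-vector space, so $K$ is torsion-free — indeed a $\mathbb{Q}$-vector space. Therefore $\mathrm{Pic}(A)_{\mathrm{tors}}\cap K=0$, and as the image of $\mathrm{Pic}(A)_{\mathrm{tors}}$ is torsion it lands in $\mathrm{Pic}(A//x)_{\mathrm{tors}}$, giving the desired injection.

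The main obstacle is exactly this last point: ruling out that the lone potentially integral differential $d_s\colon E_s^{0,1}\to E_s^{s,s}$ feeds torsion into the filtration-$\ge 2$ part of $\pi_0\pics(A)$. This is why I want the observations that $\pi_0(A//x)$ stays local with residue field $k$ and that its only torsion units are the roots of unity in $k$, which the edge map sees faithfully; everything else is the routine descent-spectral-sequence bookkeeping already carried out for \Cref{quotdegminusonepic}.
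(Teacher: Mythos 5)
Your overall strategy (descent along $A \to A//x$ via \Cref{quotdesc} and the Picard spectral sequence \eqref{picBKSS}) is the paper's, but the first concrete step is wrong. The identification $(A//x)^{\otimes_A (k+1)} \simeq (A//x) \otimes \sym^*(\mathbb{Q}[1]^{\oplus k})$ is correct, but it does \emph{not} give $\pi_0\bigl((A//x)^{\otimes_A(k+1)}\bigr) \cong \pi_0(A//x)$: the degree-one exterior classes pair with the negative-degree homotopy of $A//x$, so that $\pi_0$ of the $n$-th cobar term is $\bigoplus_j \pi_{-j}(A//x)\otimes \Lambda^j(y_1,\dots,y_n)$. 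Since $A//x$ has a degree-two unit and $\pi_{-1}(A//x)\neq 0$ in general (it contains $\ker(x)$ acting on $\pi_{-2}(A)\cong\pi_0(A)$, which is nonzero for $x$ a nonzero nilpotent, and contains $\pi_{-2}(A)\cdot y$ if $x=0$), already $\pi_0\bigl((A//x)\otimes_A(A//x)\bigr) = \pi_0(A//x)\oplus\pi_{-1}(A//x)$ is strictly larger than $\pi_0(A//x)$. Hence the cosimplicial group of units is \emph{not} constant, the codegeneracies are not isomorphisms on $\pi_0$, and your conclusion $E_2^{1,1}=0$ is unjustified (it is typically false). This is precisely where the paper argues differently: by \Cref{radicial2} each $\pi_0$ of a cobar term is local artinian with residue field $k$, and the splitting $R^\times \cong k^\times \oplus (1+\mathfrak{m})$ with $1+\mathfrak{m}$ uniquely divisible shows the filtration-one contribution is a $\mathbb{Q}$-vector space, which is all one needs.

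You correctly isolate the delicate point that the paper treats briskly — the differentials $d_s\colon E_s^{0,1}\to E_s^{s,s}$ — but your resolution is insufficient. Torsion in $E_{s+1}^{s,s}$ would be created by a \emph{non-divisible} image of $d_s$ inside the $\mathbb{Q}$-vector space $E_s^{s,s}$ (a quotient of a $\mathbb{Q}$-vector space by a non-divisible subgroup, e.g.\ $\mathbb{Q}/\mathbb{Z}$, has torsion), and such an image can perfectly well come from the torsion-free part of $k^\times$; showing that $k^\times_{\mathrm{tors}}$ consists of permanent cycles does not control this. What is needed is that \emph{all} of $k^\times$ consists of permanent cycles, so that every differential out of position $(0,1)$ factors through the uniquely divisible complement and therefore has divisible image. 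This follows from a coefficient field $k\hookrightarrow \pi_0(A)$, realized topologically by \Cref{secresfld}: it makes every cobar term a $k$-algebra compatibly and splits the cosimplicial unit groups as $k^\times \times (1+\mathfrak{m}_\bullet)$ with constant first factor (note also that $E_2^{0,1}$ is the equalizer $H^0$, not all of $(\pi_0(A//x))^\times$). With that input the bookkeeping closes: subgroups of torsion-free groups are torsion-free, and quotients of torsion-free groups by divisible subgroups remain torsion-free, so every $E_\infty^{s,s}$ with $s\geq 1$ is torsion-free and the kernel of $\mathrm{Pic}(A)\to\mathrm{Pic}(A//x)$ contains no torsion, as in the paper's proof.
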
 
\begin{proof} 
The map $A \to A//x$ admits descent since $x$ is nilpotent (\Cref{quotdesc}). 
Therefore, 
we can apply the expression \eqref{pictot} and the associated spectral
sequence. As in \Cref{countablepic}, to run the argument, it suffices to see that there
are no torsion contributions in filtration one. 
All the rings $\pi_0((A//x)^{\otimes (n+1)})$ are local artinian with the same
residue field $k$, in view of \Cref{radicial2}. Given any local artinian ring $R$ with residue field $k$,
the group $R^{\times}$ of units has a natural splitting $k^{\times} \oplus
R^{\times, \mathrm{unip}}$ where $R^{\times, \mathrm{unip}}$ is a
$\mathbb{Q}$-vector space. From this, it follows easily that the contribution
in filtration one in the spectral sequence is a $\mathbb{Q}$-vector space,
which proves the lemma. 
\end{proof}

\begin{lemma} 
\label{torsionart2periodicpic}
Let $A$ be a rational, noetherian $\e{\infty}$-ring. Suppose that $\pi_2(A)$
contains a unit and that $\pi_0(A)$ is a local artinian ring. 
Then the only nontrivial element in the torsion subgroup of $\mathrm{Pic}(A)$
is $\Sigma A$.
\end{lemma}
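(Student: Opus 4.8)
The plan is to run the same tower construction used in the proof of \Cref{rsfldforart} and to feed it into the Picard-group injectivity statements established just above. First I would invoke \Cref{secresfld} to regard $A$ as an $\e{\infty}$-$k$-algebra, where $k$ is the residue field of $\pi_0(A)$, and then form the sequence of rational, noetherian, $2$-periodic $\e{\infty}$-rings $A = A_0 \to A_1 \to A_2 \to \dots \to A_\infty = \varinjlim_i A_i$ exactly as in the proof of \Cref{rsfldforart}: $A_{i+1}$ is obtained from $A_i$ by attaching a $1$-cell along each element of a $k$-basis of the maximal ideal $\mathfrak{m}_i \subset \pi_0(A_i)$, each $\pi_0(A_i)$ is local artinian with residue field $k$, and $\pi_0(A_\infty) = k$ with $\pi_{-1}(A_\infty)$ a countably dimensional $k$-vector space, so that $A_\infty$ satisfies the hypotheses of \Cref{countablepic}. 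The endgame is to show that $A \to A_\infty$ induces an injection on torsion Picard subgroups and then to quote \Cref{countablepic} to identify the target.

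For the injectivity I would factor each map $A_i \to A_{i+1}$ as a finite composite $A_i \to A_i//x_1 \to A_i//(x_1,x_2) \to \dots$ of single $1$-cell attachments, each along a class $x_j$ lying in the maximal ideal of a local artinian ring and hence nilpotent. By \Cref{radicial2}, every intermediate $\e{\infty}$-ring $A_i//(x_1,\dots,x_s)$ is again rational, noetherian, $2$-periodic, with $\pi_0$ local artinian of residue field $k$, and the class used for the next cell again lies in its maximal ideal; therefore \Cref{modnilpotentinjpic} applies at each of these single steps, and composing shows $\mathrm{Pic}(A_i)_{\mathrm{tors}} \to \mathrm{Pic}(A_{i+1})_{\mathrm{tors}}$ is injective for every $i$. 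Since the Picard functor commutes with filtered colimits (\cite[Prop.\ 2.4.1]{pictmf}) and each composite $\mathrm{Pic}(A)_{\mathrm{tors}} \to \mathrm{Pic}(A_i)_{\mathrm{tors}}$ is injective, the induced map $\mathrm{Pic}(A)_{\mathrm{tors}} \to \mathrm{Pic}(A_\infty)_{\mathrm{tors}}$ is injective. By \Cref{countablepic}, $\mathrm{Pic}(A_\infty)_{\mathrm{tors}} = \{A_\infty, \Sigma A_\infty\} \cong \mathbb{Z}/2$, so $\mathrm{Pic}(A)_{\mathrm{tors}}$ has order $1$ or $2$. Finally, this injection is base change along $A \to A_\infty$, so it sends $[\Sigma A]$ to $[\Sigma A_\infty]$, the nontrivial element, while it sends $[A]$ to $[A_\infty]$; as these differ, $[\Sigma A] \ne [A]$, whence $\mathrm{Pic}(A)_{\mathrm{tors}} = \{A, \Sigma A\}$ and its only nontrivial element is $\Sigma A$.

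Most of this is assembly from the preceding lemmas, so there is no single deep step; the place that needs genuine care is the reduction to single $1$-cell attachments, since \Cref{modnilpotentinjpic} is stated only for one nilpotent element of the maximal ideal. One must verify that the ``local artinian with residue field $k$'' property of $\pi_0$, and the presence of a unit in degree two, both persist through each partial quotient $A_i//(x_1,\dots,x_s)$. The first persistence is precisely the finite universal homeomorphism statement of \Cref{radicial2}, together with the elementary fact that a finite universal homeomorphism extension of a local artinian ring is itself local artinian and nonzero; the degree-two unit persists because it maps to a unit under any $\e{\infty}$-ring map. I expect that bookkeeping, rather than any substantive new idea, to be the only real obstacle.
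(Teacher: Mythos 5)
Your proposal is correct and follows essentially the same route as the paper: the same tower $A = A_0 \to A_1 \to \dots \to A_\infty$ from the proof of \Cref{rsfldforart}, injectivity on torsion Picard groups via \Cref{modnilpotentinjpic}, compatibility of $\mathrm{Pic}$ with filtered colimits, and \Cref{countablepic} to identify $\mathrm{Pic}(A_\infty)_{\mathrm{tors}} \simeq \mathbb{Z}/2$. The only difference is that you make explicit the factorization of each $A_i \to A_{i+1}$ into single $1$-cell attachments and the persistence of the hypotheses (via \Cref{radicial2}) needed to apply \Cref{modnilpotentinjpic} at each step, which the paper leaves implicit.
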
 
\begin{proof} 
As in the proof of \Cref{rsfldforart}, we can construct a sequence of $\e{\infty}$-rings
\[ A  = A_0 \to A_1 \to A_2 \to \dots ,  \]
such that:
\begin{enumerate}
\item  $A_{i+1} \simeq A_i//(x_1^{(i)}, \dots, x_{n_i}^{(i)})$ for some
finite sequence of nilpotent elements $x^{(i)}_{j} \in \pi_0(A_i)$. 
\item Each ring $\pi_0(A_i)$ is local artinian with residue field $k$. 
\item  Each map $A_i \to A_{i+1}$ induces
 a map of local artinian rings $\pi_0(A_i) \to \pi_0(A_{i+1})$ which
 annihilates the maximal ideal of $\pi_0(A_i)$. 
\end{enumerate}
By \Cref{modnilpotentinjpic}, it follows that each of the maps $A_i \to A_{i+1}$ induces an
injection $\mathrm{Pic}(A_i)_{\mathrm{tors}} \to
\mathrm{Pic}(A_{i+1})_{\mathrm{tors}}$. The colimit $A_\infty = \varinjlim_i
A_i$ 
has $\mathrm{Pic}(A_\infty)_{\mathrm{tors}} \simeq \mathbb{Z}/2$ in view of
\Cref{countablepic}, and $\mathrm{Pic}(A)_{\mathrm{tors}} \hookrightarrow
\mathrm{Pic}(A_\infty)_{\mathrm{tors}}$. Putting everything together, we get that
$\mathrm{Pic}(A)_{\mathrm{tors}} \simeq \mathbb{Z}/2$ as desired. 
\end{proof}

\begin{proof}[Proof of \Cref{rationalpic}] 
Let $ M$ be an invertible $A$-module. Suppose that $n > 0$ and that
the tensor power $M^{\otimes n} \in \mathrm{Pic}(A)$ has the property that
$\pi_*(M)$ is a flat $\pi_*(A)$-module, i.e., $M^{\otimes n}$ belongs to the image of 
the map
$\mathrm{Pic}(\pi_*(A)) \to \mathrm{Pic}(A)$. 
We need to show that $M$ itself has this property. 

For this, we may make the base change $A \to
A[t_2^{\pm 1}]$, as $\pi_*(A) \to \pi_*(A[t_2^{\pm 1}])$ is faithfully flat, and thus assume that $\pi_2(A)$ contains a unit. Since
flatness is a local property, it also suffices to work under the assumption
that $\pi_0(A)$ is a local ring. Similarly, by completing, we can assume that $\pi_0(A)
$ is complete local with maximal ideal $\mathfrak{m} = (x_1, \dots, x_n)
\subset \pi_0(A)$. 
Here
we have to use the fact that any invertible $A$-module is necessarily perfect 
\cite[Prop. 2.1.2]{pictmf}. 
In this case, the algebraic Picard group is trivial
(\Cref{invertible2periodic}). So it suffices to show
that $\mathrm{Pic}(A)_{\mathrm{tors}} = \mathbb{Z}/2$. 

Consider the tower of $\e{\infty}$-$A$-algebras $A_m \stackrel{\mathrm{def}}{=}
A//(x_1^m, \dots, x_n^m)$.
We have $\mathrm{Pic}(A_m)_{\mathrm{tors}} \simeq \mathbb{Z}/2$ by
\Cref{torsionart2periodicpic}. 
One sees that $\mathrm{Pic}(A) \subset \pi_0(\varprojlim_m \pics(A_m))$
because $\md^\omega(A) \subset \varprojlim_m \md^\omega(A_m)$.
Note that the tower of abelian groups $\{\pi_1(\pics(A_m))\} =
\{\pi_0(A_m)^{\times}\}$
satisfies the Mittag-Leffler condition since the
tower $\left\{\pi_0(A_m)\right\}$ of finite-dimensional $k$-vector spaces
clearly satisfies the Mittag-Leffler condition. In particular, 
$\pi_0(\varprojlim_m \pics(A_m)) = \varprojlim_m \mathrm{Pic}(A_m)$ by the
Milnor exact sequence.
Finally, 
\[ \mathrm{Pic}(A)_{\mathrm{tors}} \subset \left(\varprojlim_m \mathrm{Pic}(A_m)
\right)_{\mathrm{tors}} \subset \varprojlim_m \mathrm{Pic}(A_m)_{\mathrm{tors}}
= \varprojlim_m \mathbb{Z}/2 = \mathbb{Z}/2.
 \]
\end{proof}

\section{Non-noetherian counterexamples}

The purpose of this section is to describe certain counterexamples that can
arise from non-noetherian rational $\e{\infty}$-rings. In particular, we
obtain as a result new constructions of Galois extensions of ring spectra
(\Cref{strangegalois})
and of elements in Picard groups (\Cref{picstrange}). The main point of this section is that we
can obtain such examples for quasi-affine schemes (such as punctured spectra)
which fail to satisfy a form of ``purity.''

\subsection{Quasi-affineness}

Let $X$ be a noetherian scheme. 
Recall the presentable, stable $\infty$-category $\qcoh(X)$ of quasi-coherent 
sheaves of $\mathcal{O}_X$-complexes on $X$ \cite{DAGQC}. 
$\qcoh(X)$ is a symmetric monoidal $\infty$-category with unit the
structure sheaf $\mathcal{O}_X$, whose endomorphisms are given by the
$\e{\infty}$-ring $\R  \Gamma(X, \mathcal{O} _X)$ of (derived) global sections
of $\mathcal{O}_X$. 
In this generality, we obtain an adjunction
\begin{equation} \label{adjscheme} ( \cdot \otimes_{\R \Gamma(X,
\mathcal{O}_X)} \mathcal{O}_X, \R
\Gamma)\cl \md( \R \Gamma(X, \mathcal{O}_X)) \rightleftarrows \qcoh(X),
\end{equation}
which sends the 
$\R \Gamma(X, \mathcal{O}_X)$-module $\R \Gamma(X, \mathcal{O}_X)$ to the
structure sheaf $\mathcal{O}_X$. The right adjoint takes the derived global
sections.
We need the following basic fact. 

\begin{theorem}[{ \cite[Prop. 2.4.4]{DAGQC}}] \label{quasiaffineness}
Suppose $X$ is quasi-affine. Then the adjunction
\eqref{adjscheme} is a pair of inverse equivalences of symmetric monoidal
$\infty$-categories.
\end{theorem}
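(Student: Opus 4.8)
The plan is to show that the left adjoint $F = (\cdot)\otimes_{A}\mathcal{O}_X\colon\md(A)\to\qcoh(X)$, where $A=\R\Gamma(X,\mathcal{O}_X)$ is the endomorphism $\e{\infty}$-ring of the unit $\mathcal{O}_X$, and its right adjoint $G=\R\Gamma$ are mutually inverse, by checking that the unit $\mathrm{id}_{\md(A)}\to GF$ and the counit $FG\to\mathrm{id}_{\qcoh(X)}$ are equivalences. Since $F$ is already symmetric monoidal, once it is an equivalence of underlying $\infty$-categories it is one of symmetric monoidal $\infty$-categories, so I would not dwell on the monoidal bookkeeping. I would then reduce everything to two claims: (i) $F$ is fully faithful (equivalently the unit is an equivalence); and (ii) $G$ is conservative. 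These suffice, because a fully faithful left adjoint with conservative right adjoint is an equivalence: applying $G$ to the counit and using the triangle identity together with invertibility of the unit forces the counit to be invertible.

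For (i) the argument is formal. Both functors $M\mapsto M$ and $M\mapsto GF(M)=\R\Gamma(X,M\otimes_A\mathcal{O}_X)$ preserve all colimits: $F$ does, being a left adjoint, and $G=\R\Gamma$ does because $X$ is quasi-compact and quasi-separated, so $\R\Gamma$ commutes with filtered colimits (verified on a finite affine cover via the \v{C}ech complex) and is exact (a limit-preserving functor between stable $\infty$-categories is exact, as pullback squares are pushout squares), hence preserves all colimits. Since $\md(A)$ is generated under colimits by $A$ and $GF(A)=\R\Gamma(X,\mathcal{O}_X)=A$ with the unit the identity, the unit is an equivalence everywhere, so $F$ is fully faithful.

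For (ii) I would use quasi-affineness decisively. Realize $X$ as a quasi-compact open subscheme of some $\spec R$ with closed complement $V(I)$; then finitely many basic opens $D(f_1),\dots,D(f_n)$ with $f_i\in I$ cover $X$, each $D(f_i)=\spec R_{f_i}$ is affine, and (as $X$ is separated) so are all their finite intersections. Given $\mathcal{F}\in\qcoh(X)$ with $\R\Gamma(X,\mathcal{F})=0$, it suffices to show each $\mathcal{F}|_{D(f_i)}$ vanishes, i.e. each $\R\Gamma(D(f_i),\mathcal{F})=0$. Here I would invoke the standard fact that on a quasi-compact, quasi-separated scheme the derived global sections of a quasi-coherent sheaf localize at a global function: $\R\Gamma(D(f_i),\mathcal{F})\simeq\R\Gamma(X,\mathcal{F})[f_i^{-1}]$, the localization being formed in $\md(A)$ along the image of $f_i$ in $\pi_0(A)=R$. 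One proves this localization identity by writing $\R\Gamma(X,\mathcal{F})$ as the totalization of the \v{C}ech complex for the finite affine cover with affine intersections, noting that localization (a filtered colimit) commutes with this finite totalization, and reducing to the affine case where it is immediate. Then $\R\Gamma(X,\mathcal{F})=0$ gives $\R\Gamma(D(f_i),\mathcal{F})=0$ for all $i$, so $\mathcal{F}$ vanishes on an open cover and $\mathcal{F}=0$.

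The main obstacle is step (ii): conservativity of $\R\Gamma$ is exactly the place where quasi-affineness is used and cannot be dropped --- it already fails for $X=\mathbb{P}^1$, where $\R\Gamma(\mathbb{P}^1,\mathcal{O}(-1))=0$ while $\mathcal{O}(-1)\neq 0$. Step (i) and the symmetric-monoidal packaging are formal. The one point to handle with care is that $A=\R\Gamma(X,\mathcal{O}_X)$ may carry nontrivial higher homotopy (it is not $R=\Gamma(X,\mathcal{O}_X)$ in general), so the elements $f_i$ must be read as classes in $\pi_0(A)$ and the localizations taken in $\md(A)$; this is harmless since $\pi_0 A = R$ for quasi-affine $X$. (This is of course Lurie's argument; see \cite[Prop. 2.4.4]{DAGQC}.)
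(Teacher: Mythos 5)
The paper does not actually prove this statement: it is imported wholesale from Lurie (\cite[Prop. 2.4.4]{DAGQC}), so there is no internal proof to compare yours against. Your argument is correct, and it is essentially the standard argument underlying that citation: the base-change functor $F$ is symmetric monoidal and colimit-preserving, full faithfulness follows because $\R\Gamma$ preserves all colimits on a quasi-compact separated scheme (finite affine \v{C}ech cover with affine intersections) while $\md(A)$ is generated under colimits by $A$, and conservativity of $\R\Gamma$ is exactly where quasi-affineness enters, via the identity $\R\Gamma(D(f_i),\mathcal{F})\simeq \R\Gamma(X,\mathcal{F})[f_i^{-1}]$ obtained by commuting the filtered colimit defining localization past the finite \v{C}ech totalization; together with the $\mathbb{P}^1$ example you give, this correctly isolates the role of the hypothesis. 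One small factual correction: for quasi-affine $X\subset\spec R$ one has $\pi_0(A)=\Gamma(X,\mathcal{O}_X)$, which receives $R$ but need not equal it (already $X=D(f)$ gives $R_f$); this does not affect your proof, since the localizations only use the image of each $f_i$ under $R\to\pi_0(A)$, which is how you in fact formulate them.
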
 

$\e{\infty}$-rings of the form $\R \Gamma(X, \mathcal{O}_X)$ for quasi-affine schemes
will be our primary source of counterexamples, because questions about $\R
\Gamma(X, \mathcal{O}_X)$
can often be reduced to questions (in ordinary algebraic geometry) about the
scheme $X$. 
For instance, we obtain immediately: 

\begin{corollary} 
\label{picqaffine}
Suppose $X$ is a quasi-affine scheme and $\mathcal{L}$ is a line bundle on
$X$. Then the $\R \Gamma(X, \mathcal{O}_X)$-module $\R \Gamma(X, \mathcal{L})$
is invertible.
\end{corollary}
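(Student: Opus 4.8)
The plan is to deduce this immediately from \Cref{quasiaffineness}. First I would recall that, in any symmetric monoidal $\infty$-category, an object $Y$ is invertible precisely when there is an object $Z$ together with an equivalence $Y \otimes Z \simeq \mathbf{1}$ with the unit. A line bundle $\mathcal{L}$ on $X$ is invertible in $\qcoh(X)$: taking $Z = \mathcal{L}^{\vee}$ to be the dual line bundle, the evaluation map $\mathcal{L} \otimes_{\mathcal{O}_X} \mathcal{L}^{\vee} \to \mathcal{O}_X$ is an equivalence, since it may be checked locally on $X$, where both sides are free of rank one.

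Next I would invoke \Cref{quasiaffineness}: since $X$ is quasi-affine, the adjunction \eqref{adjscheme} is a pair of inverse equivalences of \emph{symmetric monoidal} $\infty$-categories, with right adjoint the derived global sections functor $\mathbf{R}\Gamma$. The inverse of a symmetric monoidal equivalence is again symmetric monoidal, hence preserves invertible objects: applying $\mathbf{R}\Gamma$ to the equivalence $\mathcal{L} \otimes_{\mathcal{O}_X} \mathcal{L}^{\vee} \simeq \mathcal{O}_X$ yields an equivalence
\[ \mathbf{R}\Gamma(X, \mathcal{L}) \otimes_{\mathbf{R}\Gamma(X, \mathcal{O}_X)} \mathbf{R}\Gamma(X, \mathcal{L}^{\vee}) \simeq \mathbf{R}\Gamma(X, \mathcal{O}_X), \]
which exhibits $\mathbf{R}\Gamma(X, \mathcal{L})$ as an invertible $\mathbf{R}\Gamma(X, \mathcal{O}_X)$-module.

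There is no real obstacle; the only point worth stressing is that the global-sections functor is genuinely symmetric monoidal here — it commutes with tensor products and carries $\mathcal{O}_X$ to the unit $\mathbf{R}\Gamma(X, \mathcal{O}_X)$ — which is exactly what \Cref{quasiaffineness} asserts. Granting this, the displayed equivalence above is automatic and the corollary follows.
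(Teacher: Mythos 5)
Your argument is correct and is exactly the paper's (implicit) reasoning: the paper derives \Cref{picqaffine} immediately from the symmetric monoidal equivalence of \Cref{quasiaffineness}, since a symmetric monoidal equivalence preserves invertible objects and $\mathcal{L}$ is invertible in $\qcoh(X)$. Nothing further is needed.
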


We can also obtain a comparison for Galois theory. 
\begin{corollary} 
\label{galoisqaffine}
Suppose $X$ is a quasi-affine scheme. 
Suppose $Y \to  X$ is a finite \'etale cover. Then the map $\R \Gamma(X,
\mathcal{O}_X) \to \R \Gamma(Y, \mathcal{O}_Y)$ of $\e{\infty}$-rings is a
finite cover. If $Y \to X$ is a $G$-torsor for a finite group $G$, then the
map (together with the natural $G$-action on the target) exhibits 
$\R \Gamma(Y, \mathcal{O}_Y)$ as a faithful $G$-Galois extension of $ \R
\Gamma(X, \mathcal{O}_X)$. In fact, the Galois group of the $\e{\infty}$-ring
$\R \Gamma(X, \mathcal{O}_X)$ is naturally identified with the \'etale
fundamental group of $X$.
\end{corollary}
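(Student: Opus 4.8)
The plan is to deduce everything from \Cref{quasiaffineness}, which exhibits derived global sections $\R\Gamma\colon\qcoh(X)\to\md(\R\Gamma(X,\mathcal{O}_X))$ as an equivalence of symmetric monoidal $\infty$-categories. Since the notions of \emph{finite cover}, of \emph{faithful $G$-Galois extension} in the sense of Rognes \cite{rognes}, and of the Galois group $\pi_1\md(-)$ are, by their very construction in \cite{galgp}, invariants of the underlying symmetric monoidal $\infty$-category, it suffices to establish the corresponding statements internally to $\qcoh(X)$ and transport them across this equivalence. I will tacitly assume $X$ connected, so that $\pi_0\R\Gamma(X,\mathcal{O}_X)=\Gamma(X,\mathcal{O}_X)$ has no nontrivial idempotents and these Galois groups (rather than groupoids) are defined; the general case follows by the usual groupoid bookkeeping.

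First I would treat a finite \'etale $f\colon Y\to X$. As $f$ is affine, $\R\Gamma(Y,\mathcal{O}_Y)\simeq\R\Gamma(X,f_*\mathcal{O}_Y)$, so under \Cref{quasiaffineness} the $\e{\infty}$-algebra $\R\Gamma(Y,\mathcal{O}_Y)$ under $\R\Gamma(X,\mathcal{O}_X)$ corresponds to the commutative algebra object $f_*\mathcal{O}_Y\in\qcoh(X)$, which is discrete since $f$ is flat. Then I would record the two structural properties of this algebra object: it is dualizable, because $f$ finite \'etale makes $f_*\mathcal{O}_Y$ a locally free $\mathcal{O}_X$-module of finite rank; and it is separable, the splitting idempotent being the one cut out by the open-and-closed diagonal $Y\hookrightarrow Y\times_X Y$, living in $\pi_0(f_*\mathcal{O}_Y\otimes_{\mathcal{O}_X}f_*\mathcal{O}_Y)$. (When $f$ is surjective, $f_*\mathcal{O}_Y$ is moreover faithfully flat, so $\mathcal{O}_X\to f_*\mathcal{O}_Y$ admits descent.) These are exactly the requirements of \cite[Def.\ 6.1]{galgp} read inside $\qcoh(X)$, so applying $\R\Gamma$ shows $\R\Gamma(X,\mathcal{O}_X)\to\R\Gamma(Y,\mathcal{O}_Y)$ is a finite cover. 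If $Y\to X$ is a $G$-torsor, the canonical isomorphism $Y\times_X Y\xrightarrow{\ \sim\ }G\times Y$ becomes an equivalence $f_*\mathcal{O}_Y\otimes_{\mathcal{O}_X}f_*\mathcal{O}_Y\simeq\prod_G f_*\mathcal{O}_Y$, and faithfully flat descent along the torsor --- whose \v{C}ech nerve is precisely the cobar complex computing homotopy $G$-fixed points --- yields $(f_*\mathcal{O}_Y)^{hG}\simeq\mathcal{O}_X$; this is Rognes's definition of a faithful $G$-Galois extension, and pushing it through the symmetric monoidal equivalence $\R\Gamma$ gives the Galois assertion for $\R\Gamma(X,\mathcal{O}_X)$.

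For the final clause I would show the functor just built, from finite \'etale $X$-schemes to finite covers of $\R\Gamma(X,\mathcal{O}_X)$, is an equivalence of Galois categories compatible with fibre functors. Compatibility is easy: a geometric point $\bar x\colon\spec \Omega\to X$ induces $\R\Gamma(X,\mathcal{O}_X)\to\Omega$, and base change along it computes the geometric fibre $Y_{\bar x}$. The substance is essential surjectivity, and this is the step I expect to be the main obstacle: one must see that any finite cover $\mathcal{A}$ of the unit of $\qcoh(X)$ --- a priori only a dualizable, separable commutative algebra object --- is automatically discrete, hence a classical finite \'etale $\mathcal{O}_X$-algebra. I would verify this locally: for an affine open $U=\spec B\subseteq X$ (again quasi-affine), the restriction $\qcoh(X)\to\qcoh(U)$ is symmetric monoidal and, via \Cref{quasiaffineness} applied to $U$, corresponds to base change along $\R\Gamma(X,\mathcal{O}_X)\to\R\Gamma(U,\mathcal{O}_U)=B$; base change preserves finite covers, so $\mathcal{A}|_U$ is a finite cover of the \emph{discrete} ring $B$, hence a finite \'etale $B$-algebra by the classical case treated in \cite{galgp}, in particular discrete. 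Gluing over an affine cover, $\mathcal{A}$ is a discrete sheaf of finite \'etale $\mathcal{O}_X$-algebras, so $\mathcal{A}=f_*\mathcal{O}_Y$ with $Y=\spec_X(\mathcal{A})\to X$ finite \'etale. Thus the category of finite covers of $\R\Gamma(X,\mathcal{O}_X)$ is equivalent, as a Galois category with fibre functor, to that of finite \'etale $X$-schemes, and passing to $\pi_1$ identifies the Galois group of $\R\Gamma(X,\mathcal{O}_X)$ with $\pi_1^{\mathrm{et}}(X)$.
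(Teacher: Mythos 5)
Your proposal is correct, and it follows the same overall strategy as the paper: transport everything across the symmetric monoidal equivalence of \Cref{quasiaffineness} and exploit the affine-local structure of $\qcoh(X)$. The difference is in how much is verified by hand. The paper's proof is essentially a citation: writing $\qcoh(X) \simeq \varprojlim_{\spec R \subset X} \md(R)$, it observes that $f_*\mathcal{O}_Y$ restricts to an (algebraic) finite cover over each affine open and then invokes the general gluing result for finite covers in limits of symmetric monoidal $\infty$-categories (\cite[Prop.~7.1]{galgp}), together with the comparison of weak finite covers and finite covers (\cite[Th.~6.5]{galgp}); the identification of the full Galois group, including the converse direction, is absorbed into those citations. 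You instead verify the forward direction globally (dualizability and separability of $f_*\mathcal{O}_Y$, and for a torsor the Rognes conditions $f_*\mathcal{O}_Y \otimes_{\mathcal{O}_X} f_*\mathcal{O}_Y \simeq \prod_G f_*\mathcal{O}_Y$ and $(f_*\mathcal{O}_Y)^{hG} \simeq \mathcal{O}_X$ via descent along the torsor), and you make the essential-surjectivity step explicit by restricting a finite cover to affine opens, using the discrete case, and gluing --- in effect re-proving the special case of \cite[Prop.~7.1]{galgp} that the paper cites. This is a virtue: your write-up makes visible why the last clause (Galois group $=$ \'etale fundamental group) holds, which the paper leaves implicit. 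Two caveats you should fold in: ``dualizable and separable'' is not the \emph{definition} of a finite cover in \cite{galgp} but (essentially) the notion of a \emph{weak} finite cover, so the step where you say these are ``exactly the requirements of Def.~6.1'' should instead invoke the comparison theorem (\cite[Th.~6.5]{galgp}) or, more directly, check the actual definition by base-changing along the descendable algebra $\prod_i j_{i*}\mathcal{O}_{U_i}$ attached to a finite affine cover, where the algebra becomes visibly algebraic; and the input that finite covers of a discrete ring $B$ are precisely finite \'etale $B$-algebras is itself a nontrivial theorem from \cite{galgp}, not a formality, so it should be cited as such.
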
 
\begin{proof} 
If $f\cl Y \to X$ is a finite \'etale cover, then $Y$ can be constructed as the
relative  $\mathrm{Spec}$ of a sheaf of commutative $\mathcal{O}_X$-algebras
$f_* (\mathcal{O}_Y)$. On any affine open $\spec R \subset X$,
$f_*(\mathcal{O}_Y)|_{\spec R}$ is obtained from a finite \'etale algebra. 
In other words, if we write $\qcoh(X)  = \varprojlim_{\spec R \subset X}
\md(R)$, as the inverse limit ranges over all Zariski open subsets $\spec R
\subset X$, then $f_*(\mathcal{O}_Y) \in \qcoh(X)$ defines a family of finite
covers in each of these symmetric monoidal $\infty$-categories. It follows
from \cite[Prop. 7.1]{galgp} 
(and \cite[Th. 6.5]{galgp} for the comparison between weak finite covers and
finite covers)
that $f_*(\mathcal{O}_Y)$ defines a finite cover
of the unit in the $\infty$-category $\qcoh(X)$. Thus, applying the
equivalence $\R \Gamma$ then completes the proof. 
\end{proof}

In general, there is no reason for the Galois theory (resp. Picard group) of
the quasi-affine scheme to be determined by that of $\pi_0 \R \Gamma(X,
\mathcal{O}_X)$, and this will be a source of counterexamples.
This is related to subtle purity questions. However, we can obtain
immediately counterexamples to the thick subcategory theorem without
noetherian hypotheses in view of the
following result. 

\begin{theorem}[{Thomason \cite[Th. 3.15]{thomason}}]
Let $X$ be a noetherian scheme. Let $\qcoh^\omega(X)$ denote the
$\infty$-category of 
quasi-coherent complexes $\mathcal{F} \in \qcoh(X)$ such that for every open
affine $\spec R \subset X$, $\mathcal{F}( \spec R)$ is a perfect $R$-module
(equivalently, $\qcoh^\omega(X)$ consists of the dualizable objects in
$\qcoh(X)$).
Then the thick tensor-ideals in $\qcoh^\omega(X)$ are in natural bijection
with the specialization-closed subsets of $X$.
\end{theorem}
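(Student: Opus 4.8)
The statement is the classification theorem of Hopkins--Neeman (affine case) and Thomason (general case), so my plan is to recall that argument. First I would set up the two maps realizing the bijection. To a dualizable $\mathcal{F}\in\qcoh^\omega(X)$ attach its support $\supp(\mathcal{F}) = \{x\in X : \mathcal{F}\otimes_{\mathcal{O}_X}\kappa(x)\neq 0\}$; since $X$ is noetherian this is a closed (hence quasi-compact) subset, and it satisfies $\supp(\mathcal{F}\oplus\mathcal{G}) = \supp(\mathcal{F})\cup\supp(\mathcal{G})$, $\supp(\mathcal{F}\otimes_{\mathcal{O}_X}\mathcal{G}) = \supp(\mathcal{F})\cap\supp(\mathcal{G})$, and, for a cofiber sequence, $\supp$ of any one term is contained in the union of the supports of the other two. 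Consequently $\mathcal{T}\mapsto\bigcup_{\mathcal{F}\in\mathcal{T}}\supp(\mathcal{F})$ sends a thick tensor-ideal to a specialization-closed subset, while $Z\mapsto\qcoh^\omega_Z(X) := \{\mathcal{F} : \supp(\mathcal{F})\subseteq Z\}$ sends a specialization-closed subset to a thick tensor-ideal. One composite is nearly formal: $\bigcup_{\mathcal{F}\in\qcoh^\omega_Z(X)}\supp(\mathcal{F}) = Z$ once one knows that every closed subset of $X$ is the support of some dualizable complex, which holds because on an affine chart $\spec R\subseteq X$ the closed set is $V(f_1,\dots,f_n)$ and equals the support of the Koszul complex $K(f_1,\dots,f_n)$, with such local constructions glued together (or invoked via Thomason--Trobaugh).

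The content is the reverse composite, namely $\qcoh^\omega_{\supp(\mathcal{T})}(X) = \mathcal{T}$ for every thick tensor-ideal $\mathcal{T}$; equivalently, the key lemma: if $\mathcal{F},\mathcal{G}\in\qcoh^\omega(X)$ with $\supp(\mathcal{F})\subseteq\supp(\mathcal{G})$, then $\mathcal{F}$ lies in the thick tensor-ideal $\langle\mathcal{G}\rangle_{\otimes}$. Since $\supp(\mathcal{F})$ is quasi-compact it is already covered by finitely many $\supp(\mathcal{G}_i)$ with $\mathcal{G}_i\in\mathcal{T}$, so replacing $\mathcal{G}$ by $\bigoplus_i\mathcal{G}_i$ reduces to a single $\mathcal{G}$. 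The next step -- Thomason's addition to the affine argument -- is a Mayer--Vietoris induction on the number of opens in a finite affine cover $X = U_1\cup\dots\cup U_m$: using that $\qcoh^\omega(U\cup V)$ is assembled from $\qcoh^\omega(U)$, $\qcoh^\omega(V)$, and $\qcoh^\omega(U\cap V)$ (the intersections being quasi-affine, so that one may appeal to results such as those of \cite{DAGQC}), and that dualizable complexes on quasi-compact opens extend to $X$, one reduces membership in $\langle\mathcal{G}\rangle_{\otimes}$ to the analogous statement on each $U_i$, that is, to the affine case.

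For $X = \spec R$ with $R$ noetherian I would run the Hopkins--Neeman argument. First, any perfect $E$ with $\supp(E)\subseteq V(f_1,\dots,f_n)$ lies in $\langle K(f_1,\dots,f_n)\rangle_{\otimes}$ (equivalently, that tensor-ideal equals $\qcoh^\omega_{V(f_1,\dots,f_n)}(\spec R)$): each $f_i$ acts nilpotently on the finitely many, finitely generated cohomology modules of $E$, so $f_i^N$ acts nullhomotopically on $E$ for $N\gg 0$, whence $E$ is a retract of $E\otimes_R R/f_i^N$, and iterating over $i$ -- trading $R/f_i^N$ for $R/f_i$ via the octahedral axiom -- puts $E$ in $\langle K(f_1,\dots,f_n)\rangle_{\otimes}$. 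The reverse containment $K(f_1,\dots,f_n)\in\langle\mathcal{G}\rangle_{\otimes}$, for $\mathcal{G}$ perfect with $\supp(\mathcal{G}) = V(f_1,\dots,f_n)$, is the subtle point: one iterates the evaluation map $\mathcal{G}\otimes_R\mathcal{G}^\vee\to R$ to progressively cut down the amplitude and extract from $\langle\mathcal{G}\rangle_{\otimes}$ a perfect complex with support exactly $V(f_1,\dots,f_n)$, whence $K(f_1,\dots,f_n)\in\langle\mathcal{G}\rangle_{\otimes}$ by the first step. Together these give $\langle\mathcal{G}\rangle_{\otimes} = \qcoh^\omega_{V(f_1,\dots,f_n)}(\spec R)\ni\mathcal{F}$, proving the key lemma affine-locally. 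With the key lemma in hand, injectivity of $Z\mapsto\qcoh^\omega_Z(X)$ and surjectivity onto all thick tensor-ideals are formal. The hard part is twofold: the affine Hopkins--Neeman lemma just described (especially the amplitude-reduction via the evaluation map), and the Mayer--Vietoris globalization of the previous paragraph -- precisely the two ingredients that upgrade Hopkins--Neeman to Thomason.
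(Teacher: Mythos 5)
This theorem is not proved in the paper: it is quoted from Thomason \cite[Th.\ 3.15]{thomason}, so there is no internal argument to compare yours against, and what can be judged is whether your sketch is a viable account of the Hopkins--Neeman--Thomason proof. The architecture you set up is the standard and correct one: supports of dualizable complexes and their tensor/cofiber properties, the two mutually inverse assignments, realization of every closed subset as a support via Koszul complexes, reduction of everything to the key lemma that $\supp(\mathcal{F})\subseteq\supp(\mathcal{G})$ forces $\mathcal{F}\in\langle\mathcal{G}\rangle_{\otimes}$, and a globalization to reduce that lemma to the affine case (this last step does require the Thomason--Trobaugh localization and extension theorems, which you invoke but do not carry out; that is tolerable in a sketch, though Thomason himself globalizes through a tensor-nilpotence theorem valid over any quasi-compact quasi-separated scheme rather than a bare Mayer--Vietoris induction).

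The genuine gap is in the affine key step, at exactly the point you flag as subtle. Your first step is fine: if $\supp(E)\subseteq V(f_1,\dots,f_n)$ then each $f_i$ acts nilpotently on the finitely many finitely generated cohomology modules, $E$ is a retract of $E\otimes K(f_i^N)$, and so $E\in\langle K(f_1,\dots,f_n)\rangle_{\otimes}$. But that step only produces memberships \emph{in} $\langle K(f_1,\dots,f_n)\rangle_{\otimes}$; applied to $\mathcal{G}$ it gives $\mathcal{G}\in\langle K\rangle_{\otimes}$, which is the wrong direction, so your claim that $K(f_1,\dots,f_n)\in\langle\mathcal{G}\rangle_{\otimes}$ follows ``by the first step'' after ``iterating the evaluation map to cut down amplitude'' is circular as stated, and no amplitude reduction supplies the reverse membership. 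The missing idea is the tensor-nilpotence theorem (Hopkins' lemma; Thomason's Th.\ 3.8): a map of perfect complexes vanishing after tensoring with every residue field $\kappa(x)$ is $\otimes$-nilpotent. One applies it to the fiber $I\to R$ of the unit map $R\to\mathcal{G}\otimes\mathcal{G}^{\vee}$: for $x\in\supp(\mathcal{G})$ the unit map splits after tensoring with $\kappa(x)$ (a nonzero map of modules over a graded field splits), so $I\otimes\kappa(x)\to\kappa(x)$ is zero there, while for $x\notin\supp(\mathcal{G})$ one has $\mathcal{F}\otimes\kappa(x)=0$; hence the maps $\mathcal{F}\otimes I^{\otimes n}\to\mathcal{F}$ vanish at all residue fields, some iterate is nullhomotopic, and $\mathcal{F}$ is then a retract of the cofiber, which is built from $\mathcal{F}\otimes\mathcal{G}\otimes\mathcal{G}^{\vee}$ and so lies in $\langle\mathcal{G}\rangle_{\otimes}$. (Neeman's alternative route through residue fields and localizing subcategories of the full derived category would serve equally well.) Without this ingredient, or some equivalent, the classification does not follow from what you have written.
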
 

Suppose $X$ is quasi-affine. In this case, $\qcoh^\omega(X)$ 
corresponds under the equivalence of \Cref{quasiaffineness} to the $\R \Gamma(X, \mathcal{O}_X)$-modules which are dualizable,
i.e., perfect, and thick tensor-ideals are the same as thick subcategories. 
Thus, we get:
\begin{corollary} 
\label{thicksubcatquasiaffine}
If $X$ is a quasi-affine scheme, then the thick 
subcategories of $\md^\omega( \R \Gamma(X, \mathcal{O}_X))$ are in natural
bijection  with the specialization-closed subsets of $X$. 
\end{corollary}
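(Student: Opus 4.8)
The plan is simply to chain together the equivalence of \Cref{quasiaffineness} with Thomason's theorem. Write $R = \R\Gamma(X, \mathcal{O}_X)$ for brevity, and assume (as throughout this subsection) that $X$ is noetherian. By \Cref{quasiaffineness}, derived global sections furnish a symmetric monoidal equivalence $\qcoh(X) \simeq \md(R)$. Since a symmetric monoidal equivalence carries dualizable objects to dualizable objects, it restricts to an equivalence on the full subcategories of dualizable objects. On the $\md(R)$ side, the dualizable objects are exactly the perfect $R$-modules, i.e., $\md^\omega(R)$; on the $\qcoh(X)$ side, the dualizable objects are exactly $\qcoh^\omega(X)$, as recalled in the statement of Thomason's theorem above. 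Hence $\R\Gamma$ induces an equivalence $\qcoh^\omega(X) \simeq \md^\omega(R)$ of symmetric monoidal $\infty$-categories.

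Next I would observe that, on the $\md(R)$ side, the notions of \emph{thick subcategory} and \emph{thick tensor-ideal} of $\md^\omega(R)$ coincide: the unit $R$ generates $\md^\omega(R)$ as a thick subcategory, so any thick subcategory is automatically closed under tensoring with arbitrary perfect $R$-modules. Transporting this along the equivalence of the previous paragraph, we conclude that the thick subcategories of $\md^\omega(R)$ are in natural bijection with the thick tensor-ideals of $\qcoh^\omega(X)$.

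Finally, Thomason's theorem (applicable since $X$ is noetherian) identifies the thick tensor-ideals of $\qcoh^\omega(X)$ with the specialization-closed subsets of $X$. Composing the two bijections yields the corollary, and naturality in $X$ is immediate since each of the constituent constructions is natural. There is no real obstacle here: the one point deserving a word of care is the identification of dualizable with perfect objects on both sides, but this is standard (a module is dualizable over $R$ iff it is a retract of a finite cell $R$-module, i.e., perfect), and the $\qcoh(X)$ analog is precisely Thomason's characterization of $\qcoh^\omega(X)$ quoted above. All of the mathematical content is already contained in \Cref{quasiaffineness} and Thomason's theorem.
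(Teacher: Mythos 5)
Your proposal is correct and is essentially the paper's own argument: transport the problem along the symmetric monoidal equivalence of \Cref{quasiaffineness}, identify dualizable objects with perfect modules on one side and with $\qcoh^\omega(X)$ on the other, note that thick subcategories of $\md^\omega(\R\Gamma(X,\mathcal{O}_X))$ coincide with thick tensor-ideals because the unit generates, and then apply Thomason's theorem. Nothing further is needed.
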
 

\begin{cons}
Let $X$ be a quasi-affine, noetherian scheme which is not affine. 
In this case, the natural map $X \to \spec \pi_0 (\R \Gamma(X, \mathcal{O}_X))$
is an open immersion 
of schemes \cite[Tag 01P5]{stacks-project}. 
By \Cref{thicksubcatquasiaffine}, the thick subcategories of 
$\md( \R \Gamma(X, \mathcal{O}_X))$ are in bijection not with
specialization-closed subsets of the scheme $\spec \pi_0 \left(\R \Gamma(X,
\mathcal{O}_X)\right)$, but rather specialization-closed subsets of an open
subset (i.e., $X$) of this scheme.
\end{cons}

\subsection{The punctured affine plane}
\label{bigcompact}
Not every compact $\e{\infty}$-$\mathbb{Q}$-algebra has the noetherianness
properties used 
in this paper. 
In this subsection, we explain how $\e{\infty}$-rings of the form $\R \Gamma(X,
\mathcal{O}_X)$ can give counterexamples, and work out the simplest
nontrivial case. 

\begin{cons}
Consider the $\e{\infty}$-ring $A$ of functions on the
punctured affine
plane $\mathbb{A}^2 \setminus \{(0, 0)\}$, which fits into a homotopy pullback
\[ \xymatrix{
A \ar[d] \ar[r] & \mathbb{Q}[x^{\pm 1}, y] \ar[d] \\
\mathbb{Q}[x, y^{\pm 1}] \ar[r] &  \mathbb{Q}[x^{\pm 1}, y^{\pm 1}]
}.\]
The homotopy groups $\pi_*(A)$ are given by 
\[ 
\pi_i(A) = \begin{cases} 
\mathbb{Q}[x,y] & i =  0\\
\mathbb{Q}[x, y]/(x^\infty, y^\infty) & i = - 1 \\
0 & \text{otherwise}
 \end{cases}, 
\]
where $\mathbb{Q}[x,y]/(x^\infty, y^\infty)$ denotes 
the cokernel of the map $\mathbb{Q}[x^{\pm 1}, y] \oplus \mathbb{Q}[x, y^{\pm
1}] \to \mathbb{Q}[x^{\pm 1}, y^{\pm 1}]$. 
In particular, $\pi_{-1}(A)$ is not a finitely generated $\pi_0(A)$-module
(though $\pi_0(A)$ is noetherian). 
\end{cons}

\begin{proposition}
The $\e{\infty}$-ring $A$ is compact in $\clg_{\mathbb{Q}/}$. 
\end{proposition}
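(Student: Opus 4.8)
The plan is to exploit the homotopy pullback square defining $A$. Write $P = \mathbb{Q}[x,y]$ for the free $\e{\infty}$-$\mathbb{Q}$-algebra on two generators in degree zero (the ordinary polynomial ring; it is compact in $\clg_{\mathbb{Q}/}$, being the free $\e{\infty}$-$\mathbb{Q}$-algebra on a perfect module), and set $B = \mathbb{Q}[x^{\pm 1}, y]$, $C = \mathbb{Q}[x, y^{\pm 1}]$, $D = \mathbb{Q}[x^{\pm 1}, y^{\pm 1}]$, so the square reads $A \simeq B \times_D C$. The first point is that $B$, $C$, $D$ are compact in $\clg_{\mathbb{Q}/}$: each is the localization of $P$ at a single element, and for any $\e{\infty}$-ring $R$ and $f \in \pi_0(R)$ one has $R[f^{-1}] \simeq R[t]//(ft - 1)$, a finite pushout of compact objects (with $R[t] = R \otimes_{\mathbb{Q}} \mathbb{Q}[t]$), so $R[f^{-1}]$ is compact whenever $R$ is. Thus $A$ is a homotopy pullback of compact $\e{\infty}$-$\mathbb{Q}$-algebras, and the whole square lies in $\clg_{P/}$.

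The next step is to reduce to showing $A$ is compact in $\clg_{P/}$. Since $P$ is compact in $\clg_{\mathbb{Q}/}$, the fibration sequence of mapping spaces $\hom_{\clg_{P/}}(A, -) \to \hom_{\clg_{\mathbb{Q}/}}(A, -) \to \hom_{\clg_{\mathbb{Q}/}}(P, -)$ shows compactness of $A$ over $P$ forces compactness over $\mathbb{Q}$. Working over $P$, one has $B \simeq A[x^{-1}]$, $C \simeq A[y^{-1}]$, $D \simeq A[(xy)^{-1}]$ — the degree $-1$ homotopy $\pi_{-1}(A) \cong \mathbb{Q}[x,y]/(x^\infty, y^\infty)$ being annihilated upon inverting $x$ or $y$ — so that $\{A \to B,\ A \to C\}$ is precisely the Zariski cover of $\md(A) \simeq \qcoh(\mathbb{A}^2 \smallsetminus \{0\})$ by its two principal affine opens, with $B \otimes_A B \simeq B$, $C \otimes_A C \simeq C$, $B \otimes_A C \simeq D$.

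The crux, which I expect to be the main obstacle, is to promote this to the assertion that the pullback $A = B \times_D C$ is compact. This is \emph{not} formal: $\clg$ is not stable, so a homotopy pullback of compact $\e{\infty}$-rings need not be compact. What makes it work is that $B \to D$ and $C \to D$ are Zariski localizations — the square is the gluing of the finite-type quasi-affine scheme $\mathbb{A}^2 \smallsetminus \{0\}$ along a two-element affine cover. I would argue by induction on the size of such a cover (the base case being a single localization, done above), so the essential case is this two-open one. There, for $T \in \clg_{P/}$ one computes $\hom_{\clg_{P/}}(A, T)$ by localizing the target at the images of $x$, $y$, $xy$ and repeatedly coning off the finitely many degree $-1$ classes that appear in the finite \v{C}ech complex of the cover, using that $\sym^* \mathbb{Q}[-1]$ is a finite cell $\e{\infty}$-$\mathbb{Q}$-algebra (\S\ref{degminusonesec}); this exhibits $\hom_{\clg_{P/}}(A, -)$ as a finite homotopy limit of functors built from $\hom_{\clg_{P/}}(B,-)$, $\hom_{\clg_{P/}}(C,-)$, $\hom_{\clg_{P/}}(D,-)$ and mapping spaces into finite towers of localizations, each of which commutes with filtered colimits. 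The delicate feature is that, although $\pi_{-1}(A)$ is not finitely generated over $\pi_0(A)$, the nontrivial multiplicative structure carried by the $\mathbb{G}_m$-weight decomposition $A \simeq \bigoplus_n \R\Gamma(\P^1, \mathcal{O}(n))$ is exactly what lets the finite \v{C}ech presentation collapse to a finite cell presentation — and hence $\hom_{\clg_{\mathbb{Q}/}}(A,-)$ to commute with filtered colimits.
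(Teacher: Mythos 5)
Your reduction steps are fine ($B$, $C$, $D$ compact, and compactness over $P=\mathbb{Q}[x,y]$ implying compactness over $\mathbb{Q}$), but the step you yourself identify as the crux is not actually proved, and as sketched it does not work. Mapping \emph{out} of the pullback $A \simeq B\times_D C$ does not decompose in terms of $\hom(B,-)$, $\hom(C,-)$, $\hom(D,-)$ — limits in the source give no formula for mapping spaces — and your substitute ("localizing the target at the images of $x,y,xy$ and repeatedly coning off the finitely many degree $-1$ classes in the \v{C}ech complex") is incoherent as written: one cannot localize the target along a map one has not yet constructed, $\pi_{-1}(A)\cong \mathbb{Q}[x,y]/(x^\infty,y^\infty)$ contains infinitely many classes rather than finitely many, and the assertion that the \v{C}ech presentation "collapses to a finite cell presentation" because of the $\mathbb{G}_m$-weight decomposition is exactly the claim that needs proof (indeed, if $\hom_{\clg_{P/}}(A,-)$ were a finite limit of the corepresented functors you list, the statement would be formal after all, contradicting your own caveat). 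No finite presentation of $A$ is exhibited anywhere in your argument.

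The missing idea is the universal property of $A$ as the \emph{finite localization} of $P=\mathbb{Q}[x,y]$ away from the perfect module $\mathbb{Q}[x,y]/(x,y)$ supported at the origin: the map $P\to A$ is an epimorphism of $\e{\infty}$-rings ($A\otimes_P A\simeq A$), so for any rational $T$ the space $\hom_{\clg_{\mathbb{Q}/}}(A,T)$ is the union of those connected components of $\hom_{\clg_{\mathbb{Q}/}}(P,T)\simeq \Omega^\infty T\times\Omega^\infty T$ consisting of pairs $(u,v)$ such that $T/(u,v)$ is contractible. Since $T/(u,v)$ is a finite cell $T$-module, its vanishing (equivalently, $1=0$ in $\pi_0(T/(u,v))$) is detected at a finite stage of any filtered colimit $T=\varinjlim T_\alpha$, and $\hom(P,-)$ commutes with filtered colimits; this gives compactness directly, with no need for the pullback square, the reduction to $\clg_{P/}$, or any induction on covers. (Alternatively, the explicit presentation of $A$ as a pushout of $\mathbb{Q}[x,y]$ against $\sym^*_{\mathbb{Q}[x,y]}(\mathbb{D}M)$ for $M=\mathbb{Q}[x,y]/(x,y)$ would also yield compactness, but that presentation is itself a nontrivial result and is not established by your \v{C}ech argument.)
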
We are grateful to J. Lurie
for explaining this to us. 
\begin{proof} 

In fact, to give a morphism $A \to B$,
for $B$ a rational $\e{\infty}$-ring, is equivalent to giving two elements $u,
v \in \Omega^\infty B$ which have the \emph{property} that $B/(x,y) $ is
contractible. This follows from the fact that $A$ is the \emph{finite
localization} (\cite{miller}) of $\mathbb{Q}[x, y]$ away from the $\mathbb{Q}[x,y]$-module
$\mathbb{Q}[x,y]/(x,y)$ (which is supported at the origin). 
In particular, to give a morphism of $\e{\infty}$-rings $A \to B$ is equivalent
to giving a map $\mathbb{Q}[x, y] \to B$ \emph{such that} $B//(x,y) = B/(x,y) $ is
contractible; note that this condition is detected in a finite stage of a
filtered colimit. 
\end{proof}
In \cite[Ex.~2.8]{BHL}, B. Bhatt and D. Halpern-Leistner give in fact an \emph{explicit
presentation} of $A$ as an $\e{\infty}$-ring under $\mathbb{Q}[x,y]$. 
Consider the $\mathbb{Q}[x,y]$-module $M = \mathbb{Q}[x,y]/(x,y)$ and the
natural map $\phi\colon \mathbb{Q}[x,y] \to M$. The dual gives a map $\psi \colon
\mathbb{D}M \to \mathbb{Q}[x,y]$, where $\mathbb{D}M$ is the Spanier-Whitehead
dual of $M$. Then one has:
\begin{proposition}[Bhatt, Halpern-Leinster]
The $\e{\infty}$-$\mathbb{Q}[x,y]$-algebra is the pushout
\[ \xymatrix{
\sym_{\mathbb{Q}[x,y]}^*( \mathbb{D} M) \ar[d] \ar[r] & \mathbb{Q}[x,y] \ar[d]
\\
\mathbb{Q}[x,y] \ar[r] &  A
},\]
where:
\begin{enumerate}
\item $\sym_{\mathbb{Q}[x,y]} ^*(\mathbb{D}M)$ is the free
$\e{\infty}$-$\mathbb{Q}[x,y]$-algebra on the $\mathbb{Q}[x,y]$-module
$\mathbb{D}M$.
\item The two maps $\sym_{\mathbb{Q}[x,y]}^*( \mathbb{D}M) \to
\mathbb{Q}[x,y]$ are adjoint to two maps of
$\mathbb{Q}[x,y]$-modules $\mathbb{D}M \to \mathbb{Q}[x,y]$ which are 
given by $\psi$ and the zero map. 
\end{enumerate}\end{proposition}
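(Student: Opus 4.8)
The plan is to identify $A$ with the pushout $P := \mathbb{Q}[x,y]\otimes_{\sym^*_{\mathbb{Q}[x,y]}(\mathbb{D}M)}\mathbb{Q}[x,y]$ of the statement by comparing universal properties; write $R=\mathbb{Q}[x,y]$. First I would check that $P$ is canonically an object of $\clg_{R/}$: the two $R$-algebra maps $\sym^*_R(\mathbb{D}M)\to R$ adjoint to $\psi$ and to the zero map both restrict to $\mathrm{id}_R$ on the weight-zero summand $R\subset\sym^*_R(\mathbb{D}M)$, so the two induced maps $R\to P$ coincide. For the universal property of $A$ I would simply invoke the description recalled above of $A$ as the finite localization of $R$ away from $M=R/(x,y)$: for $B\in\clg_{R/}$, the space $\hom_{\clg_{R/}}(A,B)$ is contractible if $B//(x,y)\simeq 0$ and empty otherwise.

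Next I would unwind the universal property of $P$. For $B\in\clg_{R/}$ with structure map $r\colon R\to B$, the universal properties of the pushout and of the free algebra $\sym^*_R$ identify $\hom_{\clg_{R/}}(P,B)$ with the space of nullhomotopies of the $R$-module map $r\circ\psi\colon\mathbb{D}M\to B$. The point needing care is that a homotopy in $\clg$ between the two composites $\sym^*_R(\mathbb{D}M)\to R\to B$ can be deformed so that both composites equal $r$ (the target of one factor may be dragged along the induced path in $\hom_{\clg}(R,B)$), after which the homotopy restricts on the generating module $\mathbb{D}M$ to a nullhomotopy of $r\circ\psi$, and this procedure is an equivalence of spaces. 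I would then use that $M=R/(x,y)$ is perfect over $R$ — since $(x,y)$ is a regular sequence, $M$ is resolved by the Koszul complex — so that $\mathbb{D}M$ is dualizable, $\mathbb{D}\mathbb{D}M\simeq M$, and $\hom_R(\mathbb{D}M,B)\simeq M\otimes_R B$ naturally. Naturality of Spanier--Whitehead duality then carries $r\circ\psi = r\circ\mathbb{D}\phi$ to the canonical $B$-module quotient map $B = R\otimes_R B\xrightarrow{\,\phi\otimes\mathrm{id}_B\,} M\otimes_R B = B//(x,y)$; thus $\hom_{\clg_{R/}}(P,B)$ is the space of nullhomotopies of $B\to B//(x,y)$.

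Finally I would observe that this nullhomotopy space is contractible if $B//(x,y)\simeq 0$ and empty otherwise. Being a $B$-module map out of $B$, the map $B\to B//(x,y)$ is nullhomotopic iff $1$ dies in $\pi_0(B//(x,y))=\pi_0(B)/(x,y)$, i.e.\ iff the images of $x$ and $y$ generate the unit ideal of $\pi_0(B)$; choosing $ax+by=1$, the element $y$ then acts invertibly on $B/x$, so $B//(x,y)\simeq\mathrm{cofib}\bigl(y\colon B/x\to B/x\bigr)\simeq 0$, and the condition is equivalent to $B//(x,y)\simeq 0$. When $B//(x,y)\simeq 0$ the map in question is the zero map into the zero spectrum, whose space of nullhomotopies is contractible. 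Hence $\hom_{\clg_{R/}}(P,-)$ and $\hom_{\clg_{R/}}(A,-)$ agree, and $P\simeq A$ in $\clg_{R/}$ by Yoneda.

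I expect the main obstacle to be the duality computation in the second paragraph — identifying $r\circ\psi$ with the canonical map $B\to B//(x,y)$ via $\hom_R(\mathbb{D}M,B)\simeq M\otimes_R B$ and the naturality of the dual of $\phi$ — together with the homotopy-coherence bookkeeping needed to reduce maps out of the pushout to genuine nullhomotopies. Granting those, the identification of the universal property of $A$ and the final elementary manipulation with $B//(x,y)$ are routine, and nothing here is special to characteristic zero beyond the finite-localization description of $A$.
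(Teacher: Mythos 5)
Your overall strategy is the same as the paper's: writing $R=\mathbb{Q}[x,y]$ and $P$ for the pushout, you compute $\hom_{\clg_{R/}}(P,B)$ as the space of nullhomotopies of the composite $\mathbb{D}M\stackrel{\psi}{\to}R\to B$, identify this by duality with the space of nullhomotopies of the $B$-module map $B\to B//(x,y)$, and compare with the finite-localization universal property of $A$. Up to that point the argument is correct and matches the paper's (terser) proof.

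The final paragraph, however, contains a step that is false, and it is exactly the step you need to show the mapping space is \emph{empty} when $B//(x,y)\neq 0$. You assert that $\pi_0(B//(x,y))=\pi_0(B)/(x,y)$, and hence that $B\to B//(x,y)$ is nullhomotopic if and only if $x,y$ generate the unit ideal of $\pi_0(B)$. For a general rational $B$ neither claim holds: by the exact sequence \eqref{ses} the homotopy of an iterated cofiber picks up contributions from odd-degree homotopy, and the comparison map $\pi_0(B)/(x,y)\to\pi_0(B//(x,y))$ need not be injective; this is precisely the non-noetherian failure of \Cref{radicial2} recorded in \Cref{bigcompact}. Indeed $B=A$ itself is a counterexample: $A//(x,y)\simeq 0$, so the map is null, while $(x,y)\subsetneq\mathbb{Q}[x,y]=\pi_0(A)$ is proper --- were your criterion correct, there would be no map $P\to A$ at all, contradicting the proposition you are proving. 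The implication you actually need (nullhomotopic $\Rightarrow B//(x,y)\simeq 0$) is nevertheless true and has a short proof that never mentions $\pi_0(B)$: by \Cref{modxmodx} (applied twice) the map $B\to B//(x,y)$ underlies the unit of an $\e{\infty}$-$B$-algebra, so a nullhomotopy of it as a $B$-module map forces $1=0$ in the ring $\pi_0(B//(x,y))$, i.e.\ $B//(x,y)\simeq 0$. (Alternatively: $x$ and $y$ act nullhomotopically on $C=B//(x,y)$, so the base-changed map $C\to C\otimes_B C$ is a split injection, which cannot be null unless $C\simeq 0$.) With that repair, your argument coincides with the paper's.
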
 
\begin{proof} 
Indeed, 
let $A'
\in \clg_{\mathbb{Q}[x,y]/}$.
Then 
\[ \hom_{\clg_{\mathbb{Q}[x,y]/}}(A, A') \simeq 
\ast \times_{\hom_{\md({\mathbb{Q}[x,y])}}( \mathbb{D}M, A')} \ast
.\]
Here the two maps $\ast \to 
\hom_{\md({\mathbb{Q}[x,y])}}( \mathbb{D}M, A')$ 
send, respectively, $\ast $ to $0$ and to 
the map $\mathbb{D}M \stackrel{\psi}{\to} A \to A'$.
If $A'//(x,y) = 0$, then $\mathbb{D}M = 0$ and the mapping space
$\hom_{\clg_{\mathbb{Q}[x,y]/}}(A,A') $ is therefore
contractible. If not, then $\mathbb{D} M \to A \to A'$ is not the zero map, so
that mapping space is empty. This is precisely the universal property of $A'
\in \clg_{A'/}$.
\end{proof} 

\begin{remark}
The $\infty$-category of $A$-modules is equivalent to the $\infty$-category of
quasi-coherent sheaves on the scheme $\mathbb{A}^2_{\mathbb{Q}} \setminus \left\{(0,
0)\right\}$, since this scheme is quasi-affine. In particular, it follows
\Cref{thicksubcatquasiaffine} that the thick
subcategories of $\md^\omega(A)$ correspond to the subsets 
of $\mathbb{A}^2_{\mathbb{Q}} \setminus \left\{(0,
0)\right\}$ which are closed under specialization. 
In particular, \Cref{thicksubcat} fails for $A$, as there is no thick
subcategory corresponding to the origin in $\spec \pi_0 A$. 
\end{remark}

\begin{remark} 
$A$ also illustrates the failure of \Cref{radicial2} in the non-noetherian
case. In fact, $\pi_0(A)/(x,y) \simeq \mathbb{Q}$ while $A//(x,y)$ is
contractible.  
\end{remark} 
\subsection{Punctured spectra and counterexamples}

We will now describe counterexamples to our theorems on Galois groups and
Picard groups in the non-noetherian case, arising from quasi-affine schemes in a
similar way. We start by recalling 
the context. 

\begin{cons}
Let $(R, \mathfrak{m})$ be a noetherian local ring. We define the
\emph{punctured spectrum} $\psp R = \spec R \setminus
\left\{\mathfrak{m}\right\}$.
\end{cons}

The punctured spectrum $\psp R$ is a quasi-affine scheme, and many
``purity'' results in 
algebraic geometry and commutative algebra relate invariants of $\psp R$ to
those of 
$\spec R$.
\begin{theorem}[{Zariski-Nagata \cite[Exp. X, Th. 3.4]{SGA2}}] \label{ZN} Let $R$ be a regular local ring of dimension
$\geq 2$. Then the inclusion $\psp R \to \spec R $ induces an isomorphism on
\'etale fundamental groups. 
\end{theorem}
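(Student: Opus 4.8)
The plan is to prove the stronger statement that the restriction functor $j^{*}$ from finite \'etale covers of $\spec R$ to finite \'etale covers of $\psp R$, where $j\colon\psp R\hookrightarrow\spec R$ is the open immersion, is an equivalence of categories. Since $R$ is a regular (hence normal, hence integral) local ring, $\psp R$ is connected, so the Galois category formalism with a compatible choice of geometric points turns such an equivalence into the asserted isomorphism $\pi_{1}^{\mathrm{et}}(\psp R)\xrightarrow{\sim}\pi_{1}^{\mathrm{et}}(\spec R)$. The hypothesis $\dim R\geq 2$ — i.e., that the closed point $\mathfrak{m}$ has codimension $\geq 2$ in $\spec R$ — will be used crucially in both halves of the equivalence.

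Full faithfulness of $j^{*}$ uses only normality and $\dim R\geq 2$. If $X,Y\to\spec R$ are finite \'etale, then $X$, $Y$, and $X\times_{\spec R}Y$ are regular, in particular normal, and the preimages of $\psp R$ in them have complements of codimension $\geq 2$. A morphism $X|_{\psp R}\to Y|_{\psp R}$ over $\psp R$ is the choice of a connected component of $(X\times_{\spec R}Y)|_{\psp R}$ mapping isomorphically to $X|_{\psp R}$; the closure of such a component in $X\times_{\spec R}Y$ is a union of irreducible components which is finite and birational over the normal scheme $X$, hence is isomorphic to $X$. This yields the unique extension of the morphism across the codimension-$\geq 2$ locus, and unwinding the construction shows it is a morphism over $\spec R$. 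Hence $j^{*}$ is fully faithful.

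The real content is essential surjectivity: a finite \'etale cover $f\colon V\to\psp R$ must descend to a finite \'etale cover of $\spec R$. Let $\mathcal{A}=f_{*}\mathcal{O}_{V}$, a finite locally free $\mathcal{O}_{\psp R}$-algebra, and put $B=\Gamma(\psp R,\mathcal{A})$. Because $R$ is Cohen--Macaulay (being regular) of dimension $\geq 2$, a local-cohomology finiteness argument shows that $B$ is a finite $R$-algebra, that it agrees with the $S_{2}$-ification of any coherent extension of $\mathcal{A}$ to $\spec R$, and that $\spec B\to\spec R$ is finite and restricts to $V$ over $\psp R$. It then remains to see that $\spec B\to\spec R$ is \'etale. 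It is already \'etale over $\psp R$, so its branch locus is a closed subset of $\spec R$ contained in $\{\mathfrak{m}\}$, hence of codimension $\geq 2$; by purity of the branch locus over a regular base such a branch locus must be empty, so $\spec B\to\spec R$ is finite \'etale. For completeness one would prove purity of the branch locus itself: cutting repeatedly by generic hyperplane sections reduces to $\dim R=2$, where $B$ is normal, hence $S_{2}$, hence Cohen--Macaulay, hence free over the two-dimensional regular local ring $R$ by the Auslander--Buchsbaum formula; then $\spec B\to\spec R$ is finite flat, its branch locus is cut out by the principal discriminant ideal, and Krull's principal ideal theorem forbids a principal ideal from defining the codimension-$2$ point $\{\mathfrak{m}\}$, so the branch locus is empty.

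The main obstacle is precisely this last point: the two-dimensional case and all of the Galois-category bookkeeping are comparatively formal, but making the reduction to $\dim R=2$ rigorous — choosing hyperplane sections along which $B$ remains normal and the branch locus stays concentrated at the maximal ideal — is where the genuine work (as in \cite{SGA2}) lies.
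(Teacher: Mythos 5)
The paper does not prove this statement at all: it is quoted as a classical theorem with a citation to SGA2, so the only meaningful comparison is with the classical argument, whose skeleton your outline reproduces. The formal parts of your proposal are sound: full faithfulness of restriction follows from normality and the codimension~$\geq 2$ condition (your graph-closure argument works, and so does the quicker Hartogs-type argument that global sections of a finite \'etale $\spec R$-scheme do not change upon removing the closed point); for essential surjectivity, the extension $B=\Gamma(\psp R, f_*\mathcal{O}_V)$ is indeed a finite $R$-algebra restricting to $V$ over $\psp R$ --- though it is cleaner to identify $B$ with the product of the integral closures of $R$ in the function fields of the components of $V$, finite over $R$ because $R$ is normal and the extensions are separable, than to invoke local cohomology (your route also works, but only because the sheaf is locally free and $R$ is regular; for a general coherent extension $H^1_{\mathfrak{m}}$ need not be finite). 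The two-dimensional case via Auslander--Buchsbaum, the principal discriminant ideal, and Krull's principal ideal theorem is the standard and correct base case.

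The genuine gap is the step you yourself flag: the reduction from $\dim R\geq 3$ to $\dim R=2$. ``Cutting repeatedly by generic hyperplane sections'' is not available in this local setting: there is no Bertini-type genericity for hypersurfaces through the closed point, and even for a carefully chosen regular parameter $t$ there is no reason that $B/tB$ remains normal or that the branch locus of the sliced cover stays concentrated at the maximal ideal --- arranging this is not a refinement of your argument but a different argument. In SGA2 this step is the local Lefschetz theorem for the fundamental group: one reduces to the complete case and compares finite \'etale covers of $\psp R$ with covers of the formal completion along a hyperplane section, algebraizing by means of the finiteness and comparison theorems (the conditions Lef/Leff), which is where the depth~$\geq 3$ hypotheses enter and what drives the induction down to dimension two. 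Alternatively, the purity of the branch locus you need can be quoted from Nagata and Auslander, whose module-theoretic proof treats all dimensions at once. As written, your proposal is a complete proof only when $\dim R=2$; in higher dimension the key inductive mechanism is missing, not merely unrigorous.
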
 

From our point of view, we can restate ``purity'' results such as the
Zariski-Nagata theorem in terms of ring spectra, by passage to the $\e{\infty}$-ring $\R\Gamma( \psp R,
\mathcal{O}_{\psp R})$. 
Let $(R, \mathfrak{m})$ be a regular local ring of dimension $\geq 2$.
Then $\pi_0(\R\Gamma(\psp R,
\mathcal{O}_{\psp R})) = R$ and $\pi_{-i} \R\Gamma(\psp R,
\mathcal{O}_{\psp R})) = 0$ if $i \notin \left\{0, \mathrm{dim}(R) -
1\right\}$ by general results on local cohomology and depth \cite[Exp. III,
Ex. 3.4]{SGA2}. 
For example, we obtain:

\begin{enumerate}
\item \Cref{ZN}  is thus equivalent to the statement that the Galois group of the
$\e{\infty}$-ring  
$\R\Gamma(\psp R,
\mathcal{O}_{\psp R})
$ is algebraic.
\item Similarly, on a much more elementary level, let $(R, \mathfrak{m})$ be a
regular local ring of dimension $\geq 2$. Then $R$ is factorial, so that it
has trivial Picard group. Since the Picard group is isomorphic to the class
group, it follows that the inclusion $\psp R \to \spec R $ induces an
isomorphism on Picard groups. In particular, it follows that the 
Picard group of $\R\Gamma( \psp R, \mathcal{O}_{\psp R})$ is algebraic. (More
subtle purity results of the Picard group in non-regular cases can be phrased
in this form too.) 
\end{enumerate}

The main point of this subsection is that 
non-regular rings for which purity fails can be used to give interesting
examples of Galois extensions and invertible modules over non-noetherian ring
spectra. Our example (which is not local) follows \cite[Example 16.5]{divKrull}.

Let $K$ be a field of characteristic zero containing a primitive $n$th root
$\zeta_n$ of unity. Let $m \geq 2$. 
Consider the $\mathbb{Z}/n$-action on the ring $R' = K[x_1, \dots, x_m]$
sending $x_i \mapsto \zeta_n x_i$. Then $R = R'^{\mathbb{Z}/n} $ is the
subring generated by all the homogeneous degree $n$ monomials.
Geometrically, the map 
\( \spec R' \to \spec R = \spec R'^{\mathbb{Z}/n}  \)
corresponds to the quotient of the affine space $\mathbb{A}^m$ by 
rotation by the angle $2\pi/n$ in each direction. In particular, this map is \'etale away from the origin,
the only place where the action fails to be free. 

\begin{cons}
Let $X = \spec R$
and let $Y = \spec R'$. We have a $\mathbb{Z}/n$-action on $Y$ and a map $Y \to
X$ which exhibits $X$ as the quotient $Y/(\mathbb{Z}/n)$. 
If $y \in Y$ is the point corresponding to the prime ideal $(x_1, \dots,
x_m)$ and $x \in X$ its image, then the point $y$ is $\mathbb{Z}/n$-invariant,
and the 
induced
map $Y \setminus \left\{y\right\} \to X \setminus \left\{x\right\}$
is a $\mathbb{Z}/n$-torsor. 
We write $X^{\circ} = X \setminus \left\{x\right\}, Y^{\circ} = Y \setminus
\left\{y\right\}$.

We define $\e{\infty}$-rings $A = \R \Gamma( X^{\circ},
\mathcal{O}_{X^{\circ}})$ and $B = \R
\Gamma(Y^{\circ}, \mathcal{O}_{Y^{\circ}})$.
Note that $B \in \clg_{A/}$ has a natural $\mathbb{Z}/n$-action.
\end{cons}

\begin{theorem} 
\label{strangegalois}
We have $\pi_0(A) = R, \pi_0(B) = R'$. The map $A \to B$, together with the
$\mathbb{Z}/n$-action on $B$ exhibits $B$ as a faithful $\mathbb{Z}/n$-Galois
extension of $A$.
\end{theorem}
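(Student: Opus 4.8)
The plan is to reduce the statement to \Cref{galoisqaffine}, whose hypotheses ask only for a quasi-affine base together with a finite \'etale torsor over it. Both $X^\circ$ and $Y^\circ$ are open subschemes of the affine schemes $\spec R$ and $\spec R'$, hence quasi-compact, separated, and quasi-affine; so the theorem will follow once we check that $Y^\circ \to X^\circ$ is a finite \'etale $\mathbb{Z}/n$-torsor and that $\pi_0(A) = R$, $\pi_0(B) = R'$. It therefore remains to carry out (i) the $\pi_0$ computation and (ii) the verification of the torsor property.

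For (i), one has $\pi_0(A) = H^0(X^\circ, \mathcal{O}_{X^\circ})$ and $\pi_0(B) = H^0(Y^\circ, \mathcal{O}_{Y^\circ})$. Now $R' = K[x_1, \dots, x_m]$ is a regular, hence normal (hence $S_2$), domain of dimension $m \geq 2$, and the removed point $y$ corresponds to the maximal ideal $(x_1, \dots, x_m)$ of codimension $m \geq 2$; since an $S_2$ domain has no global sections supported in codimension $\geq 2$, restriction along $Y^\circ \hookrightarrow \spec R'$ is an isomorphism on global sections, so $\pi_0(B) = R'$. Likewise $R = R'^{\mathbb{Z}/n}$ is a normal domain of the same dimension $m \geq 2$, over which $R'$ is module-finite (finiteness and normality of invariants under a finite group), and $x$ corresponds to the irrelevant maximal ideal $R \cap (x_1, \dots, x_m)$, again of codimension $m \geq 2$; the same argument gives $\pi_0(A) = R$.

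For (ii), I would argue in three steps. First, the $\mathbb{Z}/n$-action on $\spec R'$ is free away from $y$: if a nontrivial power $\zeta_n^k$ fixes a closed point $(a_1, \dots, a_m)$, then $\zeta_n^k a_i = a_i$ for all $i$, which forces all $a_i = 0$, i.e.\ the point is $y$; in particular the action on $Y^\circ$ is free. Second, $X^\circ$ is the quotient $Y^\circ/(\mathbb{Z}/n)$: the map $\spec R' \to \spec R$ is the quotient by $\mathbb{Z}/n$ by construction, $y$ is its only point lying over $x$ (the group acts transitively on the fiber of an integral extension, and the orbit of $y$ is $\{y\}$), so $Y^\circ = \spec R' \times_{\spec R} X^\circ$, and formation of the quotient by a finite group commutes with restriction to the open subscheme $X^\circ$ (open immersions are flat, and $(-)^{\mathbb{Z}/n}$ commutes with flat base change). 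Third, a free action of a finite group $G$ on a quasi-compact separated scheme makes the quotient map a finite \'etale $G$-torsor; applied to $Y^\circ$, this shows $Y^\circ \to X^\circ$ is a finite \'etale $\mathbb{Z}/n$-torsor (recovering the classical fact that $\spec R' \to \spec R$ is \'etale away from the origin). Invoking \Cref{galoisqaffine}, the induced map $A = \R\Gamma(X^\circ, \mathcal{O}_{X^\circ}) \to B = \R\Gamma(Y^\circ, \mathcal{O}_{Y^\circ})$, together with the residual $\mathbb{Z}/n$-action on $B$, then exhibits $B$ as a faithful $\mathbb{Z}/n$-Galois extension of $A$. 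The only nonformal point is the middle step of (ii) --- that puncturing is compatible with the finite quotient --- while the rest is the quasi-affine descent machinery of \Cref{quasiaffineness} and \Cref{galoisqaffine} together with standard normality and depth bookkeeping.
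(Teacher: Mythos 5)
Your proof is correct, and it differs from the paper's in one genuine way: the computation of $\pi_0(A)$. The paper gets $\pi_0(B)=R'$ exactly as you do (normality of $R'$ plus codimension $\geq 2$ of the puncture), but then obtains $\pi_0(A)$ by descent along the torsor: it writes $\R\Gamma(X^{\circ},\mathcal{O}_{X^{\circ}}) \simeq \R\Gamma(Y^{\circ},\mathcal{O}_{Y^{\circ}})^{h\mathbb{Z}/n}$ and takes $\pi_0$, using (implicitly, via characteristic zero) the vanishing of higher group cohomology to identify this with $R'^{\mathbb{Z}/n}=R$. You instead apply the Hartogs/depth argument a second time, directly to $R=R'^{\mathbb{Z}/n}$, using that invariants of a normal finite-type domain under a finite group are again a normal finite-type domain of the same dimension $m\geq 2$ and that $x$ is a closed point of height $m\geq 2$; this avoids the homotopy fixed-point step entirely and is if anything more elementary, at the cost of a little extra commutative-algebra bookkeeping (normality of $R$, going-down to control the height of the contracted maximal ideal). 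Your part (ii) — freeness of the action away from the origin, the identification of the fiber over $x$ with $\{y\}$ so that $Y^{\circ}$ is the preimage of $X^{\circ}$, and the standard fact that a free admissible action of a finite group yields a finite \'etale torsor — is material the paper relegates to the Construction preceding the theorem and simply asserts there, so spelling it out is a legitimate (and welcome) completion rather than a deviation; the final appeal to \Cref{galoisqaffine} is the same in both arguments.
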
 
\begin{proof} 
The natural map $R' = \Gamma(Y, \mathcal{O}_Y) \to \Gamma(Y^{\circ},
\mathcal{O}_{Y^{\circ}})$ is an isomorphism since $Y$ is normal and the
missing locus is codimension $\geq 2$ by \cite[\S 17, Th. 35]{matsumura}. 
Moreover, the $\mathbb{Z}/n$-torsor $Y^{\circ} \to X^{\circ}$ shows that 
$\R\Gamma(X^{\circ}, \mathcal{O}_{X^{\circ}}) \simeq 
\R\Gamma(Y^{\circ}, \mathcal{O}_{Y^{\circ}})^{h\mathbb{Z}/n}$. Taking $\pi_0$,
we find that $\pi_0(A) \simeq 
\left(\pi_0 \left( \Gamma(Y^{\circ},
\mathcal{O}_{Y^{\circ}})\right)\right)^{\mathbb{Z}/n} = R'^{\mathbb{Z}/n}  = R$.
The assertion that $A \to B$ is a faithful $\mathbb{Z}/n$-Galois extension
comes from \Cref{galoisqaffine}.
\end{proof}

The map of commutative rings $R \to R'$ is not \'etale: in fact, $R'$ is not
regular (at zero) while $R$ is. 
In particular, the Galois extension of 
\Cref{strangegalois} does not come from algebra. 
\begin{example} 
Suppose $K = \mathbb{C}$ is the field of complex numbers. In this case, the
topological realization of $\spec
R$ (i.e., the topological space $\mathbb{C}^m/(\mathbb{Z}/n)$) is easily seen to be \emph{contractible}: one can 
scale down to the image of the origin. Therefore, the \'etale fundamental
group of $R = \pi_0(A)$ is trivial. However, the $\mathbb{Z}/n$-Galois
extension $A \to B$ (and the fact that $B$ has trivial Galois group by
\Cref{galoisqaffine} applied to $\mathbb{C}^m \setminus \left\{(0,
\dots, 0)\right\}$) shows that the Galois group of the $\e{\infty}$-ring $A$
is \emph{precisely} $\mathbb{Z}/n$. 
\end{example}

We can also obtain elements of the Picard group. 

\begin{example} 
\label{picstrange}
We compute the (classical) Picard group of the scheme $X^{\circ}$ again with
$K = \mathbb{C}$. First, we observe that
the Picard group of $Y^{\circ}$ is trivial since that of affine space $Y$ is
and $Y^{\circ} = Y \setminus \left\{y\right\}$.  
Moreover, $\Gamma(Y^{\circ}, \mathcal{O}_{Y}^{\circ}) = \mathbb{C}^{\times}$. 
We have a $\mathbb{Z}/n$-torsor $Y^{\circ} \to X^{\circ}$, and we
can use 
Galois descent to compute the Picard group as
$\mathrm{Pic}(X^{\circ}) = H^1(\mathbb{Z}/n; H^0(
Y^{\circ}, \mathcal{O}_{Y^{\circ}}^{\times})) = \mathbb{Z}/n$ since the
action is trivial.
Therefore, the Picard group of the $\e{\infty}$-ring $A$ is given by
$\mathbb{Z} \oplus \mathbb{Z}/n$ where the $\mathbb{Z}$ comes from suspensions. 
However, we claim that the Picard group of $\pi_0(A)$ is trivial. 
Indeed, by \Cref{normalinj} below, we have $\mathrm{Pic}(X) \subset
\mathrm{Pic}(X^{\circ}) = \mathbb{Z}/n$. But the Picard group of $X$ can have
no torsion as $X$ is topologically contractible, in view of the K\"ummer sequence, and
therefore $\mathrm{Pic}(X) = 0$.
In particular, by \Cref{picqaffine}, we find that $\mathrm{Pic}(A) \simeq \mathbb{Z}
\oplus \mathbb{Z}/n$ though the Picard group of $\pi_0(A)$ is trivial. 
\end{example} 

\begin{lemma} 
\label{normalinj}
Let $X$ be a noetherian, normal, integral scheme. Let $Z \subset X$ have
codimension $\geq 2$. Then the map $\mathrm{Pic}(X) \to \mathrm{Pic}(X
\setminus Z)$ is injective.
\end{lemma}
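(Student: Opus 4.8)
The plan is to deduce the injectivity from the ``algebraic Hartogs'' extension property of normal schemes: on a normal noetherian integral scheme, regular functions---and more generally sections of any line bundle---extend uniquely across closed subsets of codimension $\geq 2$. Set $U = X \setminus Z$ and let $j \colon U \hookrightarrow X$ denote the open immersion. Since $X$ is noetherian, $j$ is quasi-compact and separated, so $j_*$ preserves quasi-coherence.

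First I would prove that for every line bundle $\mathcal{L}$ on $X$ the adjunction unit $\eta \colon \mathcal{L} \to j_* j^* \mathcal{L}$ is an isomorphism. This is a local question on $X$, so it suffices to treat the case $\mathcal{L} \cong \mathcal{O}_X$, where the assertion becomes $\mathcal{O}_X \xrightarrow{\sim} j_* \mathcal{O}_U$. That is exactly the statement that a regular function on $U$ extends uniquely to $X$, which holds because $X$ is normal (hence satisfies Serre's condition $S_2$) and $\operatorname{codim}(Z, X) \geq 2$; this is the algebraic Hartogs lemma.

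Granting this, the lemma follows at once: if $[\mathcal{L}] \in \mathrm{Pic}(X)$ lies in the kernel of $\mathrm{Pic}(X) \to \mathrm{Pic}(U)$, then $j^* \mathcal{L} \cong \mathcal{O}_U$, whence $\mathcal{L} \cong j_* j^* \mathcal{L} \cong j_* \mathcal{O}_U \cong \mathcal{O}_X$; thus $[\mathcal{L}]$ is trivial and the restriction map is injective.

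The only point requiring genuine care---the main obstacle, such as it is---is the correct invocation of the normality and codimension hypotheses in the extension statement: one must know that normal implies $S_2$, and that $S_2$ together with $\operatorname{codim}(Z,X) \geq 2$ forces sections to extend over $Z$. Everything else is formal: the compatibility of $j^*$, $j_*$, and the unit of the adjunction, plus the fact that a morphism of quasi-coherent sheaves that is locally an isomorphism is an isomorphism. If one prefers to bypass Hartogs, an equivalent route runs through the divisor class group: for $X$ normal, integral, and noetherian one has an inclusion $\mathrm{Pic}(X) \hookrightarrow \mathrm{Cl}(X)$, and since Weil divisors are supported in codimension one, restriction induces an isomorphism $\mathrm{Cl}(X) \xrightarrow{\sim} \mathrm{Cl}(U)$ compatible with the Picard subgroups---but this merely repackages the same input, so I would present the sheaf-theoretic argument.
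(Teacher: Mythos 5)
Your proof is correct, and it takes a slightly different formal route from the paper's. The paper works with the sheaf of units: it observes that $\mathcal{O}_X^{\times} \to j_*(\mathcal{O}_{X\setminus Z}^{\times})$ is an isomorphism (the same codimension-$\geq 2$ extension principle you invoke, applied to invertible functions) and then gets injectivity of $H^1(X,\mathcal{O}_X^{\times}) \to H^1(X\setminus Z, \mathcal{O}_{X\setminus Z}^{\times})$ from the low-degree terms of the Leray spectral sequence for $j$. You instead argue directly at the level of line bundles: the adjunction unit $\mathcal{L} \to j_*j^*\mathcal{L}$ is an isomorphism (locally reduced to algebraic Hartogs, using that normal implies $S_2$ and $\operatorname{codim}(Z,X)\geq 2$), so a bundle trivialized on $U$ is $j_*\mathcal{O}_U \cong \mathcal{O}_X$. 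Both arguments rest on exactly the same geometric input; yours avoids sheaf cohomology and the Leray sequence entirely, which makes it more elementary and self-contained, while the paper's version is a one-line deduction once the units-sheaf isomorphism is granted and packages the statement as injectivity on $H^1$, which is convenient for the descent-style Picard computations done elsewhere in that section. Your reduction to the trivial bundle is legitimate since $j_*$ commutes with restriction to opens of $X$ and being an isomorphism is local on the target, so there is no gap.
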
 
\begin{proof} 
Let $j\cl X \setminus Z \to X$ be the open imbedding. Then the map
$\mathcal{O}_X^{\times} \to j_*(\mathcal{O}_{X
\setminus Z}^{\times}) $ is an isomorphism. The Leray spectral sequence now
shows that the natural map $H^1(X, \mathcal{O}_X^{\times}) \to H^1( X
\setminus Z, \mathcal{O}_{X \setminus Z}^{\times})$ is an injection. 
\end{proof}

\bibliographystyle{alpha}
\bibliography{rational}

\end{document}